\theoremstyle{plain}
\newtheorem{theorem}{Theorem}[section]
\newtheorem{lemma}[theorem]{Lemma}
\newtheorem{proposition}[theorem]{Proposition}
\newtheorem{corollary}[theorem]{Corollary}
\newtheorem{thmx}{Theorem}
\theoremstyle{definition}
\newtheorem{definition}[theorem]{Definition}
\newtheorem{example}[theorem]{Example}
\newtheorem{remark}[theorem]{Remark}
\newtheorem{notation}[theorem]{Notation}
\newtheorem{defx}[thmx]{Definition}
\theoremstyle{remark}
\numberwithin{equation}{section}
\newcounter{TmpEnumi}
\newcommand{\N}{\mathbb{N}}
\newcommand{\Z}{\mathbb{Z}}
\newcommand{\C}{\mathbb{C}}
\newcommand{\spec}{{\operatorname{sp}}}
\newcommand{\dist}{{\operatorname{dist}}}
\newcommand{\Ped}{{\operatorname{Ped}}}
\newcommand{\af}{\alpha}
\newcommand{\bt}{\beta}
\newcommand{\dt}{\delta}
\newcommand{\ep}{\varepsilon}
\newcommand{\zt}{\zeta}
\newcommand{\et}{\eta}
\newcommand{\ld}{\lambda}
\newcommand{\ph}{\varphi}
\newcommand{\ps}{\psi}
\newcommand{\rh}{\rho}
\newcommand{\om}{\omega}
\newcommand{\ta}{\tau}
\newcommand{\andeqn}{\qquad {\mbox{and}} \qquad}
\definecolor{RED}{rgb}{1,0,0}\definecolor{BLUE}{rgb}{0,0,1} %DIF PREAMBLE
\begin{document}

\title{Simple tracially $\mathcal{Z}$-absorbing C*-algebras}

%\subtitle{Do you have a subtitle?\\ If so, write it here}

%\titlerunning{Short form of title}
% if too long for running head

\author[M. Amini, N. Golestani,  S. Jamali, N. C. Phillips]{
  Massoud Amini, Nasser Golestani, Saeid Jamali,
  N. Christopher Phillips}

%\authorrunning{M. Amini, S. Jamali, N. Golestani, N. C. Phillips}
 % if too long for running head

\address{{\textbf{M. Amini, N. Golestani, S. Jamali}} \newline
           Faculty of Mathematical Sciences, Tarbiat Modares University
\\
           Tehran 14115-134
\newline
              {\textbf{N. C. Phillips}} \newline
   Department of Mathematics, University of Oregon
\\
   Eugene OR 97403-1222}
% \email{mamini@modares.ac.ir, n.golestani@modares.ac.ir,
%  \newline saeidjamali86@gmail.com, ncp@darkwing.uoregon.edu}
% \\
% \\
%\subjclass[2020]{46L05, 46L35} 
%\keywords{C*-algebra,
%tracial $\mathcal{Z}$-absorption, Cuntz semigroup}
%\thanks{}

%\date{Received: date / Accepted: date}
% The correct dates will be entered by the editor

% \date{\today}
%\date{4 July 2018}

\begin{abstract}
We define a notion of tracial $\mathcal{Z}$-absorption for simple
not necessarily unital C*-algebras, study it systematically, and prove 
its permanence properties.
This extends the notion defined
by Hirshberg and Orovitz for unital  C*-algebras.
The Razak-Jacelon algebra, simple nonelementary C*-algebras with tracial rank zero, 
and simple purely infinite C*-algebras are tracially $\mathcal{Z}$-absorbing.
We obtain the first purely infinite examples of 
   tracially $\mathcal{Z}$-absorbing C*-algebras
which are not $\mathcal{Z}$-absorbing.
We use techniques from 
reduced free products of von~Neumann algebras to
 construct these examples.
A stably finite example was given   by Z. Niu and Q. Wang in 2021.
%We show that tracial $\mathcal{Z}$-absorption passes to hereditary
%C*-subalgebras, direct limits,  
%minimal tensor products with arbitrary simple C*-algebras,
%and is preserved under stable isomorphism.
We study the Cuntz semigroup of
a simple tracially $\mathcal{Z}$-absorbing C*-algebra
and prove that it
is almost unperforated and the algebra is weakly almost divisible.
\end{abstract}

\maketitle
\tableofcontents
%%%%%%%%%%%%%%%%%%%%%%%%%%%%%%%%%% Abstract
\section{Introduction and Main Results}\label{sec_intro}
%\textcolor{blue}{Changes to the previous version}
%% \begin{enumerate}
%\item
%\end{enumerate}
%

X.~Jiang and H.~Su \cite{JS} constructed
a unital separable simple infinite dimensional nuclear
C*-algebra $\mathcal{Z}$
with the same K-theoretic invariant as that of $\mathbb C$.
The Jiang-Su algebra, which is the stably finite analog
of the Cuntz algebra ${\mathcal{O_{\infty}}}$, has played a
central role in Elliott classification program for nuclear C*-algebras.
A unital C*-algebra $A$ is $\mathcal{Z}$-absorbing
if $A \cong A \otimes \mathcal{Z}$.
Certain $\mathcal{Z}$-absorbing C*-algebras
are classified by their ordered K-groups \cite{LN, W2}
and all classes of unital simple nuclear C*-algebras
for which the Elliott conjecture is confirmed
are already $\mathcal{Z}$-absorbing.
It was conjectured
 that the properties of strict
comparison, finite nuclear dimension, and $\mathcal{Z}$-absorption
are equivalent for unital separable
simple infinite-dimensional nuclear C*-algebras
(Toms-Winter conjecture) \cite{TW07}. It is now known 
that the last two conditions are equivalent \cite{CE}, 
\cite{CETWW}, \cite{Ti14}, \cite{W1} (building upon \cite{MS}).
We also know that $\mathcal{Z}$-absorption
implies strict comparison for unital simple exact
C*-algebras~\cite{Ro04}
and  that for
a unital separable simple infinite-dimensional nuclear C*-algebra
with finitely many extremal traces, strict
comparison is equivalent to $\mathcal{Z}$-absorption~\cite{MS}. 
Finally, we know that, when there is at least one trace on a unital 
separable simple infinite-dimensional nuclear C*-algebra, then 
$\mathcal{Z}$-absorption is equivalent to strict comparison plus 
the uniform $\Gamma$-property \cite{CETW}.

\medskip

Hirshberg and Orovitz introduced the notion of
tracial $\mathcal{Z}$-absorption for unital C*-algebras~\cite{HO13}.
(See Definition~\ref{deftzau} below.)
They show in \cite[Proposition 2.2 and Theorem 4.1]{HO13}
that for simple unital separable nuclear C*-algebras,
$\mathcal{Z}$-absorption
is equivalent to tracial $\mathcal{Z}$-absorption.
The main result of \cite{CLS2021} states this equivalence in the nonunital case.  
The main advantage of tracial $\mathcal{Z}$-absorption is that
it is easier to check in certain cases,
particularly for crossed products.
(See, e.g.,~\cite{Ke17}.)
This is due to the fact that tracial $\mathcal{Z}$-absorption
is defined
via a local property, whereas to verify $\mathcal{Z}$-absorption one needs to
verify  the existence of an isomorphism
which may not be easy.

\medskip

The aim of this paper is twofold.
First, there has recently been considerable interest
in the classification and structure of nonunital 
simple C*-algebras \cite{GngLin, GngLinII, GngLinIII}.
Nonunital tracial $\mathcal{Z}$-absorption
may be useful in applying the results of the classification
of nonunital C*-algebras
(or, rather, stably projectionless C*-algebras),
in particular, the classification of crossed products of simple
nonunital C*-algebras by actions with Rokhlin type properties.
This requires, however, further progress in the
recently started classification project
for simple nonunital C*-algebras.
(See e.g.\   \cite{EGLN17}.)
Second, tracial $\mathcal{Z}$-absorption
can substitute for $\mathcal{Z}$-absorption
when attempting to prove various regularity properties
for simple C*-algebras,
but holds more generally and is easier to verify.
There are therefore motivations to  study tracial 
$\mathcal{Z}$-absorption in its own right.

\medskip

In this paper we extend
the notion of tracial $\mathcal{Z}$-absorption
to the simple nonunital case and study it
systematically.

\begin{defx}\label{defx_tz}
We say that a simple C*-algebra $A$
is \emph{tracially $\mathcal{Z}$-absorbing} if
$A \ncong \mathbb{C}$
and for every $x, a \in A_{+}$ with $a \neq 0$,
every finite set $F \subseteq A$, every $\varepsilon > 0$,
and every $n \in \mathbb{N}$,
there is a c.p.c.~order zero map $\varphi \colon M_{n} \to A$
such that:
\begin{enumerate}
\item\label{defx_tz-it1}
$\bigl( x^{2} - x \varphi (1) x - \varepsilon \bigr)_{+} \precsim a$.
\item\label{defx_tz-it2}
$\| [\varphi (z), b] \| < \varepsilon$
for any $z \in M_{n}$ with $\| z \| \leq 1$ and any $b \in F$.
\setcounter{TmpEnumi}{\value{enumi}}
\end{enumerate}
\end{defx}
Part~\eqref{defx_tz-it1} contains our main idea of 
 ``nonunital tracial smallness."
If $A$ is unital, then we can take $x=1$, and it turns
out that this definition extends 
the definition of the unital case (\cite[Definition~2.1]{HO13});
see Remark~\ref{rmkdeftza}. 
 This definition motivated 
the second named author and Forough to introduce a 
suitable notion of weak tracial Rokhlin property for
finite group actions on simple nonunital C*-algebras.
See Definition~1.1 of \cite{FG17} and the discussion after that.
Also, see \cite[Definition~6.6]{FG17ar},
   \cite{SJ_thesis}, and \cite[Definition~4.4]{FLL21}
   in which Definition~\ref{defx_tz} is quoted from the 
   unpublished version of the current paper,
   showing that this idea of nonunital tracial smallness
   has been motivating.
This definition also appeared recently in \cite{CLS2021}
which has minor overlap in results with ours.
%See Acknowledgment for clarifications.
\medskip

We prove in Section~\ref{Sec_Finite} that if $A$ is finite then one can add the following
condition to Definition~\ref{defx_tz}:

\begin{enumerate}
\setcounter{enumi}{\value{TmpEnumi}}
\item\label{defx_tz-it3}
$\| \ph (1) a \ph (1) \| > 1 - \ep$.
\end{enumerate}
This is parallel to the definition
of the weak tracial Rokhlin property for finite group actions
\cite{GH20, FG17}, and is needed for the proof of a result in
\cite{AGJP17}
that the permutation action
on the minimal tensor product of finitely many copies
of a tracially $\mathcal{Z}$-absorbing C*-algebra
has the weak tracial Rokhlin property,
provided that the tensor product is finite.

Different characterizations of Definition~\ref{defx_tz} and its permanence
properties are given in Sections~\ref{sec_tz} and \ref{sec_per}.
In particular, it is preserved under inductive limits
and stable isomorphism, and passes to
hereditary subalgebras.

\medskip

The class of simple tracially $\mathcal{Z}$-absorbing C*-algebras
is large. 

\begin{thmx}\label{thmx_ex}
The following classes of C*-algebras are tracially $\mathcal{Z}$-absorbing:
\begin{enumerate}
\item\label{thmx_ex_it1}
simple nonelementary C*-algebras with tracial rank zero;

\item\label{thmx_ex_it2}
simple purely infinite C*-algebras;

\item\label{thmx_ex_it3}
simple $\mathcal{Z}$-absorbing C*-algebras.
\end{enumerate}
\end{thmx}

Part~\eqref{thmx_ex_it1} implies that many 
simple stably finite C*-algebras are
tracially $\mathcal{Z}$-absorbing. It is proved in
Lemma~\ref{P_8809_TAFtoTrZAbs} (unital case) and
Proposition~\ref{P_8Y03_NonUTAFtoTrZAbs} (nonunital
case). 
The special case of Part~\eqref{thmx_ex_it1} where $A$
is unital and separable follows from 
\cite[Proposition~6.9 and Theorem~4.11]{FLL21}
and \cite[Theorem~5.9]{FL21} in which the authors
use different methods from ours.
A similar result by Matui and Sato is that
every
  simple separable unital nuclear infinite-dimensional 
  C*-algebra with tracial rank zero  
   $\mathcal{Z}$-absorbing
  \cite[Theorem~5.4]{MS}.
 (The nonunital version of this result follows from
 Part~\eqref{thmx_ex_it1} and \cite[Theorem~A]{CLS2021}.)
An example of a simple separable unital C*-algebra with tracial
rank zero (hence with stable rank one and real rank zero
\cite{Ln1, LnBook})
which is not $\mathcal{Z}$-absorbing is constructed
in \cite{NiuWng}. It is nonnuclear and exact. As far as we know, this
stably finite example and purely infinite examples
of Theorem~\ref{thmx_pi} (below) are
the first known tracially $\mathcal{Z}$-absorbing C*-algebras
which are not $\mathcal{Z}$-absorbing.

Part~\eqref{thmx_ex_it2} follows directly  form Definition~\ref{defx_tz}
and characterizations of pure infiniteness
\cite[Proposition~5.4]{KR00} (see Example~\ref{exapi}).
We obtain purely infinite C*-algebras which are not
$\mathcal{Z}$-absorbing:

\begin{thmx}\label{thmx_pi}
There are simple separable unital purely
infinite (hence tracially $\mathcal{Z}$-absorbing) C*-algebras 
which are not $\mathcal{Z}$-absorbing.
\end{thmx}

The proof of this result is based on techniques from
reduced free products of von~Neumann algebras 
\cite{DkmRdm, Brnt}. See Proposition~\ref{P_8624_NotZStab}
and Example~\ref{E_6825_PINotZSt}.
As a concrete example,
the reduced C*-algebra free product
\[
(M_2 \otimes M_2) \star_{\mathrm{r}} C ([0, 1])
\]
 is
simple separable unital purely
infinite (hence tracially $\mathcal{Z}$-absorbing) but not $\mathcal{Z}$-absorbing
(Example~\ref{E_8702_PINotZStable}).
By \cite[Theorem 4.1]{HO13}, this example is nonnuclear.

\medskip
We remark that an interesting result of \cite{FLL21} says that
every simple separable  tracially approximately divisible C*-algebra
is  purely infinite or  has stable rank one \cite[Corollary~6.5]{FLL21}.
Since tracial approximate divisibility is equivalent to
tracial $\mathcal{Z}$-absorption for simple separable
C*-algebras \cite[Theorem~4.11]{FLL21}, we see that
every simple separable  tracially  $\mathcal{Z}$-absorbing
C*-algebra
is  purely infinite or  has stable rank one

\medskip

Part~\eqref{thmx_ex_it3} of Theorem~\ref{thmx_ex} follows from tracial $\mathcal{Z}$-stability
of $\mathcal{Z}$ \cite[proposition~2.2]{HO13} and that tracial 
$\mathcal{Z}$-absorption passes to
minimal tensor products (Theorem~\ref{thmztz}).
In particular, the Razak-Jacelon algebra $\mathcal{W}$
is a stably projectionless C*-algebra which is
tracially $\mathcal{Z}$-absorbing.

\medskip

It is shown in \cite[Theorem~3.3]{HO13} that if
 $A$ is a simple tracially $\mathcal{Z}$-absorbing C*-algebra
 then the Cuntz semigroup $W(A)$ is almost unperforated
 and hence $A$ has strict comparison. We extend this to
 the nonunital case. 
 We define a notion of strict  comparison
 (called weak strict comparison)
in a sense suitable for nonunital C*-algebras
(Definition \ref{D_8604_StrComp}).
Purely infinite simple C*-algebras
have weak strict comparison,
and for simple unital C*-algebras,
we show that our definition reduces to strict comparison.
The proof of different parts of the following result
is given in Section~\ref{sec_cu}.

\begin{thmx}\label{thmx_Cu}
Let A be a simple tracially $\mathcal{Z}$-absorbing C*-algebra.
Then
\begin{enumerate}
\item\label{thmx_Cu_it1}
 $W (A)$ is
almost unperforated;

\item\label{thmx_Cu_it2}
$A$ has weak strict comparison;

\item\label{thmx_Cu_it3}
$A$ is weakly almost divisible.
\end{enumerate}
\end{thmx}

Weak almost divisibility in Part~\eqref{thmx_Cu_it3}
is a weak version of divisibility introduced in \cite{OPW17},
and is useful to show that certain
 crossed products have strict comparison
in the absence of tracial $\mathcal{Z}$-stability \cite{OPW17}.

\medskip
	
The structure of this paper is as follows.
In Section~\ref{sec_pre}, we recall some  facts and
prove some results  on Cuntz subequivalence.
In Section~\ref{sec_tz},
we define the notion of tracial $\mathcal{Z}$-absorption for
simple not necessarily unital C*-algebras (Definition~\ref{deftza})
and investigate its primary properties.
In Section~\ref{sec_per}, we show that
tracial $\mathcal{Z}$-absorption passes to
hereditary subalgebras,
matrix algebras, and direct limits,
and that it is Morita invariant.
In Section~\ref{sec_tzz},
we compare tracial $\mathcal{Z}$-absorption with
$\mathcal{Z}$-absorption.
In Section~\ref{sec_cu},
we study the Cuntz semigroup of 
 tracially $\mathcal{Z}$-absorbing C*-algebras.
In Section \ref{Sec_Finite}, we verify  finite  
tracially $\mathcal{Z}$-absorbing C*-algebras.

\medskip

We use the following (standard) notation.
For a C*-algebra $A$, $A_{+}$ denotes the positive cone of $A$.
Also, $A^{+}$ denotes the unitization of $A$
(adding a new identity even if $A$ is unital),
while $A^{\sim} = A$ if $A$ is unital and
$A^{\sim} = A^{+}$ if $A$ is nonunital.
% If $E$ and $F$ are subsets of $A$ and $\varepsilon > 0$,
% we write $E \subseteq_{\varepsilon} F$ if
% for every $a \in E$ there is $b \in F$
% such that $\|a - b\| < \varepsilon$.
% The latter inequality is also denoted by
% $a \approx_{\varepsilon} b$.
The notation $a \approx_{\varepsilon} b$
means $\|a - b\| < \varepsilon$.
% We write
% $a \approx_{\varepsilon} b$ if $\|a - b\| \leq \varepsilon$.
We write $\mathcal{K} = K (\ell^{2})$ and
${M}_{n} = M_{n} (\mathbb{C})$.
% We use $A \otimes B$ to denote the minimal tensor product
% of C*-algebras $A$ and $B$.
We take ${\mathbb{Z}}_n = {\mathbb{Z}} / n {\mathbb{Z}}$.
(The $p$-adic integers will never appear.)
We take $\mathbb{N} = \{ 1, 2, \ldots \}$.
We abbreviate ``completely positive contractive'' to ``c.p.c.''..
For any C*-algebra~$A$,
we denote its tracial state space by $T (A)$.

\section{Preliminaries on Cuntz subequivalence}\label{sec_pre}

\indent
In this section we recall some results
and provide some lemmas on Cuntz subequivalence
needed in the subsequent sections.
Some of these are already in the literature.
See~\cite{APT11}
for an extensive discussion of Cuntz subequvalence.

%\subsection{Cuntz Subequivalence}
Let $A$ be a C*-algebra.
For $a, b \in A_{+}$, we write $a \precsim b$
if $a$ is Cuntz subequivalent to
$b$, i.e., there is a sequence $(v_n)_{n \in \N}$ in $A$
such that $\|a - v_{n} b v_{n}^{*} \| \to 0$.
% We also may write $a \precsim_{A} b$
% to emphasize that this relation holds in $A$.
We write $a \sim b$ if both $a \precsim b$ and $b \precsim a$.
% We give some lemmas on Cuntz subequivalence
% for positive elements
% which will be used in the subsequent sections.
The following lemma will be used in many places.

\begin{lemma}[\cite{KR02}, Lemma~2.2]\label{lemkr}
Let $A$ be a C*-algebra,
let $a, b \in A$ be positive, and let $\varepsilon > 0$.
If $\|a - b\| < \varepsilon$
then there is a contraction $d \in A$ such that
$(a - \varepsilon)_{+} = d b d^{*}$.
In particular, $(a - \varepsilon)_{+} \precsim b$.
\end{lemma}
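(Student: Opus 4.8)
The plan is to build $d$ by first solving the problem with $b$ replaced by $a$ via functional calculus, and then correcting for the discrepancy between $a$ and $b$ using the hypothesis $\|a-b\|<\varepsilon$. First I would reduce to the case that $A$ is unital by passing to $A^{+}$, noting that the element $d$ produced below automatically lies in $A$, being a product of elements of the hereditary subalgebra generated by $a$ and $b$. Writing $\delta=\|a-b\|<\varepsilon$, I would introduce the continuous function $g$ on $[0,\infty)$ given by $g(t)=(t-\varepsilon)_{+}^{1/2}\,t^{-1/2}$ for $t>0$ and $g(0)=0$; this is continuous, satisfies $0\le g\le 1$, and vanishes on $[0,\varepsilon]$. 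Setting $d_{0}=g(a)$, functional calculus gives the exact identity $d_{0}\,a\,d_{0}=(a-\varepsilon)_{+}$, so $d_{0}$ already solves the problem when $b$ is replaced by $a$.

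The next step is to control $w:=d_{0}bd_{0}$. Writing $w=(a-\varepsilon)_{+}-d_{0}(a-b)d_{0}$ and using $-\delta\cdot 1\le a-b\le\delta\cdot 1$, I obtain $w\ge(a-\varepsilon)_{+}-\delta d_{0}^{2}$. Since $d_{0}^{2}=(a-\varepsilon)_{+}/a$ in functional calculus and $a>\varepsilon$ on the support of $(a-\varepsilon)_{+}$, a direct comparison of functions gives $\delta d_{0}^{2}\le(\delta/\varepsilon)(a-\varepsilon)_{+}$, and hence the crucial lower bound $w\ge\lambda(a-\varepsilon)_{+}$ with $\lambda=1-\delta/\varepsilon>0$. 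Thus $(a-\varepsilon)_{+}\le\lambda^{-1}w$, even though one cannot hope for $(a-\varepsilon)_{+}\le b$ itself.

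With this domination in hand I would solve $dbd^{*}=(a-\varepsilon)_{+}$ exactly by a Douglas/polar-type factorization: set $d=(a-\varepsilon)_{+}^{1/2}\,w^{-1/2}d_{0}$, interpreting $w^{-1/2}$ through the limit of $(w+1/n)^{-1/2}$ and defining $d$ as the norm limit of the elements $(a-\varepsilon)_{+}^{1/2}(w+1/n)^{-1/2}d_{0}\in A$, the estimate $(a-\varepsilon)_{+}\le\lambda^{-1}w$ being exactly what makes this limit exist. Then $dbd^{*}=(a-\varepsilon)_{+}^{1/2}w^{-1/2}(d_{0}bd_{0})w^{-1/2}(a-\varepsilon)_{+}^{1/2}=(a-\varepsilon)_{+}^{1/2}\,p\,(a-\varepsilon)_{+}^{1/2}=(a-\varepsilon)_{+}$, where $p=w^{-1/2}ww^{-1/2}$ is the support projection of $w$, which absorbs into $(a-\varepsilon)_{+}^{1/2}$ because $(a-\varepsilon)_{+}\le\lambda^{-1}w$ forces the support of $(a-\varepsilon)_{+}$ to be dominated by that of $w$. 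The final ``in particular'' clause is then immediate, since $(a-\varepsilon)_{+}=dbd^{*}$ gives $(a-\varepsilon)_{+}\precsim b$ using the constant sequence $v_{n}=d$. The hard part will be the last step: upgrading the merely approximate relation $d_{0}bd_{0}\approx_{\delta}(a-\varepsilon)_{+}$ to an exact factorization, and in particular arranging that $d$ can be taken to be a \emph{contraction}. The root difficulty is that $t\mapsto(t-\varepsilon)_{+}$ is not operator monotone, so the naive estimates only yield $\|d\|\le\lambda^{-1/2}$; obtaining the sharp bound $\|d\|\le 1$ requires a closer analysis on the common support of $(a-\varepsilon)_{+}$ and $w$.
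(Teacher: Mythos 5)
Note first that the paper does not prove this lemma; it is quoted verbatim from \cite{KR02}. So your argument has to stand on its own. The first three steps do: the choice $d_{0}=g(a)$ with $g(t)=\big((t-\varepsilon)_{+}/t\big)^{1/2}$, the identity $d_{0}ad_{0}=(a-\varepsilon)_{+}$, and the lower bound $w=d_{0}bd_{0}\ge(1-\delta/\varepsilon)(a-\varepsilon)_{+}$ are all correct (and they already yield the ``in particular'' clause, since $0\le x\le y$ implies $x\precsim y$ and $d_{0}bd_{0}\precsim b$).

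The gap is in the last step, and it is not the cosmetic issue you flag. You assert that the inequality $(a-\varepsilon)_{+}\le\lambda^{-1}w$ is ``exactly what makes'' the norm limit of $(a-\varepsilon)_{+}^{1/2}(w+1/n)^{-1/2}d_{0}$ exist. It is not: an operator inequality $c\le Cw$ guarantees only the uniform bound $\|c^{1/2}(w+1/n)^{-1/2}\|\le C^{1/2}$, not norm convergence. This is the critical exponent $\beta=1/2$ in Pedersen's factorization \cite[Proposition 1.4.5]{Pdsn1}, which holds for $\beta<1/2$ and fails at $\beta=1/2$; already for $c=w$ given by $c(t)=t$ in $C_{0}((0,1])$ the elements $c^{1/2}(w+1/n)^{-1/2}$ converge pointwise to $1$ but have no norm limit in the algebra. (The extra factor $d_{0}$ on the right does help in commutative models, but your argument does not exploit it, and $w$ does not commute with $a$, so the pointwise estimates do not transfer.) This is precisely the kind of trap the authors warn about immediately before Lemma~\ref{lembdd}, where they note that the related factorization \cite[Lemma~2.4(ii)]{KR02} has a gap of this nature and ``may well be false.'' So $d$ has not been shown to exist in $A$, let alone to be a contraction; and as you concede, even formally your $d$ only satisfies $\|d\|\le\lambda^{-1/2}$. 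The standard proofs avoid the critical exponent entirely, e.g.\ by inserting an intermediate $\varepsilon_{1}\in(\|a-b\|,\varepsilon)$ and using a function such as $t\mapsto(t-\varepsilon)_{+}/(t-\varepsilon_{1})$, so that the slack $\varepsilon-\varepsilon_{1}$ is absorbed into the functional calculus and the resulting element is a contraction by construction; some version of that idea, or of \cite[Proposition 1.4.5]{Pdsn1} with $\beta<1/2$, is needed to close your argument.
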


%\begin{remark}\label{rmkleq}
%The lemma above implies that
% if $a, b \in A_{+}$ and $\|a - b\| \leq \varepsilon$
% for some $\varepsilon > 0$ then
%$(a - \varepsilon)_{+} \precsim b$.
% In fact, by Lemma~\ref{lemkr}, for any $n \in \mathbb{N}$ we have
%$(a - \varepsilon - \frac{1}{n})_{+} \precsim b$.
% Letting $n \to \infty$, we get
%$(a - \varepsilon)_{+} \precsim b$.
%\end{remark}

% Comment and next lemma and proof not checked. % ???

Observe that if $a \precsim b$ in $A$ then, by definition,
there is a sequence $(v_{n})_{n \in \N}$
in $A$ such that $\| a - v_{n} b v_{n}^{*} \| \to 0$.
But it is not the case that there always exists
a \emph{bounded} sequence with this property.
However, we have the following lemma.
It may be in the literature.
We give a proof for the sake of completeness.
(There is a similar result in \cite[Lemma~2.4(ii)]{KR02},
but there is a gap in the proof,
and the statement may well be false~\cite{RrPC}.
Its proof essentially implies
our lemma.)

\begin{lemma}\label{lembdd}
Let $A$ be a C*-algebra,
let $a, b \in A_{+}$, and let $\delta > 0$.
If $a \precsim (b - \delta)_{+}$ then there exists a sequence
$(v_{n})_{n \in \N}$
in $A$ such that $\|a - v_{n} b v_{n}^{*} \| \to 0$
and
$\| v_{n} \| \leq \| a \|^{1/2} \delta^{- 1/2}$
for every $n \in \mathbb{N}$.
\end{lemma}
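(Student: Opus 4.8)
The plan is to produce, for each $n \in \N$, a \emph{single} element $v_n$ satisfying the exact equation $v_n b v_n^* = (a - 1/n)_+$ together with the required norm bound; since $\|a - (a-1/n)_+\| \le 1/n \to 0$, such a sequence automatically has $\|a - v_n b v_n^*\| \to 0$.

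First I would upgrade the hypothesis $a \precsim (b-\delta)_+$ to an exact factorization. By definition there is some $w \in A$ (depending on $n$) with $\|a - w(b-\delta)_+ w^*\| < 1/n$. Applying Lemma~\ref{lemkr} to the positive elements $a$ and $b' = w(b-\delta)_+ w^*$ with $\varepsilon = 1/n$ yields a contraction $d$ with $(a - 1/n)_+ = d b' d^* = c_n (b-\delta)_+ c_n^*$, where $c_n = dw$. Thus for every $n$ I have an exact equation $(a-1/n)_+ = c_n (b-\delta)_+ c_n^*$, albeit with $\|c_n\|$ completely uncontrolled.

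The heart of the argument is to reabsorb a spectral factor of $b$ into a function that is simultaneously a ``square root over $b$'' and is bounded in a way that forces the norm estimate. I would define the continuous function $p$ on $[0,\infty)$ by $p(t) = \big((t-\delta)_+ / t\big)^{1/2}$ for $t > 0$ and $p(0) = 0$; explicitly $p(t) = 0$ on $[0,\delta]$ and $p(t) = \sqrt{(t-\delta)/t}$ for $t \ge \delta$, so that $p$ is continuous, $p(0) = 0$ (hence $p(b) \in A$ even when $A$ is nonunital), and $0 \le p \le 1$. Two elementary pointwise facts drive everything: the identity $p(t)^2 t = (t-\delta)_+$ and the inequality $p(t)^2 \le \delta^{-1}(t-\delta)_+$ (the latter holding because $1/t \le 1/\delta$ for $t \ge \delta$, and both sides vanishing for $t \le \delta$). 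Setting $v_n = c_n p(b)$, the identity gives $v_n b v_n^* = c_n\, p(b)^2 b\, c_n^* = c_n (b-\delta)_+ c_n^* = (a-1/n)_+$, which settles convergence. The inequality, together with functional calculus and the fact that $x \mapsto c_n x c_n^*$ preserves order, gives the operator inequality $v_n v_n^* = c_n\, p(b)^2\, c_n^* \le \delta^{-1} c_n (b-\delta)_+ c_n^* = \delta^{-1}(a-1/n)_+$, whence $\|v_n\|^2 = \|v_n v_n^*\| \le \delta^{-1}\|(a-1/n)_+\| \le \delta^{-1}\|a\|$, which is exactly the claimed bound $\|v_n\| \le \|a\|^{1/2}\delta^{-1/2}$.

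The only genuinely delicate point is the choice of $p$: one must insist that $p$ vanish at $0$, so that $p(b)$ lies in $A$ and not merely in $A^{+}$, while still arranging both $p(t)^2 t = (t-\delta)_+$ and the norm-producing estimate $p(t)^2 \le \delta^{-1}(t-\delta)_+$. The cutoff at $\delta$ is precisely what makes all three requirements compatible, and it is the source of the factor $\delta^{-1/2}$. Everything else is routine: Lemma~\ref{lemkr} converts the approximate Cuntz relation into exact equations, and the passage from a scalar inequality to an operator inequality is standard functional calculus. I do not expect any serious obstacle beyond pinning down $p$.
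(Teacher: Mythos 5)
Your proof is correct and follows essentially the same route as the paper: both first use Lemma~\ref{lemkr} to obtain the exact factorization $(a - \tfrac{1}{n})_{+} = c_n (b-\delta)_+ c_n^*$, and then pass from $(b-\delta)_+$ to $b$ with the norm bound $\delta^{-1/2}\|a\|^{1/2}$. The only difference is that the paper delegates this second step to \cite[Lemma~2.4(i)]{KR02}, whereas you reprove it by explicitly constructing the function $p(t) = \bigl((t-\delta)_+/t\bigr)^{1/2}$ --- which is exactly the argument behind that cited lemma.
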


\begin{proof}
Let $n \in \mathbb{N}$.
Since $a \precsim (b - \delta)_{+}$,
there exists $w_{n} \in A$ such that
$\bigl\| a - w_{n} (b - \delta)_{+}w_{n}^{*} \bigr\| < \frac{1}{n}$.
By Lemma~\ref{lemkr}, there exists
$d_{n} \in A$ such that
\[
\bigl( a - \tfrac{1}{n} \bigr)_{+}
 = d_{n} w_{n} (b - \delta)_{+} w_{n}^{*}d_{n}^{*}.
\]
By \cite[Lemma~2.4(i)]{KR02}, there exists $v_{n} \in A$ such that
\[
\bigl( a - \tfrac{1}{n} \bigr)_{+} = v_{n} b v_{n}^{*}
\andeqn
\| v_{n} \|
 \leq \| \bigl( a - \tfrac{1}{n} \bigr)_{+} \|^{1/2} \delta^{- 1/2}.
\]
Now
$v_{n} b v_{n}^{*} \to a$
and $\| v_{n} \| \leq \| a \|^{1/2} \delta^{- 1/2}$.
\end{proof}

\begin{lemma}\label{lemkey}
Let $A$ be a C*-algebra,
let $x \in A$ be nonzero, and let $b \in A_{+}$.
Then for any $\varepsilon > 0$,
\[
( x b x^{*} - \varepsilon)_{+}
 \precsim x \bigl( b - \varepsilon / \| x \|^{2} \bigr)_{+} x^{*}
 \precsim \bigl( b - \varepsilon / \| x \|^{2} \bigr)_{+}.
\]
If $\| x \| \leq 1$ then
$(x b x^{*} - \varepsilon)_{+}
 \precsim x (b - \varepsilon)_{+} x^{*}
 \precsim (b - \varepsilon)_{+}$.
\end{lemma}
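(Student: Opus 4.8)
The plan is to prove the two stated subequivalences separately, fixing $\delta = \varepsilon / \|x\|^{2}$ throughout, and then to deduce the case $\|x\| \le 1$ from the general statement together with the monotonicity of $(b - \cdot)_{+}$.

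For the second (and easier) subequivalence $x(b-\delta)_{+} x^{*} \precsim (b-\delta)_{+}$, I would use the standard fact that a compression is Cuntz-dominated by the original element. Writing $c = (b-\delta)_{+} \in A_{+}$, one has $x c x^{*} = (x c^{1/2})(x c^{1/2})^{*} \sim (x c^{1/2})^{*}(x c^{1/2}) = c^{1/2} x^{*} x c^{1/2} \le \|x\|^{2} c$, and the last element is Cuntz subequivalent to $c$ since it is dominated by a scalar multiple of $c$. This needs only the basic identities $e e^{*} \sim e^{*} e$ and $0 \le p \le \lambda q \Rightarrow p \precsim q$ (see \cite{APT11}).

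The first subequivalence $(xbx^{*} - \varepsilon)_{+} \precsim x(b-\delta)_{+} x^{*}$ is the main point. Here I would start from the elementary functional-calculus inequality $t \le (t-\delta)_{+} + \delta$ for $t \ge 0$, which gives $b \le (b-\delta)_{+} + \delta \cdot 1$ in $A^{\sim}$; conjugating by $x$ and using $x x^{*} \le \|x\|^{2} \cdot 1$ yields
\[
0 \le xbx^{*} - x(b-\delta)_{+} x^{*} = x \bigl( b - (b-\delta)_{+} \bigr) x^{*} \le \delta x x^{*} \le \delta \|x\|^{2} \cdot 1 = \varepsilon \cdot 1 .
\]
Thus $r := xbx^{*} - x(b-\delta)_{+} x^{*}$ is positive with $\|r\| \le \varepsilon$, so $\bigl\| xbx^{*} - x(b-\delta)_{+} x^{*} \bigr\| \le \varepsilon$. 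The one subtlety is that Lemma~\ref{lemkr} requires a \emph{strict} inequality, whereas we only have $\le \varepsilon$. I would circumvent this by applying Lemma~\ref{lemkr} with $\varepsilon + \eta$ in place of $\varepsilon$, for arbitrary $\eta > 0$: since the above norm is $\le \varepsilon < \varepsilon + \eta$, we obtain $\bigl( xbx^{*} - (\varepsilon + \eta) \bigr)_{+} \precsim x(b-\delta)_{+} x^{*}$. Using the identity $\bigl( (xbx^{*} - \varepsilon)_{+} - \eta \bigr)_{+} = \bigl( xbx^{*} - (\varepsilon + \eta) \bigr)_{+}$ and the standard fact that $(w - \eta)_{+} \precsim z$ for all $\eta > 0$ implies $w \precsim z$ (immediate from $\|w - (w-\eta)_{+}\| \le \eta$; see \cite{APT11}), I let $\eta \to 0$ to conclude $(xbx^{*} - \varepsilon)_{+} \precsim x(b-\delta)_{+} x^{*}$.

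Finally, for the case $\|x\| \le 1$ I would note that then $\delta = \varepsilon/\|x\|^{2} \ge \varepsilon$, so $(b-\delta)_{+} \le (b-\varepsilon)_{+}$ and hence $x(b-\delta)_{+} x^{*} \le x(b-\varepsilon)_{+} x^{*}$, giving $x(b-\delta)_{+} x^{*} \precsim x(b-\varepsilon)_{+} x^{*}$. Chaining this with the two subequivalences already proved yields $(xbx^{*} - \varepsilon)_{+} \precsim x(b-\varepsilon)_{+} x^{*} \precsim (b-\varepsilon)_{+}$, as required. I expect the main obstacle to be precisely the passage from the operator inequality to a Cuntz subequivalence in the first part, that is, the correct handling of the non-strict bound $\|r\| \le \varepsilon$ via the $\eta$-approximation; everything else is routine manipulation of continuous functional calculus and the elementary properties of $\precsim$.
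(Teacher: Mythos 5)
Your proposal is correct and follows essentially the same route as the paper: both establish the norm bound $\bigl\| x b x^{*} - x ( b - \varepsilon / \| x \|^{2} )_{+} x^{*} \bigr\| \leq \varepsilon$ and then pass to the Cuntz subequivalence by cutting down an extra $\eta$ (the paper's $\rho$), invoking the identity $\bigl( (w - \varepsilon)_{+} - \eta \bigr)_{+} = (w - \varepsilon - \eta)_{+}$ and the fact that $(w - \eta)_{+} \precsim z$ for all $\eta > 0$ implies $w \precsim z$. The only cosmetic differences are that you derive the norm estimate via an operator inequality in $A^{\sim}$ rather than directly, and you spell out the routine second subequivalence that the paper dismisses as clear.
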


\begin{proof}
For the first statement, we have
\begin{align*}
\bigl\| x b x^{*}
  - x \bigl( b - \varepsilon / \| x \|^{2} \bigr)_{+} x^{*} \bigr\|
& \leq \| x \|^{2}
   \bigl\| b - \left( b - \varepsilon / \| x \|^{2} \right)_{+} \bigr\|
\\
& \leq \| x \|^{2}
   \cdot \left( \frac{\varepsilon}{\| x \|^{2}} \right)
  = \varepsilon.
\end{align*}
Let $\rh > 0$.
Using \cite[Lemma 2.5(i)]{KR00}
at the first step,
and Lemma~\ref{lemkr} at the second step,
we have
\[
\bigl( (x b x^{*} - \varepsilon)_{+} - \rh \bigr)_{+}
 = (x b x^{*} - \varepsilon - \rh)_{+}
 \precsim x \bigl( b - \varepsilon / \| x \|^{2} \bigr)_{+} x^{*}.
\]
Since $\rh > 0$ is arbitrary,
it follows from \cite[Proposition 2.6]{KR00}
that
\[
(x b x^{*} - \varepsilon)_{+}
 \precsim x \bigl( b - \varepsilon / \| x \|^{2} \bigr)_{+} x^{*}.
\]
Clearly
$x \bigl( b - \varepsilon / \| x \|^{2} \bigr)_{+} x^{*}
 \precsim \bigl( b - \varepsilon / \| x \|^{2} \bigr)_{+}$.

If $\| x \| \leq 1$
then $\varepsilon / \| x \|^{2} \geq \varepsilon$ and so
\[
(x b x^{*} - \varepsilon)_{+}
 \precsim x \bigl( b - \varepsilon / \| x \|^{2} \bigr)_{+} x^{*}
 \leq x (b - \varepsilon)_{+} x^{*}
 \precsim (b - \varepsilon)_{+}.
\]
This completes the proof.
\end{proof}

In general, the reverse of the Cuntz subequivalence relation
in Lemma~\ref{lemkey}
does not hold.
However, a weaker version holds.
See Lemma~\ref{lemst} below.
We need the following proposition in its proof.
It should be in the literature.
We provide a proof for the sake of
completeness.

\begin{proposition}\label{propst}
Let $A$ be a C*-algebra,
let $K \subseteq \mathbb{C}$ be a compact set,
and let
$f \colon K \to \mathbb{C}$ be a continuous function.
Set
\[
S = \bigl\{ x \in M (A) \colon
 {\mbox{$x$ is normal and $\spec (x) \subseteq K$}} \bigr\}.
\]
Then the functional calculus map
$x \mapsto f (x)$,
from $S$ to $M (A)$,
is strictly continuous.
Furthermore, if $(x_{i})_{i \in I}$ is a net in $M (A)_{+}$
which converges to $x \in M (A)$ strictly,
then $x \in M (A)_{+}$
and $(x_{i} - \varepsilon)_{+} \to (x - \varepsilon)_{+}$
strictly.
\end{proposition}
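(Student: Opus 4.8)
The plan is to treat the two assertions in turn, relying throughout on two standard facts about the strict topology on $M(A)$: the involution $x \mapsto x^*$ is strictly continuous, and multiplication $M(A) \times M(A) \to M(A)$ is jointly strictly continuous on norm-bounded sets. Both follow from the defining seminorms $x \mapsto \| x a \|$ and $x \mapsto \| a x \|$ (for $a \in A$) by elementary estimates, writing differences as $x_i y_i - x y = x_i (y_i - y) + (x_i - x) y$ and grouping so that one factor always lands in $A$. I also record at the outset that, since $K$ is compact, every $x \in S$ satisfies $\| x \| \leq \sup_{z \in K} | z | =: R$, so $S$ is norm-bounded; this boundedness is exactly what allows the multiplicative fact to be used on $S$.

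For the first assertion I would reduce to $*$-polynomials. By the Stone--Weierstrass theorem the polynomials in $z$ and $\overline{z}$ are uniformly dense in $C (K)$, so given $a \in A$ and $\eta > 0$ I choose such a polynomial $g$ with $\sup_{z \in K} | f (z) - g (z) | < \eta$. For normal $x$ with $\spec (x) \subseteq K$ the element $g (x)$ is the corresponding $*$-polynomial in $x$ and $x^{*}$, so the preliminary facts make $x \mapsto g (x)$ strictly continuous on the bounded set $S$ (sums, scalar multiples, the involution, and iterated products of bounded elements all behave). The passage from $g$ to $f$ is an $\eta/3$ argument resting on the norm estimate $\| f (x) - g (x) \| = \sup_{t \in \spec (x)} | f (t) - g (t) | \leq \eta$, valid for every $x \in S$: if $x_{i} \to x$ strictly in $S$, then
\[
\| (f (x_{i}) - f (x)) a \|
 \leq \| (f (x_{i}) - g (x_{i})) a \|
   + \| (g (x_{i}) - g (x)) a \|
   + \| (g (x) - f (x)) a \| ,
\]
whose two outer terms are at most $\eta \| a \|$ and whose middle term tends to~$0$. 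Letting $\eta \to 0$ gives $\| (f (x_{i}) - f (x)) a \| \to 0$, and symmetrically $\| a (f (x_{i}) - f (x)) \| \to 0$.

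For the second assertion, the first point is that a strictly convergent net is automatically norm-bounded: for each $a \in A$ the net $(x_{i} a)$ converges, hence is bounded, so the left-multiplication operators $b \mapsto x_{i} b$ on the Banach space $A$ are pointwise bounded, and the uniform boundedness principle yields $\sup_{i} \| x_{i} \| = \sup_{i} \sup_{\| b \| \leq 1} \| x_{i} b \| =: C < \infty$. Self-adjointness of $x$ is immediate from strict continuity of the involution. For positivity I would argue directly in $M (A)$: for each $a \in A$ the element $a^{*} x a = \lim_{i} a^{*} x_{i} a$ is a norm-limit of positive elements of $A$, hence positive. Applying this with $a = x_{-}^{1/2} e_{\lambda}$, where $x = x_{+} - x_{-}$ is the decomposition into positive and negative parts in $M (A)$ and $(e_{\lambda})$ is an approximate identity of $A$, and using $x_{-}^{1/2} x x_{-}^{1/2} = - x_{-}^{2}$, forces $e_{\lambda} x_{-}^{2} e_{\lambda} = 0$, whence $\| x_{-} e_{\lambda} \|^{2} = \| e_{\lambda} x_{-}^{2} e_{\lambda} \| = 0$ and so $x_{-} c = \lim_{\lambda} x_{-} e_{\lambda} c = 0$ for all $c \in A$, giving $x_{-} = 0$. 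Thus $x \in M (A)_{+}$, and since $\| x \| \leq C$ all of the $x_{i}$ and $x$ lie in the set $S$ associated with $K = [0, C]$ and $f (t) = (t - \varepsilon)_{+}$; the required strict convergence $(x_{i} - \varepsilon)_{+} \to (x - \varepsilon)_{+}$ is then precisely the first assertion applied to this $f$.

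The main obstacle I anticipate is the second assertion rather than the first. The first is a routine Stone--Weierstrass and $\eta/3$ reduction once joint strict continuity of multiplication on bounded sets is established. Part two, by contrast, has two genuine subtleties before it can be reduced to part one: strict convergence is not a priori norm-bounded (resolved by uniform boundedness), and strict limits of positive elements must be shown to be positive without invoking a representation (resolved by the $x_{-}^{1/2} e_{\lambda}$ computation); after these, the reduction to the first assertion is the clean payoff.
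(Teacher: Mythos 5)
Your treatment of the first assertion coincides with the paper's proof step for step: note that $S$ is norm-bounded by $\sup \{ |\lambda| \colon \lambda \in K \}$, use joint strict continuity of multiplication on bounded sets together with strict continuity of the involution to handle $*$-polynomials in $x$ and $x^*$, and then run a Stone--Weierstrass $\eta/3$ argument. For the second assertion the paper is much terser -- it only invokes the fact that $M(A)_{+}$ is strictly closed in $M(A)$ and says the rest follows from the first part -- so your positivity argument (that $a^{*} x a = \lim_i a^{*} x_i a \geq 0$ for all $a \in A$, followed by the $x_{-}^{1/2} e_{\lambda}$ computation forcing $x_{-} = 0$) is a correct and welcome expansion of what the paper leaves implicit.

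The one step that does not hold up is the boundedness claim. A convergent \emph{net} in a normed space, unlike a convergent sequence, need not be bounded: it is only eventually bounded, and for the nets $(x_i a)_{i \in I}$ the index past which boundedness holds depends on $a$, so the pointwise-boundedness hypothesis of the uniform boundedness principle is not available. Indeed $\sup_i \| x_i \|$ can genuinely be infinite: in $M(c_0) = \ell^{\infty}$ one obtains a strictly null net of positive elements of arbitrarily large norm by indexing over pairs $(F, \delta)$ with $F \subseteq c_0$ finite and $\delta > 0$, and taking $x_{(F, \delta)} = \delta^{-1} \chi_{\{ n \}}$ with $n$ chosen so large that $|a(n)| < \delta^2$ for all $a \in F$. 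Consequently your reduction of the second assertion to the first, which requires all $x_i$ and $x$ to lie in a single $S$ with $K = [0, C]$, is justified only for norm-bounded nets (in particular for sequences, which is all that is needed where the proposition is applied, in Lemma~\ref{lemst}). In fairness, the paper's one-line proof of the second assertion makes the same implicit reduction and so carries the same restriction; to cover arbitrary nets one would need either a separate argument or a hypothesis of norm-boundedness, not the uniform boundedness principle.
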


\begin{proof}
First note that $S$ is norm bounded,
by $\sup ( \{ |\lambda| \colon \lambda \in K \} )$.
%  First note that $S$ is norm bounded since for any $x \in S$,
%  \[
%  \| x \| \leq M = \sup \{ |\lambda| \colon \lambda \in K \}.
%  \]

Let $(x_{i})_{i \in I}$ be a net in~$S$
which tends to $x \in S$ strictly.
We have to show that $f (x_{i}) \to f (x)$ strictly.
Since $(x_{i})_{i \in I}$ is bounded
and the product is jointly continuous in the strict topology
on bounded subsets of $M (A)$,
and since $a \mapsto a^*$ is strictly continuous,
the statement holds when $f (\zt)$ is a polynomial
in $\zt$ and~${\overline{\zt}}$.
Now let $y \in A$.
We need to show that $f (x_{i}) y \to f (x) y$
and $y f (x_{i}) \to y f (x)$ in norm in $A$.
We do only the first;
the second is similar.
Let $\varepsilon > 0$.
Choose $\delta > 0$ such that
$\delta \| y \| < \frac{\varepsilon}{3}$.
There is a polynomial $g (\zt)$
in $\zt$ and~${\overline{\zt}}$ such that
$\sup_{\zt \in K} | g (\zt) - f (\zt) | < \delta$.
Since $g (x_{i}) y \to g (x) y$, there is $i_{0} \in I$ such that
$\|g (x_{i}) y - g (x) y\| < \frac{\varepsilon}{3}$
for any $i \in I$ with $i \geq i_{0}$.
Thus for $i \geq i_{0}$ we have
\begin{align*}
& \| f (x_{i}) y - f (x) y \|
\\
& \hspace*{3em} {\mbox{}}
 \leq \| f (x_{i}) y - g (x_{i}) y \| + \| g (x_{i}) y - g (x) y \|
  + \| g (x) y - f (x) y \|
\\
& \hspace*{3em} {\mbox{}}
  < \delta \| y \| + \tfrac{\varepsilon}{3} + \delta \| y \|
  < \varepsilon.
\end{align*}
So $f (x_{i}) \to f (x)$ strictly.

Since $M (A)_{+}$ is strictly closed in $M (A)$,
the second part of the statement
follows from the first.
%  For the second part of the statement consider the map
%  $f \colon [0, \infty) \to [0, \infty)$
%  defined by $f (\lambda) = \max (0, \lambda - \varepsilon)$.
%  If $(x_{i})_{i \in I}$
%  is a bounded net in $M (A)_{+}$ and $x_{i} \to x$,
%  then $x \in M (A)_{+}$.
%  In fact, if $A \subseteq \mathbb{B} (\mathcal{H})$
%  is a non-degenerate representation then
%  $M (A) \subseteq \mathbb{B} (\mathcal{H})$
%  and the strict topology is stronger than the
%  strong operator topology \cite{WO93}.
%  Thus $x_{i} \to x$ in the strong operator
%  topology and hence $x \geq 0$.
\end{proof}

We will use the following lemma
in the proof of Proposition~\ref{prop_stz}.

\begin{lemma}\label{lemst}
Let $A$ be a $\sigma$-unital C*-algebra,
let $x$ be a strictly positive  element, let $b \in M (A)$,
and let $\delta > 0$.
Then for every $\varepsilon > 0$ there is $n_{0} \in \mathbb{N}$
such that for any $n \geq n_{0}$ we have
\[
\bigl( x (b - \delta)_{+} x - \varepsilon \bigr)_{+}
  \precsim x \bigl( x^{1/n} b x^{1/n} - \delta \bigr)_{+} x
  \precsim  \bigl( x^{1/n} b x^{1/n} - \delta \bigr)_{+}.
\]
\end{lemma}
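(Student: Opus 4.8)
The plan is to prove the two Cuntz comparisons separately. The right-hand one holds for \emph{every} $n$ and is essentially formal, while the left-hand one requires a norm-convergence argument and is the source of the index $n_{0}$. Throughout, set
$c_{n} = \bigl( x^{1/n} b x^{1/n} - \delta \bigr)_{+}$. Note that $c_{n} \in A_{+}$: since $A$ is an ideal in $M(A)$ and $x^{1/n} \in A$, we have $x^{1/n} b x^{1/n} \in A$, and the function $t \mapsto (t - \delta)_{+}$ vanishes at $0$, so applying it keeps us inside $A$. For the right-hand comparison, since $x = x^{*}$ we may write $x c_{n} x = x c_{n} x^{*}$, and the standard fact that $z c z^{*} \precsim c$ for all $z \in A$ and $c \in A_{+}$ (the same ``clearly'' used in the proof of Lemma~\ref{lemkey}) gives $x c_{n} x \precsim c_{n}$ for every $n \in \mathbb{N}$.

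For the left-hand comparison, the key point is that $x^{1/n} b x^{1/n}$ converges strictly to $b$. Because $x$ is strictly positive and $A$ is $\sigma$-unital, $(x^{1/n})_{n}$ is an approximate identity for $A$, which is exactly the statement that $x^{1/n} \to 1$ strictly in $M(A)$. Using that multiplication is jointly strictly continuous on bounded sets, together with the uniform bound $\| x^{1/n} \| \leq \max(1, \| x \|)$, I would check that $x^{1/n} b x^{1/n} \to b$ strictly. These elements are self-adjoint and uniformly norm bounded, so Proposition~\ref{propst}, applied with a compact set $K \subseteq \mathbb{R}$ containing all their spectra and with $f(t) = (t - \delta)_{+}$, yields $c_{n} \to (b - \delta)_{+}$ strictly.

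Next I would convert this strict convergence into norm convergence by compressing with the fixed element $x \in A$. Since strict convergence $c_{n} \to (b-\delta)_{+}$ includes $\| (c_{n} - (b-\delta)_{+}) x \| \to 0$, multiplying once more on the left by $x$ gives $\| x c_{n} x - x (b-\delta)_{+} x \| \leq \| x \| \cdot \| (c_{n} - (b-\delta)_{+}) x \| \to 0$, so $x c_{n} x \to x (b-\delta)_{+} x$ in norm. This passage from strict convergence (via Proposition~\ref{propst}) to norm convergence after compressing by $x$ is the one technical point that must be handled with care; everything else is either formal or a direct citation.

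Finally, given $\varepsilon > 0$, I would choose $n_{0}$ so that $\bigl\| x (b-\delta)_{+} x - x c_{n} x \bigr\| < \varepsilon$ for all $n \geq n_{0}$. Applying Lemma~\ref{lemkr} to the positive elements $x (b-\delta)_{+} x$ and $x c_{n} x$ then gives $\bigl( x (b-\delta)_{+} x - \varepsilon \bigr)_{+} \precsim x c_{n} x$, which is precisely the left-hand comparison. Combined with the right-hand comparison established above, this proves both inequalities for every $n \geq n_{0}$.
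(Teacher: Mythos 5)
Your proof is correct and follows essentially the same route as the paper: show $x^{1/n} b x^{1/n} \to b$ strictly, apply Proposition~\ref{propst} to get $(x^{1/n} b x^{1/n} - \delta)_{+} \to (b - \delta)_{+}$ strictly, compress by $x$ to upgrade to norm convergence, and finish with Lemma~\ref{lemkr}; the right-hand comparison is the same formal observation in both. Your extra care in spelling out why compression by $x$ converts strict convergence to norm convergence, and in invoking the functional-calculus form of Proposition~\ref{propst} for self-adjoint elements, only makes explicit what the paper leaves implicit.
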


\begin{proof}
Consider $A \subseteq M (A)$
and note that $x^{1/n} \to 1_{M (A)}$ strictly.
So $x^{1/n} b x^{1/n} \to b$ strictly.
Thus, by Proposition~\ref{propst},
$(x^{1/n} b x^{1/n} - \delta)_{+} \to (b - \delta)_{+}$ strictly.
Hence
$x \bigl( x^{1/n} b x^{1/n} - \delta \bigr)_{+} x
 \to x (b - \delta)_{+} x$
in $A$,
and so there is $n_{0} \in \mathbb{N}$
such that for every $n \geq n_{0}$,
\[
\bigl\| x \bigl( x^{1/n} b x^{1/n} - \delta \bigr)_{+} x
  - x (b - \delta)_{+} x \bigr\|
 < \varepsilon.
\]
Therefore, using Lemma~\ref{lemkr} at the first step, we have
\[
\bigl( x (b - \delta)_{+} x - \varepsilon \bigr)_{+}
 \precsim x \bigl( x^{1/n} b x^{1/n} - \delta \bigr)_{+} x
 \precsim \bigl( x^{1/n} b x^{1/n} - \delta \bigr)_{+}.
\]
This completes the proof.
\end{proof}

%\begin{remark}\label{rmk_com}
%A stronger analog of Lemma~\ref{lemst}
% holds for commutative C*-algebras. In fact, it is not hard
%to show that if $A$ is a commutative $\sigma$-unital C*-algebra
% and  $x$ is a strictly positive  element,
%then for every $b \in M (A)$ and every
%$\delta > 0$  there is $n_{0} \in \mathbb{N}$ such that
%for any $n \geq n_{0}$ we have
%\[
%x (b - \delta)_{+} x
% \precsim x \left( x^{1/n} b x^{1/n} - \delta \right)_{+} x
% \precsim  \left( x^{1/n} b x^{1/n} - \delta \right)_{+}.
%\]
%\end{remark}

\section{Tracial $\mathcal{Z}$-absorption}\label{sec_tz}

\indent
In this section we define tracial $\mathcal{Z}$-absorption for
simple not necessarily unital C*-algebras (Definition~\ref{deftza}).
We show that this definition
extends the unital version defined by Hirshberg and Orovitz
(see Remark~\ref{rmkdeftza}).
As an example, every simple purely infinite C*-algebra is
% (strongly)
tracially $\mathcal{Z}$-absorbing.
In the separable case,
we give an equivalent definition
for tracial $\mathcal{Z}$-absorption in terms of
the central sequence algebra
(Proposition~\ref{prop_cstza}).
At the end of this section we give another version of
tracial $\mathcal{Z}$-absorption
(called strong tracial $\mathcal{Z}$-absorption;
see Definition~\ref{defstza})
and we compare it with Definition~\ref{deftza}.

We recall the definition
of a unital tracially $\mathcal{Z}$-absorbing C*-algebra.
% from  \cite{HO13}.

\begin{definition}[\cite{HO13}, Definition~2.1]\label{deftzau}
A unital C*-algebra $A$ is called
\emph{tracially $\mathcal{Z}$-absorbing} if $A \ncong \mathbb{C}$
and for every finite set $F \subseteq A$,
every $\varepsilon > 0$, every $a \in A_{+} \setminus \{ 0 \}$,
and every $n \in \mathbb{N}$,
there is a c.p.c.~order zero map
$\varphi  \colon  {M}_{n} \to A$ such that the following hold:
\begin{enumerate}[label=$\mathrm{(\arabic*)}$]
\item\label{Item_deftzau_Small}
$1 - \varphi (1) \precsim a$.
\item\label{Itemdeftzau_Comm}
$\| [\varphi (z), b] \| < \varepsilon$
for any $z \in M_{n}$ with $\| z \| \leq 1$ and any $b \in F$.
\end{enumerate}
\end{definition}

\begin{remark}\label{rmkdeftzu}
Although Definition~\ref{deftzau} makes sense for unital C*-algebras,
it works well mainly for
unital \emph{simple} C*-algebras.
For example, in \cite{HO13} the assumption of simplicity
appears in
almost all of the results.
Because of this, in this paper we define
tracial $\mathcal{Z}$-absorption
only for (not necessarily unital) simple C*-algebras.
However, we will not use the assumption
of simplicity in some results
(e.g., Theorems~\ref{thmher} and \ref{thmloc}).
\end{remark}

We need the following equivalent version
of tracial $\mathcal{Z}$-absorption in order to define
this notion for nonunital C*-algebras.

\begin{lemma}\label{lemtzau}
Let $A$ be a unital C*-algebra with $A \not\cong \mathbb{C}$.
Then $A$ is tracially
$\mathcal{Z}$-absorbing if and only if
for every finite set $F \subseteq A$, every $\varepsilon > 0$,
every $a \in A_{+} \setminus \{ 0 \}$,
and every $n \in \mathbb{N}$
there is a c.p.c.~order zero map $\varphi \colon M_{n} \to A$
such that:
\begin{enumerate}[label=$\mathrm{(\arabic*)}$]
\item\label{it1_lemtzau}
$( 1 - \varphi (1) -\varepsilon )_{+} \precsim a$.
\item\label{it2_lemtzau}
$\| [\varphi (z), b]\| < \varepsilon$
for any $z \in M_{n}$ with $\| z \| \leq 1$ and any $b \in F$.
\setcounter{TmpEnumi}{\value{enumi}}
\end{enumerate}
\end{lemma}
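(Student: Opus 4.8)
The plan is to prove the nontrivial implication; the converse is immediate. Indeed, given the map $\varphi$ furnished by Definition~\ref{deftzau}, condition~\ref{Item_deftzau_Small} gives $1 - \varphi(1) \precsim a$, so $(1 - \varphi(1) - \varepsilon)_{+} \precsim 1 - \varphi(1) \precsim a$ (the first subequivalence is Lemma~\ref{lemkr} applied with $a = b = 1 - \varphi(1)$), which is condition~\ref{it1_lemtzau}; and condition~\ref{Itemdeftzau_Comm} is literally condition~\ref{it2_lemtzau}.

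For the forward direction, fix $F$, $\varepsilon$, $a$, $n$; we may assume $0 < \varepsilon < 1$, since producing a map for a smaller tolerance suffices. The idea is to take a map $\varphi$ supplied by the hypothesis and sharpen it by the Winter--Zacharias functional calculus for c.p.c.\ order zero maps, pushing the part of $\spec(\varphi(1))$ lying in $[1-\varepsilon, 1]$ up to $1$ so as to absorb the ``$\varepsilon$'' in the cut-down. Concretely, set $g(t) = \min\{(1-\varepsilon)^{-1} t, \, 1\}$ for $t \in [0,1]$, so that $g \in C_{0}((0,1])$ with $0 \le g \le 1$ and a direct computation gives the pointwise identity $1 - g(t) = (1-\varepsilon)^{-1}(1 - t - \varepsilon)_{+}$. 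Then $\psi := g(\varphi)$ is again c.p.c.\ order zero, and
\[
1 - \psi(1) = 1 - g(\varphi(1)) = (1-\varepsilon)^{-1}\bigl( 1 - \varphi(1) - \varepsilon \bigr)_{+} \sim \bigl( 1 - \varphi(1) - \varepsilon \bigr)_{+},
\]
so condition~\ref{Item_deftzau_Small} for $\psi$ follows as soon as $(1 - \varphi(1) - \varepsilon)_{+} \precsim a$.

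The main obstacle is condition~\ref{Itemdeftzau_Comm}: the nonlinear function $g$ can in principle amplify commutators, and there is no bound on $\|[g(\varphi)(z), b]\|$ in terms of $\|[\varphi(z), b]\|$ that is uniform over all $\varphi$. I would resolve this by approximating $g$ by a polynomial \emph{before} invoking the hypothesis. Choose a polynomial $p$ with $p(0) = 0$ and $\|g - p\|_{\infty, [0,1]} < \eta$, and write $p(t) = t\,q(t)$. Using the $*$-homomorphism $\rho \colon C_{0}((0,1]) \otimes M_{n} \to A$ underlying $\varphi$ (so $\varphi(z) = \rho(\iota \otimes z)$ and $\varphi(1) = \rho(\iota \otimes 1)$), the factorization $p \otimes z = (q \otimes 1)(\iota \otimes z)$ yields $p(\varphi)(z) = q(\varphi(1))\,\varphi(z)$; expanding the commutator and using $\|[\varphi(1)^{k}, b]\| \le k\,\|[\varphi(1), b]\|$ then gives $\|[p(\varphi)(z), b]\| \le C_{p}\,\max_{b \in F}\|[\varphi(z), b]\|$ for $\|z\| \le 1$, with $C_{p}$ depending only on $p$. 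Separately, $\|(g-p)(\varphi)(z)\| = \|\rho((g-p)\otimes z)\| \le \|g-p\|_{\infty}\|z\| \le \eta$.

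With these two estimates in hand, the constants are fixed in the right order. Put $M = \max_{b \in F}\|b\|$ and first choose $\eta = \varepsilon/\bigl(4(1+M)\bigr)$, which fixes $p$ and hence $C_{p}$; then set $\varepsilon' = \min\{\varepsilon,\, \varepsilon/(2C_{p})\}$ and apply the hypothesis with data $(F, \varepsilon', a, n)$, obtaining a c.p.c.\ order zero $\varphi \colon M_{n} \to A$ with $\|[\varphi(z), b]\| < \varepsilon'$ for all contractions $z$ and all $b \in F$ (condition~\ref{it2_lemtzau}) and with $(1 - \varphi(1) - \varepsilon')_{+} \precsim a$ (condition~\ref{it1_lemtzau}). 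Setting $\psi = g(\varphi)$, condition~\ref{Item_deftzau_Small} holds because $\varepsilon' \le \varepsilon$ forces $(1-\varphi(1)-\varepsilon)_{+} \precsim (1-\varphi(1)-\varepsilon')_{+} \precsim a$, whence $1 - \psi(1) \precsim a$ by the displayed identity, and condition~\ref{Itemdeftzau_Comm} holds because
\[
\|[\psi(z), b]\| \le \|[p(\varphi)(z), b]\| + 2M\,\|(g-p)(\varphi)(z)\| < C_{p}\,\varepsilon' + 2M\eta < \tfrac{\varepsilon}{2} + \tfrac{\varepsilon}{2} = \varepsilon .
\]
Thus $\psi$ satisfies Definition~\ref{deftzau}, completing the proof.
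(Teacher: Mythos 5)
Your proof is correct, and its overall architecture matches the paper's: the easy implication is disposed of the same way, and for the converse both arguments apply the hypothesis at a smaller tolerance and then post-compose with the cut-up function $t \mapsto \min\{(1-\delta)^{-1}t, 1\}$ via the Winter--Zacharias order zero functional calculus, so that $1 - \psi(1)$ becomes a scalar multiple of $(1 - \varphi(1) - \delta)_{+}$ and hence is Cuntz subequivalent to $a$. Where you diverge is the commutator estimate. You cut up at the full level $\varepsilon$, and since that function is not uniformly close to the identity you are driven to a polynomial approximation of $g$, the factorization $p(\varphi)(z) = q(\varphi(1))\,\varphi(z)$, and explicit commutator bounds for powers of $\varphi(1)$; this is all valid (the order of choices $\eta \to p \to C_{p} \to \varepsilon'$ is handled correctly), but it is more work than needed. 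The paper instead cuts up at the small level $\delta$ with $\delta < \varepsilon / \bigl(1 + 2\max_{b \in F}\|b\|\bigr)$: since the cut-up function $f$ satisfies $\| f - \mathrm{id} \|_{\infty} \leq \delta$ on $[0,1]$, the functional calculus gives $\|\psi(z) - \varphi(z)\| \leq \delta \|z\|$ outright, and the commutator condition follows from the three-term perturbation $\|[\psi(z), b]\| \leq 2\delta\|b\| + \|[\varphi(z), b]\| < \delta(1 + 2\|b\|) < \varepsilon$, while $(1 - \varphi(1) - \delta)_{+} \precsim a$ still yields $1 - \psi(1) \precsim a$. So your worry that $g$ ``can amplify commutators'' is legitimate for a general continuous function, but becomes moot once the cut-up parameter is taken small; your polynomial workaround is sound but buys nothing over the simpler perturbation argument.
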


\begin{proof}
The forward implication is obvious because
$( 1 - \varphi (1) - \varepsilon )_{+} \precsim 1 - \varphi (1)$.

Now suppose that we are given $F$, $\varepsilon$, $a$, and~$n$
as in the statement.
We have to find a c.p.c.~order zero map
$\psi \colon M_{n} \to A$ such that:
\begin{enumerate}
\setcounter{enumi}{\value{TmpEnumi}}
\item\label{it3_lemtzau}
$1 - \psi (1) \precsim a$.
\item\label{it4_lemtzau}
$\| [\psi (z), b] \| < \varepsilon$
for any $z \in M_{n}$ with $\| z \| \leq 1$ and any $b \in F$.
\end{enumerate}
Choose $\delta > 0$ such that
\[
\delta
 < \frac{\varepsilon}{
   1 + 2 \max \bigl( \{ \| x \| \colon x \in F \} \bigr)}.
\]
By assumption there is a c.p.c.~order zero map
$\varphi \colon M_{n} \to A$
such that $(1 - \varphi (1) - \delta)_{+} \precsim a$
and $\| [\varphi (z), b] \| < \delta$ for any
$z  \in M_{n}$ with $\| z \| = 1$.
Define a continuous function $f  \colon  [0, 1] \to [0, 1]$ by
\[
f (\lambda)
= \begin{cases}
\frac{\lambda}{1 - \delta}
& \hspace*{1em} 0 \leq \lambda \leq 1 - \delta
\\
1
& \hspace*{1em} 1 - \delta < \lambda \leq 1.
\end{cases}
\]
Using functional calculus for c.p.c.~order zero maps
(\cite[Corollary~4.2]{WZ09}),
set $\psi = f (\varphi)$.
Thus $\psi \colon M_{n} \to A$ is a c.p.c.~order zero map,
and,
similar to the proof of
\cite[Lemma~2.8]{ABP16},
we have
\begin{equation}\label{equ1.1}
\| \varphi (z) - \psi (z) \| \leq \delta \| z \|
\end{equation}
for any $z \in M_{n}$ with $\| z \| \leq 1$,
and
\begin{equation}\label{equ1.2}
1 - \psi (1)
 = \tfrac{1}{1 - \delta} (1 - \varphi (1) - \delta)_{+}
 \sim (1 - \varphi (1) - \delta)_{+}
 \precsim a.
\end{equation}
By (\ref{equ1.2}) we get \eqref{it3_lemtzau}.
To prove \eqref{it4_lemtzau},
let $z \in M_{n}$ with $\| z \| \leq 1$
and let $x \in F$.
By (\ref{equ1.1}) we get
\begin{align*}
& \|\psi (z) x - x \psi (z) \|
\\
& \hspace*{3em} {\mbox{}}
 \leq \| \psi (z) x - \varphi (z) x \|
        + \|\varphi (z) x - x \varphi (z) \|
        + \| x \varphi (z)- x \psi (z) \|
\\
& \hspace*{3em} {\mbox{}}
 \leq \delta \| x \| + \delta + \delta \| x \|
 = \delta (1 + 2 \| x \|)
 < \varepsilon. 
\end{align*}
This completes the proof.
\end{proof}

One may also work with $\varepsilon$-order zero maps
to check tracial $\mathcal{Z}$-absorption.
Recall that a c.p.c.\  map $\varphi \colon M_{n} \to A$
is {\emph{$\varepsilon$-order zero}}
if whenever $y, z \in M_{n}$ are orthogonal positive
elements of norm at most~$1$,
then $\| \varphi (y) \varphi (z) \| < \varepsilon$.

% Need to put definition of $\varepsilon$-order zero map,
% and then recheck proof.

\begin{proposition}\label{epoz}
Let $A$ be a unital C*-algebra with $A \not\cong \mathbb{C}$.
Then $A$ is tracially
$\mathcal{Z}$-absorbing if and only if
for any finite set $F \subseteq A$, $\varepsilon > 0$,
$a \in A_{+} \setminus \{ 0 \}$,
and $n \in \mathbb{N}$,
there is a c.p.c.~$\varepsilon$-order zero map
$\varphi \colon M_{n} \to A$ such that:
\begin{enumerate}[label=$\mathrm{(\arabic*)}$]
\item
$(1- \varphi (1) -\varepsilon)_{+} \precsim a$.
\item
$\| [\varphi (z), b] \| < \varepsilon$
for any $z \in M_{n}$ with $\| z \| \leq 1$ and any $b \in F$.
\end{enumerate}
\end{proposition}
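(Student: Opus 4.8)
The plan is to treat the two implications separately; the forward direction is essentially immediate, while the content lies in the reverse direction, which I would reduce to a perturbation (weak stability) statement for order zero maps.

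For the forward implication, I would simply observe that a genuine c.p.c.\ order zero map $\varphi \colon M_n \to A$ satisfies $\|\varphi(y)\varphi(z)\| = 0 < \varepsilon$ whenever $y, z \in M_n$ are orthogonal positive elements of norm at most~$1$, so it is $\varepsilon$-order zero for every $\varepsilon > 0$. Hence, if $A$ is tracially $\mathcal{Z}$-absorbing, then for any data $F$, $\varepsilon$, $a$, $n$ the map produced by Lemma~\ref{lemtzau} already satisfies \ref{it1_lemtzau} and \ref{it2_lemtzau} and is in addition $\varepsilon$-order zero, which is exactly what the proposition demands.

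The key ingredient for the reverse implication, and the step I expect to be the main obstacle, is the \emph{weak stability} of the relations defining a c.p.c.\ order zero map out of $M_n$: for every $n \in \mathbb{N}$ and every $\eta > 0$ there is $\delta > 0$ such that, for any C*-algebra~$A$, every c.p.c.\ $\delta$-order zero map $\varphi \colon M_n \to A$ admits a genuine c.p.c.\ order zero map $\psi \colon M_n \to A$ with $\|\varphi(z) - \psi(z)\| < \eta\|z\|$ for all $z \in M_n$. I would derive this from the Winter--Zacharias correspondence between c.p.c.\ order zero maps $M_n \to A$ and $*$-homomorphisms $C_0((0,1]) \otimes M_n \to A$ (\cite{WZ09}) together with projectivity of the cone $C_0((0,1]) \otimes M_n$, which makes its defining relations weakly stable: a $\delta$-order zero c.p.c.\ map is an approximate solution of these relations, and projectivity supplies a nearby exact solution.

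Granting this, I would fix $F$, $\varepsilon$, $a$, $n$, choose $\eta > 0$ small enough (depending only on $\varepsilon$ and $\max_{b \in F}\|b\|$) that both $2\eta \le \varepsilon$ and $\eta\bigl(1 + 2\max_{b \in F}\|b\|\bigr) < \varepsilon$, take the corresponding $\delta$, and apply the hypothesis with parameter $\varepsilon' = \min(\delta, \eta)$. This produces a c.p.c.\ $\varepsilon'$-order zero map $\varphi$ with $(1 - \varphi(1) - \varepsilon')_+ \precsim a$ and $\|[\varphi(z), b]\| < \varepsilon'$ on contractions; weak stability then gives a genuine c.p.c.\ order zero map $\psi$ with $\|\varphi(z) - \psi(z)\| < \eta\|z\|$. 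Condition \ref{it2_lemtzau} for $\psi$ follows from $\|[\psi(z),b]\| \le \|[\varphi(z),b]\| + 2\|\varphi(z)-\psi(z)\|\,\|b\| < \varepsilon'+2\eta\max_{b\in F}\|b\| < \varepsilon$ for contractions~$z$. For \ref{it1_lemtzau}, set $u = 1 - \psi(1)$ and $v = 1 - \varphi(1)$, so that $\|u - v\| < \eta$; Lemma~\ref{lemkr} gives a contraction $d$ with $(u - \eta)_+ = d v d^{*}$, and then the identity $\bigl((u-\eta)_+ - \eta\bigr)_+ = (u - 2\eta)_+$ (\cite[Lemma~2.5(i)]{KR00}) together with Lemma~\ref{lemkey} yields $(u - 2\eta)_+ = (dvd^{*}-\eta)_+ \precsim (v - \eta)_+$. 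Since $\varepsilon' \le \eta$ we have $(v-\eta)_+ \precsim (v-\varepsilon')_+ \precsim a$, and since $2\eta \le \varepsilon$ we conclude $(1 - \psi(1) - \varepsilon)_+ \precsim (u-2\eta)_+ \precsim a$. Thus $\psi$ is a genuine c.p.c.\ order zero map meeting \ref{it1_lemtzau} and \ref{it2_lemtzau} for arbitrary data, and Lemma~\ref{lemtzau} shows $A$ is tracially $\mathcal{Z}$-absorbing. All the estimates beyond the weak stability step are routine bookkeeping with the comparison lemmas of Section~\ref{sec_pre}.
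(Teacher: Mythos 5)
Your proposal is correct and follows essentially the same route as the paper: the paper's one-line proof combines Lemma~\ref{lemtzau} with \cite[Proposition 2.5]{KW04}, which is precisely the weak-stability statement (every c.p.c.\ $\delta$-order zero map on $M_n$ is uniformly close to a genuine c.p.c.\ order zero map) that you identify as the key ingredient and sketch via projectivity of $C_0((0,1])\otimes M_n$. The remaining bookkeeping with Lemmas \ref{lemkr} and \ref{lemkey} is exactly what the paper leaves implicit.
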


\begin{proof}
This follows easily from Lemma~\ref{lemtzau} and
\cite[Proposition 2.5]{KW04}.
\end{proof}

The following lemma will be used several times in the sequel.

\begin{lemma}\label{lemind}
Let $A$ be a C*-algebra and let $a \in A_{+} \setminus \{ 0 \}$.
Suppose that an element $x \in A_{+}$ has the following property.
For any finite set $F \subseteq A$, any $\varepsilon > 0$,
and any $n \in \mathbb{N}$,
there is a c.p.c.~order zero map $\varphi \colon M_{n} \to A$
such that:
\begin{enumerate}[label=$\mathrm{(\arabic*)}$]
\item\label{Item_lemind_1}
$\bigl( x^{2} - x \varphi (1) x - \varepsilon \bigr)_{+} \precsim a$.
\item\label{Item_lemind_2}
$\| [\varphi (z), b] \| < \varepsilon$
for any $z \in M_{n}$ with $\| z \| \leq 1$ and any $b \in F$.
\setcounter{TmpEnumi}{\value{enumi}}
\end{enumerate}
Then every positive element $y \in \overline{A x}$
also has the same property.
\end{lemma}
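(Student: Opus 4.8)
```latex
\begin{proof}[Proof proposal]
The plan is to reduce the statement for an arbitrary positive
$y \in \overline{A x}$ to the given property for~$x$, using the
fact that $y$ can be approximated by elements of the form
$x z x$ for $z \in A_{+}$ (since $\overline{A x} = \overline{x A x}$
is the hereditary subalgebra generated by~$x$, and
$\overline{x A x}$ is the closure of $x A x$).
First I would observe that $y^{1/2} \in \overline{A x}$ as well,
and fix a small parameter $\eta > 0$ to be chosen at the end.
I would then choose $c \in A_{+}$, or more conveniently an element
of the form $c = x d x$ with $d \in A_{+}$, so that
$\| y - x d x \|$ is small; the point is to arrange that
$y^{2}$, and more importantly $y \varphi(1) y$, can be controlled
by the corresponding expressions built from~$x$.

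The key algebraic step is to relate
$\bigl( y^{2} - y \varphi(1) y - \varepsilon \bigr)_{+}$
to $\bigl( x^{2} - x \varphi(1) x - \varepsilon' \bigr)_{+}$
for a suitable~$\varepsilon'$. Writing $y = \lim_{k} x d_{k} x$ (or
$y = \lim_{k} x b_{k}$ for $b_{k} \in A$), I would substitute and
expand $y \varphi(1) y$; the commutator hypothesis
\ref{Item_lemind_2} lets me move $\varphi(1)$ past the $x$'s and
the $b_{k}$'s up to an error controlled by~$\varepsilon$ and the
norms of the approximating elements, which I would put into the
finite set~$F$. Thus the element
$y^{2} - y \varphi(1) y$ is, up to a norm-error that I can make
smaller than any prescribed $\rho > 0$ by enlarging~$F$ and
shrinking~$\varepsilon$, a ``conjugate by a bounded element'' of
$x^{2} - x \varphi(1) x$. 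Applying Lemma~\ref{lemkr} to absorb the
$\rho$-error into a dropped spectral value, and then
Lemma~\ref{lemkey} to push the conjugating element across the
Cuntz subequivalence, I would obtain
\[
\bigl( y^{2} - y \varphi(1) y - \varepsilon \bigr)_{+}
 \precsim \bigl( x^{2} - x \varphi(1) x - \varepsilon' \bigr)_{+}
 \precsim a,
\]
where the last step uses property~\ref{Item_lemind_1} for~$x$ with
the data $(F, \varepsilon', n)$ chosen at the start.

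I expect the main obstacle to be the bookkeeping that makes the
commutator errors and the approximation error
$\| y - x d x \|$ interact correctly: the conjugating element
whose norm must be bounded is built from the approximants of~$y$,
so I have to fix the approximation \emph{before} selecting the
order zero map~$\varphi$, then include the relevant products in the
finite set~$F$ so that \ref{Item_lemind_2} gives a commutator
bound small enough to close the estimate. Concretely, I would
run the argument in the order: (i) fix $y$ and its approximation
$x d x$ together with the required norm bounds; (i\,i) build $F$ out
of $x$, $d$, and the approximants of $y^{1/2}$; (i\,i\,i) invoke the
hypothesis for~$x$ with this $F$ and a sufficiently small
$\varepsilon'$ to get~$\varphi$; and (iv) verify
\ref{Item_lemind_1} for~$y$ by the chain of Cuntz subequivalences
above and verify \ref{Item_lemind_2} for~$y$ directly, since the
same $\varphi$ has small commutators with every element of~$F$ and
hence, by approximation and the Leibniz estimate for commutators,
with~$y$ itself. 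The delicate point throughout is that
$\varphi(1)$ is only a contraction, not a projection, so the
cross terms in expanding $y \varphi(1) y$ do not vanish and must be
handled by the order zero structure of~$\varphi$ together with the
commutator control.
\end{proof}
```
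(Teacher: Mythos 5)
Your proposal is correct and follows essentially the same route as the paper's proof: fix an approximation of $y$ by a product involving $x$ \emph{before} choosing the order zero map, invoke the hypothesis for $x$ with a tolerance shrunk according to the norm of the approximant, and then transport condition~(1) using Lemma~\ref{lemkey} together with Lemma~\ref{lemkr}. The one simplification you miss is that no commutator estimates and no enlargement of $F$ are needed: writing $y \approx c x$ with $c \in A$, one has $y^{2} \approx c x^{2} c^{*}$ and $y \varphi (1) y \approx c\, x \varphi (1) x \, c^{*}$ by exact conjugation, so $y^{2} - y \varphi (1) y$ lies within a controllable norm distance of $c \bigl( x^{2} - x \varphi (1) x \bigr) c^{*}$ without ever moving $\varphi (1)$ past anything, and condition~(2) does not mention $y$ at all, so there is nothing further to verify there.
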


\begin{proof}
Let $F \subseteq A$ be a finite subset,
let $\varepsilon > 0$, and let $n \in \mathbb{N}$.
Let $\delta > 0$ satisfy
\[
\delta
 < \min \left( 1, \, \frac{\varepsilon}{4 (2 \| y \| + 1)} \right).
\]
Since $y \in \overline{A x}$, there is $c \in A \setminus \{ 0 \}$
such that $\| y - c x \| < \delta$.
Set
\[
\eta = \min
   \left( \varepsilon, \, \frac{\varepsilon}{2 \|c\|^{2}} \right).
\]
By assumption there is a c.p.c.~order zero map
$\varphi \colon M_{n} \to A$ such that
\begin{enumerate}
\setcounter{enumi}{\value{TmpEnumi}}
\item\label{Item_lemind_pf1}
$\bigl( x^{2} - x \varphi (1) x - \eta \bigr)_{+} \precsim a$.
\item\label{Item_lemind_pf2}
$\| [\varphi (z), b] \| < \eta$
for any $z \in M_{n}$ with $\| z \| \leq 1$ and any $b \in F$.
\end{enumerate}
By Lemma~\ref{lemkey} we have
\begin{align*}
\left( c x^{2} c^{*} - c x \varphi (1) x c^{*}
        - \frac{\varepsilon}{2} \right)_{+}
& \precsim
   \left( x^{2} - x \varphi (1) x
          - \frac{\varepsilon}{2 \|c\|^{2}} \right)_{+}
\\
& \precsim \left( x^{2} - x \varphi (1) x - \eta \right)_{+}
  \precsim a.
\end{align*}
Also we have
\begin{align*}
& \bigl\| y^{2} - y \varphi (1) y
  - \left( c x^{2} c^{*} - c x \varphi (1) x c^{*}
    - \tfrac{\varepsilon}{2} \right)_{+} \bigr\|
\\
& \hspace*{3em} {\mbox{}}
 \leq \bigl\| y^{2} - y \varphi (1) y
     - c x^{2} c^{*} + c x \varphi (1) x c^{*} \bigr\|
   + \frac{\varepsilon}{2}
\\
& \hspace*{3em} {\mbox{}}
 \leq \| y^{2} - c x y \| + \| c x y - c x^{2} c^{*} \|
    + \bigl\| y \varphi (1) y - y \varphi (1) x c^{*} \bigr\|
\\
& \hspace*{3em} {\mbox{}} \hspace*{3em} {\mbox{}}
      + \bigl\| y \varphi (1) x c^{*} - c x \varphi (1) x c^{*} \bigr\|
      + \frac{\varepsilon}{2}
\\
& \hspace*{3em} {\mbox{}}
 \leq \delta \| y \| + \delta \|c x\| + \delta \| y \|
     + \delta \|c x\| + \frac{\varepsilon}{2}
%  \\
%
& \hspace*{3em} {\mbox{}}
%   \leq 2 \delta (2 \| y \| + \delta ) + \frac{\varepsilon}{2}
\\
& \hspace*{3em} {\mbox{}}
 < 2 \delta (2 \| y \| + 1 ) + \frac{\varepsilon}{2}
 < \varepsilon.
\end{align*}
Thus,
\[
\bigl( y^{2} - y \varphi (1) y - \varepsilon \bigr)_{+}
 \precsim \bigl(c x^{2} c^{*} - c x \varphi (1) x c^{*} -
      \tfrac{\varepsilon}{2} \bigr)_{+}
 \precsim a.
\]
Since $\eta < \varepsilon$,
for any $b \in F$ we have $\| [\varphi (z), b] \| < \varepsilon$.
\end{proof}

\begin{definition}\label{deftza}
We say that a simple C*-algebra $A$
is \emph{tracially $\mathcal{Z}$-absorbing} if
$A \ncong \mathbb{C}$
and for every $x, a \in A_{+}$ with $a \neq 0$,
every finite set $F \subseteq A$, every $\varepsilon > 0$,
and every $n \in \mathbb{N}$,
there is a c.p.c.~order zero map $\varphi \colon M_{n} \to A$
such that:
\begin{enumerate}
\item\label{deftza-it1}
$\bigl( x^{2} - x \varphi (1) x - \varepsilon \bigr)_{+} \precsim a$.
\item\label{deftza-it2}
$\| [\varphi (z), b] \| < \varepsilon$
for any $z \in M_{n}$ with $\| z \| \leq 1$ and any $b \in F$.
\end{enumerate}
\end{definition}

Some definitions of this sort,
such as for the weak tracial Rokhlin property
(see, \cite{FG17}, for a definition in nonunitl case),
have an additional condition,
which here would be to assume $\| a \| = 1$
and require $\| \ph (1) a \ph (1) \| > 1 - \ep$.
We don't include such a condition here
because it doesn't appear in Definition~\ref{deftzau}
(\cite[Definition~2.1]{HO13}).
When $A$ is finite in a suitable sense,
it is automatic that such a condition can be satisfied,
as we show in Section~\ref{Sec_Finite}.

% Check the references in the remark. ???

\begin{remark}\label{rmkdeftza}
Let $A$ be a simple C*-algebra.
\begin{enumerate}
\item\label{it1_rmkdeftza}
If $A$ is unital, Definition~\ref{deftza}
is equivalent to Definition~\ref{deftzau},
even without assuming simplicity.
In fact, by Lemma~\ref{lemtzau},
Definition~\ref{deftza} implies Definition~\ref{deftzau}.
The converse holds by Lemma~\ref{lemtzau}
and taking $x = 1$ in Lemma~\ref{lemind}.
\item\label{it2_rmkdeftza}
If $A$ is $\sigma$-unital,
it is enough that the properties in
Definition~\ref{deftza}  hold
for \emph{some} strictly positive element $x \in A$.
This follows from
Lemma~\ref{lemind}.
\item\label{it3_rmkdeftza}
In Definition~\ref{deftza},
the ``c.p.c.~order zero" condition on  $\varphi$  may be replaced by
``c.p.c.~$\varepsilon$-order zero" (cf.~Proposition~\ref{epoz}).
\item\label{it4_rmkdeftza}
Definition~\ref{deftza} holds for $A = \mathbb{C}$
by taking $\varphi = 0$.
To rule this out,
the technical condition $A \ncong \mathbb{C}$ is added.
In fact, every  nonzero simple
tracially $\mathcal{Z}$-absorbing C*-algebra
is infinite dimensional (Lemma~\ref{rmkinfdim} below)
and not type~I (Corollary~\ref{cornone} below).
\item\label{it5_rmkdeftza}
In Definition~\ref{deftza},
it is enough to have the map  $\varphi \colon M_{n} \to A$
for some $l \in \mathbb N$ and each $n > l$.
In fact, if Definition~\ref{deftza} holds for $n = k m$
with $k, m \in \mathbb{N}$,
then it also holds for $n = m$.
\end{enumerate}
\end{remark}

% Check the references in the remark. ???

\begin{remark}\label{rmkappu}
It is easy to see that in Definition~\ref{deftza}
it is enough to take $x$ in a norm
dense subset of $A_{+}$.
Moreover, if $(e_{i})_{i \in I}$ is an approximate identity
for $A$,
it is enough to take $x$ from the set $\{ e_{i} \colon i \in I \}$.
This follows
from Lemma~\ref{lemind} because the set
\[
\bigl\{ x \in A_{+} \colon
 {\mbox{$x \in \overline{A e_{i}}$ for some $i \in I$}} \bigr\}
\]
is dense in $A_{+}$.
\end{remark}

\begin{lemma}\label{rmkinfdim}
Let $A$ be a nonzero simple
tracially $\mathcal{Z}$-absorbing C*-algebra.
Then $A$ is infinite dimensional.
\end{lemma}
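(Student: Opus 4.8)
The plan is to argue by contradiction. Suppose $A$ is finite dimensional. Since $A$ is simple, $A \cong M_k$ for some $k \in \mathbb{N}$, and since $A \ncong \mathbb{C}$ we must have $k \geq 2$. The idea is to invoke Definition~\ref{deftza} with a value of $n$ so large that the order zero map is forced to vanish, and then to derive a contradiction from condition~\ref{deftza-it1}. Concretely, I would apply Definition~\ref{deftza} with $x = 1$ (the unit of $M_k$), with $a = e$ a minimal projection in $M_k$ (so $a \neq 0$ and $\operatorname{rank}(e) = 1$), with $\varepsilon = \tfrac{1}{2}$, with $F = \emptyset$, and crucially with $n = k+1$. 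Using $x^2 = 1$ and $x \varphi(1) x = \varphi(1)$, this produces a c.p.c.~order zero map $\varphi \colon M_{k+1} \to M_k$ satisfying $\bigl( 1 - \varphi(1) - \tfrac{1}{2} \bigr)_{+} \precsim e$.

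The key step is to show that there is no nonzero c.p.c.~order zero map from $M_{k+1}$ into $M_k$, so that $\varphi = 0$. For this I would use the Winter--Zacharias structure theorem (\cite{WZ09}): such a map factors as $\varphi(z) = h\, \pi(z)$ for a $*$-homomorphism $\pi \colon M_{k+1} \to M\bigl( C^*(\varphi(M_{k+1})) \bigr)$ and a positive $h = \varphi(1)$ commuting with $\pi(M_{k+1})$. Since $M_{k+1}$ is simple, $\pi$ is either zero or injective; but an injective copy of $M_{k+1}$ inside $M_k$ is impossible, as the images of the matrix units $e_{11}, \dots, e_{k+1,k+1}$ would be $k+1$ pairwise orthogonal nonzero projections in the $k$-dimensional factor $M_k$. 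Hence $\pi = 0$ and $\varphi = 0$. With $\varphi = 0$, condition~\ref{deftza-it1} reduces to $\bigl( 1 - \tfrac{1}{2} \bigr)_{+} = \tfrac{1}{2}\cdot 1 \precsim e$. Since Cuntz subequivalence is rank monotone in a matrix algebra (if $v_n b v_n^* \to a$ then $\operatorname{rank}(a) \leq \operatorname{rank}(b)$ by lower semicontinuity of the rank), this forces $k = \operatorname{rank}\bigl( \tfrac{1}{2}\cdot 1 \bigr) \leq \operatorname{rank}(e) = 1$, contradicting $k \geq 2$.

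I expect the main obstacle to be the justification that $\varphi$ must vanish, i.e.\ making precise the nonexistence of a nonzero order zero map $M_{k+1} \to M_k$. This relies on the structure theory of order zero maps, the point being that the pairwise orthogonal positive images $\varphi(e_{11}), \dots, \varphi(e_{k+1,k+1})$ are mutually equivalent and hence all nonzero as soon as $\varphi \neq 0$, which cannot fit in $M_k$. Everything else, namely the choice of parameters and the rank estimate for Cuntz subequivalence in a matrix algebra, is routine.
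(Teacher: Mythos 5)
Your proposal is correct and follows essentially the same route as the paper: both use the Winter--Zacharias structure theorem for order zero maps to force $\varphi = 0$ once $n$ is large enough to preclude a homomorphism of $M_n$ into the finite-dimensional algebra, and then derive a contradiction from condition~(1) of Definition~\ref{deftza} via a rank/Cuntz-comparison argument. The only cosmetic difference is that the paper works with $\dim(A) < n^2$ directly and concludes $x \precsim a$ for all positive $x, a$ with $a \neq 0$, whereas you specialize to $A \cong M_k$ and compare ranks against a minimal projection.
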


\begin{proof}
Suppose $\dim (A) < \infty$.
Choose $n \in \mathbb{N}$ such that $n^{2} > \dim (A)$.
We claim that
there is no nonzero c.p.c.~order zero map $\varphi \colon M_{n} \to A$.
Indeed,
if we had such a map~$\varphi$,
then \cite[Theorem~3.3]{WZ09}
would provide a homomorphism $\pi \colon M_{n} \to A$
such that $\varphi (z) = \pi (z) \varphi (1)$
for all $z \in M_{n}$.
But $\dim (A) < n^{2}$, so $\pi = 0$.
Thus $\varphi = 0$.
This proves the claim.

Now it follows from Definition~\ref{deftza} that
for every $x, a \in A_{+}$ with $a \neq 0$
and every $\ep > 0$,
we have $(x^2 - \ep)_{+} \precsim a$.
So $x \precsim a$ for every $x, a \in A_{+}$ with $a \neq 0$.
Since ${\mathrm{dim}} (A) < \infty$,
this can only happen if $A \cong \C$,
which is excluded in Definition~\ref{deftza},
or $A = 0$.
\end{proof}

It may happen that $\varphi$ in Definition~\ref{deftza} is zero.
See Example~\ref{exapi} below.
First we recall (see before Proposition~1.6 of~\cite{Cu2})
that a simple C*-algebra~$A$ (not necessarily unital)
is purely infinite if for every $a \in A_{+} \setminus \{ 0 \}$
there is an infinite projection in ${\overline{a A a}}$.
For our purposes,
the following characterization is often more useful.
It is the simple case of \cite[Definition~4.1]{KR00},
according to which a
not necessarily simple C*-algebra~$A$
is purely infinite if and only if
there are no
characters on $A$ and for all $a, b \in A_{+}$
such that $a \in \overline{A b A}$,
we have $a \precsim b$.

\begin{definition}[\cite{KR00}, Proposition~5.4]\label{defpi}
A simple C*-algebra $A$ is purely infinite if and only if
$A \ncong \mathbb{C}$ and
$a \sim b$ for any $a, b \in A_{+} \setminus \{ 0 \}$.
\end{definition}

\begin{example}\label{exapi}
Let $A$ be a  simple purely infinite C*-algebra.
Then $A$ is tracially $\mathcal{Z}$-absorbing.
In fact, we can take $\varphi = 0$ in Definition~\ref{deftza}.
\end{example}

\begin{lemma}\label{P_8809_TAFtoTrZAbs}
Let $A$ be an infinite dimensional simple unital C*-algebra
with tracial rank zero
\cite[Definition 3.6.2]{LnBook}.
Then $A$ is tracially $\mathcal{Z}$-absorbing.
\end{lemma}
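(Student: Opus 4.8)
Since $A$ is unital, by Remark~\ref{rmkdeftza} it suffices to verify Definition~\ref{deftzau}. So fix a finite set $F \subseteq A$, a number $\varepsilon > 0$, an element $a \in A_{+} \setminus \{0\}$, and $n \in \N$; I will in fact produce a projection $q \in A$ together with a \emph{unital homomorphism} $\varphi \colon M_n \to qAq$ (necessarily c.p.c.\ order zero, with $\varphi(1) = q$) such that $1 - q \precsim a$ and $\|[\varphi(z), b]\| < \varepsilon$ whenever $b \in F$ and $\|z\| \le 1$. The guiding idea is that, since tracial rank zero will let me capture $F$ approximately inside a finite-dimensional subalgebra $B$, the copy of $M_n$ cannot be placed inside $B$ (a nontrivial matrix subalgebra can never approximately commute with all of $B$); instead I will place it in the relative commutant of $B$ inside a corner of $A$.

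Choose $\varepsilon_1 < \varepsilon / 4$. Using tracial rank zero \cite[Definition~3.6.2]{LnBook}, pick a finite-dimensional subalgebra $B = \bigoplus_i M_{r_i} \subseteq A$ with unit $p = 1_B$ such that $\|[b, p]\| < \varepsilon_1$ and $\dist(p b p, B) < \varepsilon_1$ for all $b \in F$, and with $1 - p$ Cuntz small (made precise below). Since $B$ is a unital finite-dimensional subalgebra of the corner $C = pAp$, there is the standard tensor splitting $C \cong \bigoplus_i M_{r_i} \otimes D_i$, under which $B = \bigoplus_i M_{r_i} \otimes 1$ and $B' \cap C = \bigoplus_i 1 \otimes D_i$, where each $D_i$ is a corner $e_i C e_i$ cut down by a minimal projection $e_i$ of the $i$-th summand of $B$. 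Any element $\varphi(z) \in B' \cap C$ commutes exactly with $B$, and since $\varphi(z) = p \varphi(z) p$, writing $p x (1-p) = [p,x](1-p)$ and using $\dist(pxp, B) < \varepsilon_1$, one gets $\|[\varphi(z), b]\| \le 4 \varepsilon_1 < \varepsilon$ for all $b \in F$ and $\|z\| \le 1$. This settles the commutation condition, and reduces the task to finding a unital copy of $M_n$ inside $B' \cap C$ whose unit $q$ satisfies $1 - q \precsim a$.

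Each $D_i$ is a nonzero corner of $A$, hence is simple, unital, and (because $A$ is simple, unital and infinite dimensional) infinite dimensional, and it inherits tracial rank zero from $A$; in particular it has real rank zero and strict comparison of positive elements by traces \cite{LnBook}. The copy of $M_n$ now comes from an $n$-fold divisibility of the unit of each $D_i$: given $\gamma > 0$, real rank zero and strict comparison let me realize a projection $e \in D_i$ with $\tau(e)$ uniformly close to $\tfrac{1}{n}(1 - \gamma)$, and then, since at each of the $n$ stages the relevant trace inequality $(j+1)\tau(e) < 1$ holds, I inductively produce mutually orthogonal projections $q^{(i)}_1, \dots, q^{(i)}_n \le 1_{D_i}$ each Murray--von Neumann equivalent to $e$. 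Their sum $q^{(i)}$ carries a unital copy $M_n \hookrightarrow q^{(i)} D_i q^{(i)}$, the complementary projection $1_{D_i} - q^{(i)}$ has uniformly small trace, and taking the diagonal over $i$ defines the unital homomorphism $\varphi \colon M_n \to B' \cap C$ with $\varphi(1) = q := \sum_i (1_{M_{r_i}} \otimes q^{(i)})$. Finally, since $A$ has strict comparison and $\kappa := \inf_{\tau \in T(A)} d_\tau(a) > 0$ (as $a \neq 0$ and $T(A)$ is compact), it suffices to arrange $\sup_{\tau} \tau(1 - q) < \kappa$ to conclude $1 - q \precsim a$. This I achieve by choosing $1 - p$ at the outset to have trace below $\kappa/2$ (feed a projection of small uniform trace into the tracial rank zero approximation) and choosing $\gamma$ small enough that the finitely many remainders $1_{M_{r_i}} \otimes (1_{D_i} - q^{(i)})$ contribute total trace below $\kappa/2$.

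The conceptual crux is the commutation requirement of Definition~\ref{deftzau}: no order zero copy of $M_n$ inside the finite-dimensional model $B$ can approximately commute with $F \approx B$, so the essential move is to manufacture the matrix algebra in the relative commutant $B' \cap C$ via the tensor splitting while keeping its unit Cuntz large. The remaining technical work is the $n$-fold divisibility of projections up to arbitrarily small trace, which is exactly where simplicity, infinite dimensionality, real rank zero and strict comparison (all furnished by tracial rank zero) are used; the one place that requires genuine care is the trace bookkeeping relating traces on $A$ to the relative traces on the corners $D_i$, needed to push $\sup_\tau \tau(1-q)$ below the threshold $\kappa$.
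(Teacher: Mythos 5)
Your proof is correct and follows essentially the same route as the paper's: reduce to the unital definition, use the tracial rank zero approximation to produce $p$ and $B$, build a unital copy of $M_n$ in the relative commutant of $B$ inside $pAp$ by dividing the minimal projections of the summands of $B$, and control $1-\varphi(1)$ by a trace estimate (the paper gets the divisibility from a lemma of Osaka--Phillips and compares the projection $p-\varphi(1)$ against a projection $q_2\in\overline{aAa}$ using strict comparison of projections, which is the clean way to make your final step $\sup_\tau\tau(1-q)<\kappa$ yield $1-q\precsim a$). One small caution: $pAp$ need not split globally as $\bigoplus_i M_{r_i}\otimes D_i$, since the central projections of $B$ are not central in $pAp$; but the piece you actually use, namely $B'\cap pAp\cong\bigoplus_i e_i(pAp)e_i$ together with the re-embedding via the matrix units of $B$, is correct and coincides with the paper's formula for its map $\beta$.
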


In Proposition~\ref{P_8Y03_NonUTAFtoTrZAbs} below,
we remove the requirement that $A$ be unital.

A variant of this lemma is stated in \cite[Lemma~5.1]{MS}
with the assumption that $A$ is separable, 
and is used to show that every
  simple separable unital nuclear infinite-dimensional 
  C*-algebra with tracial rank zero  is approximately divisible, and hence $\mathcal{Z}$-absorbing
  \cite[Theorem~5.4]{MS}.

\begin{proof}[Proof of Lemma~\ref{P_8809_TAFtoTrZAbs}]
We prove the following statement,
which clearly implies the conclusion.
% that $A$ is tracially $\mathcal{Z}$-absorbing.
Let $a \in A_{+} \setminus \{ 0 \}$,
let $F \subseteq A$ be finite,
let $\varepsilon > 0$,
and let $n \in \mathbb{N}$.
Then there is a homomorphism
$\varphi \colon M_{n} \to A$
such that:
\begin{enumerate}
\item\label{P_8809_TAFtoTrZAbs-it1}
$1 - \ph (1) \precsim a$.
\item\label{P_8809_TAFtoTrZAbs-it2}
$\| [\varphi (z), x] \| < \varepsilon$
for any $z \in M_{n}$ with $\| z \| \leq 1$ and any $x \in F$.
\setcounter{TmpEnumi}{\value{enumi}}
\end{enumerate}
%
% We may assume $\| x \| \leq 1$ for all $x \in F$.

Since $A$ has real rank zero
(by \cite[Theorem 3.6.11]{LnBook})
and is unital, infinite dimensional, and simple,
there are nonzero orthogonal projections
$q_1, q_2 \in {\overline{a A a}}$.
Apply the definition of tracial rank zero
to find a projection $p \in A$,
$m, r (1), r (2), \ldots, r (m) \in \N$,
and a unital injective homomorphism $\ps$
from $B = \bigoplus_{l = 1}^m M_{r (l)}$ to $p A p$
such that:
\begin{enumerate}
\setcounter{enumi}{\value{TmpEnumi}}
\item\label{Item_P_8809_TAFtoTrZAbs_Comm}
$\| [p, x] \| < \frac{\ep}{6}$ for all $x \in F$.
\item\label{Item_P_8809_TAFtoTrZAbs_App}
$\dist (p x p, \, \ps (B)) < \frac{\ep}{6}$ for all $x \in F$.
\item\label{Item_P_8809_TAFtoTrZAbs_Small}
$1 - p \precsim q_1$.
\end{enumerate}
Choose $n_0 \in \N$ such that
\[
\frac{1}{n_0} < \inf_{\ta \in T (A)} \ta (q_2).
\]
Choose $\nu \in \N$ so large that $\nu n > n_0$.

For $l = 1, 2, \ldots, m$,
let $\big( e_{j, k}^{(l)} \big)_{j, k = 1, 2, \ldots, r(l)}$
be the standard system of matrix units for~$M_{r (l)}$.
Use \cite[Lemma 2.3]{OP06} to choose projections
$f_0^{(l)}, f_1^{(l)}, f_2^{(l)}, \ldots, f_{\nu n}^{(l)}
  \in \ps \big( e_{1, 1}^{(l)} \big) A \ps \big( e_{1, 1}^{(l)} \big)$
such that
\[
f_1^{(l)} \sim f_2^{(l)} \sim \cdots \sim f_{\nu n}^{(l)},
\qquad
f_0^{(l)} \precsim f_1^{(l)},
\andeqn
\sum_{j = 0}^{\nu n} f_j^{(l)} = \ps \big( e_{1, 1}^{(l)} \big).
\]
There is then a homomorphism
$\bt_l \colon M_{\nu n} \to A$
such that the images of the diagonal matrix units in $M_{\nu n}$
are $f_1^{(l)}, f_2^{(l)}, \ldots, f_{\nu n}^{(l)}$.
Define a homomorphism
$\bt \colon M_{\nu n} \to A$
by
\[
\bt (z)
 = \sum_{l = 1}^m \sum_{k = 1}^{r (l)}
   \ps \bigl( e_{k, 1}^{(l)} \bigr)
        \bt_l (z) \ps \bigl( e_{1, k}^{(l)} \bigr)
\]
for $z \in M_{\nu n}$.
Clearly $\bt (z)$ commutes with $\ps (b)$
for all $z \in M_{\nu n}$ and $b \in B$,
and $\bt (1) \leq p$.

Let $x \in F$ and let $z \in M_{n}$ satisfy $\| z \| \leq 1$.
Apply \eqref{Item_P_8809_TAFtoTrZAbs_App} to choose $b \in B$ such that $\| \ps (b) - p x p \| < \frac{\ep}{6}$.
We have $[ \bt (z), \, (1 - p) x (1 - p) ] = 0$, and
so
by \eqref{Item_P_8809_TAFtoTrZAbs_Comm} at the third step,
we get
\begin{align*}
\| [\bt (z), x] \|
& \leq \| [\bt (z), \, p x p] \|
  + \| [\bt (z), \, p x (1 - p)] \|
  + \| [\bt (z), \, (1 - p) x p] \|
\\
& \leq 2 \| \bt (z) \| \cdot \| p x p - \ps (b) \|
\\
& \hspace*{3em} {\mbox{}}
      +  \| \bt (z) \| \cdot \| p x (1 - p) \|
      + \| \bt (z) \| \cdot \| (1 - p) x p \|
\\
& < 2 \left( \frac{\ep}{6} \right) +  \left( \frac{\ep}{6} \right)
    +  \left( \frac{\ep}{6} \right)
  < \ep.
\end{align*}
This is~\eqref{P_8809_TAFtoTrZAbs-it2}
with $\bt$ in place of~$\ph$.

For every $\ta \in T (A)$,
for $l = 1, 2, \ldots, m$
we have
\[
\ta \big( f_1^{(l)} \big) = \ta \big( f_2^{(l)} \big) = \cdots
   = \ta \big(  f_{\nu n}^{(l)} \big),
\qquad
\ta \big( f_0^{(l)} \big) \leq \ta \big( f_1^{(l)} \big),
\]
and
\[
\sum_{j = 0}^{\nu n} \ta \big( f_j^{(l)} \big)
  = \ta \big( \ps \big( e_{1, 1}^{(l)} \big) \big).
\]
Therefore,
\[
\ta \big( f_0^{(l)} \big)
\leq \frac{1}{\nu n + 1}
   \ta (\big( \ps \big( e_{1, 1}^{(l)} \big) \big).
\]
Since
\[
p - \bt (1)
  = \sum_{l = 1}^m \sum_{k = 1}^{r (l)}
       \ps \big( e_{k, 1}^{(l)}\big) f_0^{(l)} \ps \big( e_{1, k}^{(l)}\big),
\]
it follows that
\begin{align*}
\ta (p - \bt (1))
& = \sum_{l = 1}^m r (l) \ta \big( f_0^{(l)} \big)
  \leq \frac{1}{\nu n + 1} \sum_{l = 1}^m r (l)
     \ta \big( \ps \big( e_{1, 1}^{(l)} \big) \big)
\\
& = \frac{\ta (p)}{\nu n + 1}
  \leq \frac{1}{\nu n + 1}
  < \frac{1}{n_0}
  < \ta (q_2).
\end{align*}
Since $A$ has strict comparison of projections
using traces
\cite[Theorem 3.7.2]{LnBook},
it follows that $(p - \bt (1) \precsim q_2$.
We already have $1 - p \precsim q_1$,
$q_1 q_2 = 0$, and $q_1 + q_2 \in {\overline{a A a}}$,
so $1 - \bt (1) \precsim a$.
This is~\eqref{P_8809_TAFtoTrZAbs-it1}
with $\bt$ in place of~$\ph$.

To complete the proof,
we define $\ph \colon M_n \to A$
by $\ph (z) = \bt (1_{M_{\nu}} \otimes z)$
for $z \in M_n$.
\end{proof}

Now we give an equivalent definition
for the tracial $\mathcal{Z}$-absorption in terms of
the central sequence algebra.

\begin{notation}\label{N_8422_CentSq}
If $(A_{i})_{i \in I}$
is a family of C*-algebra,
we let $\prod_{i \in I} A_i$ denote that
set of all families $(a_{i})_{i \in I}$
in the set theoretic product of the $A_{i}$
such that $\sup_{i \in I} \| a_{i} \| < \infty$.
(This is the product in the category of C*-algebras.)

For a C*-algebra $A$, we write
\[
A_{\infty}
 = \prod_{n \in \mathbb{N}} A \Big/ \bigoplus_{n \in \mathbb{N}} A.
\]
We identify $a \in A$ with the equivalence class
in $A_{\infty}$ of the corresponding constant sequence
$(a)_{n \in \mathbb{N}}$.
We write $A_{\infty} \cap A^{\prime}$
for the relative commutant of $A$ in $A_{\infty}$.
\end{notation} 

The algebra $A_{\infty} \cap A^{\prime}$
is often written $A^{\infty}$.
We warn that some authors write $A^{\infty}$
for our $A_{\infty}$.

\begin{proposition}\label{prop_cstza}
Let $A$ be a simple separable C*-algebra.
The following
statements are equivalent:
\begin{enumerate}
\item\label{prop_cental_sequence_1}
$A$ is tracially $\mathcal{Z}$-absorbing.

\item\label{prop_cental_sequence_3}
For every $a, x \in A_{+}$
with $a \neq 0$ and every $n \in \N$,
there exists a c.p.c.~order zero map
$\psi \colon M_n \to   A_{\infty} \cap A^{\prime}$
such that $x^{2} - x \psi (1) x \precsim a$ in $A_{\infty}$.
\setcounter{TmpEnumi}{\value{enumi}}
\end{enumerate}
If moreover $A$ is unital, then these statements 
are equivalent to the following:

\begin{enumerate}
\setcounter{enumi}{\value{TmpEnumi}}
\item\label{prop_cental_sequence_2}
For every nonzero $a  \in A_{+} $
 and every $n \in \N$,
there exists a c.p.c.~order zero map
$\psi \colon M_n \to   A_{\infty} \cap A^{\prime}$
such that $1_{A_{\infty}} -  \psi (1) \precsim a$ in $A_{\infty}$.
\end{enumerate}
\end{proposition}

\begin{proof}
\eqref{prop_cental_sequence_1}\,$\Rightarrow$\eqref{prop_cental_sequence_3}:
Let $\{a_{1}, a_{2}, \ldots \}$  be a dense subset of
the closed unit ball of $A$.
Put $F_{m} = \{a_{1}, a_{2}, \ldots, a_{m} \}$ for
$m \in \mathbb{N}$.
Let $a, x \in A_{+}$ with $a \neq 0$ and let $n \in \mathbb{N}$.
We may assume that $\| a \| = 1$.
Tracial $\mathcal{Z}$-absorption of $A$ implies that
for every $m \in \mathbb{N}$
there exists a c.p.c.~order zero map $\varphi_{m} \colon M_{n} \to A$
such that
\[
\left( x^{2} - x \varphi_{m} (1) x - \tfrac{1}{m} \right)_{+}
 \precsim \left(a - \tfrac{1}{2} \right)_{+}
\]
and $\| [\varphi_{m} (z), b] \| < \frac{1}{m}$
for every $z \in M_{n}$ with $\| z \| \leq 1$ and every $b \in F_{m}$.
Now let $\pi \colon \prod_{m \in \mathbb{N}} A \to A_{\infty}$
be the quotient map,
and define $\psi \colon M_n \to A_{\infty}$ by
\[
\psi (z) = \pi \big( ( \varphi_m (z) )_{m \in \N} \big)
\]
for $z \in M_n$.
Then
$\psi (M_n) \subseteq A_{\infty} \cap  A^{\prime}$.
By Lemma~\ref{lembdd},
for every $m \in \mathbb{N}$ there exists  $v_{m} \in A$ such that
$\|v_{m} \| \leq 2 \| x \|^{2}$ and
\[
\big\| \left( x^{2} - x \varphi_{m} (1) x
        - \tfrac{1}{m} \right)_{+} - v_{m} a v_{m}^{*} \big\|
 < \tfrac{1}{m}.
\]
Hence
$\big\| x^{2} - x \varphi_{m} (1) x - v_{m} a v_{m}^{*} \big\|
 < \frac{2}{m}$.
Put $v = \pi \big( ( v_m  )_{m \in \N} \big) \in A_{\infty}$.
Then we have $x^{2} - x \psi (1) x = vav^{*} \precsim a$
in $A_{\infty}$.

\eqref{prop_cental_sequence_3}\,$\Rightarrow$\eqref{prop_cental_sequence_1}:
Suppose that \eqref{prop_cental_sequence_3}
holds. To show that $A$ is tracially
$\mathcal{Z}$-absorbing,
we verify
the condition in Remark \ref{rmkdeftza}\eqref{it3_rmkdeftza}
(Definition~\ref{deftza} with order zero replaced by
$\varepsilon$-order zero).
So
let $a$, $x$, $\varepsilon$, $F$, and~$n$
be as in Definition~\ref{deftza}.
Choose a c.p.c.~order zero map
$\psi \colon M_{n} \to  A_{\infty} \cap A^{\prime}$
such that
$x^{2} - x \psi (1) x \precsim a$ in $A_{\infty}$.
Since $M_{n}$ is nuclear,
there is a sequence
$(\psi_{m})_{m \in \mathbb{N}}$
of c.p.c.\  maps from
$M_{n}$ to $A$
such that,
if we define
\[
\widetilde{\psi}
 = (\psi_m)_{m \in \mathbb{N}}
  \colon M_{n} \to \prod_{m \in \mathbb{N}} A,
\]
then
$\psi = \pi \circ \widetilde{\psi}$.
Since $F$ is finite and $M_n$ is finite dimensional,
it follows that there is $m \in \mathbb{N}$ such that
$\| [\psi_{m} (z), y] \| < \varepsilon$
for every $z \in M_{n}$ with $\| z \| \leq 1$
and every $y \in F$,
such that $\| \psi_{m} (y) \psi_{m} (z) \| < \varepsilon$
for all $y, z \in M_{n}$ with $0 \leq y, z \leq 1$
and $y z = 0$,
and such that there is $w \in A$ with
$\big\| x^{2} - x \psi_{m} (1) x - w a w^{*} \big\| < \varepsilon$.
Thus,
$\big( x^{2} - x \psi_{m} (1) x - \varepsilon \big)_{+} \precsim a$.
Hence, $\psi_{m}$ satisfies
the conditions of Remark \ref{rmkdeftza}\eqref{it3_rmkdeftza},
and so $A$ is tracially $\mathcal{Z}$-absorbing.
Thus \eqref{prop_cental_sequence_1} holds.

Suppose that $A$ is unital.
The implication
\eqref{prop_cental_sequence_2}\,$\Rightarrow$\eqref{prop_cental_sequence_3}  is obvious since
$x^{2} - x \psi (1) x \precsim
1_{A_{\infty}} -  \psi (1)$.
Also, 
\eqref{prop_cental_sequence_3}\,$\Rightarrow$\eqref{prop_cental_sequence_2}
follows by taking $x=1_A$ in \eqref{prop_cental_sequence_3}.
\end{proof}

To conclude this section,
we give another version of tracial $\mathcal{Z}$-absorption
(called strong tracial $\mathcal{Z}$-absorption,
Definition~\ref{defstza} below),
and we compare it with Definition~\ref{deftza}.
By Remark \ref{rmkdeftza}(\ref{it1_rmkdeftza})
and Remark~\ref{rmk_defstza},
both definitions are equivalent to \cite[Definition~2.1]{HO13}
in the simple unital case.
Definition~\ref{defstza} was our first proposal
for the definition of tracial $\mathcal{Z}$-absorption.
At first glance, it seems to be a more natural
definition extending the unital case.
However, it turned out that Definition~\ref{deftza}
is better.
We do not have examples to show that these two
definitions are actually different.

\begin{definition}\label{defstza}
We say that a simple C*-algebra $A$ is
\emph{strongly tracially $\mathcal{Z}$-absorbing}
if $A \not\cong \mathbb{C}$
and for every  $x, a \in A_{+}$ with $a \neq 0$,
every finite set $F \subseteq A$, every $\varepsilon > 0$,
and every $n \in \mathbb{N}$,
there is a c.p.c.~order zero map $\varphi \colon M_{n} \to A$
such that:
\begin{enumerate}
\item\label{defstza-it1}
$x^{2} - x \varphi (1) x \precsim a$.
\item\label{defstza-it2}
$\| [\varphi (z), b] \| < \varepsilon$
for any $z \in M_{n}$ with $\| z \| \leq 1$ and any $b \in F$.
\end{enumerate}
\end{definition}

As an example,
by taking $\varphi = 0$,
we see that
every simple (not necessarily unital) purely infinite C*-algebra
is strongly tracially $\mathcal{Z}$-absorbing.
(Compare with Example~\ref{exapi}.)
In particular,
Examples \ref{E_6825_PINotZSt} and~\ref{E_8702_PINotZStable}
show that strong tracial $\mathcal{Z}$-absorption
does not imply $\mathcal{Z}$-absorption.

\begin{lemma}\label{lemstr}
Let $A$ be a C*-algebra and let $x, y \in A_{+}$.
If $y \in \overline{x A}$ then for any
$c \in (A^{\sim})_{+}$ we have
$y c y \precsim x cx$.
\end{lemma}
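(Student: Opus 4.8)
The plan is to reduce the claim to a comparison of the form $d^{*}(xcx)d \precsim xcx$ by using twice the standard fact that $e^{*}e \sim e e^{*}$ for any element $e$. First I would set $e = c^{1/2}y \in A$ (noting $c^{1/2}\in (A^{\sim})_{+}$ and $y\in A$, so $e\in A$) and observe $ycy = e^{*}e \sim ee^{*} = c^{1/2}y^{2}c^{1/2}$. Thus it suffices to prove $c^{1/2}y^{2}c^{1/2} \precsim xcx$, since transitivity will then give $ycy \precsim xcx$.

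Next I would exploit the hypothesis $y \in \overline{xA}$. Given $\varepsilon > 0$, I choose $d \in A$ with $\|y - xd\| = \delta$ so small that $\|y^{2} - x d d^{*} x\| < \varepsilon'$ for a suitably small $\varepsilon'$. This is where the self-adjointness of $y$ matters: since $(d^{*}x)^{*} = xd$ and $y^{*}=y$, we have $\|y - d^{*}x\| = \|y - xd\| = \delta$ and $\|d^{*}x\| = \|xd\| \le \|y\| + \delta$, so that
$\|y^{2} - xdd^{*}x\| \le \|y\|\,\|y - d^{*}x\| + \|y - xd\|\,\|d^{*}x\| \le \delta(2\|y\| + \delta)$,
a bound uniform in $d$. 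Consequently $\|c^{1/2}y^{2}c^{1/2} - c^{1/2}xdd^{*}xc^{1/2}\| \le \|c\|\,\varepsilon'$, which I arrange to be below $\varepsilon$. Writing $u = c^{1/2}xd$, we have $c^{1/2}xdd^{*}xc^{1/2} = uu^{*}$, and Lemma~\ref{lemkr} then yields $(c^{1/2}y^{2}c^{1/2} - \varepsilon)_{+} \precsim uu^{*}$.

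Now I would flip once more: $uu^{*} \sim u^{*}u = d^{*}xc^{1/2}c^{1/2}xd = d^{*}(xcx)d$, and $d^{*}(xcx)d \precsim xcx$ because $z^{*}bz \precsim b$ for every $z$ and every $b \in A_{+}$ (a special case of Lemma~\ref{lemkey}, via $\varepsilon\to 0$). Chaining these subequivalences gives $(c^{1/2}y^{2}c^{1/2} - \varepsilon)_{+} \precsim xcx$ for every $\varepsilon > 0$, whence $c^{1/2}y^{2}c^{1/2} \precsim xcx$ by \cite[Proposition 2.6]{KR00}. Combined with $ycy \sim c^{1/2}y^{2}c^{1/2}$ from the first step, this gives $ycy \precsim xcx$.

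The one point requiring care, and the main obstacle, is the norm estimate $\|y^{2} - xdd^{*}x\| < \varepsilon'$: a naive substitution $y \approx xd$ in only one factor would leave an uncontrolled $\|d\|$, so I must use self-adjointness of $y$ to replace the second factor by $d^{*}x$ and bound $\|xd\|$ by $\|y\| + \delta$, making the estimate uniform in $d$. The two applications of $e^{*}e \sim ee^{*}$ are the structural heart of the argument: they are precisely what let the two copies of $c^{1/2}$ collapse into the single sandwich $xcx$. Everything else is routine.
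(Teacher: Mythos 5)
Your proof is correct, and the key estimate $\|y^{2} - x d d^{*} x\| \le \delta(2\|y\|+\delta)$ (using $y = y^{*}$ to control both factors uniformly in $d$) is sound; so is the endgame via $(\,\cdot - \varepsilon)_{+}$ and \cite[Proposition 2.6]{KR00}. The paper, however, reaches the conclusion much more directly: it picks $(b_{n})$ in $A$ with $x b_{n} \to y$, notes that $b_{n}^{*} (x c x) b_{n} = (x b_{n})^{*} c \, (x b_{n}) \to y c y$, observes $b_{n}^{*} (x c x) b_{n} \precsim x c x$ for every $n$, and concludes by the norm-closedness of $\{ a \in A_{+} \colon a \precsim x c x \}$ (itself a consequence of Lemma~\ref{lemkr}). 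The structural difference is that your two applications of $e^{*} e \sim e e^{*}$ are a detour: since $y$ is self-adjoint, the approximation $y \approx x d$ can be inserted on \emph{both} sides of $c$ at once, giving $y c y \approx d^{*} (x c x) d \precsim x c x$ in a single step, with the same uniform bound $\|y c y - d^{*} x c x d\| \le \|c\| \, \delta (2 \|y\| + \delta)$ you already derived for $y^{2}$. What your route buys is nothing extra here, but it does illustrate the general principle that the flip $a^{*} a \sim a a^{*}$ lets one move the "unknown" factor from the outside to the inside of a sandwich; the paper's route buys brevity by exploiting that conjugation by $b_{n}$ already puts $x c x$ in the middle.
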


\begin{proof}
Choose $(b_{n})_{n \in \mathbb{N}}$ in $A$ such that $y b_{n} \to y$.
Then $b_{n}^{*} x c x b_{n} \to y c y$ and for any
$n \in \mathbb{N}$,
we have
$b_{n}^{*} x c x b_{n} \precsim x cx$.
Taking the limit in the latter gives the result.
\end{proof}

\begin{remark}\label{rmk_defstza}
By Lemma~\ref{lemstr},
if the condition stated in Definition~\ref{defstza}
holds for some $x \in A_{+}$
then it also holds for any positive element $y \in \overline{xA}$.
In particular, Definition~\ref{defstza}
is equivalent to Definition~\ref{deftza}
in the  unital case.
Moreover, it follows from Lemma~\ref{lemstr} that if $A$ is
$\sigma$-unital,
then $A$ is strongly tracially $\mathcal{Z}$-absorbing
if the properties in
Definition~\ref{defstza}
hold for \emph{some} strictly positive element $x \in A$.
\end{remark}

The following result clarifies the relation
between Definitions~\ref{deftza} and
\ref{defstza}.

\begin{proposition}\label{prop_stz}
Let $A$ be a simple $\sigma$-unital C*-algebra
and let $x \in A$ be a strictly positive element.
Then $A$ is strongly tracially $\mathcal{Z}$-absorbing
if and only if
for every $a \in A_{+}$ with $a \neq 0$,
every finite set $F \subseteq A$, every $\varepsilon > 0$,
and every $n \in \mathbb{N}$,
there is a c.p.c.~order zero map $\varphi \colon M_{n} \to A$
such that:
\begin{enumerate}[label=$\mathrm{(\arabic*)}$]
\item\label{it1_prop_stz}
$\left( x^{2 / m} - x^{1 / m} \varphi (1) x^{1 / m}
    - \varepsilon \right)_+
  \precsim a$
for every $m \in \mathbb{N}$.
\item\label{it2_prop_stz}
$\| [\varphi (z), b] \| < \varepsilon$
for any $z \in M_{n}$ with $\| z \| \leq 1$ and any $b \in F$.
\end{enumerate}
\end{proposition}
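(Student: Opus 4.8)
The plan is to prove the two implications separately, using Lemma~\ref{lemstr} for the forward direction and Lemma~\ref{lemst} together with the order zero functional calculus of Lemma~\ref{lemtzau} for the converse.

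For the forward implication, I would assume $A$ is strongly tracially $\mathcal{Z}$-absorbing and, given $a \neq 0$, $F$, $\varepsilon$, and $n$, apply Definition~\ref{defstza} directly with the given strictly positive element $x$. This produces a c.p.c.\ order zero map $\varphi \colon M_n \to A$ with $x^2 - x\varphi(1)x \precsim a$ and with the commutator estimate~\ref{it2_prop_stz}. Writing $c = 1 - \varphi(1) \in (A^{\sim})_{+}$, the first condition reads $xcx \precsim a$. Since $x$ is strictly positive we have $\overline{xA} = A$, so $x^{1/m} \in \overline{xA}$ for every $m$, and Lemma~\ref{lemstr} (with $y = x^{1/m}$) gives $x^{1/m} c x^{1/m} \precsim xcx$. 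As $x^{1/m} c x^{1/m} = x^{2/m} - x^{1/m}\varphi(1)x^{1/m}$, chaining yields $x^{2/m} - x^{1/m}\varphi(1)x^{1/m} \precsim a$ for every $m$, which implies~\ref{it1_prop_stz}. Notably this direction even produces the cut-free inequality.

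For the converse, I would assume the stated condition and verify Definition~\ref{defstza}; by Remark~\ref{rmk_defstza} it is enough to do so for the given strictly positive $x$. Given $a \neq 0$, $F$, $\varepsilon$, $n$, first choose $\varepsilon_0 > 0$ with $\varepsilon_0\bigl(1 + 2\max_{b \in F}\|b\|\bigr) < \varepsilon$, and apply the hypothesis with $\varepsilon_0$ in place of $\varepsilon$ to get a c.p.c.\ order zero map $\varphi$. Put $c = 1 - \varphi(1)$, so that $(x^{1/m} c x^{1/m} - \varepsilon_0)_+ \precsim a$ for all $m$ and $\|[\varphi(z), b]\| < \varepsilon_0$. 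Then I would invoke Lemma~\ref{lemst} with its element taken to be $c$ and with $\delta = \varepsilon_0$: for each $\varepsilon'' > 0$ some large $m$ gives $\bigl(x(c - \varepsilon_0)_+ x - \varepsilon''\bigr)_+ \precsim (x^{1/m} c x^{1/m} - \varepsilon_0)_+ \precsim a$, and since $\varepsilon''$ is arbitrary \cite[Proposition 2.6]{KR00} yields $x(c - \varepsilon_0)_+ x \precsim a$.

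The main obstacle is that $x(c-\varepsilon_0)_+ x$ is not $xcx = x^2 - x\varphi(1)x$: a direct norm comparison only gives $(xcx - \|x\|^2\varepsilon_0)_+ \precsim a$, and this residual cutoff cannot be removed by shrinking $\varepsilon_0$, since that would alter $\varphi$. I would resolve this exactly as in Lemma~\ref{lemtzau}, by rescaling $\varphi$ rather than $a$: using the functional calculus of \cite{WZ09}, set $\psi = f(\varphi)$ with $f \colon [0,1] \to [0,1]$ given by $f(\lambda) = \min\bigl(\lambda/(1-\varepsilon_0),\, 1\bigr)$. Then $\psi$ is c.p.c.\ order zero with $1 - \psi(1) = (1-\varepsilon_0)^{-1}(c - \varepsilon_0)_+$, so $x^2 - x\psi(1)x = (1-\varepsilon_0)^{-1} x(c-\varepsilon_0)_+ x \sim x(c-\varepsilon_0)_+ x \precsim a$, which is~\ref{defstza-it1}. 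Since also $\|\varphi(z) - \psi(z)\| \le \varepsilon_0\|z\|$, the same estimate as in Lemma~\ref{lemtzau} gives $\|[\psi(z), b]\| \le \|[\varphi(z), b]\| + 2\varepsilon_0\|b\| < \varepsilon_0\bigl(1 + 2\max_{b\in F}\|b\|\bigr) < \varepsilon$, which is~\ref{defstza-it2}. Thus $\psi$ witnesses Definition~\ref{defstza} for $x$, $a$, $F$, $\varepsilon$, $n$, finishing the proof.
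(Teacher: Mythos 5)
Your proposal is correct and follows essentially the same route as the paper: the forward direction applies Definition~\ref{defstza} to the given strictly positive $x$ and pushes the conclusion to $x^{1/m}$ via Lemma~\ref{lemstr}, and the converse combines Lemma~\ref{lemst} (applied to $1-\varphi(1)$) with the order zero functional calculus correction $\psi = f(\varphi)$ exactly as in Lemma~\ref{lemtzau}. Your explicit identification of the residual cutoff $(1-\varphi(1)-\varepsilon_0)_+$ and why it forces the rescaling of $\varphi$ rather than of $a$ matches the paper's computation $1-\psi(1)=\tfrac{1}{1-\delta}(1-\varphi(1)-\delta)_+$.
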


\begin{proof}
Assume first that $A$ is strongly tracially $\mathcal{Z}$-absorbing.
Let $a$, $F$, $\varepsilon$, and~$n$
be as in the hypotheses,
and apply Definition~\ref{defstza}
with these elements and $x$ as given,
obtaining a c.p.c.~order zero map $\varphi \colon M_{n} \to A$.
This map satisfies~\ref{it2_prop_stz} by construction.
For~\ref{it1_prop_stz},
let $m \in \mathbb{N}$.
Then $x^{1 / m} \in {\overline{x A}}$,
so,
using Lemma \ref{lemstr} at the second step,
\[
\big( x^{2 / m} - x^{1 / m} \varphi (1) x^{1 / m}
        - \varepsilon \big)_{+}
 \precsim x^{2 / m} - x^{1 / m} \varphi (1) x^{1 / m}
 \precsim x^{2} - x \varphi (1) x.
\]

For the converse,
let $a$, $F$, $\varepsilon$, and~$n$ be as in Definition~\ref{defstza}.
Choose $\delta > 0$ with
\[
\delta
 < \frac{\varepsilon}{1
               + 2 \max \big( \{ \| y \| \colon y \in F \} \big)}.
\]
By assumption there is a c.p.c.~order zero map
$\varphi \colon M_{n} \to A$ such that \ref{it1_prop_stz}
and \ref{it2_prop_stz}  hold for
$\delta$ instead of $\varepsilon$.
Define a map $f \colon [0, 1] \to [0, 1]$ by
\[
f (\lambda)
 = \begin{cases}
\tfrac{\lambda}{1 - \delta}
& \hspace*{1em} 0 \leq \lambda \leq 1 - \delta
\\
1
& \hspace*{1em} 1 - \delta < \lambda \leq 1.
\end{cases}
\]
Using functional calculus for c.p.c.~order zero maps
(\cite[Corollary~4.2]{WZ09}),
set $\psi = f (\varphi)$.
As in the proof of Lemma~\ref{lemtzau},
in ${A}^{\sim}$ we have
\begin{equation}\label{equ1_propstz}
1 - \psi (1)
 = \frac{1}{1 - \delta} \big( 1 - \varphi (1) - \delta \big)_{+}.
\end{equation}
We show that
\eqref{defstza-it1} in Definition~\ref{defstza} holds, i.e.,
$x^{2} - x \psi (1) x \precsim a$.
Let $\eta > 0$.
Then
by \eqref{equ1_propstz} we have
\[
\big((1 - \delta) (x^{2} - x \psi (1) x ) - \eta \big)_{+}
  = \big( x (1 - \varphi (1) - \delta)_{+} x - \eta \big)_{+}.
\]
By Lemma~\ref{lemst}, there is $m \geq 1$ such that
\begin{equation}\label{Eq_8531_Subeq}
\big( x (1 - \varphi (1) - \delta)_{+} x - \eta \big)_{+} \precsim
\big( x^{1 / m} (1 - \varphi (1)) x^{1 / m} - \delta \big)_{+}.
\end{equation}
The right hand side of~(\ref{Eq_8531_Subeq})
is Cuntz subequivalent to~$a$ by~\ref{it1_prop_stz}.
Since $\eta > 0$ is arbitrary,
and using~(\ref{equ1_propstz}) at the first step,
we get
\[
x^{2} - x \psi (1) x \sim x (1 - \varphi (1)- \delta)_{+} x \precsim a.
\]
The proof of \eqref{defstza-it2} in Definition~\ref{defstza}
is similar to the
last part of the proof of Lemma~\ref{lemtzau}.
So
$A$ is strongly tracially $\mathcal{Z}$-absorbing.
\end{proof}

\section{Permanence properties}\label{sec_per}

\indent
In this section we show that tracial $\mathcal{Z}$-absorption
passes to hereditary C*-subalgebras (Theorem~\ref{thmher}).
In particular,
no simple tracially $\mathcal{Z}$-absorbing C*-algebra is type~I.
Then we show that tracial $\mathcal{Z}$-absorption passes to
matrix algebras and direct limits.
We also prove that
it is preserved under Morita equivalence
in the class of $\sigma$-unital
simple C*-algebras (Corollary~\ref{cor_morita}).
Moreover, we compare our nonunital definition of
tracial $\mathcal{Z}$-absorption with the unital definition when the
algebra has an approximate identity (not necessarily increasing)
consisting of projections (Proposition~\ref{prop_apup}).

\begin{theorem}\label{thmher}
Let $A$ be a simple tracially $\mathcal{Z}$-absorbing C*-algebra
and let $B$ be a hereditary C*-subalgebra of $A$.
Then $B$
is also tracially $\mathcal{Z}$-absorbing.
\end{theorem}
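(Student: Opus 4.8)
The plan is to prove that tracial $\mathcal{Z}$-absorption passes to hereditary C*-subalgebras by reducing the verification of Definition~\ref{deftza} for $B$ to an application of the same definition in $A$, exploiting the fact that hereditary subalgebras of simple C*-algebras are themselves simple and that Cuntz subequivalence behaves well under the hereditary structure.

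First I would fix the data for $B$: given $x, a \in B_{+}$ with $a \neq 0$, a finite set $F \subseteq B$, an $\varepsilon > 0$, and an $n \in \mathbb{N}$, I want to produce a c.p.c.\ order zero map $\psi \colon M_{n} \to B$ satisfying the two conditions with respect to $B$. The natural strategy is to first apply the hypothesis in $A$ to obtain a c.p.c.\ order zero map $\varphi \colon M_{n} \to A$ with $\bigl( x^{2} - x \varphi(1) x - \tfrac{\varepsilon}{2} \bigr)_{+} \precsim a$ (in $A$) and good commutation estimates against a suitably enlarged finite set $F' \subseteq A$. The enlarged set $F'$ should include $F$ together with enough positive elements of $B$ to let me \emph{compress} $\varphi$ back into $B$: since $B$ is hereditary and $x \in B$, if I choose an element $e \in B_{+}$ with $\| e \| \leq 1$ acting almost like a unit on the relevant data (e.g.\ $e x \approx x$ and $e$ nearly commuting with elements of $F$ and with $\varphi(1)$), then $\psi(z) := e \varphi(z) e$ lands in $B$ because $B$ is hereditary.

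The main obstacle I expect is that cutting down by $e$ on both sides destroys the order zero property: $e \varphi(\,\cdot\,) e$ is c.p.c.\ but generally not order zero. I see two ways to handle this. One is to pass to $\varepsilon$-order zero maps via Remark~\ref{rmkdeftza}\eqref{it3_rmkdeftza} and Proposition~\ref{epoz}, checking that if $\varphi$ is order zero and $e$ nearly commutes with the image of $\varphi$, then $z \mapsto e \varphi(z) e$ is $\varepsilon'$-order zero; this is the more robust route since $\varepsilon$-order zero is exactly what is needed. The cleaner alternative is to arrange the approximation so that $\varphi$ already maps into (a slight enlargement of) $B$ up to small norm, then correct it using the functional-calculus stability of order zero maps. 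In either case, the commutator estimate in condition~\eqref{deftza-it2} follows by the standard three-term splitting $\| [e \varphi(z) e, \, b] \| \leq \| [e, b] \| \cdot \|\varphi(z) e\| + \| e \| \cdot \| [\varphi(z), b] \| \cdot \| e \| + \| e \varphi(z) \| \cdot \| [e, b] \|$, where the first and third terms are controlled by choosing $e$ to nearly commute with each $b \in F$.

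The remaining point is the Cuntz-subequivalence condition~\eqref{deftza-it1}, where I must verify $\bigl( x^{2} - x \psi(1) x - \varepsilon \bigr)_{+} \precsim a$ with the subequivalence realized \emph{inside} $B$ rather than merely in $A$. Here I would use that $x, \psi(1) \in B$, so $x^{2} - x \psi(1) x \in B$, together with the approximation $\psi(1) = e \varphi(1) e \approx \varphi(1)$ on the support of $x$ (arranging $ex \approx x$), to conclude $x^{2} - x \psi(1) x \approx x^{2} - x \varphi(1) x$ in norm; then Lemma~\ref{lemkr} converts the norm estimate into a Cuntz subequivalence of $(\,\cdot\, - \varepsilon)_{+}$ terms, and transitivity with the subequivalence from $A$ gives $\bigl( x^{2} - x \psi(1) x - \varepsilon \bigr)_{+} \precsim a$. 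Because $a \in B$ and $B$ is hereditary, any Cuntz subequivalence $c \precsim a$ with $c \in B$ can be implemented by elements of $B$ (one may compress the witnessing elements by an approximate unit of $B$, using that $\overline{aBa}$ is full in the simple algebra), so the relation holds in $B$ as required. Invoking Remark~\ref{rmkdeftza}\eqref{it2_rmkdeftza} to work with a single strictly positive $x$ (when $B$ is $\sigma$-unital) would streamline the bookkeeping, but the argument above does not require it.
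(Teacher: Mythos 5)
Your overall strategy is the one the paper uses: enlarge the finite set so that it contains a positive contraction $e\in B_{+}$ acting as an approximate unit on $x$ and on $F$, apply the definition in $A$ against this enlarged set, and then push the resulting map into $B$ using heredity. The paper implements the last step by multiplying by a power of the near-unit and invoking \cite[Lemma 2.7]{ABP16} to perturb to an exact order zero map with values in $B$ (your ``cleaner alternative''), whereas your preferred route via $\varepsilon$-order zero maps is a legitimate and arguably simpler variant, since the two-sided compression $z\mapsto e\varphi(z)e$ lands in $B$, is $\varepsilon'$-order zero once $e$ approximately commutes with the image of $\varphi$, and Remark~\ref{rmkdeftza}\eqref{it3_rmkdeftza} (via Proposition~\ref{epoz} and \cite[Proposition 2.5]{KW04}, whose repair stays inside the codomain $B$) says this suffices. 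Your explicit attention to realizing the Cuntz subequivalence inside $B$ is a point the paper's proof silently elides, so raising it is to your credit; but your parenthetical justification is not quite right. Simplicity and fullness of $\overline{aBa}$ are irrelevant, and compressing the witnesses by an approximate unit of $B$ on both sides does not place them in $B$ (heredity gives $BAB\subseteq B$, not $BA\subseteq B$). The correct standard argument is: from $(c-\varepsilon)_{+}=dad^{*}$ (Lemma~\ref{lemkr}) one writes $(c-2\varepsilon)_{+}=vv^{*}$ with $v=g(c)\,d\,a^{1/2}\in BAB\subseteq B$ for a suitable $g\in C_{0}((0,\infty))$, notes $v^{*}v\in\overline{aAa}\subseteq B$ with $v^{*}v\leq \mathrm{const}\cdot a$, and concludes $(c-2\varepsilon)_{+}\sim v^{*}v\precsim a$ with all witnesses in $B$.

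The one genuine omission is the clause $B\not\cong\mathbb{C}$, which is part of Definition~\ref{deftza} and hence of the conclusion: your argument verifies conditions \eqref{deftza-it1} and \eqref{deftza-it2} for $B$ but these can hold vacuously when $B$ is one-dimensional. The paper disposes of this separately: if $B\cong\mathbb{C}$ then $A\cong K(H)$, which contains a hereditary copy of $M_{2}$; that copy would be tracially $\mathcal{Z}$-absorbing by the main argument, contradicting Lemma~\ref{rmkinfdim}. You should add this (or an equivalent) final step.
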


\begin{proof}
Let $x \in B_{+}$, let $a \in B_{+} \setminus \{ 0 \}$,
let $\varepsilon > 0$,
let $F \subseteq B$ be finite, and let $n \in \mathbb{N}$.
We must find a c.p.c.~order zero map
$\varphi \colon M_{n} \to B$ such that:
\begin{enumerate}
\item\label{it1_thmher}
$\bigl( x^{2} - x \varphi (1) x - \varepsilon \bigr)_{+} \precsim a$.
\item\label{it2_thmher}
$\| [\varphi (z), b] \| < \varepsilon$
for any $z \in M_{n}$ with $\| z \| \leq 1$ and any $b \in F$.
\setcounter{TmpEnumi}{\value{enumi}}
\end{enumerate}
The case $x = 0$ is trivial,
so, by Lemma~\ref{lemind}, we may assume that $\| x \| = 1$.
Also, we may assume that $\| b \| \leq 1$ for all $b \in F$.
Let $y_0$ be the sum of $x$ with the positive and negative parts of
the real and imaginary parts of all the elements of $F$,
and set $y = \| y_0 \|^{-1} y_0$.
Then $y \in B_{+}$, $\| y \| = 1$,
and $F \cup \{ x \} \subseteq {\overline{y B y}}$.
Since $\bigl( y^{1 / m} \bigr)_{m \in \N}$
is an approximate identity for
${\overline{y B y}}$, there is $m \in \mathbb{N}$ such that
\begin{equation}\label{equ1}
\bigl\| [y^{1 / k}, b] \bigr\|  < \tfrac{\varepsilon}{6}
\andeqn
\bigl\| y^{1 / k} b - b \bigr\| < \tfrac{\varepsilon}{6}
\end{equation}
for $k \in \{ m, 2 m, m / 2 \}$ and $b \in F \cup \{ x \}$.
Choose $\delta > 0$ for $\frac{\varepsilon}{6}$
according to \cite[Lemma 2.7]{ABP16}.
We may assume that
$\delta < \frac{\varepsilon}{4}$.
Since $A$ is tracially $\mathcal{Z}$-absorbing,
there exists a c.p.c.~order zero map $\psi \colon M_{n} \to A$
such that:
\begin{enumerate}
\setcounter{enumi}{\value{TmpEnumi}}
\item\label{it3_thmher}
$\bigl( y^{2 / m} - y^{1 / m} \psi (1) y^{1 / m} - \delta \bigr)_{+}
 \precsim a$.
\item\label{it4_thmher}
$\| [\psi (z) , b] \| < \delta$ for every
$z \in M_{n}$ with $\| z \| \leq 1$ and every
$b \in F \cup \left\{ x, y^{2 / m}, y^{1 / m} \right\}$.
\end{enumerate}
Let us show that the conditions of \cite[Lemma 2.7]{ABP16}
are satisfied
for $y^{1 / m}$, $\psi$, and~$B$
in place of $x$, $\ph_0$, and~$B$.
Since $\| y \| \leq 1$,
we have $0 \leq y^{1 / m} \leq 1$.
Also by \eqref{it4_thmher},
$\bigl\| [\psi (z), \ , y^{1 / m}] \bigr\| < \delta$
for any $z \in M_{n}$ with $\| z \| \leq 1$.
Finally,
we claim that $\dist \bigl( \psi (z) y^{1 / m}, \, B \bigr) < \delta$
for any $z \in M_{n}$ with $\| z \| \leq 1$.
In fact, $y^{1 / (2 m)} \psi (z) y^{1 / (2 m)} \in B$
(since $B$ is hereditary) and  by \eqref{it4_thmher} we have
\[
\bigl\| \psi (z) y^{1 / m}
        - y^{1 / (2 m)} \psi (z) y^{1 / (2 m)} \bigr\|
 \leq \bigl\| \psi (z) y^{1 / (2 m)} - y^{1 / (2 m)} \psi (z) \bigr\|
 < \delta,
\]
proving the claim.
Hence by the choice of $\dt$ using \cite[Lemma~2.7]{ABP16},
there is a c.p.c.~order zero map
$\varphi \colon M_{n} \to B$ such that for
every $z \in M_{n}$ with $\| z \| \leq 1$ we have
\begin{equation}\label{equ2}
\bigl\| \psi (z) y^{1 / m} - \varphi (z) \bigr\|
 < \tfrac{\varepsilon}{6}.
\end{equation}

Now we show that $\varphi$ has  properties
\eqref{it1_thmher} and \eqref{it2_thmher} above.
For \eqref{it2_thmher},
let $b \in F$ and let $z \in M_{n}$ satisfy $\| z \| \leq 1$.
Using
(\ref{equ1}), (\ref{equ2}), and~(\ref{it4_thmher})
at the second step, we have
\begin{align*}
\| \varphi (z) b - b \varphi (z) \|
& \leq \bigl\| \varphi (z) - \psi (z) y^{1 / m} \bigr\|\cdot \| b \|
     + \| \psi (z) \|\cdot \bigl\| y^{1 / m} b - b y^{1 / m} \bigr\|
\\
& \hspace*{3em} {\mbox{}}
  + \| \psi (z) b - b \psi (z) \|\cdot \bigl\| y^{1 / m} \bigr\|
  + \| b \|\cdot \bigl\| \psi (z) y^{1 / m} - \varphi (z) \bigr\|
% \\
% & < \tfrac{\varepsilon}{6} + \bigl\| by^{1 / m} - y^{1 / m} b \bigr\|
%    + \| \psi (z) b - b \psi (z) \| + \tfrac{\varepsilon}{6}
\\
& < \tfrac{\varepsilon}{6} + \tfrac{\varepsilon}{6}
    + \delta + \tfrac{\varepsilon}{6}
  < \tfrac{\varepsilon}{2} + \tfrac{\varepsilon}{4}
  < \varepsilon.
\end{align*}

To prove \eqref{it1_thmher}, first we
use
(\ref{equ2}) and~(\ref{it4_thmher})
at the third step to get
\begin{align*}
& \bigl\| \bigl( y^{2 / m}
            - y^{1 / m} \psi (1) y^{1 / m} - \delta \bigr)_{+}
  - \bigl( y^{2 / m}
             - y^{1 / (2 m)} \varphi (1) y^{1 / (2 m)} \bigr) \bigr\|
\\
& \hspace*{3em} {\mbox{}}
\leq \delta
   + \bigl\| y^{1 / m} \psi (1) y^{1 / m}
          - y^{1 / (2 m)} \varphi (1) y^{1 / (2 m)} \bigr\|
%  \\
%
& \hspace*{3em} {\mbox{}}
%  \leq \delta
%     + \bigl\| y^{1 / m} \psi (1) y^{1 / m}
%          - y^{1 / (2 m)} \psi (1) y^{1 / m} y^{1 / (2 m)} \bigr\|
%  \\
%
& \hspace*{3em} {\mbox{}} \hspace*{3em} {\mbox{}}
%   + \bigl\| y^{1 / (2 m)} \psi (1) y^{1 / m} y^{1 / (2 m)}
%          - y^{1 / (2 m)} \varphi (1) y^{1 / (2 m)} \bigr\|
%  \\
%
& \hspace*{3em} {\mbox{}}
%  \leq \delta
%     + \bigl\| y^{1 / (2 m)} \psi (1) - \psi (1) y^{1 / (2 m)} \bigr\|
%                 + \bigl\| \psi (1) y^{1 / m} - \varphi (1) \bigr\|
\\
& \hspace*{3em} {\mbox{}}
\leq \delta
   + \bigl\| y^{1 / (2 m)} \bigr\|
    \cdot   \bigl\| y^{1 / (2 m)} \psi (1) - \psi (1) y^{1 / (2 m)} \bigr\| \cdot
       \bigl\| y^{1 / m} \bigr\|
\\
& \hspace*{3em} {\mbox{}} \hspace*{3em} {\mbox{}}
   + \bigl\| y^{1 / (2 m)} \bigr\|
     \cdot  \bigl\| \psi (1) y^{1 / m} - \varphi (1) \bigr\|\cdot
       \bigl\| y^{1 / (2 m)} \bigr\|
\\
& \hspace*{3em} {\mbox{}}
< \delta + \tfrac{\varepsilon}{6}
% \\
% & \hspace*{3em} {\mbox{}}
< \tfrac{\varepsilon}{4} + \tfrac{\varepsilon}{6}
% \\
% & \hspace*{3em} {\mbox{}}
< \tfrac{\varepsilon}{2}.
\end{align*}
Thus by \eqref{it3_thmher} we have
\begin{equation}\label{Eq_8702_NewStar}
\bigl( y^{2 / m} - y^{1 / (2 m)} \varphi (1) y^{1 / (2 m)}
  - \tfrac{\varepsilon}{2} \bigr)_{+}
 \precsim \bigl( y^{2 / m} - y^{1 / m} \psi (1) y^{1 / m}
      - \delta \bigr)_{+}
 \precsim a.
\end{equation}
At the second step
using $\bigl\| x - x y^{1 / (2 m)} \bigr\|
 = \bigl\| x - y^{1 / (2 m)} x \bigr\|$
and (\ref{equ1}) several times,
we get
%
%  % \begin{align*}
%  & \bigl\| \bigl( x^{2} - x \varphi (1) x \bigl)
%   - \bigl( x y^{2 / m} x - x y^{1 / (2 m)} \varphi (1) y^{1 / (2 m)} x
%               - \tfrac{\varepsilon}{2} \bigl)_{+} \bigr\|
%  \\
% & \hspace*{3em} {\mbox{}}
%   \leq \tfrac{\varepsilon}{2} + \bigl\| x^{2} - x y^{2 / m} x \bigr\|
%      + \bigl\| x \varphi (1) x - x y^{1 / (2 m)} \varphi (1) x \bigr\|
%  \\
% & \hspace*{3em} {\mbox{}} \hspace*{3em} {\mbox{}}
%            + \bigl\| x y^{1 / (2 m)} \varphi (1) x
%                  - x y^{1 / (2 m)} \varphi (1) y^{1 / (2 m)} x \bigr\|
%  \\
% & \hspace*{3em} {\mbox{}}
%   \leq \tfrac{\varepsilon}{2}
%          + \bigl\| x - y^{2 / m} x \| + \| x - x y^{1 / (2 m)} \bigr\|
%          + \bigl\| x - y^{1 / (2 m)} x \bigr\|
%  \\
% & \hspace*{3em} {\mbox{}}
%   < \tfrac{\varepsilon}{2} + \tfrac{\varepsilon}{2}
%   = \varepsilon.
%  \end{align*}
%
\begin{align*}
& \bigl\| \bigl( x^{2} - x \varphi (1) x \bigl)
   - \bigl( x y^{2 / m} x - x y^{1 / (2 m)} \varphi (1) y^{1 / (2 m)} x
             - \tfrac{\varepsilon}{2} \bigl)_{+} \bigr\|
\\
& \hspace*{3em} {\mbox{}}
 \leq \tfrac{\varepsilon}{2} + \| x \| 
 \cdot
 \bigl\| x - y^{2 / m} x \bigr\| 
      + \bigl\| x - x y^{1 / (2 m)} \bigr\|
      \cdot \| \varphi (1) x \|
\\
& \hspace*{3em} {\mbox{}} \hspace*{3em} {\mbox{}}
          + \bigl\| x y^{1 / (2 m)} \varphi (1) \bigr\| \cdot
              \bigl\| x - y^{1 / (2 m)} x \bigr\|
\\
& \hspace*{3em} {\mbox{}}
 < \tfrac{\varepsilon}{2} + \tfrac{\varepsilon}{2}
 = \varepsilon.
\end{align*}
Using Lemma~\ref{lemkey} at the second step
and~\eqref{Eq_8702_NewStar} at the third step, we now have
\begin{align*}
\bigl( x^{2} - x \varphi (1) x - \varepsilon \bigr)_{+}
& \precsim \bigl( x y^{2 / m} x - x y^{1 / (2 m)} \varphi (1) y^{1 / (2 m)} x
   - \tfrac{\varepsilon}{2} \bigr)_{+}
\\
& \precsim \bigl( y^{2 / m} - y^{1 / (2 m)} \varphi (1) y^{1 / (2 m)}
      - \tfrac{\varepsilon}{2} \bigr)_{+}
% \\
% &
  \precsim a.
\end{align*}

It remains to prove that $B \not\cong \C$.
So suppose $B \cong \C$.
Since $B$ is hereditary and $A$ is simple,
it follows that there is a Hilbert space~$H$
such that $A \cong K (H)$.
We have $A \not\cong \C$
and $A \neq 0$,
so $A$ has a hereditary subalgebra $D$ isomorphic to~$M_2$.
Since $D \not\cong \C$,
by what we have already done,
$D$ is tracially $\mathcal{Z}$-absorbing.
This contradicts Lemma~\ref{rmkinfdim}.
\end{proof}

\begin{corollary}\label{cornone}
Let $A$ be a
nonzero simple tracially $\mathcal{Z}$-absorbing C*-algebra.
Then $A$ is not type~I.
In particular, $\mathcal{K}$ is not
tracially $\mathcal{Z}$-absorbing.
\end{corollary}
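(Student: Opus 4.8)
The plan is to combine the structure theory of simple type~I C*-algebras with the hereditary-subalgebra permanence result (Theorem~\ref{thmher}) and the finite-dimensionality obstruction (Lemma~\ref{rmkinfdim}). The key structural input is that a nonzero simple type~I C*-algebra is elementary, that is, isomorphic to $\mathcal{K} (H)$ for some Hilbert space~$H$. Indeed, a nonzero type~I algebra admits an irreducible representation~$\pi$ whose image satisfies $\pi (A) \supseteq \mathcal{K} (H_{\pi})$; since $A$ is simple, the nonzero representation $\pi$ is faithful, so $A \cong \pi (A)$, and then the nonzero ideal $\pi (A) \cap \mathcal{K} (H_{\pi})$ of the simple algebra $\pi (A)$ must equal $\pi (A)$, which forces $\pi (A) = \mathcal{K} (H_{\pi})$ and hence $A \cong \mathcal{K} (H_{\pi})$.

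First I would assume, for contradiction, that $A$ is type~I, so that $A \cong \mathcal{K} (H)$ as above. Since $A$ is nonzero and tracially $\mathcal{Z}$-absorbing, Lemma~\ref{rmkinfdim} shows that $A$ is infinite dimensional, whence $\dim (H) = \infty$; in particular $\dim (H) \geq 2$. Choosing a rank-two projection $p \in A$, the corner $B = p A p$ is a hereditary C*-subalgebra of $A$ with $B \cong M_2$.

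Then Theorem~\ref{thmher} applies directly: as a hereditary C*-subalgebra of the tracially $\mathcal{Z}$-absorbing algebra $A$, the algebra $B$ is again tracially $\mathcal{Z}$-absorbing. But $B \cong M_2$ is finite dimensional, contradicting Lemma~\ref{rmkinfdim}. This rules out $A$ being type~I and establishes the main assertion. For the final sentence, I would simply observe that $\mathcal{K} = \mathcal{K} (\ell^{2})$ is a nonzero simple type~I C*-algebra, so by what has just been shown it cannot be tracially $\mathcal{Z}$-absorbing.

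I expect no serious obstacle here, since this is essentially the argument already carried out at the end of the proof of Theorem~\ref{thmher}, where the hypothesis $A \cong \mathcal{K} (H)$ was shown to be incompatible with tracial $\mathcal{Z}$-absorption via the $M_2$-corner. The only point requiring care is the structural reduction from ``simple type~I'' to ``$\cong \mathcal{K} (H)$,'' which is standard and can be either cited or sketched as above.
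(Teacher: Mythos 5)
Your proof is correct and follows essentially the same route as the paper's: reduce the simple type~I case to $A \cong \mathcal{K}(H)$, note $H$ is infinite dimensional by Lemma~\ref{rmkinfdim}, and then derive a contradiction by applying Theorem~\ref{thmher} to a rank-two corner $p A p \cong M_2$ and invoking Lemma~\ref{rmkinfdim} again. The only difference is that you spell out the standard reduction from ``nonzero simple type~I'' to ``elementary,'' which the paper leaves implicit.
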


\begin{proof}
Suppose that $H$ is a Hilbert space
and ${K} (H)$ is  tracially $\mathcal{Z}$-absorbing.
By Lemma~\ref{rmkinfdim}, $H$
is infinite dimensional.
Let $p$ a projection
in ${K} (H)$ with $\mathrm{rank} (p) = 2$.
Then $p{K} (H) p \cong M_{2}$ would be
tracially $\mathcal{Z}$-absorbing by Theorem~\ref{thmher}.
But this is not the case by  Lemma~\ref{rmkinfdim}.
\end{proof}

\begin{theorem}\label{thmeqtz}
Let $A$ be a simple C*-algebra.
The following are equivalent:
\begin{enumerate}[label=$\mathrm{(\arabic*)}$]
\item\label{thmeqtz_it1}
$A$ is tracially $\mathcal{Z}$-absorbing.
\item\label{thmeqtz_it2}
Every $\sigma$-unital hereditary C*-subalgebra  of $A$
is tracially $\mathcal{Z}$-absorbing.
\item\label{thmeqtz_it3}
For every positive element $x$ in $A$,
$\overline{x A x}$ is tracially $\mathcal{Z}$-absorbing.
\end{enumerate}
\end{theorem}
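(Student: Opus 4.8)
The plan is to prove the cycle \ref{thmeqtz_it1}$\,\Rightarrow\,$\ref{thmeqtz_it2}$\,\Rightarrow\,$\ref{thmeqtz_it3}$\,\Rightarrow\,$\ref{thmeqtz_it1}, with essentially all of the work concentrated in the last implication.

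The implication \ref{thmeqtz_it1}$\,\Rightarrow\,$\ref{thmeqtz_it2} is immediate from Theorem~\ref{thmher}: a hereditary C*-subalgebra of a simple C*-algebra is again simple, and Theorem~\ref{thmher} shows that every hereditary C*-subalgebra of a simple tracially $\mathcal{Z}$-absorbing C*-algebra is tracially $\mathcal{Z}$-absorbing; in particular this applies to the $\sigma$-unital ones. For \ref{thmeqtz_it2}$\,\Rightarrow\,$\ref{thmeqtz_it3}, I would observe that for $x \in A_{+}$ the algebra $\overline{x A x}$ is a hereditary C*-subalgebra of~$A$ in which $x$ is a strictly positive element, so it is $\sigma$-unital and \ref{thmeqtz_it2} applies verbatim (the case $x = 0$ being trivial).

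The substance is \ref{thmeqtz_it3}$\,\Rightarrow\,$\ref{thmeqtz_it1}. Given $x, a \in A_{+}$ with $a \neq 0$, a finite set $F \subseteq A$, $\varepsilon > 0$, and $n \in \mathbb{N}$, I would first absorb the data into a single hereditary subalgebra: let $y_{0}$ be the sum of $x$ with the positive and negative parts of the real and imaginary parts of the elements of $F$, and put $B = \overline{y_{0} A y_{0}}$. Each of $x$ and the elements of $F$ is then dominated by $y_{0}$, so $x \in B$ and $F \subseteq B$, and $B$ is tracially $\mathcal{Z}$-absorbing by \ref{thmeqtz_it3}. The key obstacle is that the comparison element $a$ appearing in condition~\eqref{deftza-it1} of Definition~\ref{deftza} need not belong to~$B$, so I cannot feed it directly into the definition applied to~$B$. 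This is exactly where simplicity is used: I claim there is a nonzero $a' \in B_{+}$ with $a' \precsim a$. Indeed, simplicity of~$A$ gives $\overline{A a^{1/2} A} = A$, and hence $y_{0}^{1/2} w a^{1/2} \neq 0$ for some $w \in A$, since otherwise $y_{0}^{1/2} A a^{1/2} A = \{0\}$ would force $y_{0}^{1/2} A = \{0\}$ and so $y_{0} = 0$ (the degenerate case $y_{0} = 0$, where $x = 0$ and $F \subseteq \{0\}$, is trivial and handled by $\varphi = 0$). Setting $s = y_{0}^{1/2} w$ and $a' = s a s^{*} = y_{0}^{1/2}(w a w^{*}) y_{0}^{1/2}$ then produces a nonzero positive element lying in~$B$ (it has the form $y_{0}^{1/2}(\,\cdot\,) y_{0}^{1/2}$ and $B$ is hereditary with $y_{0}^{1/2} \in B$) and satisfying $a' \precsim a$ directly from its form $s a s^{*}$.

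With $a'$ chosen, I would apply the tracial $\mathcal{Z}$-absorption of~$B$ to the data $x$, $a'$, $F$, $\varepsilon$, $n$, obtaining a c.p.c.\ order zero map $\varphi \colon M_{n} \to B \subseteq A$ with $\bigl(x^{2} - x \varphi(1) x - \varepsilon\bigr)_{+} \precsim a'$ in~$B$ and $\|[\varphi(z), b]\| < \varepsilon$ for all $z \in M_{n}$ with $\|z\| \leq 1$ and all $b \in F$. Since a witness to Cuntz subequivalence in~$B$ is also one in~$A$, and $\precsim$ is transitive, $\bigl(x^{2} - x \varphi(1) x - \varepsilon\bigr)_{+} \precsim a' \precsim a$ in~$A$, so $\varphi$ verifies Definition~\ref{deftza} for~$A$. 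Finally $A \ncong \mathbb{C}$, for otherwise taking $x = 1$ would give $\overline{x A x} = A \cong \mathbb{C}$, which is not tracially $\mathcal{Z}$-absorbing and contradicts~\ref{thmeqtz_it3}. I expect the only genuinely delicate point to be the production of the nonzero $a' \in B$ with $a' \precsim a$ via simplicity; everything else is transitivity of $\precsim$ together with the routine hereditary-subalgebra bookkeeping already seen in the proof of Theorem~\ref{thmher}.
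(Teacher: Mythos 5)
Your proof is correct and follows the same overall strategy as the paper: Theorem~\ref{thmher} gives \ref{thmeqtz_it1}$\,\Rightarrow\,$\ref{thmeqtz_it2}, the observation that $\overline{x A x}$ is exactly a $\sigma$-unital hereditary subalgebra links \ref{thmeqtz_it2} and \ref{thmeqtz_it3}, and for \ref{thmeqtz_it3}$\,\Rightarrow\,$\ref{thmeqtz_it1} one absorbs the data into a single hereditary subalgebra $B = \overline{y A y}$ and applies the definition there. The one place you diverge is in handling the comparison element $a$: the paper's proof only arranges $F \cup \{x\} \subseteq \overline{y A y}$ and then applies Definition~\ref{deftza} to $\overline{y A y}$ without comment, leaving unaddressed exactly the point you flag, namely that $a$ need not lie in $B$. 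Your fix --- using simplicity to produce a nonzero $a' = s a s^{*} \in B_{+}$ with $a' \precsim a$ and then invoking transitivity of $\precsim$ --- is sound (the verification that $y_{0}^{1/2} w a^{1/2} \neq 0$ for some $w$, and that $y_{0}^{1/2}(w a w^{*}) y_{0}^{1/2} \in \overline{y_{0} A y_{0}}$, both check out). A marginally shorter route to the same end would be to simply add $a$ to the sum defining $y_{0}$, so that $a$ itself lies in $B$ and the definition applies verbatim; but your argument is a legitimate and fully rigorous alternative, and it is more careful than what the paper actually wrote.
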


\begin{proof}
By Theorem~\ref{thmher}, \ref{thmeqtz_it1} implies \ref{thmeqtz_it2}.
Clearly, \ref{thmeqtz_it2} is equivalent to \ref{thmeqtz_it3}
since a hereditary C*-subalgebra $B \subseteq A$ is $\sigma$-unital
if and only if there is $x \in A_{+}$
such that $B = \overline{x A x}$.
It remains to show that
\ref{thmeqtz_it3} implies \ref{thmeqtz_it1}.
Suppose that \ref{thmeqtz_it3} holds.
Thus, in particular, $A \ncong \mathbb{C}$.
Suppose that we are given $a, x \in A_{+} \setminus \{ 0 \}$,
$\varepsilon > 0$, a finite set
$F \subseteq A$,
and $n \in \mathbb{N}$.
Choose $y \in A_{+}$ such that
$F \cup \{ x \} \subseteq \overline{y A y}$
(e.g., let $y$ be  the sum of
$x$ with the positive and negative parts
of the real and imaginary parts of the elements of $F$).
We get $\ph$ as in Definition~\ref{deftza}
by applying this definition to $\overline{y A y}$.
\end{proof}

%Theorem~\ref{thmher} implies immediately that
%if $A$ is a simple tracially $\mathcal{Z}$-absorbing C*-algebra
% then so is
%${M}_{n} (A)$
%for any $n \in \mathbb{N}$.
%We give an alternative proof  for this in Proposition~\ref{propmatrix}
%which is a modification of the proof of  \cite[Lemma~2.4]{HO13}.
%Our reason for giving an alternative proof
% is that it is based on Lemma~\ref{lemmatrix}
%which is of interest in its own.

\begin{lemma}[cf.~\cite{HO13}, Lemma~2.3]\label{lemmatrix}
Let $A$ be a simple C*-algebra which is not of type~I
and let $n \in \mathbb{N}$.
For every nonzero
positive element $a \in {M}_{n} \otimes A = {M}_{n} (A)$,
there exists a nonzero positive element
$b \in A$ such that $1 \otimes b \precsim a$.
\end{lemma}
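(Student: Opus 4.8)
The plan is to reduce the statement to a divisibility property of a corner $\overline{c A c}$ for a suitable nonzero $c \in A_{+}$, which is where the hypothesis that $A$ is not type~I will be used. In outline: first I would produce a nonzero $c \in A_{+}$ with $e_{11} \otimes c \precsim a$, thereby replacing the general positive element $a \in M_{n}(A)$ by one supported in a single corner; then I would find inside $\overline{c A c}$ a single nonzero $b$ that can be fit $n$ times, using that a nonzero hereditary subalgebra of a simple non-type-I C*-algebra is large enough.

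For the first step, write $a = \sum_{i,j} e_{ij} \otimes a_{ij}$. Since $a \geq 0$ and $a \neq 0$, some diagonal entry $a_{jj}$ is nonzero (a positive matrix with zero diagonal is zero). Let $(u_{\lambda})$ be an approximate identity for $A$ and set $w_{\lambda} = e_{jj} \otimes u_{\lambda} \in M_{n}(A)$. Then $w_{\lambda} a w_{\lambda} = e_{jj} \otimes u_{\lambda} a_{jj} u_{\lambda} \to e_{jj} \otimes a_{jj}$, while $w_{\lambda} a w_{\lambda} \precsim a$ for every $\lambda$ (as $w a w^{*} \precsim a$ always holds). Applying Lemma~\ref{lemkr}, for small $\varepsilon > 0$ and $\lambda$ large we get $e_{jj} \otimes (a_{jj} - \varepsilon)_{+} = (e_{jj} \otimes a_{jj} - \varepsilon)_{+} \precsim w_{\lambda} a w_{\lambda} \precsim a$. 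Conjugating by a permutation unitary in $M_{n} \subseteq M_{n}(A^{\sim})$ moves this to $e_{11} \otimes c \precsim a$ with $c = (a_{jj} - \varepsilon)_{+} \neq 0$.

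For the second step, note that $B = \overline{c A c}$ is again simple and not type~I (its hereditary subalgebras are hereditary in $A$, so antiliminality is inherited), hence infinite dimensional, so $B$ contains $n$ mutually orthogonal nonzero positive elements $d_{1}, \ldots, d_{n}$. Since $A$ is simple, any finitely many nonzero positive elements admit a common nonzero Cuntz-lower bound: for two such $d, d'$ one has $d x d' \neq 0$ for some $x$, and then $(d x d')(d x d')^{*} \precsim d$ while $(d x d')(d x d')^{*} \sim (d x d')^{*}(d x d') \precsim d'$; iterating produces a nonzero $b \in A_{+}$ with $b \precsim d_{i}$ for all $i$. Finally, with $v = \sum_{i=1}^{n} e_{1i} \otimes d_{i}^{1/2}$, orthogonality of the $d_{i}$ gives $v^{*} v = \diag(d_{1}, \ldots, d_{n})$ and $v v^{*} = e_{11} \otimes (d_{1} + \cdots + d_{n})$, so $\diag(d_{1}, \ldots, d_{n}) \sim e_{11} \otimes \sum_{i} d_{i}$. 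Combining, since the $e_{ii} \otimes b$ and the $e_{ii} \otimes d_{i}$ are each mutually orthogonal and $b \precsim d_{i}$,
\[
1 \otimes b = \sum_{i=1}^{n} e_{ii} \otimes b \precsim \sum_{i=1}^{n} e_{ii} \otimes d_{i} = \diag(d_{1}, \ldots, d_{n}) \sim e_{11} \otimes \sum_{i} d_{i} \precsim e_{11} \otimes c \precsim a,
\]
using $\sum_{i} d_{i} \in \overline{c A c}$ (so $\sum_{i} d_{i} \precsim c$) and the first step.

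The main obstacle is the second step, and specifically the point where non-type-I is essential: for a type~I simple algebra a corner $\overline{c A c}$ can be as small as $\C$, with no two orthogonal nonzero positive elements, and the statement genuinely fails there. So the crux is extracting the $n$ orthogonal positive elements from $\overline{c A c}$, for which I rely on $\overline{c A c}$ being an infinite-dimensional (simple, non-type-I) C*-algebra. The remaining ingredients — the compression in the first step, the common-lower-bound argument from simplicity, and the identity $\diag(d_{1},\ldots,d_{n}) \sim e_{11} \otimes \sum_{i} d_{i}$ for orthogonal $d_{i}$ — are routine once set up, the only care needed being the monotonicity of $\precsim$ under orthogonal sums.
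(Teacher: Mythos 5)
Your proof is correct and follows essentially the same route as the paper's: compress $a$ to a single corner $e_{jj}\otimes c$ with $c\in A_{+}$ nonzero, then fill the $n$ diagonal slots with mutually orthogonal pieces of $\overline{cAc}$ and use the partial isometry trick to compare $\mathrm{diag}(d_1,\dots,d_n)$ with $e_{11}\otimes\sum_i d_i$. The only difference is that the paper invokes \cite[Lemma~2.1]{Ph14} to get $n$ orthogonal \emph{mutually Cuntz-equivalent} elements of $\overline{cAc}$ directly, whereas you produce merely orthogonal elements and then pass to a common Cuntz lower bound via simplicity --- a self-contained substitute for the same step.
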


\begin{proof}
For $j, k = 1, 2, \ldots, n$,
let $e_{j, k} \in M_n$
be the standard matrix unit.
Since $a$ is a positive and nonzero element of ${M}_{n} (A)$,
there exists $j$ such that
$(e_{j, j} \otimes 1_{A^{\sim}}) a (e_{j, j} \otimes 1_{A^{\sim}})
 \neq 0$.
By replacing $a$ with
$(e_{j, j} \otimes 1_{A^{\sim}}) a (e_{j, j} \otimes 1_{A^{\sim}})$
we may assume that $a$ is of the form $e_{j, j} \otimes c$
for some nonzero positive $c \in A$.
Use \cite[Lemma~2.1]{Ph14}
to find $b_1, b_2, \ldots, b_n \in A_{+} \setminus \{ 0 \}$
such that $b_1 \sim b_2 \sim \cdots \sim b_n$,
such that $b_j b_k = 0$ for $j \neq k$,
and such that $b_1 + b_2 + \cdots + b_n \in {\overline{c A c}}$.
Then
\[
1 \otimes b
 \sim e_{j, j} \otimes ( b_1 + b_2 + \cdots + b_n )
 \precsim e_{j, j} \otimes c,
\]
as desired.
\end{proof}

The following is a nonunital analog of \cite[Lemma~2.4]{HO13}.

\begin{proposition}\label{propmatrix}
Let $A$ be a simple tracially $\mathcal{Z}$-absorbing C*-algebra
and let $n \in \mathbb{N}$.
Then ${M}_{n} (A)$ is also
tracially $\mathcal{Z}$-absorbing.
\end{proposition}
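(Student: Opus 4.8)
The plan is to verify Definition~\ref{deftza} for $M_n(A) = M_n \otimes A$ directly, using the diagonal embedding $\iota \colon A \to M_n(A)$ given by $\iota(c) = 1_{M_n} \otimes c$, which is an injective homomorphism. First I note that $M_n(A)$ is simple (since $A$ is) and infinite dimensional by Lemma~\ref{rmkinfdim}, so $M_n(A) \not\cong \mathbb{C}$; moreover $A$ is simple and not of type~I (Corollary~\ref{cornone}), which are exactly the hypotheses needed to apply Lemma~\ref{lemmatrix} below. To check the definition, fix a nonzero $a \in M_n(A)_+$, a finite set $F \subseteq M_n(A)$, an $\varepsilon > 0$, and an $m \in \mathbb{N}$ (I use $m$ for the size of the matrix algebra in the definition, reserving $n$ for $M_n(A)$). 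By Remark~\ref{rmkappu} applied to $M_n(A)$, using the approximate identity $(1_{M_n} \otimes e_\lambda)_\lambda$ coming from an approximate identity $(e_\lambda)_\lambda$ of~$A$, it is enough to treat $X = 1_{M_n} \otimes e$ with $e \in A_+$ and $\|e\| \leq 1$.

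The crucial reduction of the Cuntz comparison comes from Lemma~\ref{lemmatrix}: since $a \in M_n(A)_+$ is nonzero and $A$ is simple and not of type~I, there is $b \in A_+ \setminus \{0\}$ with $1_{M_n} \otimes b \precsim a$. I would then invoke the tracial $\mathcal{Z}$-absorption of $A$ itself, applied to the data $x' = e$, $a' = b$, the finite set $F' = \{\, b_{ij} : b \in F,\ 1 \leq i, j \leq n \,\}$ of all matrix entries of the elements of~$F$, the tolerance $\varepsilon' = \varepsilon / n^2$, and the size~$m$. This yields a c.p.c.\ order zero map $\psi \colon M_m \to A$ with $(e^2 - e \psi(1) e - \varepsilon)_+ \precsim b$ and $\|[\psi(z), b_{ij}]\| < \varepsilon'$ for all $z \in M_m$ with $\|z\| \leq 1$ and all $b_{ij} \in F'$.

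Now set $\varphi = \iota \circ \psi \colon M_m \to M_n(A)$, that is, $\varphi(z) = 1_{M_n} \otimes \psi(z)$. Since $\iota$ is an injective homomorphism, $\varphi$ is again c.p.c.\ order zero, and $\varphi(1) = 1_{M_n} \otimes \psi(1)$. For condition~\eqref{deftza-it2}, writing $b = \sum_{i,j} e_{ij} \otimes b_{ij}$ one computes $[\varphi(z), b] = \sum_{i,j} e_{ij} \otimes [\psi(z), b_{ij}]$, whose norm is at most $n^2 \max_{i,j} \|[\psi(z), b_{ij}]\| < n^2 \varepsilon' = \varepsilon$. For condition~\eqref{deftza-it1}, since $X = 1_{M_n} \otimes e$ and $\varphi(1) = 1_{M_n} \otimes \psi(1)$ we get $X^2 - X \varphi(1) X = 1_{M_n} \otimes (e^2 - e \psi(1) e)$, and applying the function $(\,\cdot\, - \varepsilon)_+$ blockwise gives $\bigl( X^2 - X \varphi(1) X - \varepsilon \bigr)_+ = 1_{M_n} \otimes (e^2 - e \psi(1) e - \varepsilon)_+$. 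Using that $c \precsim d$ in $A$ implies $1_{M_n} \otimes c \precsim 1_{M_n} \otimes d$ in $M_n(A)$ (apply the amplified witnesses $1_{M_n} \otimes v_k$), together with the choice of~$b$, I obtain $\bigl( X^2 - X \varphi(1) X - \varepsilon \bigr)_+ \precsim 1_{M_n} \otimes b \precsim a$, as required.

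The only genuine content lies in producing $b$ with $1_{M_n} \otimes b \precsim a$ for an arbitrary nonzero positive~$a$, which is precisely Lemma~\ref{lemmatrix} and is where simplicity and the failure of type~I are used; everything else is the bookkeeping of the diagonal construction and the commutator estimate. The main point to be careful about is the reduction to $X = 1_{M_n} \otimes e$ through the approximate identity, which is what makes condition~\eqref{deftza-it1} collapse to a blockwise statement in~$A$; for a general positive $X$ one would instead have to route the construction through Lemma~\ref{lemind}.
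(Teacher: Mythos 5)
Your proposal is correct and follows essentially the same route as the paper: reduce $a$ to $1_{M_n}\otimes b$ via Lemma~\ref{lemmatrix}, reduce $x$ to diagonal form (the paper uses Lemma~\ref{lemind} directly where you invoke Remark~\ref{rmkappu}, which is itself a consequence of that lemma), apply tracial $\mathcal{Z}$-absorption of $A$ to the matrix entries of $F$ with tolerance $\varepsilon/n^2$, and amplify the resulting order zero map diagonally. The bookkeeping (blockwise functional calculus for $(\,\cdot\,-\varepsilon)_+$, amplified Cuntz witnesses, and the $n^2$ commutator estimate) matches the paper's proof.
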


\begin{proof}
Let $x, a \in {M}_{n} (A)_{+}$ with $a \neq 0$,
let $\varepsilon > 0$,
let $m \in \mathbb{N}$,
and let $F \subseteq {M}_{n} (A)$ be finite.

By Lemma~\ref{lemmatrix}, it is enough to consider the case where
$a = 1 \otimes b \in {M}_{n} \otimes A = {M}_{n} (A)$ for some nonzero
element $b \in A_{+}$.
Also, there exists $y \in A_{+}$
such that $x \in {\overline{M_n (A) (1 \otimes y) }}$.
Thus, by Lemma~\ref{lemind}, it is enough to
take $x =  1 \otimes y$ with $y \in A_{+}$.
Let $E$ be the set of all matrix entries of elements of $F$.
Put $\delta = \varepsilon / n^{2}$.
Since $A$ is tracially $\mathcal{Z}$-absorbing,
there exists a c.p.c.~order zero map
$\varphi \colon M_{m} \to A$ such that:
\begin{enumerate}
\item\label{8531_PfOfpropmatrix_1}
$\big( y^{2} - y \varphi (1) y - \delta \big)_{+} \precsim b$.
\item\label{8531_PfOfpropmatrix_2}
$\| [\varphi (z), d] \| < \delta$
for any $z \in M_{n}$ with $\| z \| \leq 1$ and any $d \in E$.
\end{enumerate}
Define a c.p.c.~order zero map
$\psi \colon M_{m} \to  {M}_{n} \otimes A$ by
$\psi (z) = 1 \otimes \varphi (z)$ for $z \in {M}_{n}$.
Then
\begin{align*}
\big( x^{2} - x \varphi (1) x - \varepsilon \big)_{+}
& = 1 \otimes \big( y^{2} - y \varphi (1) y - \varepsilon \big)_{+}
\\
& \precsim 1 \otimes \big( y^{2} - y \varphi (1) y - \delta \big)_{+}
  \precsim 1 \otimes b
  = a.
\end{align*}
%
%  % \begin{align*}
%  \big( x^{2} - x \varphi (1) x - \varepsilon \big)_{+}
%  & = 1 \otimes \big( y^{2} - y \varphi (1) y - \varepsilon \big)_{+}
%  \\
%  & \precsim 1 \otimes \big( y^{2} - y \varphi (1) y - \delta \big)_{+}
%  \\
%  & \precsim 1 \otimes b
%    = a.
%  \end{align*}
%
Also, for any $z \in M_{n}$ with $\| z \| \leq 1$
and any $d = (d_{j, k})_{1 \leq j, k \leq n} \in F$ we have
\begin{align*}
\| [\psi (z), d] \|
& = \| [1 \otimes \varphi (z), d] \|
  = \big\| ( [\varphi (z), \, d_{j, k}] )_{1 \leq j, k \leq n} \big\|
\\
& \leq \sum_{j, k = 1}^{n} \|[\varphi (z), d_{j, k}] \|
  < n^{2} \delta
  = \varepsilon.
\end{align*}
We have shown that $M_{n} (A)$ is tracially $\mathcal{Z}$-absorbing.
\end{proof}

Proposition~\ref{propmatrix}
also follows from Theorem~\ref{thmztz} below.

\begin{definition}\label{defloc}
We say that a C*-algebra $A$
is \emph{locally tracially $\mathcal{Z}$-absorbing}
if for any $\varepsilon > 0$
and any finite subset $F \subseteq A$
there exists a simple tracially $\mathcal{Z}$-absorbing
C*-subalgebra $B$ of $A$ such that $F \subseteq_{\varepsilon} B$,
that is, for any $a \in F$
there exists $b \in B$ such that $\|a - b\| < \varepsilon$.
\end{definition}

\begin{theorem}\label{thmloc}
Let $A$ be a simple
locally tracially $\mathcal{Z}$-absorbing C*-algebra.
Then $A$ is tracially $\mathcal{Z}$-absorbing.
\end{theorem}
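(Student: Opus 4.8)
The plan is to verify Definition~\ref{deftza} for $A$ directly. Fix $x, a \in A_{+}$ with $a \neq 0$, a finite set $F \subseteq A$, $\varepsilon > 0$, and $n \in \mathbb{N}$; I may assume $\| b \| \leq 1$ for all $b \in F$. The strategy is to fix a small tolerance $\theta > 0$ (to be determined at the end in terms of $\varepsilon$, $\| x \|$, and $\| a \|$), use local tracial $\mathcal{Z}$-absorption to obtain a \emph{simple} tracially $\mathcal{Z}$-absorbing C*-subalgebra $B \subseteq A$ with $F \cup \{ x, a \} \subseteq_{\theta} B$, transport the data into $B$, apply Definition~\ref{deftza} inside $B$, and pull the resulting c.p.c.~order zero map $\varphi \colon M_{n} \to B \subseteq A$ back to $A$ by perturbation estimates.

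First I would produce positive approximants inside $B$. For $x$, choose $c \in B$ with $\| c - x \| < \theta$, replace it by its self-adjoint part and then by its positive part to get $x' \in B_{+}$ with $\| x' - x \| \leq 2 \theta$ (using that $x \geq 0$ forces the negative part of the self-adjoint approximant to have norm at most $\theta$). The same construction yields $a_{0} \in B_{+}$ with $\| a_{0} - a \| \leq 2 \theta$, and I set $a' = (a_{0} - 3 \theta)_{+} \in B_{+}$. By Lemma~\ref{lemkr}, $a' \precsim a$ in $A$, and $a' \neq 0$ provided $\theta < \| a \| / 5$. Finally, for each $b \in F$ pick $b' \in B$ with $\| b - b' \| < \theta$ and set $F' = \{ b' \colon b \in F \}$.

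Now apply tracial $\mathcal{Z}$-absorption of $B$ to the data $x', a' \in B_{+}$ (with $a' \neq 0$), $F'$, $\varepsilon' = \varepsilon / 2$, and $n$, obtaining a c.p.c.~order zero map $\varphi \colon M_{n} \to B \subseteq A$ with $\bigl( (x')^{2} - x' \varphi (1) x' - \tfrac{\varepsilon}{2} \bigr)_{+} \precsim a'$ in $B$ and $\| [\varphi (z), b'] \| < \tfrac{\varepsilon}{2}$ for $\| z \| \leq 1$ and $b' \in F'$. The key observation making the transfer clean is that, since $\varphi$ is c.p.c., $0 \leq \varphi (1) \leq 1$, so both $x^{2} - x \varphi (1) x$ and $(x')^{2} - x' \varphi (1) x'$ are \emph{positive}; writing $g$ and $g'$ for these, one has $\| g - g' \| \leq 2 \| x - x' \| ( \| x \| + \| x' \| )$, a bound that tends to $0$ as $\theta \to 0$ and uses only $\| \varphi (1) \| \leq 1$. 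Choosing $\theta$ so small that this quantity is $< \varepsilon / 2$, the cut-down comparison $(g - \varepsilon)_{+} \precsim \bigl( g' - \tfrac{\varepsilon}{2} \bigr)_{+}$ — which follows from Lemma~\ref{lemkr} and Lemma~\ref{lemkey} exactly as in the final steps of the proof of Theorem~\ref{thmher} — gives $\bigl( x^{2} - x \varphi (1) x - \varepsilon \bigr)_{+} \precsim a' \precsim a$, establishing~\eqref{deftza-it1}. For~\eqref{deftza-it2}, given $b \in F$ with approximant $b' \in F'$ and $\| z \| \leq 1$, I estimate $\| [\varphi (z), b] \| \leq \| [\varphi (z), b'] \| + 2 \theta \| \varphi (z) \| < \tfrac{\varepsilon}{2} + 2 \theta < \varepsilon$ after shrinking $\theta$. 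Since $B \ncong \mathbb{C}$ (indeed $B$ is infinite dimensional by Lemma~\ref{rmkinfdim}) and $B \subseteq A$, also $A \ncong \mathbb{C}$.

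The only genuine obstacle is bookkeeping: all the tolerances (the $\theta$ controlling the approximations of $x$, $a$, and $F$; the gap $3\theta$ in the definition of $a'$; and the requirement that $\| g - g' \| < \varepsilon/2$) must be chosen \emph{before} $B$, hence before $\varphi$, is produced. This is legitimate precisely because the perturbation bound for $\| g - g' \|$ depends only on $\theta$ and $\| x \|$, not on the (as yet unknown) map $\varphi$, since it invokes $\varphi$ only through $\| \varphi (1) \| \leq 1$. Once this uniformity is confirmed, the choice of $\theta$ and the verification of~\eqref{deftza-it1} and~\eqref{deftza-it2} are routine.
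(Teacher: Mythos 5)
Your proof is correct and follows essentially the same route as the paper's: fix tolerances in advance, obtain a simple tracially $\mathcal{Z}$-absorbing subalgebra $B$ approximately containing the data, produce positive approximants of $x$ and $a$ inside $B$, apply Definition~\ref{deftza} in $B$, and transfer back via Lemma~\ref{lemkr} and Lemma~\ref{lemkey}. The only difference is cosmetic: the paper gets positive approximants by approximating $x^{1/2}$ and $a^{1/2}$ and forming $y^{*}y$ and $b^{*}b$, whereas you truncate a self-adjoint approximant to its positive part; both work, and your uniformity observation (the perturbation bound uses $\varphi$ only through $\|\varphi(1)\| \leq 1$) is exactly the point that makes the order of quantifiers legitimate.
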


\begin{proof}
% Let $A$ be a
% nonzero simple locally tracially $\mathcal{Z}$-absorbing C*-algebra.
It follows from Definition~\ref{defloc} that $A \ncong \mathbb{C}$.
Let  $x$, $a$, $F$, $\varepsilon$, and~$n$
be as in Definition~\ref{deftza}.
Write
$F = \{f_{1}, f_{2}, \ldots, f_{m} \}$.
We may assume that $\| a \| = 1$,
that $\| x \| \leq \frac{1}{4}$, and that $\varepsilon < 1$.
Set $E = F \cup \big\{ x^{1/2}, \, a^{1/2} \big\}$.
Choose $\delta > 0$ such that
\[
\delta < \tfrac{\varepsilon}{4},
\qquad
(2 + \delta) \delta < \tfrac{\varepsilon}{12},
\qquad {\mbox{and}} \qquad
\big( 2 \|a^{1/2} \| + \delta \big) \delta
 < \tfrac{\varepsilon}{12}.
\]
By assumption there is
a simple tracially $\mathcal{Z}$-absorbing C*-subalgebra $B$ of $A$
such that
$E \subseteq_{\delta} B$.

In particular,
there exists $b \in B$ such that $\| a^{1/2} - b \| < \delta$.
Then
\begin{align*}
\| b^{*} b - a \|
& \leq \| b^{*} b - b^{*} a^{1/2} \| + \|b^{*} a^{1/2} - a \|
\\
& \leq \| b \|\delta + \|a^{1/2} \| \delta
  \leq \big( \|a^{1/2} \| + \delta + \|a^{1/2} \| \big) \delta
  < \tfrac{\varepsilon}{12}.
\end{align*}
Set $d = (b^{*} b - \frac{\varepsilon}{12})_{+}$.
Then the estimate just done implies that
\begin{equation}\label{Eq_8531_dPrecsim}
d \precsim a.
\end{equation}
Also, $d \neq 0$
since $\| b^{*} b \| > 1 - \frac{\varepsilon}{12}$
and
$\varepsilon < 1$.

Similarly, there exists $y \in B$
such that $\| x^{1/2} - y \| < \delta$,
and the element $w = y^{*} y$
satisfies $\| w - x \| < \tfrac{\varepsilon}{12}$.
Since $\| x \| \leq \frac{1}{4}$
and $\tfrac{\varepsilon}{12} < \tfrac{1}{12}$,
we have $\|w\| < 1$.
Therefore
\begin{equation}\label{equ1_thmloc}
\| w^2 - x^2 \| < \tfrac{\varepsilon}{6}.
\end{equation}
Choose $e_{1}, e_{2}, \ldots, e_{m} \in B$
such that $\| e_{j} - f_{j} \| < \frac{\varepsilon}{4}$
for $j = 1, 2, \ldots, m $.
Since $B$ is tracially $\mathcal{Z}$-absorbing,
there exists a c.p.c.~order zero
map $\varphi \colon {M}_{n} \to B$ such that:
\begin{enumerate}
\item\label{thmloc_it1}
$\big( w^{2} - w \varphi (1) w - \frac{\varepsilon}{4} \big)_{+}
 \precsim d$.
\item\label{thmloc_it2}
$\| [\varphi (z), e_{j}] \| <  \frac{\varepsilon}{4}$ for
$j \in \{ 1, 2, \ldots, m \}$ and $z \in {M}_{n}$ with $\|z\| \leq 1$.
\end{enumerate}
Using~\eqref{equ1_thmloc}, $\| w \| \leq 1$,
and $\| x \| \leq \frac{1}{4}$ at the third step,
we then get
\begin{align*}
& \big\| \big( w^{2} - w \varphi (1) w
            - \tfrac{\varepsilon}{4} \big)_{+}
 - \big( x^{2} - x \varphi (1) x \big) \big\|
\\
& \hspace*{3em} {\mbox{}}
  \leq \big\| \big( w^{2} - w \varphi (1) w
               - \tfrac{\varepsilon}{4} \big)_{+}
         - w^{2} - w \varphi (1) w \big\|
\\
& \hspace*{6em} {\mbox{}}
    + \big\| \big( w^{2} - w \varphi (1) w \big)
         - \big( x^{2} - x \varphi (1) x \big) \big\|
\\
& \hspace*{3em} {\mbox{}}
  \leq
\tfrac{\varepsilon}{4} + \|w^{2} - x^{2} \|
 + \|w \varphi (1) w - w \varphi (1) x \|
 + \|w \varphi (1) x - x \varphi (1) x \|
\\
& \hspace*{3em} {\mbox{}}
  < \tfrac{\varepsilon}{4} + \tfrac{\varepsilon}{6} +
\tfrac{\varepsilon}{12} + \tfrac{\varepsilon}{12}
  < \varepsilon.
\end{align*}
Hence, using \eqref{thmloc_it1} at the second step
and~(\ref{Eq_8531_dPrecsim}) at the third step, we get
\[
\big( x^{2} - x \varphi (1) x - \varepsilon \big)_{+}
 \precsim \big( w^{2} - w \varphi (1) w
       - \tfrac{\varepsilon}{4} \big)_{+}
  \precsim d
  \precsim a.
\]
Finally, for any $z \in {M}_{n}$ with $\| z \| \leq 1$
and for $j \in \{ 1, 2, \ldots, m \}$,
by~\eqref{thmloc_it2} we have
\begin{align*}
\| [\varphi (z), f_{j}] \|
& \leq \|\varphi (z) f_{j} - \varphi (z) e_{j} \|
      + \|\varphi (z) e_{j} - e_{j} \varphi (z) \|
       + \| e_{j} \varphi (z) - f_{j} \varphi (z) \|
\\
& < \tfrac{\varepsilon}{4}
     + \tfrac{\varepsilon}{4} + \tfrac{\varepsilon}{4}
  < \varepsilon.
\end{align*}
So $A$ is tracially $\mathcal{Z}$-absorbing.
\end{proof}

%\begin{remark}\label{rmkloc}
%The proof of Theorem~\ref{thmloc}
% implies that if $A$ is a simple C*-algebra with $A \ncong \mathbb{C}$
%such that for every finite subset $F \subseteq A$
% there exists a  C*-subalgebra $B$ of $A$ such that
% $F \subseteq_{\varepsilon}A$
% and either $B$ is simple and tracially $\mathcal{Z}$-absorbing
% or $B \cong \mathbb{C}$
% then $A$ is tracially $\mathcal{Z}$-absorbing.
% (See Remark~\ref{rmkdeftza}(iv).)
%\end{remark}

\begin{corollary}\label{indlim}
Let $A$ be a C*-algebra  which is the direct limit
of a system of simple
tracially $\mathcal{Z}$-absorbing C*-algebras.
Then $A$ is also simple and tracially $\mathcal{Z}$-absorbing.
\end{corollary}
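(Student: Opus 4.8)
The plan is to reduce everything to Theorem~\ref{thmloc}: it suffices to prove that $A$ is simple and locally tracially $\mathcal{Z}$-absorbing in the sense of Definition~\ref{defloc}, after which Theorem~\ref{thmloc} (which also supplies $A \ncong \mathbb{C}$) gives the conclusion. Write $A = \varinjlim (A_i, \phi_{ij})$ with canonical homomorphisms $\phi_i \colon A_i \to A$, so that $\bigcup_i \phi_i(A_i)$ is dense in $A$.

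First I would arrange that all the maps are injective. Since each $A_i$ is simple, every $\phi_i$ is either $0$ or injective. The set of indices $i$ with $\phi_i \neq 0$ is upward closed (if $\phi_i \neq 0$ and $j \geq i$, then $\phi_i = \phi_j \circ \phi_{ij}$ forces $\phi_j \neq 0$), and it is nonempty, hence cofinal, because $A \neq 0$. Restricting to this cofinal set does not change the limit, so I may assume every $\phi_i$ is injective; then each connecting map $\phi_{ij}$ is injective as well. With injective connecting maps the simplicity of $A$ is standard: a nonzero closed ideal $I \subseteq A$ has $\phi_i^{-1}(I) \neq 0$ for some $i$ by density, hence $\phi_i^{-1}(I) = A_i$ by simplicity of $A_i$, so $\phi_i(A_i) \subseteq I$; injectivity of the connecting maps then propagates this to $\phi_j(A_j) \subseteq I$ for all $j$, and density forces $I = A$.

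For local tracial $\mathcal{Z}$-absorption, note that each image $B_i := \phi_i(A_i)$ is a (closed) C*-subalgebra of $A$ isomorphic to $A_i$, hence simple and tracially $\mathcal{Z}$-absorbing. Given a finite set $F = \{a_1, \ldots, a_m\} \subseteq A$ and $\varepsilon > 0$, density of $\bigcup_i B_i$ provides for each $k$ an index $i_k$ and an element of $B_{i_k}$ within $\varepsilon$ of $a_k$; choosing $i \geq i_1, \ldots, i_m$ by directedness and using $B_{i_k} \subseteq B_i$ yields $F \subseteq_{\varepsilon} B_i$. Thus $A$ is locally tracially $\mathcal{Z}$-absorbing, and Theorem~\ref{thmloc} completes the proof.

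I expect there to be no genuine analytic obstacle here: all the real work has already been done in Theorem~\ref{thmloc}. The only points requiring mild care are the cofinality/injectivity reduction and the (standard) verification that a direct limit of simple C*-algebras with injective connecting maps is simple; both amount to routine bookkeeping.
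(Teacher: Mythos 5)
Your proof is correct and follows exactly the route the paper takes: the paper's entire proof of Corollary~\ref{indlim} is the single sentence ``This follows immediately from Theorem~\ref{thmloc}.'' You have simply filled in the routine details (the reduction to injective connecting maps, simplicity of the limit, and the density argument giving local tracial $\mathcal{Z}$-absorption) that the authors left implicit.
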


\begin{proof}
This follows immediately from Theorem~\ref{thmloc}.
\end{proof}

For C*-algebras with an approximate identity
(not necessarily increasing)
consisting of projections
we give a ``unital" equivalent statement for our
``nonunital" definition of tracial $\mathcal{Z}$-absorption.

\begin{proposition}\label{prop_apup}
Let $A$ be a simple C*-algebra
with an approximate identity $(p_{i})_{i \in I}$
(not necessarily increasing)
consisting of projections.
Then
$A$ is tracially $\mathcal{Z}$-absorbing if and only if
$p_{i} A p_{i}$ is tracially $\mathcal{Z}$-absorbing
in the unital sense
for each $i \in I$.
\end{proposition}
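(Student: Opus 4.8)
The plan is to prove the two directions separately, exploiting the hereditary-subalgebra machinery already established. The forward implication is nearly immediate: if $A$ is tracially $\mathcal{Z}$-absorbing, then for each $i \in I$ the corner $p_i A p_i$ is a hereditary C*-subalgebra of $A$, so by Theorem~\ref{thmher} it is tracially $\mathcal{Z}$-absorbing (in the nonunital sense of Definition~\ref{deftza}). Since $p_i A p_i$ is unital with identity $p_i$, Remark~\ref{rmkdeftza}\eqref{it1_rmkdeftza} says that Definition~\ref{deftza} and the unital Definition~\ref{deftzau} coincide for it. This settles one direction.

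For the converse, I would verify Definition~\ref{deftza} directly for $A$. Given $x, a \in A_+$ with $a \neq 0$, a finite set $F \subseteq A$, $\varepsilon > 0$, and $n \in \mathbb{N}$, the idea is to ``compress'' everything into a single corner $p_i A p_i$ and then invoke its unital tracial $\mathcal{Z}$-absorption. First I would use that $(p_i)_{i \in I}$ is an approximate identity to choose $i \in I$ with $p_i x \approx_{\delta} x$, $p_i f \approx_{\delta} f$, and $f p_i \approx_{\delta} f$ for all $f \in F$, for a suitably small $\delta > 0$ depending on $\varepsilon$ and on $\max\{\|f\| : f \in F\} \cup \{\|x\|\}$. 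Replacing $x$ by $p_i x p_i$ and each $f$ by $p_i f p_i$ produces approximations living inside $B := p_i A p_i$; because $p_i$ is a projection, these compressions are genuine elements of $B$, not merely close to $B$. I also need a nonzero positive element of $B$ dominated by $a$ in the Cuntz order to serve as the ``small'' target: since $A$ is simple and $a \neq 0$, there is a nonzero positive $a' \in B$ with $a' \precsim a$ (for instance, compress a suitable element, or use simplicity to move the support of $a$ into the corner). Applying the unital Definition~\ref{deftzau} to $B$ with data $(p_i f p_i)_{f \in F}$, $a'$, $n$, and a small tolerance yields a c.p.c.\ order zero map $\varphi \colon M_n \to B \subseteq A$ satisfying $p_i - \varphi(1) \precsim a'$ together with the commutator estimate against the compressed elements.

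The final step is to translate these corner estimates back to the original data. For the commutator condition \eqref{deftza-it2}, I would insert $p_i$ factors and use $\varphi(z) = p_i \varphi(z) p_i$ together with the approximations $p_i f \approx_\delta f$ to bound $\|[\varphi(z), f]\|$ by $\varepsilon$; this is a routine triangle-inequality estimate of the same flavor as the last part of the proof of Lemma~\ref{lemtzau}. For the comparison condition \eqref{deftza-it1}, the key observation is that $x \varphi(1) x \approx (p_i x p_i)\varphi(1)(p_i x p_i)$ up to an error controlled by $\delta$, and similarly $x^2 \approx (p_i x p_i)^2$; hence $\bigl(x^2 - x\varphi(1)x - \varepsilon\bigr)_+$ is Cuntz subequivalent, via Lemma~\ref{lemkr} and Lemma~\ref{lemkey}, to an element built from $p_i - \varphi(1)$ inside $B$, which in turn satisfies $p_i - \varphi(1) \precsim a' \precsim a$. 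I expect the main obstacle to be this last comparison step: one must carefully arrange that the compression error in $x^2 - x\varphi(1)x$ is absorbed into the $(\,\cdot\,-\varepsilon)_+$ cutoff, so that the computation genuinely lands inside the corner where the unital hypothesis applies, and then chain the subequivalences correctly. Choosing $\delta$ small enough at the outset and applying Lemma~\ref{lemkr} to convert norm-closeness into Cuntz subequivalence should make this work cleanly.
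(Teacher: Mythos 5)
Your forward direction is exactly the paper's: Theorem~\ref{thmher} plus Remark~\ref{rmkdeftza}(\ref{it1_rmkdeftza}). For the converse, however, you take a genuinely different (and more laborious) route. The paper simply observes that the corners $p_i A p_i$ are simple C*-subalgebras, tracially $\mathcal{Z}$-absorbing in the sense of Definition~\ref{deftza} by Remark~\ref{rmkdeftza}(\ref{it1_rmkdeftza}), and that $p_i f p_i \to f$ for every $f \in A$, so $A$ is locally tracially $\mathcal{Z}$-absorbing in the sense of Definition~\ref{defloc}; Theorem~\ref{thmloc} then finishes the proof in one line. You instead re-run the compression argument by hand: push $x$ and $F$ into a single corner $B = p_i A p_i$, produce a nonzero $a' \in B_+$ with $a' \precsim a$, apply the unital definition inside $B$, and translate the estimates back via Lemmas~\ref{lemkr} and~\ref{lemkey}. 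This is essentially a special case of the proof of Theorem~\ref{thmloc}, slightly simplified because $p_i f p_i$ lies exactly in $B$ rather than merely $\varepsilon$-close to it, and because $x'(p_i - \varphi(1))x' = x'^2 - x'\varphi(1)x' \precsim p_i - \varphi(1) \precsim a'$ makes the comparison step direct. Your argument is correct and self-contained (the existence of $a'$ follows from simplicity, e.g.\ $a' = p_i v a v^* p_i$ for suitable $v$, and the hypothesis already forces $p_i A p_i \ncong \mathbb{C}$), but it duplicates work the paper has already packaged into Theorem~\ref{thmloc}; what it buys is independence from that theorem, at the cost of redoing its $\delta$-bookkeeping.
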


\begin{proof}
The statement follows from
Remark \ref{rmkdeftza}(\ref{it1_rmkdeftza}),
Theorem~\ref{thmher}, and Theorem~\ref{thmloc}.
\end{proof}

We can now generalize Lemma~\ref{P_8809_TAFtoTrZAbs}
to the nonunital case.

\begin{proposition}\label{P_8Y03_NonUTAFtoTrZAbs}
Let $A$ be an nonelementary simple
not necessarily C*-algebra
with tracial rank zero
(which is tracially AF; see
\cite[Definition 2.1]{Ln1}).
Then $A$ is tracially $\mathcal{Z}$-absorbing.
\end{proposition}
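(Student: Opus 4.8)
The plan is to reduce the statement to the unital Lemma~\ref{P_8809_TAFtoTrZAbs} by passing to corners, the bridge being Proposition~\ref{prop_apup}. First I would produce an approximate identity of $A$ consisting of projections. Tracial rank zero implies real rank zero (the nonunital analog of \cite[Theorem~3.6.11]{LnBook}, due to Lin), and a C*-algebra with real rank zero has an approximate identity $(p_i)_{i \in I}$ consisting of projections. Hence $A$ satisfies the hypotheses of Proposition~\ref{prop_apup}, which asserts that $A$ is tracially $\mathcal{Z}$-absorbing if and only if each corner $p_i A p_i$ is tracially $\mathcal{Z}$-absorbing in the unital sense.

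It then remains to check, for each $i$, that $p_i A p_i$ falls under Lemma~\ref{P_8809_TAFtoTrZAbs}. The corner $p_i A p_i$ is unital with unit $p_i$, and it is simple because $A$ is simple. Tracial rank zero is inherited by hereditary C*-subalgebras, in particular by corners, so $p_i A p_i$ again has tracial rank zero. The delicate point is that $p_i A p_i$ must be infinite dimensional. Here I would use that $A$ is non-elementary: a finite dimensional corner would force $A$ to contain a minimal projection, whence $A \cong K(H)$ for some Hilbert space~$H$; but this is where the definition of tracial rank zero in \cite[Definition~2.1]{Ln1} is invoked to exclude elementary algebras (consistently with the fact that $K(H)$ is not tracially $\mathcal{Z}$-absorbing by Corollary~\ref{cornone}). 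Since $A$ therefore has no minimal projection, every nonzero corner $p_i A p_i$ is infinite dimensional.

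With each $p_i A p_i$ unital, simple, infinite dimensional, and of tracial rank zero, Lemma~\ref{P_8809_TAFtoTrZAbs} shows it is tracially $\mathcal{Z}$-absorbing in the unital sense, and Proposition~\ref{prop_apup} completes the proof. The main obstacle is precisely the infinite dimensionality of the corners, that is, ruling out the elementary case $A \cong K(H)$; once that is settled, the result is an immediate combination of the permanence result Proposition~\ref{prop_apup} with the inheritance of real rank zero and of tracial rank zero by corners.
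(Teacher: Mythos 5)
Your proof follows exactly the paper's route: tracial rank zero gives real rank zero, hence an approximate identity of projections; the corners inherit tracial rank zero and are handled by the unital Lemma~\ref{P_8809_TAFtoTrZAbs}; and Proposition~\ref{prop_apup} assembles the conclusion. The only difference is that you explicitly verify that the corners are infinite dimensional (ruling out the elementary case $A \cong K(H)$), a point the paper's proof leaves implicit; this is a worthwhile refinement rather than a divergence.
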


\begin{proof}
By \cite[Corollary~A.22(1)]{FG17},
the algebra $A$ has real rank zero.
Therefore $A$ has an approximate identity $(p_{i})_{i \in I}$
(not necessarily increasing)
consisting of projections.
(This also follows from \cite[Corollary 2.8]{Ln1}.)
The algebras $p_i A p_i$ have tracial rank zero
by \cite[Corollary~A.22(3)]{FG17}.
So they are tracially $\mathcal{Z}$-absorbing
by Lemma~\ref{P_8809_TAFtoTrZAbs}.
Now Proposition~\ref{prop_apup}
implies that $A$ is tracially $\mathcal{Z}$-absorbing.
\end{proof}

\begin{proposition}\label{propk}
Let $A$ be a simple C*-algebra.
Then $A$ is tracially $\mathcal{Z}$-absorbing
if and only if $A \otimes \mathcal{K}$
is tracially $\mathcal{Z}$-absorbing.
\end{proposition}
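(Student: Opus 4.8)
The plan is to reduce the statement to the permanence properties already established, using the fact that $A$ and $A \otimes \mathcal{K}$ are mutually (stably) related by hereditary subalgebras and matrix amplifications. For the forward direction, suppose $A$ is tracially $\mathcal{Z}$-absorbing. I would exhibit $A \otimes \mathcal{K}$ as a direct limit of the algebras $M_n(A) = M_n \otimes A$, with the standard upper-left-corner inclusions $M_n(A) \hookrightarrow M_{n+1}(A)$. By Proposition~\ref{propmatrix}, each $M_n(A)$ is simple and tracially $\mathcal{Z}$-absorbing. Then Corollary~\ref{indlim} (the direct-limit permanence result) immediately gives that $A \otimes \mathcal{K} = \varinjlim M_n(A)$ is simple and tracially $\mathcal{Z}$-absorbing. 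This direction is essentially a packaging of two results already proved.

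For the converse, suppose $A \otimes \mathcal{K}$ is tracially $\mathcal{Z}$-absorbing. Here I would invoke Theorem~\ref{thmher}: tracial $\mathcal{Z}$-absorption passes to hereditary C*-subalgebras. Fixing a rank-one projection $e \in \mathcal{K}$, the corner $A \otimes e \cong A$ is a hereditary C*-subalgebra of $A \otimes \mathcal{K}$ (indeed $A \otimes e = (1_{A^{\sim}} \otimes e)(A \otimes \mathcal{K})(1_{A^{\sim}} \otimes e)$, and one checks this is hereditary). Since $A \otimes \mathcal{K}$ is simple whenever $A$ is, Theorem~\ref{thmher} applies and yields that $A \cong A \otimes e$ is tracially $\mathcal{Z}$-absorbing.

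The main point requiring care is not an obstacle so much as a bookkeeping issue: I must verify that the corner $A \otimes e$ is genuinely a hereditary C*-subalgebra of $A \otimes \mathcal{K}$ and that simplicity is preserved in both directions. Simplicity of $A \otimes \mathcal{K}$ from simplicity of $A$ is standard, and $e(A \otimes \mathcal{K})e \cong A$ is a routine identification; the hereditary property of a corner $pBp$ for a projection $p$ (here an embedded projection in the multiplier algebra acting on $A \otimes \mathcal{K}$) is automatic. One small subtlety is that $e$ lives in $\mathcal{K} \subseteq M(A \otimes \mathcal{K})$ rather than in $A \otimes \mathcal{K}$ itself when $A$ is nonunital, so the corner should be described as $\overline{(1_{A^{\sim}} \otimes e)(A \otimes \mathcal{K})(1_{A^{\sim}} \otimes e)}$; this is still a hereditary subalgebra isomorphic to $A$, so Theorem~\ref{thmher} applies without modification. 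Thus the proposition follows by combining Proposition~\ref{propmatrix} and Corollary~\ref{indlim} for one implication, and Theorem~\ref{thmher} for the other.
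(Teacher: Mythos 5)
Your proposal is correct and follows exactly the paper's argument: the forward direction is obtained by combining Proposition~\ref{propmatrix} with Corollary~\ref{indlim} applied to $A \otimes \mathcal{K} = \varinjlim M_n(A)$, and the converse by applying Theorem~\ref{thmher} to the hereditary subalgebra of $A \otimes \mathcal{K}$ isomorphic to $A$. The extra care you take with the corner $\overline{(1_{A^{\sim}} \otimes e)(A \otimes \mathcal{K})(1_{A^{\sim}} \otimes e)}$ in the nonunital case is a reasonable elaboration of a detail the paper leaves implicit.
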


\begin{proof}
The forward implication follows from Proposition~\ref{propmatrix} and
Corollary~\ref{indlim}.
The converse follows from Theorem~\ref{thmher}
because $A$ is isomorphic
to a hereditary subalgebra of $A \otimes \mathcal{K}$.
%
%  For the converse, assume that $A \otimes \mathcal{K}$
%  is tracially $\mathcal{Z}$-absorbing.
%  By Corollary~\ref{cornone}, $A \ncong \mathbb{C}$.
%  Let $x \in A_{+}$.
%  Let $p$ be a rank one projection in $\mathcal{K}$.
%  Then we have
%  $\overline{x A x}
%   \cong \overline{(x \otimes p)
%           (A \otimes \mathcal{K}) (x \otimes p)}$.
%  In fact, the map
%  \[
%  \varphi \colon
%    \overline{x A x}
%    \to \overline{(x \otimes p) (A \otimes \mathcal{K}) (x \otimes p)}
%  \]
%  defined by $\varphi (b) = b \otimes p$ is a surjective isomorphism.
%  Since $A \otimes \mathcal{K}$ is tracially $\mathcal{Z}$-absorbing,
%  so is
%  $\overline{(x \otimes p) (A \otimes \mathcal{K}) (x \otimes p)}$,
%  by Theorem~\ref{thmher}.
%  Thus $\overline{x A x}$ is tracially $\mathcal{Z}$-absorbing.
%  Since $x \in A_{+}$ was arbitrary,
%  $A$
%  is tracially $\mathcal{Z}$-absorbing, by Theorem~\ref{thmeqtz},
\end{proof}

\begin{corollary}\label{cor_morita}
Let $A$ and $B$ be simple C*-algebras.
\begin{enumerate}[label=$\mathrm{(\arabic*)}$]
\item\label{cor_morita_it1}
If $A$ and $B$ are stably isomorphic,
and $A$ is tracially $\mathcal{Z}$-absorbing, then so is $B$.
\item\label{cor_morita_it2}
If $A$ and $B$ are $\sigma$-unital and Morita equivalent,
and $A$ is tracially $\mathcal{Z}$-absorbing, then so is $B$.
\end{enumerate}
\end{corollary}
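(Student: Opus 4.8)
The plan is to derive both parts from Proposition~\ref{propk}, together with the elementary observation that tracial $\mathcal{Z}$-absorption is invariant under $*$-isomorphism. First I would record this invariance: it is immediate from Definition~\ref{deftza}, since a $*$-isomorphism $\Phi \colon C \to D$ carries a c.p.c.~order zero map $\varphi \colon M_{n} \to C$ to the c.p.c.~order zero map $\Phi \circ \varphi \colon M_{n} \to D$, and it preserves norms, commutators, the positive-part functional calculus $(\,\cdot\,)_{+}$, and Cuntz subequivalence. Hence the two defining conditions of Definition~\ref{deftza} transfer term by term, and $C$ is tracially $\mathcal{Z}$-absorbing if and only if $D$ is. No separability or $\sigma$-unitality hypothesis is needed for this step.

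For part~\ref{cor_morita_it1}, I would argue along the chain $A \rightsquigarrow A \otimes \mathcal{K} \cong B \otimes \mathcal{K} \rightsquigarrow B$. Assuming $A$ is tracially $\mathcal{Z}$-absorbing, Proposition~\ref{propk} shows $A \otimes \mathcal{K}$ is tracially $\mathcal{Z}$-absorbing. Stable isomorphism gives $A \otimes \mathcal{K} \cong B \otimes \mathcal{K}$, so by the isomorphism-invariance just noted, $B \otimes \mathcal{K}$ is tracially $\mathcal{Z}$-absorbing. Since $B$ is simple, $B \otimes \mathcal{K}$ is simple and Proposition~\ref{propk} applies in the reverse direction, yielding that $B$ itself is tracially $\mathcal{Z}$-absorbing. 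For part~\ref{cor_morita_it2}, I would reduce to part~\ref{cor_morita_it1}: by the Brown--Green--Rieffel theorem, two $\sigma$-unital C*-algebras are Morita equivalent if and only if they are stably isomorphic, so the $\sigma$-unital Morita equivalent pair $A$, $B$ is in fact stably isomorphic, and part~\ref{cor_morita_it1} gives the conclusion.

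The proof is essentially formal, so there is no analytic obstacle to overcome; the only nontrivial ingredients are Proposition~\ref{propk} (already established) and the Brown--Green--Rieffel theorem. The one point meriting a moment's care is ensuring that Proposition~\ref{propk} is legitimately applied to $B$, which requires knowing that $B \otimes \mathcal{K}$ is simple whenever $B$ is; beyond this, the argument is bookkeeping.
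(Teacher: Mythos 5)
Your proposal is correct and follows essentially the same route as the paper: part~\ref{cor_morita_it1} is exactly the application of Proposition~\ref{propk} in both directions across the isomorphism $A \otimes \mathcal{K} \cong B \otimes \mathcal{K}$, and part~\ref{cor_morita_it2} reduces to part~\ref{cor_morita_it1} via the Brown--Green--Rieffel theorem (the paper cites \cite[Theorem~5.55]{RW98} for this). The extra care you take over isomorphism invariance and the simplicity of $B \otimes \mathcal{K}$ is sound but is already absorbed into the statement and proof of Proposition~\ref{propk}.
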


\begin{proof}
Part \ref{cor_morita_it1} follows from Proposition~\ref{propk}.
For \ref{cor_morita_it2},
let $A$ and $B$ be $\sigma$-unital and Morita equivalent.
By \cite[Theorem~5.55]{RW98},
$A \otimes \mathcal{K} \cong B \otimes \mathcal{K}$.
Now the statement follows from \ref{cor_morita_it1}.
\end{proof}

It seems plausible that Corollary \ref{cor_morita}\ref{cor_morita_it2}
holds even if $A$ and $B$ are not $\sigma$-unital,
but more work is needed.

\section{$\mathcal{Z}$-absorption and tracial $\mathcal{Z}$-absorption}\label{sec_tzz}

\indent
In this section we compare tracial $\mathcal{Z}$-absorption with
$\mathcal{Z}$-absorption.
We show that $\mathcal{Z}$-absorption implies
tracial $\mathcal{Z}$-absorption,
that the converse is false,
but that the converse is true
when the algebra is separable and 
nuclear 
 \cite{CETWW}
 (see Remark~\ref{rmk_tzz}).

First we prove the following general result.

\begin{theorem}\label{thmztz}
Let $A$  be a simple tracially $\mathcal{Z}$-absorbing C*-algebra
and let $B$ be a simple C*-algebra.
Then $A \otimes_{\mathrm{min}} B$
is tracially $\mathcal{Z}$-absorbing.
\end{theorem}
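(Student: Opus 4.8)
The plan is to manufacture the required c.p.c.\ order zero map into $A \otimes_{\mathrm{min}} B$ from one provided by the tracial $\mathcal{Z}$-absorption of $A$, tensored with an element of an approximate identity of~$B$. First note that $A \otimes_{\mathrm{min}} B$ is simple (the minimal tensor product of simple C*-algebras is simple) and, since $A$ is infinite dimensional by Lemma~\ref{rmkinfdim}, it is not isomorphic to~$\C$. By Remark~\ref{rmkappu} (which rests on Lemma~\ref{lemind}) it suffices to verify the condition of Definition~\ref{deftza} for $x$ ranging over an approximate identity; taking approximate identities $(u_{\mu})$ of $A$ and $(v_{\nu})$ of $B$, the family $(u_{\mu} \otimes v_{\nu})$ is an approximate identity of $A \otimes_{\mathrm{min}} B$, so I may assume $x = x_A \otimes x_B$ with $x_A \in A_{+}$, $x_B \in B_{+}$ positive contractions. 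Approximating in norm, I may also assume $F$ consists of elementary tensors, and collect their legs into finite sets $F_A \subseteq A$ and $F_B \subseteq B$.

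The candidate map is $\psi(z) = \varphi(z) \otimes g$, where $\varphi \colon M_n \to A$ comes from the tracial $\mathcal{Z}$-absorption of $A$ and $g \in B_{+}$ is a contraction from the approximate identity of $B$ chosen so large that $\| x_B g x_B - x_B^2 \|$ is small and $\| [g, b] \|$ is small for all $b \in F_B \cup \{ x_B \}$. Such a $\psi$ is automatically c.p.c.\ order zero, since $g \geq 0$ sits in the commuting second leg. The commutator estimate~\ref{deftza-it2} is then routine: for an elementary tensor one has $[\psi(z), a_i \otimes b_i] = [\varphi(z), a_i] \otimes g b_i + a_i \varphi(z) \otimes [g, b_i]$, which is small because $\varphi$ nearly commutes with $F_A$ and $g$ nearly commutes with $F_B$. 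It remains to arrange~\ref{deftza-it1}, namely $(x^2 - x \psi(1) x - \varepsilon)_{+} \precsim a$, where $x \psi(1) x = x_A \varphi(1) x_A \otimes x_B g x_B$. Using $x_B g x_B \approx x_B^2$ and Lemma~\ref{lemkr}, this reduces, up to the usual $(\cdot - \varepsilon)_{+}$ bookkeeping, to showing $\bigl( (x_A^2 - x_A \varphi(1) x_A) \otimes x_B^2 - \varepsilon' \bigr)_{+} \precsim a$.

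The main obstacle is a reduction lemma for the target: for every nonzero $a \in (A \otimes_{\mathrm{min}} B)_{+}$ there are nonzero $a_0 \in A_{+}$ and $b_0 \in B_{+}$ with $a_0 \otimes b_0 \precsim a$. I would prove this using the faithfulness of the product representations defining the spatial tensor product: choose irreducible representations $\pi$ of $A$ and $\sigma$ of $B$ with $(\pi \otimes \sigma)(a) \neq 0$ and unit vectors $\xi, \eta$ with $\langle (\pi \otimes \sigma)(a)(\xi \otimes \eta), \xi \otimes \eta \rangle > 0$; a Kadison transitivity argument producing positive contractions whose images approximately fix $\xi$ and $\eta$, followed by Lemma~\ref{lemkr}, then yields a nonzero elementary tensor Cuntz-below~$a$. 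This is a standard fact and could alternatively be cited.

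Granting the reduction lemma, I finish as follows. Since $B$ is simple and $b_0 \neq 0$, the element $x_B^2$ lies in the ideal generated by $b_0$, so $(x_B^2 - \eta)_{+} \precsim b_0 \oplus \cdots \oplus b_0$ ($N$ copies) in $M_N(B)$ for suitable $N$ and small $\eta$. Since $A$ is simple, infinite dimensional, and not type~I (Lemma~\ref{rmkinfdim}, Corollary~\ref{cornone}), \cite[Lemma~2.1]{Ph14} (as used in the proof of Lemma~\ref{lemmatrix}) gives a nonzero $c_0 \in A_{+}$ with $c_0 \oplus \cdots \oplus c_0$ ($N$ copies) $\precsim a_0$ in $M_N(A)$. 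Tensoring these two subequivalences with $c_0$ and $b_0$ respectively, and using that a corner is a hereditary subalgebra, gives $c_0 \otimes (x_B^2 - \eta)_{+} \precsim a_0 \otimes b_0 \precsim a$. Now I apply the tracial $\mathcal{Z}$-absorption of $A$ with target $c_0$ (nonzero), finite set $F_A \cup \{ x_A \}$, index $n$, and a small tolerance $\delta$, obtaining $\varphi$ with $(x_A^2 - x_A \varphi(1) x_A - \delta)_{+} \precsim c_0$ together with the commutator control. Tensoring with $(x_B^2 - \eta)_{+}$ and chaining Cuntz subequivalences through Lemma~\ref{lemkey} then yields $(x^2 - x \psi(1) x - \varepsilon)_{+} \precsim a$, completing the verification. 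The delicate points are the reduction lemma above and the accounting of the various cut-down constants, which is where I expect the bulk of the effort to go.
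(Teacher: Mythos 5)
Your proposal is correct and follows essentially the same route as the paper's proof: the same candidate map $\varphi_0(z)\otimes e$ with $e$ an approximate identity element of $B$, the same reduction of $a$ to a nonzero elementary tensor $a_0\otimes b_0$ (for which the paper simply cites Kirchberg's Slice Lemma, \cite[Lemma~4.1.9]{Ro02}, rather than reproving it via transitivity), and the same matrix-amplification trick pairing $(x_B^2-\eta)_+\precsim b_0\otimes 1_N$ with $c_0\otimes 1_N\precsim a_0$ obtained from Lemma~\ref{lemmatrix}. The remaining differences are only bookkeeping.
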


\begin{proof}
Suppose we are given $\varepsilon > 0$,
a finite subset $F \subseteq A \otimes_{\mathrm{min}} B$,
$v, c \in ( A \otimes_{\mathrm{min}} B )_{+}$ with $c \neq 0$,
and $n \in {\mathbb{N}}$.
We have to find a c.p.c.~order zero map
$\varphi \colon {M}_{n} \to A \otimes_{\mathrm{min}} B$
such that the following hold:
\begin{enumerate}
\item\label{thmztz_itsmall}
$\left( v^{2} - v \varphi (1) v - \varepsilon \right)_{+} \precsim c$.
\item\label{thmztz_itcom}
$\| [\varphi (z), f] \| < \varepsilon$
for any $z \in {M}_{n}$
with $\| z \| \leq 1$ and any $f \in F$.
\setcounter{TmpEnumi}{\value{enumi}}
\end{enumerate}
We may assume that there are $m \in {\mathbb{N}}$
and
\[
r_1, r_2, \ldots, r_m \in A
\qquad {\mbox{and}} \qquad
s_1, s_2, \ldots, s_m \in B
\]
such that $\| r_j \| \leq 1$ and $\| s_j \| \leq 1$
for $j = 1, 2, \ldots, m$
and such that
$F = \big\{ r_{j} \otimes s_{j} \colon j = 1, 2, \ldots, m \big\}$.
By Kirchberg's Slice Lemma (\cite[Lemma~4.1.9]{Ro02}), there are
nonzero elements $a \in A_{+}$ and $b \in B_{+}$ such that
$a \otimes b \precsim c$ in  $A \otimes_{\mathrm{min}} B$.
Choose $\delta > 0$ such that
$\delta < \varepsilon / 3$.
Since $A \otimes_{\mathrm{min}} B$ has an approximate identity
consisting of elementary tensors
$x \otimes y$ with $x$ and $y$ positive and of norm at most one,
by Remark~\ref{rmkappu} we may assume that
$v = x \otimes y$ with $x \in A_{+}$, $y \in B_{+}$, and
$\| x \|, \, \| y \| \leq 1$.
By \cite[Proposition 2.7(v)]{KR00},
there is $k \in \mathbb{N}$ such that:
\begin{enumerate}
\setcounter{enumi}{\value{TmpEnumi}}
\item\label{thmztz_itkr}
$\left( y^{2} - \delta \right)_{+} \precsim b \otimes 1_{k}$
in ${M}_{\infty} (B)$.
\setcounter{TmpEnumi}{\value{enumi}}
\end{enumerate}
Since $A$ is simple and tracially $\mathcal{Z}$-absorbing,
Corollary~\ref{cornone} implies that
$A$ is not type~I.
Now Lemma~\ref{lemmatrix} implies that there is
$a_{0} \in A_{+} \setminus \{ 0 \}$ such that:
\begin{enumerate}
\setcounter{enumi}{\value{TmpEnumi}}
\item\label{thmztz_itph}
$a_{0} \otimes 1_{k} \precsim a$ in ${M}_{\infty} (A)$.
\setcounter{TmpEnumi}{\value{enumi}}
\end{enumerate}
By assumption $A$ is tracially $\mathcal{Z}$-absorbing.
Applying Definition~\ref{deftza}
with $\{ r_1, r_2, \ldots, r_m \}$ in place of $F$,
with $\delta$ in place of $\varepsilon$, with $n$ as given,
and with $a_{0}$ in place of $a$,
we obtain a c.p.c.~order zero map
$\varphi_{0} \colon {M}_{n} \to A$ such that the following hold:
\begin{enumerate}
\setcounter{enumi}{\value{TmpEnumi}}
\item\label{thmztz_itph01}
$\left( x^{2} - x \varphi_{0} (1) x - \delta \right)_{+}
 \precsim a_{0}$.
\item\label{thmztz_itph02}
$\| [\varphi_{0} (z), r_j] \| < \delta$ for $j = 1, 2, \ldots, m$
and any $z \in {M}_{n}$
with $\| z \| \leq 1$.
\setcounter{TmpEnumi}{\value{enumi}}
\end{enumerate}
Choose $e \in B_{+}$ such that
\begin{enumerate}
\setcounter{enumi}{\value{TmpEnumi}}
\item\label{thmztz_itappu}
$\| e \| \leq 1$,
$\| y e y - y^{2} \| < \delta$,
and $\| e s_{j} - s_{j} \| < \delta$
and $\| s_{j} e - s_{j} \| < \delta$ for $j = 1, 2, \ldots, m$.
\setcounter{TmpEnumi}{\value{enumi}}
\end{enumerate}
Now define $\varphi \colon  {M}_{n} \to A \otimes_{\mathrm{min}} B$ by
$\varphi (z) = \varphi_{0} (z) \otimes e$
for $z \in  {M}_{n}$.
Then $\varphi$ is a  c.p.c.~order zero map.

We show that \eqref{thmztz_itsmall} and
\eqref{thmztz_itcom} hold.
To prove~\eqref{thmztz_itsmall},
first recall that $v = x \otimes y$ at the first step
and use \eqref{thmztz_itappu}
at the last step to get
\begin{align*}
& \big\| \big( v^{2} - v \varphi (1) v \big)
     - \big( x^{2} - x \varphi_{0} (1) x - \delta \big)_{+}
           \otimes \big( y^{2} - \delta \big)_{+} \big\|
\\
& \hspace*{3em} {\mbox{}}
 \leq  \big\| x^{2} \otimes y^{2} - x \varphi_{0} (1) x \otimes y e y
    - \big( x^{2} - x \varphi_{0} (1) x \big) \otimes y^{2} \big\|
    + 2 \delta
\\
& \hspace*{3em} {\mbox{}}
  = \big\| x \varphi_{0} (1) x \otimes ( y^{2} - y e y ) \big\|
      + 2 \delta
\\
& \hspace*{3em} {\mbox{}}
 \leq \| y e y - y^{2} \| + 2 \delta
 < 3 \delta.
\end{align*}
Then use Lemma~\ref{lemkr} at the first step,
use \eqref{thmztz_itkr} and~\eqref{thmztz_itph01}
at the second step,
and use \eqref{thmztz_itph} at the fourth step, to get
\begin{align*}
\big( v^{2} - v \varphi (1) v - 3 \delta \big)_{+}
& \precsim \big( x^{2} - x \varphi_{0} (1) x - \delta \big)_{+}
\otimes \big( y^{2} - \delta \big)_{+}
\\
& \precsim a_{0} \otimes ( b \otimes 1_{k} )
  \sim ( a_{0} \otimes 1_{k} ) \otimes b
  \precsim a \otimes b
  \precsim c.
\end{align*}
Thus
\[
\big( v^{2} - v \varphi (1) v - \varepsilon \big)_{+}
  \precsim \big( v^{2} - v \varphi (1) v - 3 \delta \big)_{+}
  \precsim c.
\]

For \eqref{thmztz_itcom}, let $f \in F$.
Then there is $j \in \{ 1, 2, \ldots, m \}$
such that $f =  r_{j} \otimes s_{j}$.
Let $z \in {M}_{n}$
satisfy $\| z \| \leq 1$.
Using \eqref{thmztz_itph02} and~\eqref{thmztz_itappu}
at the fourth step,
we get
\begin{align*}
& \| [\varphi (z), f] \|
\\
& \hspace*{1em} {\mbox{}}
  = \big\| [\varphi_{0} (z) \otimes e, \, r_{j} \otimes s_{j} ] \big\|
\\
& \hspace*{1em} {\mbox{}}
  = \big\| \varphi_{0} (z) r_{j} \otimes e s_{j} -
               r_{j} \varphi_{0} (z) \otimes s_{j} e \big\|
\\
& \hspace*{1em} {\mbox{}}
  \leq \big\| [ \varphi_{0} (z) , r_{j}] \otimes e s_{j} \big\|
    + \big\| r_{j} \varphi_{0} (z) \otimes (e s_{j} - s_{j}) \big\|
    + \big\| r_{j} \varphi_{0} (z) \otimes (s_{j} - s_{j} e) \big\|
\\
& \hspace*{1em} {\mbox{}}
  < \delta + \delta + \delta
  < \varepsilon.
\end{align*}
This completes the proof.
\end{proof}

\begin{corollary}\label{corztz}
Let $A$ be a simple $\mathcal{Z}$-absorbing C*-algebra.
Then $A$ is tracially $\mathcal{Z}$-absorbing.
\end{corollary}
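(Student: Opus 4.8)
The plan is to deduce this corollary directly from Theorem~\ref{thmztz}, once we know that the Jiang--Su algebra~$\mathcal{Z}$ is itself tracially $\mathcal{Z}$-absorbing in the sense of Definition~\ref{deftza}. First I would record the standard facts about~$\mathcal{Z}$: it is simple, unital, separable, nuclear, and satisfies $\mathcal{Z} \cong \mathcal{Z} \otimes \mathcal{Z}$ (it is strongly self-absorbing, cf.~\cite{JS}), so in particular $\mathcal{Z}$ is $\mathcal{Z}$-absorbing. By the Hirshberg--Orovitz equivalence for simple unital separable nuclear C*-algebras (\cite[Proposition~2.2 and Theorem~4.1]{HO13}), it follows that $\mathcal{Z}$ is tracially $\mathcal{Z}$-absorbing in the unital sense of Definition~\ref{deftzau}. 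Since $\mathcal{Z}$ is unital, Remark~\ref{rmkdeftza}\eqref{it1_rmkdeftza} tells us that this is the same as being tracially $\mathcal{Z}$-absorbing in the sense of Definition~\ref{deftza}.

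Next I would invoke Theorem~\ref{thmztz} with the roles filled as follows: take the theorem's tracially $\mathcal{Z}$-absorbing algebra to be~$\mathcal{Z}$ (simple and tracially $\mathcal{Z}$-absorbing, as just established) and the theorem's arbitrary simple algebra to be our given~$A$. The theorem then yields that $\mathcal{Z} \otimes_{\mathrm{min}} A$ is tracially $\mathcal{Z}$-absorbing. Finally, because $\mathcal{Z}$ is nuclear, the minimal tensor product is the only C*-tensor product, so $\mathcal{Z} \otimes_{\mathrm{min}} A \cong A \otimes \mathcal{Z}$, and the hypothesis that $A$ is $\mathcal{Z}$-absorbing gives $A \otimes \mathcal{Z} \cong A$. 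Composing these isomorphisms, $A$ is isomorphic to a tracially $\mathcal{Z}$-absorbing C*-algebra and hence is itself tracially $\mathcal{Z}$-absorbing.

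The only genuinely nontrivial input is the second sentence: that $\mathcal{Z}$ is tracially $\mathcal{Z}$-absorbing. This is where the hard analytic work has been outsourced to \cite{HO13}, and it is really the crux, since everything else is bookkeeping with tensor products and the identification of Definition~\ref{deftza} with Definition~\ref{deftzau} in the unital case. I expect the main point to watch is the nuclearity of~$\mathcal{Z}$, which is what lets us freely replace $\otimes_{\mathrm{min}}$ by~$\otimes$ and match the tensor factor appearing in the definition of $\mathcal{Z}$-absorption with the minimal tensor product appearing in Theorem~\ref{thmztz}; no separability or further structural hypothesis on~$A$ is needed beyond simplicity, which is assumed.
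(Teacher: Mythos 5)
Your proposal is correct and follows essentially the same route as the paper: establish that $\mathcal{Z}$ is tracially $\mathcal{Z}$-absorbing via \cite{HO13}, then apply Theorem~\ref{thmztz} to $\mathcal{Z} \otimes_{\mathrm{min}} A \cong A$. The only cosmetic difference is that the paper cites just \cite[Proposition~2.2]{HO13} (the direction $\mathcal{Z}$-absorption $\Rightarrow$ tracial $\mathcal{Z}$-absorption, which needs no nuclearity), whereas you also invoke the converse Theorem~4.1, which is not needed here.
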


\begin{proof}
By definition,
$A \cong \mathcal{Z} \otimes A$.
By \cite[Proposition~2.2]{HO13},
$\mathcal{Z}$ is tracially $\mathcal{Z}$-absorbing.
Now Theorem~\ref{thmztz} implies that
$A$ is tracially $\mathcal{Z}$-absorbing.
\end{proof}

For example, the Razak-Jacelon algebra
$\mathcal{W}$ (see \cite{Jcln, Na19}) is 
a stably projectionless tracially $\mathcal{Z}$-absorbing
C*-algebra.

\begin{remark}\label{rmk_tzz}
In \cite{CETWW}, it is proved that every
simple separable nuclear tracially $\mathcal{Z}$-absorbing
C*-algebra $A$ is $\mathcal{Z}$-absorbing.
The special case that 
$A$  is not stably projectionless follows
from the corresponding result for the unital case
(\cite[Theorem~4.1]{HO13}),
the Morita invariance of $\mathcal{Z}$-absorption
in the class of separable C*-algebras 
(\cite[Corollary~3.2]{TW07}), and
 Theorems~\ref{thmher} and~\ref{thmztz}.
\end{remark}

We obtain a different proof of
a special case of \cite[Corollary~3.4]{TW07}.

\begin{corollary}\label{cor_ind_z}
Let $A$ be a simple separable nuclear C*-algebra
which is not stably projectionless
and which is the direct limit
of a system of simple
$\mathcal{Z}$-absorbing C*-algebras.
Then $A$ is $\mathcal{Z}$-absorbing.
\end{corollary}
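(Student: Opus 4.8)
The plan is to assemble three results already in hand: Corollary~\ref{corztz}, Corollary~\ref{indlim}, and Theorem~\ref{thmtzz}. First I would observe that every C*-algebra appearing in the given direct system is simple and $\mathcal{Z}$-absorbing, hence tracially $\mathcal{Z}$-absorbing by Corollary~\ref{corztz}. Thus $A$ is realized as the direct limit of a system of simple tracially $\mathcal{Z}$-absorbing C*-algebras, which is precisely the hypothesis of Corollary~\ref{indlim}.

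Next I would apply Corollary~\ref{indlim} to conclude that $A$ itself is simple and tracially $\mathcal{Z}$-absorbing. It is worth noting that this step uses only the direct-limit structure together with tracial $\mathcal{Z}$-absorption of the building blocks; it requires no separability or nuclearity assumption, and it even recovers the simplicity of $A$ for free, so that hypothesis in the statement is partly redundant.

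Finally, $A$ is assumed separable, nuclear, and not stably projectionless, and we have just shown it is tracially $\mathcal{Z}$-absorbing. Theorem~\ref{thmtzz} then applies verbatim and yields that $A$ is $\mathcal{Z}$-absorbing, which is the desired conclusion.

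As for the main difficulty: there is essentially none beyond correctly matching hypotheses, since all the substantive work is concentrated in Theorem~\ref{thmtzz} (which in turn packages the unital passage \cite[Theorem~4.1]{HO13} from tracial to genuine $\mathcal{Z}$-absorption together with Morita invariance of $\mathcal{Z}$-absorption in the separable case) and in the permanence results of Section~\ref{sec_per}. The only points that merit a moment's care are that the separability and nuclearity are imposed on the limit $A$ itself, so they are available exactly where Theorem~\ref{thmtzz} needs them without having to be verified for the members of the system, and that the not-stably-projectionless hypothesis is precisely what licenses the final passage from tracial $\mathcal{Z}$-absorption to $\mathcal{Z}$-absorption.
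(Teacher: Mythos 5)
Your proposal is correct and follows exactly the paper's own proof, which cites precisely Corollaries~\ref{corztz} and~\ref{indlim} together with Theorem~\ref{thmtzz} in the same order. Your added remarks about where separability, nuclearity, and the not-stably-projectionless hypothesis are actually used are accurate and consistent with the paper's argument.
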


\begin{proof}
The statement follows from  Corollaries~\ref{corztz} and \ref{indlim}
and Remark~\ref{rmk_tzz}.
\end{proof}

We now give examples of tracially $\mathcal{Z}$-absorbing C*-algebras
which are not $\mathcal{Z}$-absorbing.
Our examples are unital;
no such examples were previously known, even in the unital case.
We start with purely infinite examples.
By Example~\ref{exapi},
it is enough to find some
separable purely infinite simple unital C*-algebra
which is not $\mathcal{Z}$-absorbing.
The work for this is contained in Proposition~\ref{P_8624_NotZStab}.
It is based on the ideas
of \cite[Corollary 1.2]{DkmRdm},
in which it is shown that
an algebra~$A$ as in Proposition~\ref{P_8624_NotZStab}
is not approximately divisible.
The execution here is much messier because we can't use projections.
A stably finite example is then obtained by applying
Lemma~\ref{P_8809_TAFtoTrZAbs}
to an example in  \cite{NiuWng}.

\begin{proposition}\label{P_8624_NotZStab}
For $j = 1, 2$ let $P_j$ be a von Neumann algebra,
let $\om_j \colon P_j \to {\mathbb{C}}$ be a faithful normal state,
and let $G_j \subseteq P_j$ be a discrete
subgroup of the unitary group $U (P_j)$
which is contained in the centralizer of~$\om_j$
and
such that whenever $u, v \in G_j$
are distinct then $\om_j (v^* u) = 0$.
Suppose $G_1$ contains an element $a \neq 1$,
and $G_2$ contains distinct elements $b, c \neq 1$.
Let $(P, \om)$ be the reduced free product von Neumann algebra
$(P, \om) = (P_1, \om_1) \star_{\mathrm{r}} (P_2, \om_2)$,
equipped with its free product state~$\om$.
Let $A \subset P$ be any C*-subalgebra
satisfying $a, b, c \in A$.
Then $A$ is not ${\mathcal{Z}}$-stable.
\end{proposition}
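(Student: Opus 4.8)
The plan is to assume, for contradiction, that $A\cong A\otimes\mathcal{Z}$ and to derive a violation of the ``fullness'' of the reduced free product, following the strategy of \cite[Corollary~1.2]{DkmRdm} but with order zero maps in place of projections. First I would record the free-product data we exploit. Since $a\neq1$ lies in $G_1$ and the elements of $G_1$ are $\omega_1$-orthogonal, we have $\omega_1(a)=0$; likewise $\omega_2(b)=\omega_2(c)=0$ and $\omega_2(c^*b)=0$. Because $G_j$ is contained in the centralizer of $\omega_j$, the unitaries $a,b,c$ lie in the centralizer of the free product state $\omega$, so $\omega(xy)=\omega(yx)$ whenever one of the factors is one of $a,b,c$; and $\omega$ is faithful, with $L^2(P,\omega)$ carrying the usual orthogonal decomposition into reduced words, in which any alternating reduced word in the centred letters has $\omega$-value $0$.

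Next I would extract a central witness from $\mathcal{Z}$-stability. Using the local character of $\mathcal{Z}$-stability \cite{TW07}, there is a separable $\mathcal{Z}$-stable unital C*-subalgebra $A_0\subseteq A$ containing $a,b,c$, so we may assume $A$ is separable. By the central-sequence description of $\mathcal{Z}$-absorption (as used in \cite{HO13,TW07}), $A_\infty\cap A'$ admits a c.p.c.\ order zero map $\varphi\colon M_2\to A_\infty\cap A'$ with $1-\varphi(1)\precsim\varphi(e_{11})$ in $A_\infty$. Lifting $\varphi$ yields asymptotically central c.p.c.\ order zero maps $\varphi_k\colon M_2\to A$ with $\varphi_k(e_{11})\varphi_k(e_{22})\to0$, with $\varphi_k(e_{11})\sim\varphi_k(e_{22})$ implemented by $\varphi_k(e_{12})$, and with $(1-\varphi_k(1)-\varepsilon_k)_+\precsim\varphi_k(e_{11})$ for some $\varepsilon_k\to0$. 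This is the positive, order-zero replacement for the approximately central symmetry (a central projection of ``trace'' $\tfrac12$) that \cite{DkmRdm} produces from approximate divisibility; note that here $\varphi_k(1)$ need not be close to $1$ in norm, which is exactly why a direct ``$\omega(e)\in\{0,1\}$'' argument is unavailable.

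The heart of the argument is then a free-product moment estimate. I would evaluate $\omega$ on a reduced word assembled from $a,b,c$ and the approximately central elements $p_k=\varphi_k(e_{11})$, $q_k=\varphi_k(e_{22})$, $s_k=\varphi_k(e_{12})$: near-centrality lets each of $p_k,q_k,s_k$ be commuted past $a,b,c$ up to a vanishing error, while the centralizer property of $a,b,c$ permits cyclic rearrangement inside $\omega$. Choosing the word so that, after these moves, it becomes an alternating reduced word — this is where one uses that $a$ is a centred $P_1$-unitary and that $b,c$ are two \emph{distinct} centred $P_2$-unitaries with $\omega_2(c^*b)=0$, so that the $P_2$-letters genuinely separate and do not collapse — the reduced-word relations force a relation among $\omega(p_k)$, $\omega(q_k)$ and $\omega(s_k^*s_k)$. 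Feeding in $p_k\sim q_k$ together with the comparison $(1-\varphi_k(1)-\varepsilon_k)_+\precsim p_k$, which bound $\omega(q_k)$ and $\omega(1-p_k-q_k)$ from below, I would obtain in the limit an inequality that cannot hold, giving the contradiction.

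The main obstacle is precisely this last step carried out with positive elements rather than projections. Where \cite{DkmRdm} can use $e^2=e$ and $e+(1-e)=1$ to convert approximate centrality of a projection into a clean contradiction on $\omega(e)$, here $p_k,q_k$ have spread-out spectra and are related only Cuntz-theoretically, so each algebraic identity must be replaced by an $\varepsilon$-cut $(\,\cdot-\varepsilon)_+$ together with the subequivalence bookkeeping of Lemmas~\ref{lemkr} and~\ref{lemkey}, and one must carry the order zero structure map of $\varphi_k$ throughout. Keeping these errors uniformly controlled while still landing on an honest reduced word, so that the freeness cancellation can be invoked, is the delicate, messy core; the reductions and the extraction of the $\varphi_k$ are routine by comparison.
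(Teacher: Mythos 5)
Your reduction to a separable $\mathcal{Z}$-stable subalgebra and the extraction of asymptotically central c.p.c.\ order zero maps $\varphi_k \colon M_2 \to A$ are fine, but there is a genuine gap exactly where you place ``the heart of the argument'': no contradiction is actually derived, and the mechanism you sketch for one does not work. The state $\omega$ is not a trace ($P$ is a factor of type ${\mathrm{III}}$ by \cite{Brnt}), so the Cuntz relations $(1 - \varphi_k(1) - \varepsilon_k)_+ \precsim \varphi_k(e_{11})$ and $\varphi_k(e_{11}) \sim \varphi_k(e_{22})$ give no control on the numbers $\omega(p_k)$, $\omega(q_k)$, $\omega(1 - p_k - q_k)$: the implementing elements $v$ are not in your approximately centralized finite set, and $\omega(v p_k v^*)$ bears no relation to $\omega(p_k)$ for a non-tracial state. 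Consequently, the data you extract from $\mathcal{Z}$-stability are consistent with $p_k$, $q_k$, $s_k$ all being close to scalars in $\| \cdot \|_{\omega}$ --- which is precisely what fullness of the free product forces on approximately central elements --- so there is nothing left to contradict. What is missing is the construction of an approximately central element whose $\omega$-variance is bounded \emph{below} by a universal constant.

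The paper's proof supplies exactly this by testing $\mathcal{Z}$-stability against approximately central \emph{unital homomorphisms} $\varphi$ from the dimension drop algebra $D$ rather than order zero maps from $M_2$. The exact algebraic identities in $D$ (such as $r_{1, n} + r_{2, n} = 1 - h_n$ and $s_{1, n} + s_{2, n} + s_{3, n} = h_n$), an intermediate value argument over the connected groups $U (M_2)$ and $U (M_3)$, and the measure estimate $\mu (I) \leq \frac{2}{N}$ together produce an approximately central positive contraction $x = r + s$ with $(\omega \circ \varphi) (x) \in \big[ \frac{1}{3}, \frac{1}{2} \big]$ and $\omega$-variance at least $\frac{1}{12}$. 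The contradiction then comes from the quantitative fullness estimate in the proof of \cite[Theorem 11]{Brnt}, namely
$\| x - \omega (x) \cdot 1 \|_{\omega}
 \leq 14 \max \big( \| [x, a] \|_{\omega}, \| [x, b] \|_{\omega},
 \| [x, c] \|_{\omega} \big)$,
which you would in any case have to state and invoke --- your proposed ``free-product moment estimate'' amounts to reproving it. To salvage your route you would need to replace the $M_2$ order zero maps by a device that pins down $\omega$-values without any trace; that is exactly what the dimension drop algebra accomplishes.
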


For a von Neumann algebra $N$
and a state $\rh \colon N \to {\mathbb{C}}$,
a unitary $u \in N$ is in the centralizer of~$\rh$
if and only if $\rh \circ {\mathrm{Ad}} (u) = \rh$.

The algebras are called $(M_j, \ph_j)$
in~\cite{Brnt},
but this notation conflicts with notation for matrix algebras.

As discussed at the beginning of
\cite[Section 2]{Brnt},
the state $\om$ is necessarily faithful and normal.
By \cite[Theorem 2]{Brnt},
$P$ is a factor of type ${\mathrm{III}}$.

\begin{proof}[Proof of Proposition~\ref{P_8624_NotZStab}]
Let $D$ be the dimension drop interval algebra
\[
D =
\bigl\{ g \in C \big( [0, 1], \, M_2 \otimes M_3 \big) \colon
  {\mbox{$g (0) \in M_2 \otimes {\mathbb{C}} \cdot 1$
         and $g (1) \in {\mathbb{C}} \cdot 1 \otimes M_3$}} \bigr\}.
\]
It suffices to show that there is no
approximately central sequence of unital homomorphisms from
$D$ to~$A$.
To do this,
we set
\[
N = 24
\andeqn
\ep = \frac{1}{70}.
\]
We let $(e_{j, k})_{j, k = 1, 2}$
be the standard system of matrix units for~$M_2$,
and we let $(f_{j, k})_{j, k = 1, 2, 3}$
be the standard system of matrix units for~$M_3$.
For $n = 1, 2, \ldots, N$,
define $h_n \colon [0, 1] \to [0, 1]$ by
\[
h_n (\ld)
 = \begin{cases}
   0               & \hspace*{1em} 0 \leq \ld \leq \frac{n - 1}{N}
        \\
   N \ld - n + 1   & \hspace*{1em} \frac{n - 1}{N} < \ld < \frac{n}{N}
       \\
   1               & \hspace*{1em} \frac{n}{N} \leq \ld \leq 1.
\end{cases}
\]
Then define functions
$r_{j, n} \in D$ for $j = 1, 2$
and $s_{j, n} \in D$ for $j = 1, 2, 3$
by
\[
r_{j, n} (\ld) = (1 - h_n (\ld)) (e_{j, j} \otimes 1)
\andeqn
s_{j, n} (\ld) = h_n (\ld) (1 \otimes f_{j, j}).
\]
Next, define a compact set $S \subseteq D$
to consist of the functions defined for $\ld \in [0, 1]$
by
\[
\ld \mapsto (u \otimes 1) r_{1, n} (\ld) (u \otimes 1)^*
\]
for $u \in U (M_2)$ and $n = 1, 2, \ldots, N$,
and
\[
\ld \mapsto (1 \otimes v) s_{1, n} (\ld) (1 \otimes v)^*
\]
for $v \in U (M_3)$ and $n = 1, 2, \ldots, N$.

If there were
an approximately central sequence of unital homomorphisms from
$D$ to~$A$,
then,
in particular,
there would be a unital homomorphism
$\ph \colon D \to A$
such that
\begin{equation}\label{Eq_8625_Commutators}
\| [ \ph (g), a ] \| < \ep,
\qquad
\| [ \ph (g), b ] \| < \ep,
\andeqn
\| [ \ph (g), c ] \| < \ep
\end{equation}
for all $g \in S$.
We will show that this can't happen.

Suppose therefore that
$\ph \colon D \to A$
is a unital homomorphism
such that~\eqref{Eq_8625_Commutators} holds for all $g \in S$.
Regard $C ([0, 1])$ as a unital subalgebra of~$D$
in the obvious way.
Let $\mu$ be the Borel probability measure on $[0, 1]$
such that $(\om \circ \ph) (g) = \int_{[0, 1]} g \, d \mu$
for all $g \in C ([0, 1])$.
We have
\[
\sum_{n = 1}^N
 \mu \left( \left[ \frac{n - 1}{N}, \, \frac{n}{N} \right] \right)
\leq 2 \mu ([0, 1])
= 2,
\]
so there exists $n \in \{ 1, 2, \ldots, N \}$ such that,
if we set $I = \left[ \frac{n - 1}{N}, \, \frac{n}{N} \right]$,
then $\mu (I) \leq \frac{2}{N}$.
Set $\af = (\om \circ \ph) (h_n)$.

We have
$r_{1, n} + r_{2, n} = 1 - h_n$,
so
\[
(\om \circ \ph) (r_{1, n}) + (\om \circ \ph) (r_{2, n}) = 1 - \af.
\]
Therefore there are $j, k \in {1, 2}$
such that
\[
(\om \circ \ph) (r_{j, n}) \geq \frac{1 - \af}{2}
\andeqn
(\om \circ \ph) (r_{k, n}) \leq \frac{1 - \af}{2}.
\]
The set of functions
\[
\ld \mapsto (u \otimes 1) r_{1, n} (\ld) (u \otimes 1)^*,
\]
for $u \in U (M_2)$,
is connected
and contains both $r_{j, n}$ and $r_{k, n}$.
Therefore there is $u \in U (M_2)$
such that the function
\[
r (\ld) = (u \otimes 1) r_{1, n} (\ld) (u \otimes 1)^*
\]
satisfies $(\om \circ \ph) (r) = \frac{1}{2} (1 - \af)$.
A similar argument,
starting with
\[
(\om \circ \ph) (s_{1, n})
   + (\om \circ \ph) (s_{2, n})
   + (\om \circ \ph) (s_{3, n})
  = \af,
\]
produces $v \in U (M_3)$
such that the function
\[
s (\ld) = (1 \otimes v) s_{1, n} (\ld) (1 \otimes v)^*
\]
satisfies $(\om \circ \ph) (s) = \frac{\af}{3}$.
Then $(\om \circ \ph) (r + s) \in \big[ \frac{1}{3}, \frac{1}{2} \big]$.
It follows that
\begin{equation}\label{Eq_8625_ThirdHalfConseq}
(\om \circ \ph) (r + s) - \big[ (\om \circ \ph) (r + s) \big]^2
 \geq \frac{1}{6}.
\end{equation}

Define $l \in C ([0, 1])$ by
\[
l (\ld) = \big\| r (\ld) + s (\ld) - (r (\ld) + s (\ld))^2 \big\|
\]
for $\ld \in [0, 1]$.
We have $0 \leq r + s \leq 1$,
so $0 \leq (r + s)^2 \leq r + s$,
and
\[
0 \leq r + s - (r + s)^2 \leq l \leq 1.
\]
Also $l (\ld) = 0$ for $\ld \not\in I$.
Therefore
\begin{align*}
0
& \leq (\om \circ \ph) (r + s) - (\om \circ \ph) \big( (r + s)^2 \big)
\\
& \leq (\om \circ \ph) (l)
  = \int_{[0, 1]} l \, d \mu
  \leq \mu (I)
  \leq \frac{2}{N}
  = \frac{1}{12}.
\end{align*}
Combining this inequality with~\eqref{Eq_8625_ThirdHalfConseq}
gives
\begin{equation}\label{Eq_8625_Est12}
(\om \circ \ph) \big( (r + s)^2 \big)
 - \big[ (\om \circ \ph) (r + s) \big]^2
 \geq \frac{1}{12}.
\end{equation}

Recall that for $x \in P$,
we have $\| x \|_{\om} = \om (x^* x)^{1/2}$.
In particular, $\| x \|_{\om} \leq \| x \|$.
The proof of \cite[Theorem 11]{Brnt}
shows that for every $x \in P$ we have
\[
\| x - \om (x) \cdot 1 \|_{\om}
 \leq 14 \max \big( \| [x, a] \|_{\om}, \,
     \| [x, b] \|_{\om}, \, \| [x, c] \|_{\om} \big).
\]
If $0 \leq x \leq 1$,
a calculation shows that
\[
\| x - \om (x) \cdot 1 \|_{\om}^2
  = \om (x^2) - \om (x)^2.
\]
Put $x = r + s$
and use \eqref{Eq_8625_Est12}~at the first step
and \eqref{Eq_8625_Commutators}~at the third step
to get
\begin{align*}
\frac{1}{12}
& \leq 14^2 \max \big( \| [x, a] \|_{\om}^2, \,
     \| [x, b] \|_{\om}^2, \, \| [x, c] \|_{\om}^2 \big)
\\
& \leq 14^2 \max \big( \| [x, a] \|^2, \,
     \| [x, b] \|^2, \, \| [x, c] \|^2 \big)
  < 14^2 \ep^2
  = \frac{1}{25},
\end{align*}
a contradiction.
\end{proof}

\begin{example}\label{E_6825_PINotZSt}
There is a simple separable unital purely infinite C*-algebra
which is not ${\mathcal{Z}}$-stable.

We start with the example at the end of~\cite{Brnt},
which gives
$(P_1, \om_1)$ and $(P_2, \om_2)$
satisfying the hypotheses of Proposition~\ref{P_8624_NotZStab}.
Let $(P, \om)$ and $a, b, c \in P$ be as there.

Since, by \cite[Theorem 2]{Brnt},
$P$ is a factor of type ${\mathrm{III}}$,
we can apply \cite[Proposition 1.3(i)]{DkmRdm}
(whose method goes at least back to \cite[Proposition 2.2]{Blkd1})
to find
a purely infinite simple separable C*-algebra $A \subseteq P$
such that $a, b, c \in A$.
Then $A$ is not ${\mathcal{Z}}$-stable
by Proposition~\ref{P_8624_NotZStab}.
\end{example}

In fact,
one can explicitly write down a purely infinite simple separable
reduced free product algebra which is not ${\mathcal{Z}}$-stable.

\begin{example}\label{E_8702_PINotZStable}
Let $A$ be
the reduced free product
$A = (M_2 \otimes M_2) \star_{\mathrm{r}} C ([0, 1])$,
taken with respect to the Lebesgue measure state on $C ([0, 1])$
and the state on $M_2 \otimes M_2$
given by tensor product of the usual tracial state ${\mathrm{tr}}$
with the state
$\rho (x) = {\operatorname{tr}} \big(
 {\mathrm{diag}} \big( \frac{1}{3}, \frac{2}{3} \big) x \big)$
on $M_2$.
We claim that $A$ is purely infinite and simple
but not ${\mathcal{Z}}$-stable.

To prove pure infiniteness,
in \cite[Example 3.9(iii)]{Dkm2}
take $A_1 = M_2$
with the state ${\mathrm{tr}}$,
take $F = M_2$
with the state~$\rh$,
and take $B = C ([0, 1])$
with the state given by Lebesgue measure.
These choices satisfy the hypotheses there.
So $A$ is is purely infinite and simple.

To prove failure of ${\mathcal{Z}}$-stability,
in Proposition~\ref{P_8624_NotZStab}
take $P_1 = M_2 \otimes M_2$
with the state ${\operatorname{tr}} \otimes \rh$,
and take $P_2 = L^{\infty} ([0, 1])$
with the state given by Lebesgue measure.
Take
$G_1 = \left\{ 1, \,
 \left( \begin{smallmatrix} 1 & 0 \\ 0 & -1 \end{smallmatrix} \right)
   \otimes 1 \right\}$,
and take $G_2$ to be the set of functions
$\ld \mapsto e^{2 \pi i n \ld}$ for $n \in \Z$.
These choices
satisfy the hypotheses there.
Moreover, $G_2 \subseteq C ([0, 1])$.
Therefore the reduced free product
$A = (M_2 \otimes M_2) \star_{\mathrm{r}} C ([0, 1])$
is a subalgebra of the algebra~$P$ of Proposition~\ref{P_8624_NotZStab}
which contains $a$, $b$, and~$c$,
so is not ${\mathcal{Z}}$-stable by Proposition~\ref{P_8624_NotZStab}.
\end{example}

One might hope that if $A$ is
a purely infinite simple separable C*-algebra,
then the conditions of ${\mathcal{O}}_{\infty}$-stability,
${\mathcal{Z}}$-stability,
and approximate divisibility are all equivalent.
This seems too good to be true.

\begin{example}\label{Ex_8817_WZ}
There is a simple separable stably finite unital C*-algebra
which is tracially $\mathcal{Z}$-absorbing
but not $\mathcal{Z}$-absorbing.
Start with the example in~\cite{NiuWng}
of a simple separable stably finite unital C*-algebra
which has tracial rank zero \cite[Definition 3.6.2]{LnBook}
but is not $\mathcal{Z}$-absorbing.
This algebra is tracially $\mathcal{Z}$-absorbing
by lemma~\ref{P_8809_TAFtoTrZAbs}.
\end{example}

\section{The Cuntz semigroup}\label{sec_cu}

\indent
In this section we generalize Theorem~3.3 of \cite{HO13}
to the nonunital case.
More precisely, we show that
if $A$ is a simple tracially $\mathcal{Z}$-absorbing C*-algebra
then $W (A)$ is
almost unperforated.
This implies that $A$ has strict comparison
in a sense suitable for nonunital C*-algebras.
(See Definition \ref{D_8604_StrComp} below.)
The proof is a modification of the argument given in \cite{HO13}.
Moreover, we show that $A$ is weakly almost divisible,
as in Definition~\ref{DEF-WAD}.

\begin{definition}\label{wpp}
Let $A$ be a C*-algebra and let $a \in {M_{\infty} (A)}_{+}$.
We say that $a$ is \emph{weakly purely positive}
if 0 is  an accumulation point of $\mathrm{sp} (a)$.
\end{definition}

\begin{remark}\label{R_8601_Wpp}
Recall (see before \cite[Corollary 2.24]{APT11})
that $a \in {M_{\infty} (A)}_{+}$ is called purely positive
if there is no projection $p \in {M_{\infty} (A)}_{+}$ such that
$\langle p \rangle = \langle a \rangle$ in $W (A)$.
By \cite[Proposition 2.23]{APT11},
if $A$ is a unital C*-algebra with $\mathrm{tsr} (A) = 1$
then $a \in {M_{\infty} (A)}_{+}$
is weakly purely positive if and only if $a$ is purely positive.
In general, pure positivity implies weak pure positivity.
We don't know whether the converse holds.
\end{remark}

The following lemma is a nonunital analog of \cite[Lemma~3.2]{HO13}.

\begin{lemma}\label{lemwpp}
Let $A$ be a simple tracially $\mathcal{Z}$-absorbing C*-algebra.
Let $a, b \in A_{+}$.
Suppose $b$ is weakly purely positive
and there is a positive integer~$n$
such that $n \langle a \rangle \leq n \langle b \rangle$
in $W (A)$.
Then $a \precsim b$.
\end{lemma}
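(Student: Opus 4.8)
The plan is to reduce to showing $(a-\varepsilon)_+\precsim b$ for every $\varepsilon>0$, and to build out of a single order-zero copy of $M_n$, supplied by tracial $\mathcal{Z}$-absorption, one subequivalence into $b$ that absorbs the multiplicity $n$ occurring in the hypothesis $n\langle a\rangle\le n\langle b\rangle$. First I would use weak pure positivity of $b$ to reserve room: since $0$ is a limit point of $\mathrm{sp}(b)$, for each small $\kappa>0$ there is a nonzero $b_2=g(b)$ with $g$ supported in $(0,\kappa)$, so that $b_2$ is orthogonal to $(b-\kappa)_+$ and $(b-\kappa)_+\oplus b_2\precsim b$. The nonzero ``buffer'' $b_2$ is exactly the extra room that upgrades the borderline inequality $n\langle a\rangle\le n\langle b\rangle$ (rather than $(n+1)\langle a\rangle\le n\langle b\rangle$) to a genuine subequivalence, and it is where the hypothesis on $b$ gets spent.

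Next I would apply Definition~\ref{deftza} (using Lemma~\ref{lemind} and Remark~\ref{rmkdeftza} to handle nonunitality) with $n$ as given, with $x=a^{1/2}$, with a finite set $F$ containing $a^{1/2}$, $b$, and $(b-\kappa)_+$, and with the buffer $b_2$ as the nonzero target element. This produces a c.p.c.\ order zero map $\varphi\colon M_n\to A$ with $h=\varphi(1)$ satisfying $\bigl(a-a^{1/2}ha^{1/2}-\eta\bigr)_+\precsim b_2$ and $\varphi$ approximately central for $F$. Since $a^{1/2}(1-h)a^{1/2}=a-a^{1/2}ha^{1/2}$ and $a^{1/2}ha^{1/2}$ are positive, the standard splitting gives $a\precsim\bigl(a-a^{1/2}ha^{1/2}\bigr)\oplus a^{1/2}ha^{1/2}\precsim b_2\oplus a^{1/2}ha^{1/2}$ up to $\varepsilon$ (here $\oplus$ denotes the diagonal sum). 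Thus it remains to prove $\bigl(a^{1/2}ha^{1/2}-\varepsilon'\bigr)_+\precsim(b-\kappa)_+$, after which $(a-\varepsilon)_+\precsim b_2\oplus(b-\kappa)_+\precsim b$, and letting $\varepsilon,\kappa\to0$ finishes.

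For that remaining subequivalence I would exploit the order-zero structure of $\varphi$. The elements $\varphi(e_{jj})$ are pairwise orthogonal, mutually Cuntz equivalent, and (by the commutation condition) approximately commute with $a$ and $b$; together with the supporting homomorphism $\pi$, for which $\varphi(z)=h\pi(z)$ and $\varphi(x)\varphi(y)=h\varphi(xy)$, they exhibit $a^{1/2}ha^{1/2}$ and $h^{1/2}bh^{1/2}$ as, approximately, $n$ mutually orthogonal and mutually Cuntz-equivalent corner compressions, that is, as $n\langle\alpha\rangle$ and $n\langle\beta\rangle$ inside the common corner $\overline{\varphi(e_{11})A\varphi(e_{11})}$. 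The point is then to feed the $M_n$-level hypothesis through the approximate matrix units $\pi(e_{1j})$ so that the $n$ parallel copies of $a$ are matched against the $n$ parallel copies of $b$ and collapsed onto a single copy, yielding $\bigl(a^{1/2}ha^{1/2}-\varepsilon'\bigr)_+\precsim b$; this adapts the computation of \cite[Lemma~3.2]{HO13} to the nonunital setting.

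The main obstacle is precisely this transport step: carrying the borderline $n$-fold Cuntz subequivalence through one order-zero copy of $M_n$ so that the multiplicity cancels and the estimate lands in a single copy of $(b-\kappa)_+$, with only the prescribed buffer $b_2$ available to absorb the accumulated errors. The nonunital features — that $\pi$ lives in $M(A)$ rather than in $A$, that the order-zero product rule carries the extra factor $h=\varphi(1)$, and that commutation with $a$ and $b$ is only approximate — are what make the bookkeeping delicate, and the spectral room granted by weak pure positivity must be calibrated against the order-zero error $a-a^{1/2}ha^{1/2}$ so that both pieces fit below $b$.
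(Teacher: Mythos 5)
Your proposal is correct and follows essentially the same route as the paper's proof, which, like yours, adapts the computation of \cite[Lemma~3.2]{HO13}: a nonzero buffer cut from the spectrum of $b$ near $0$ and orthogonal to $(b-\delta)_+$, an order-zero map $\varphi$ whose defect $\bigl(x^2 - x\varphi(1)x - \eta\bigr)_+$ is absorbed by that buffer, a splitting of $(a-\varepsilon)_+$ into its $\varphi(1)$- and $(1-\varphi(1))$-parts, and a collapse of the matrix $c=(c_{j,k})$ implementing $(a-\varepsilon)_+\otimes 1_n \precsim (b-\delta)_+\otimes 1_n$ through the approximate matrix units $g(\varphi)(e_{j,k})$. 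The one adjustment your write-up needs is the order of quantifiers: you must fix $\varepsilon$, extract $\delta$ and the implementing matrix $c$ first, and only then invoke tracial $\mathcal{Z}$-absorption with a finite set $F$ containing the products $c_{j,k}(b-\delta)_+c_{l,m}^*$ (approximate commutation with $a^{1/2}$, $b$, and $(b-\kappa)_+$ alone is not enough for the collapse computation, since the $c_{j,k}$ are arbitrary elements of $A$).
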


Some parts of the proof are similar
to the proof of \cite[Lemma 3.2]{HO13}.
(See \cite[Notation 2.13]{HO13} for some of the
notation used in that proof.)
We provide a complete proof because of some differences
due to the absence of
the identity.
(Also, in the proof of
\cite[Lemma~3.2]{HO13},
we could not follow one step in the proof of
the estimate
$\| \widehat{c} (b - \delta)_{+} \widehat{c}^{*} - a_{1} \|
  < \frac{\mu}{3}$.)

\begin{proof}[Proof of Lemma~\ref{lemwpp}]
We may assume that $\| a \| = \|  b \| = 1$.
Let $\varepsilon > 0$;
we prove that $(a - \varepsilon)_{+} \precsim b$.

The assumption says $a \otimes 1_{n} \precsim b \otimes 1_{n}$.
By \cite[Proposition 2.6]{KR00},
there is $\dt > 0$ such that
\[
\big( a \otimes 1_{n} - \tfrac{\varepsilon}{2} \big)_{+}
 \precsim ( b \otimes 1_{n} - \dt )_{+}.
\]
Choose $c_0 \in M_n (A)$
such that
\[
\big\| c_0 \big[ (b \otimes 1_{n} - \dt)_{+} \big] c_0^*
  - \big( a \otimes 1_{n} - \tfrac{\varepsilon}{2} \big)_{+} \big\|
 < \tfrac{\varepsilon}{2}.
\]
It follows from Lemma~\ref{lemkr} that
there is $c_1 \in M_n (A)$
such that
\[
c_1 c_0 \big[ (b \otimes 1_{n} - \dt)_{+} \big] c_0^* c_1^*
  = \big( \big( a \otimes 1_{n} - \tfrac{\varepsilon}{2} \big)_{+}
        - \tfrac{\varepsilon}{2} \big)_{+}.
\]
Thus,
\[
c_1 c_0 \big[ (b - \delta)_{+} \otimes 1_{n} \big] (c_1 c_0)^{*}
  = (a - \varepsilon)_{+} \otimes 1_{n}.
\]

Let $f, h \in C_{0} ((0 , 1])$
be nonnegative functions such that
$f = 0$ on $\big[ \frac{\delta}{2}, 1 \big]$,
$f > 0$ on $\big( 0, \frac{\delta}{2} \big)$,
and $\| f \| = 1$,
and such that
$h = 0$ on $\big[ 0, \frac{\delta}{2} \big]$
and $h = 1$ on $[\dt, 1]$.
Put $d = f (b)$.
Since $b$ is weakly purely positive, $d \neq 0$.
Put $c = c_1 c_0 [h (b) \otimes 1_{n}]$.
Since $h (\ld) (\ld - \delta)_{+} = (\ld - \delta)_{+}$
for $\ld \in [0, 1]$
and $h f = 0$,
we get
\[
c \big[ (b - \delta)_{+} \otimes 1_{n} \big] c^{*}
  = (a - \varepsilon)_{+} \otimes 1_{n}
\andeqn
c (d \otimes 1_{n}) = 0.
\]
Write $c = (c_{j, k})_{1 \leq j, k \leq n}$ with
$c_{j, k} \in A$ for $j, k = 1, 2, \ldots, n$.
Then
\begin{equation}\label{Eq_8701_4Star}
c_{j, k} d = 0.
\end{equation}

For any $\mu > 0$ we will find
$z \in A$ such that
\begin{equation}\label{equwpp1}
\big\| z \big[ (b - \delta)_{+} + d \big] z^{*}
        - (a - \varepsilon)_{+} \big\|
  < \mu.
\end{equation}
This will prove the first step in the calculation
\[
(a - \varepsilon)_{+} \precsim (b - \delta)_{+} + d \precsim b;
\]
since the second step is clear,
and $\varepsilon > 0$ is arbitrary,
we will have finished the proof.

So fix $\mu > 0$.
We have
\begin{equation}\label{Eq_8701_cSum}
\sum_{l = 1}^{n} c_{j, l} (b - \delta)_{+} c_{k, l}^{*}
 =
\begin{cases}
(a - \varepsilon)_{+}  & \hspace*{1em} j = k
\\
0                      & \hspace*{1em} j \neq k.
\end{cases}
\end{equation}

Similarly to the proof of \cite[Lemma~3.1]{HO13}
define $g, h \in C_{0} ((0, 1])$ by
\[
g (\lambda) = \begin{cases}
\sqrt{ 13 \lambda / \mu} & \hspace*{1em} \lambda < \mu / 13
\\
1                        & \hspace*{1em} \lambda \geq \mu / 13
\end{cases}
\andeqn
h (\lambda) = 1 - \sqrt{1 - \lambda}
\]
for $\lambda \in [0, 1]$.
Then
\begin{equation}\label{Eq_8701_g2}
| g (\lambda)^{2} \lambda - \lambda | < \tfrac{\mu}{12}
\andeqn
1 - h (\lambda) = \sqrt{1 - \lambda}
\end{equation}
for all $\lambda \in [0, 1]$.

Put
\[
F = \big\{ (a - \varepsilon)_{+} \big\}
   \cup \big\{ c_{j, k} (b - \delta)_{+} c^{*}_{l, m}
          \colon j, k, l, m = 1, 2, \ldots, n \big\}.
\]

At this point,
we need functional calculus for c.p.c.~order zero maps
for a function $k \in C_0 ((0, 1])$
(\cite[Corollary~4.2]{WZ09}).
We follow the notation there,
writing $k (\ph)$
(and then $\ph^{1/2}$),
rather than $k [\varphi]$
(and $\sqrt{[\varphi]}$)
as in \cite[Notation 2.13]{HO13}.
We briefly recall its construction,
slightly simplified since the domain is unital.
By \cite[Theorem~3.3]{WZ09}),
there is a unital homomorphism $\pi$ from $M_n$ to
the multiplier algebra of the C*-algebra
generated by the range of~$\ph$
such that $\ph (w) = \ph (1) \pi (w) = \pi (w) \ph (1)$
for all $w \in M_n$.
Then $k (\ph) (w) = k (\ph (1)) \pi (w)$
for all $w \in M_n$.
We need two consequences:
\begin{equation}\label{Eq_8701_Comm}
k (\ph) (w) \cdot \ph (1) = \ph (1) \cdot k (\ph) (w)
\end{equation}
for any $k \in C_0 ((0, 1])$ and $w \in M_n$,
and
\begin{equation}\label{Eq_8701_Prod}
k (\ph) (w_1) \cdot k (\ph) (w_2) = k (\ph (1)) \cdot k (\ph) (w_1 w_2)
\end{equation}
for any $f \in C_0 ((0, 1])$ and $w_1, w_2 \in M_n$.

For any C*-algebras $B$ and~$A$,
any map $\ph \colon B \to A$,
and any subset $F \subseteq A$,
we define
\[
C (\ph, F)
 = \sup \big( \bigl\{ \| [\varphi (w), y] \| \colon
   {\mbox{$w \in B$ satisfies $\| w \| \leq 1$ and $y \in F$}}
  \bigr\} \big).
\]
It follows from \cite[Lemma~2.8]{HO13}
that there is $\eta_{0} > 0$ such that
if $\varphi \colon  M_{n} \to A$ is a c.p.c.~order zero map with
$C (\ph, F) < \eta_{0}$, then
\begin{equation}\label{equwpp2}
C ( g (\varphi), F )  < \frac{\mu}{24n^{4}},
\qquad
C ( \varphi^{1/2}, F ) < \frac{\mu}{24n^{4}},
\qquad {\mbox{and}} \qquad
C ( h (\varphi), F ) < \frac{\mu}{6}.
\end{equation}

Use \cite[Lemma 2.5]{AP16} to choose $\eta > 0$
such that $\eta < \min \big( \eta_{0}, \tfrac{\mu}{30} \big)$
and such that whenever
$e, s \in A_{+}$
satisfy
\[
\| e \| \leq 1,
\qquad
\| s \| \leq 1,
\qquad {\mbox{and}} \qquad
\| e s - s e \| < 9 \eta,
\]
then
$\big\| e^{1/2} s - s e^{1/2} \big\| < \mu / 6$.
Choose $x \in A_{+}$ such that
\begin{equation}\label{Eq_8701_2Star}
\| x \| \leq 1
\qquad {\mbox{and}} \qquad
\big\| x (a - \varepsilon)_{+} - (a - \varepsilon)_{+} \big\| < \eta.
\end{equation}

Applying Definition~\ref{deftza},
with $\eta$ in place of $\varepsilon$,
and with $n$, $F$, and $x$ as given, we obtain
a c.p.c.~order zero map $\varphi \colon M_{n} \to A$ such that
\begin{equation}\label{Eq_8817_NewNew}
C ( \varphi, F) < \eta
\qquad {\mbox{and}} \qquad
\big( x^{2} - x \varphi (1) x - \eta \big)_{+} \precsim d.
\end{equation}
Put
\begin{equation}\label{Eq_8701_1Star}
r_0 = 1 - \varphi (1)
\qquad {\mbox{and}} \qquad
r = \big( x^{2} - x \varphi (1) x - \eta \big)_{+}
  = (x r_0 x - \eta)_{+}.
\end{equation}
%
% (The element $r_0$ was called $r$
% in the proof of \cite[Lemma~3.1]{HO13}.
% Our $r$ doesn't appear there.)
Then $r_0 \in A^{\sim}$ and $r \in A$.
Set
\[
a_{1} = \varphi (1) (a - \varepsilon)_{+},
\ \ \
a_{2} = r_0^{1/2} (a - \varepsilon)_{+} r_0^{1/2},
\ \ \
{\mbox{and}}
\ \ \
a_{3} = r^{1/2} (a - \varepsilon)_{+} r^{1/2}.
\]
Since
\begin{equation}\label{Eq_8701_3Star}
\big\| \big[ r_0, \, (a - \varepsilon)_{+} \big] \big\|
 = \big\| \big[ 1 - \ph (1), \, (a - \varepsilon)_{+} \big] \big\|
 = \big\| \big[ \ph (1), \, (a - \varepsilon)_{+} \big] \big\|
 < \eta,
\end{equation}
the choice of $\eta$ implies
\begin{equation}\label{equwpp3}
\big\| (a - \varepsilon)_{+} - (a_{1} + a_{2}) \big\| < \frac{\mu}{6}.
\end{equation}
For $j, k = 1, 2, \ldots, n$,
let $e_{j, k} \in M_n$
be the standard matrix unit,
and set
\begin{equation}\label{Eq_8701_5Star}
g_{j, k} = g (\varphi) (e_{j, k})
\qquad {\mbox{and}} \qquad
\widehat{c}_{j, k} = \varphi^{1/2} (1) g_{j, k} c_{j, k}.
\end{equation}
Then set
$\widehat{c} = \sum_{j, k = 1}^{n} \widehat{c}_{j, k}$.

We claim that
\begin{equation}\label{Eq_8701_6Star}
\big\| \widehat{c} (b - \delta)_{+} \widehat{c}^{*} - a_{1} \big\|
   < \frac{\mu}{6}.
\end{equation}
We follow the proof of \cite[Lemma~3.1]{HO13}.
One checks,
following the steps there and using
\eqref{Eq_8701_Prod} and~\eqref{Eq_8701_cSum},
that
%  %
%  \begin{align}\label{Eq_8701_jklm}
%  \sum_{j, k, l, m = 1}^{n} g_{j, k} g_{l, m} c_{j, k}
%            (b - \delta)_{+} c_{m, l}^{*}
%  & = \sum_{j, k, m = 1}^{n} g_{j, k} g_{k, m} c_{j, k}
%            (b - \delta)_{+} c_{m, k}^{*}
%  \\
%  & = \sum_{j, k, m = 1}^{n} g (\ph) (1) \cdot g_{j, m} c_{j, k}
%            (b - \delta)_{+} c_{m, k}^{*}
%  \notag
%  \\
%  & = \sum_{j, m = 1}^{n} g (\ph) (1) \cdot g_{j, m}
%          \left( \sum_{k = 1}^{n} c_{j, k}
%            (b - \delta)_{+} c_{m, k}^{*} \right)
%  \notag
%  \\
%  & = \sum_{j = 1}^{n} g (\ph) (1) \cdot g_{j, j}
%          (a - \varepsilon)_{+}
%  \notag
%  \\
%  & = g (\ph) (1)^2 \cdot (a - \varepsilon)_{+}.
%  \notag
%  \end{align}
%  %
%
\begin{equation}\label{Eq_8701_jklm}
\sum_{j, k, l, m = 1}^{n} g_{j, k} g_{l, m} c_{j, k}
          (b - \delta)_{+} c_{m, l}^{*}
 = g (\ph) (1)^2 \cdot (a - \varepsilon)_{+}.
\end{equation}
Now,
using \eqref{Eq_8701_5Star} at the second step,
using \eqref{equwpp2} at the third step,
using \eqref{equwpp2} and~\eqref{Eq_8701_Comm} at the fourth step,
using \eqref{Eq_8701_jklm} at the fifth step,
and using \eqref{Eq_8701_g2} at the sixth step,
we have
\begin{align*}
\widehat{c} (b - \delta)_{+} \widehat{c}^{*}
& = \sum_{j, k, l, m = 1}^{n}
  \widehat{c}_{j, k} (b - \delta)_{+} \widehat{c}^{*}_{m, l}
\\
& = \varphi^{1/2} (1) \left( \sum_{j, k, l, m = 1}^{n}
    g_{j, k} c_{j, k} (b - \delta)_{+} c_{m, l}^{*} g_{l, m} \right)
\varphi^{1/2} (1)
\\
& \approx_{\mu / 24} \varphi^{1/2} (1)
   \left( \sum_{j, k, l, m = 1}^{n} g_{j, k} g_{l, m} c_{j, k}
          (b - \delta)_{+} c_{m, l}^{*} \right)
     \varphi^{1/2} (1)
\\
& \approx_{\mu / 24} \varphi (1)
     \left(\sum_{j, k, l, m = 1}^{n} g_{j, k} g_{l, m} c_{j, k}
          (b - \delta)_{+} c_{m, l}^{*} \right)
\\
& = \varphi (1) \cdot g (\ph) (1)^2 \cdot (a - \varepsilon)_{+}
\\
& \approx_{\mu / 12} \varphi (1) (a - \varepsilon)_{+}
\\
& = a_{1}.
\end{align*}
This proves~\eqref{Eq_8701_6Star}.
% Thus,
% $\| \widehat{c} (b - \delta)_{+} \widehat{c}^{*} - a_{1} \|
%     < \frac{\mu}{6}$,
% as claimed.

We claim that
\begin{equation}\label{equwpp4}
\| a_{2} - a_{3} \| < \frac{\mu}{2}.
\end{equation}
First,
using \eqref{Eq_8701_1Star} at the first step,
using \eqref{Eq_8701_2Star} at the second and fourth steps,
and using \eqref{Eq_8701_3Star}
and (\ref{Eq_8701_1Star}) at the third and fifth steps,
we have
\begin{align*}
r (a -\varepsilon)_{+}
& \approx_{\eta} x r_0 x (a - \varepsilon)_{+}
  \approx_{\eta} x r_0 (a - \varepsilon)_{+}
\\
& \approx_{\eta} x (a - \varepsilon)_{+} r_0
  \approx_{\eta} (a - \varepsilon)_{+} r_0
  \approx_{\eta} r_0 (a - \varepsilon)_{+}.
\end{align*}
Thus
\[
\big\| r (a - \varepsilon)_{+} - r_0 (a - \varepsilon)_{+} \big\|
  < 5 \eta
\andeqn
\big\| r (a - \varepsilon)_{+} - (a - \varepsilon)_{+} r_0 \big\|
  < 4 \eta.
\]
Taking adjoints in the second inequality and combining gives
\[
\big\| r (a - \varepsilon)_{+} - (a - \varepsilon)_{+} r \big\|
  < 9 \eta.
\]
By the choice of $\eta$,
and using \eqref{Eq_8701_3Star} for the first inequality,
\[
\big\| r_0^{1/2} (a - \varepsilon)_{+}
        - (a - \varepsilon)_{+} r_0^{1/2} \big\|
  < \frac{\mu}{6}
\quad {\mbox{and}} \quad
\big\| r^{1/2} (a - \varepsilon)_{+}
        - (a - \varepsilon)_{+} r^{1/2} \big\|
  < \frac{\mu}{6}.
\]
Now
\begin{align*}
\| a_{2} - a_{3} \|
& = \big\| r_0^{1/2} (a - \varepsilon)_{+}r_0^{1/2}
     - r^{1/2} (a - \varepsilon)_{+} r^{1/2} \big\|
\\
& \leq \frac{\mu}{6} + \frac{\mu}{6}
 + \big\| r_0 (a - \varepsilon)_{+} - r (a - \varepsilon)_{+} \big\|
\\
& < \frac{\mu}{3} + 5 \eta
  < \frac{\mu}{3} + \frac{\mu}{6}
  = \frac{\mu}{2}.
\end{align*}
This proves \eqref{equwpp4}.

Using (\ref{Eq_8817_NewNew})
and~(\ref{Eq_8701_1Star}) for the second step,
we get $a_{3} \precsim r \precsim d$.
Choose $s \in A$ such that
$\| s d s^{*} - a_{3} \| < \frac{\mu}{6}$.
Since $d = f (b)$,
as in the proof of \cite[Lemma~3.1]{HO13},
replacing $s$ with $s k (b)$
for a function $k \in C ([0, 1])$
which is $1$ on $\big[ 0, \frac{\delta}{2} \big]$
and vanishes on $[\delta, 1]$,
we may assume that $s (b - \delta)_{+} = 0$.
Set $z =  \widehat{c} + s$.
Using \eqref{Eq_8701_4Star} and \eqref{Eq_8701_5Star}
at the first step,
and
using \eqref{Eq_8701_6Star}, \eqref{equwpp4},
and \eqref{equwpp3} at the third step,
we get
\begin{align*}
& \big\| z[(b - \delta)_{+} + d]z^{*} - (a - \varepsilon)_{+} \big\|
\\
& \hspace*{1em} {\mbox{}}
 = \big\| \widehat{c} (b - \delta)_{+} \widehat{c}^{*}
      + s d s^{*} - (a - \varepsilon)_{+} \big\|
\\
& \hspace*{1em} {\mbox{}}
 \leq \big\| \widehat{c} (b - \delta)_{+} \widehat{c}^{*} - a_{1} \big\|
      + \| s d s^{*} - a_{3} \|
      + \| a_{3} - a_{2} \|
      + \| a_{1} + a_{2} - (a - \varepsilon)_{+} \|
\\
& \hspace*{1em} {\mbox{}}
 < \frac{\mu}{6} + \frac{\mu}{6} + \frac{\mu}{2} + \frac{\mu}{6}
 = \mu.
\end{align*}
This proves \eqref{equwpp1} and completes the proof.
\end{proof}

The following theorem is the nonunital generalization
of \cite[Theorem 3.1]{HO13}.
The proof is essentially the same.

\begin{theorem}\label{thm_almup}
Let A be a simple tracially $\mathcal{Z}$-absorbing C*-algebra.
Then $W (A)$ is
almost unperforated.
\end{theorem}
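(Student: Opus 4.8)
The plan is to derive almost unperforation directly from the comparison lemma just established, Lemma~\ref{lemwpp}, which is precisely the device that removes an amplification from a Cuntz comparison at the cost of assuming the dominating element is weakly purely positive. Recall that $W(A)$ is \emph{almost unperforated} if, whenever $(n+1)\langle a\rangle\le n\langle b\rangle$ in $W(A)$ for some $n\in\N$, one has $\langle a\rangle\le\langle b\rangle$. First I would reduce to elements of $A$ itself: any two classes of $W(A)$ are represented by positive elements of some $M_k(A)$, and since $M_k(A)$ is again simple and tracially $\mathcal{Z}$-absorbing by Proposition~\ref{propmatrix} while $W(M_k(A))$ is canonically order-isomorphic to $W(A)$, I may replace $A$ by $M_k(A)$ and assume $a,b\in A_+$. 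It then suffices to show $(a-\varepsilon)_+\precsim b$ for every $\varepsilon>0$. From $a\otimes 1_{n+1}\precsim b\otimes 1_n$ and \cite[Proposition~2.6]{KR00} I obtain $\delta>0$ with $(a-\varepsilon)_+\otimes 1_{n+1}\precsim (b-\delta)_+\otimes 1_n$; writing $a_1=(a-\varepsilon)_+$ and $b_1=(b-\delta)_+$ this gives $(n+1)\langle a_1\rangle\le n\langle b_1\rangle\le(n+1)\langle b_1\rangle$ together with $b_1\precsim b$.

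The core of the argument is then to produce a weakly purely positive element $c$ with $c\precsim b_1$ and $(n+1)\langle a_1\rangle\le(n+1)\langle c\rangle$, for once this is in hand Lemma~\ref{lemwpp}, applied with $n+1$ in place of $n$, yields $a_1\precsim c\precsim b_1\precsim b$ and we are done. If $0$ is an accumulation point of $\mathrm{sp}(b_1)$, so that $b_1$ is already weakly purely positive, I simply take $c=b_1$, using $(n+1)\langle a_1\rangle\le n\langle b_1\rangle\le(n+1)\langle b_1\rangle$. Otherwise $b_1$ has a spectral gap at $0$ and is Cuntz equivalent to a projection $p$, and the goal becomes to find a weakly purely positive $c\precsim p$ with $n\langle p\rangle\le(n+1)\langle c\rangle$; then $(n+1)\langle a_1\rangle\le n\langle p\rangle=n\langle b_1\rangle\le(n+1)\langle c\rangle$ as required. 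To build such a $c$ I would work in the corner $pM_k(A)p$, which is tracially $\mathcal{Z}$-absorbing by Theorem~\ref{thmher} and hence simple and not of type~I by Corollary~\ref{cornone}, apply Definition~\ref{deftza} there to obtain a nearly unital c.p.c.\ order zero map $\varphi\colon M_N\to pM_k(A)p$ with $p-\varphi(1)\precsim e$ for a prescribed Cuntz-small $e$, and take $c=\varphi(1)$, arranging (by order zero functional calculus and the absence of minimal projections) that $c$ is weakly purely positive. Choosing $e$ so small that $ne\precsim c$ then gives $n\langle p\rangle\le n\langle c\rangle+n\langle e\rangle\le n\langle c\rangle+\langle c\rangle=(n+1)\langle c\rangle$.

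The step I expect to be the real obstacle is this last construction, the case where $b_1$ is Cuntz equivalent to a projection. The difficulty is structural: $W(A)$ is not cancellative, so I cannot simply subtract the defect between $c$ and $p$, and continuous functional calculus applied to $b_1$ (or to $p$) can never create spectrum accumulating at $0$, so the weakly purely positive approximant must be manufactured from new elements of the corner rather than from $b_1$ itself. Producing $c$ that is simultaneously weakly purely positive, Cuntz below $p$, and large enough that $n+1$ copies of it still dominate $n$ copies of $p$, using only simplicity, the failure of type~I, and the divisibility supplied by tracial $\mathcal{Z}$-absorption, is the delicate point; by contrast the weakly purely positive case is immediate, and all remaining estimates are routine applications of Lemma~\ref{lemkr} and \cite[Proposition~2.6]{KR00}.
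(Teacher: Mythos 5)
Your overall architecture is exactly the paper's: reduce to $a,b\in A_{+}$ via Proposition~\ref{propmatrix}, split according to whether the dominating element is weakly purely positive (in the sense actually used in the proof of Lemma~\ref{lemwpp}, namely spectrum accumulating at $0$), apply Lemma~\ref{lemwpp} directly in the first case, and in the second case replace the projection $p\sim b$ by a weakly purely positive $c\precsim p$ which is still large enough for Lemma~\ref{lemwpp} to apply. The paper closes the second case by quoting \cite[Lemma~3.1]{HO13}, which produces, in any simple unital C*-algebra that is not of type~I (here the corner $p M_k (A) p$, which qualifies by Theorem~\ref{thmher} and Corollary~\ref{cornone}), a weakly purely positive $c\le p$ with $(k-1)\langle p\rangle\le k\langle c\rangle$; notably, this step uses only the failure of type~I, not tracial $\mathcal{Z}$-absorption.

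The gap is in your replacement for that citation, and it is the step you yourself flagged. Two things go wrong in the sketch. First, there is a circularity: in Definition~\ref{deftza} the Cuntz-small target $e$ must be fixed before the order zero map $\varphi$ is produced, but your requirement $n\langle e\rangle\le\langle c\rangle$ with $c=\varphi(1)$ refers to the output; and since $W(A)$ is not cancellative you cannot recover $n\langle e\rangle\le\langle\varphi(1)\rangle$ from $\langle p\rangle\le\langle\varphi(1)\rangle+\langle e\rangle$ together with a smallness condition on $e$ relative to $p$. Second, nothing in Definition~\ref{deftza} prevents $\varphi$ from being a unital homomorphism onto a corner, in which case $\varphi(1)$ is a projection and is certainly not weakly purely positive; ``order zero functional calculus and the absence of minimal projections'' does not by itself manufacture spectrum accumulating at $0$. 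Both problems disappear if you invoke \cite[Lemma~3.1]{HO13} (or reprove it directly: the construction there is built from a positive element with connected spectrum in a small piece of the corner, not from an almost-unital order zero map), so the proof is repairable with the paper's citation, but as written the projection case is not established.
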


\begin{proof}
Let $a, b \in M_{\infty} (A)_{+}$.
Suppose there is $k \geq 2$
such that $k \langle a \rangle \leq (k - 1) \langle b \rangle$.
We have to show that $a \precsim b$.
Choose $n \in \mathbb{N}$ such that
$a, b \in M_{n} (A)_{+}$.
Since $M_{n} (A)$ is tracially $\mathcal{Z}$-absorbing
(by Proposition~\ref{propmatrix}), we may assume that $a, b \in A$.
If $b$ is weakly purely positive then
Lemma~\ref{lemwpp} implies that $a \precsim b$.
If $b$ is not weakly purely positive
then $b$ is not purely positive.
So there exists a projection $p \in A$
such that $\langle p \rangle = \langle b \rangle$.
We may assume that $p \neq 0$.
Since $A$ is not type~I (by Corollary~\ref{cornone}),
we can apply \cite[Lemma~3.1]{HO13}
to obtain a
weakly purely positive element $c \in A$ such
that
$(k - 1) \langle p \rangle \leq k \langle c \rangle$ and $c \leq p$.
Thus
$k \langle a \rangle
 \leq (k - 1) \langle p \rangle
 \leq k \langle c \rangle$.
Therefore, by Lemma~\ref{lemwpp}, $a \precsim c \leq p \sim b$.
\end{proof}

For strict comparison,
we need dimension functions.
We give careful statements since there is conflicting
terminology in the literature.

\begin{definition}\label{D_8605_DimFcn}
Let $A$ be a C*-algebra.
A {\emph{dimension function}} $d$ on $A$ is
an additive order preserving function
$d \colon W (A) \to [0, \infty]$.
It is equivalent to give a function
$d \colon M_{\infty} (A)_{+} \to [0, \infty]$
such that $d (a \oplus b) = d (a) + d (b)$
and $a \precsim b$ implies $d (a) \leq d (b)$,
and we use the same letter for both versions
of a dimension function.
The set of all dimension functions on $A$
is denoted by $\mathrm{DF} (A)$.
\end{definition}

This definition is given after \cite[Theorem 4.5]{Ro04}.
We warn that at the beginning of \cite[Section 4]{Ro91},
the algebra $A$ is assumed unital
and it is required that $d ( \langle 1_A \rangle ) = 1$;
a state there is
an additive order preserving function
from $W (A)$ to $[0, \infty)$.
In \cite[Definition I.1.2]{BlkHdm},
the algebra $A$ is not assumed unital,
but it is required that
$\sup_{a \in A_{+}} d (a) = 1$.
(In \cite[Definition I.1.2]{BlkHdm}, a dimension function $d$
is actually taken to be defined on $M_{\infty} (A)$,
but to satisfy $d (a) = d (a^* a)$ for all $a \in M_{\infty} (A)$.)

The following is also from after \cite[Theorem 4.5]{Ro04},
except that the definition of lower semicontunity
is from the beginning of \cite[Section 4]{Ro91}.

\begin{definition}\label{D_8605_lsc}
Let $A$ be a C*-algebra,
and let $d \colon M_{\infty} (A)_{+} \to [0, \infty]$
be a dimension function.
We say that $d$ is {\emph{lower semicontinuous}}
if whenever $(a_n)_{n \in {\mathbb{N}}}$
is a sequence in $M_{\infty} (A)_{+}$ and $\| a_n - a \| \to 0$,
then $d (a) \leq \liminf_{n \to \infty} d (a_n)$.
\end{definition}

\begin{lemma}[Proposition 4.1 of~\cite{Ro91}]\label{L_8605_ToLDF}
Let $A$ be a C*-algebra,
and let $d \colon M_{\infty} (A)_{+} \to [0, \infty]$
be a dimension function.
Define ${\overline{d}} \colon M_{\infty} (A)_{+} \to [0, \infty]$
by
\[
{\overline{d}} (a)
 = \lim_{\varepsilon \to 0^{+}} d \big( (a - \varepsilon)_{+} \big)
\]
for $a \in M_{\infty} (A)_{+}$.
Then ${\overline{d}}$ is lower semicontinuous,
and ${\overline{d}} = d$ if and only if
$d$ is lower semicontinuous.
\end{lemma}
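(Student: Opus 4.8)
The plan is to treat the three assertions in turn---the existence of the limit defining $\overline{d}$, the lower semicontinuity of $\overline{d}$, and the equivalence $\overline{d} = d \Leftrightarrow d$ lower semicontinuous---with the middle one being the substantive step.

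First I would record the monotonicity that makes $\overline{d}$ well defined. For $0 < \varepsilon' < \varepsilon$ the functional calculus identity $\bigl( (a - \varepsilon')_{+} - (\varepsilon - \varepsilon') \bigr)_{+} = (a - \varepsilon)_{+}$ together with $(x - t)_{+} \precsim x$ gives $(a - \varepsilon)_{+} \precsim (a - \varepsilon')_{+}$, so $\varepsilon \mapsto d\bigl( (a - \varepsilon)_{+} \bigr)$ is nonincreasing and the limit defining $\overline{d}(a)$ exists, equal to $\sup_{\varepsilon > 0} d\bigl( (a - \varepsilon)_{+} \bigr)$. Since $(a - \varepsilon)_{+} \precsim a$ I get $\overline{d} \leq d$ pointwise. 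I would also note that $\overline{d}$ is again a dimension function (so that the term ``lower semicontinuous'' applies to it): additivity follows from $(a \oplus b - \varepsilon)_{+} = (a - \varepsilon)_{+} \oplus (b - \varepsilon)_{+}$ and letting $\varepsilon \to 0^{+}$, while order preservation follows from \cite[Proposition 2.6]{KR00}, which turns $a \precsim b$ into $(a - \varepsilon)_{+} \precsim (b - \delta)_{+}$ for a suitable $\delta$, whence $d\bigl( (a - \varepsilon)_{+} \bigr) \leq \overline{d}(b)$.

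For lower semicontinuity, suppose $\| a_{n} - a \| \to 0$ and fix $\varepsilon > 0$. The key manoeuvre is to cut down by $\varepsilon/2$ on both sides before invoking Lemma~\ref{lemkr}: since $\lambda \mapsto (\lambda - \varepsilon/2)_{+}$ is $1$-Lipschitz, $\bigl\| (a - \varepsilon/2)_{+} - (a_{n} - \varepsilon/2)_{+} \bigr\| \leq \| a - a_{n} \| < \varepsilon/2$ for large $n$, and Lemma~\ref{lemkr} applied with cut $\varepsilon/2$ yields $(a - \varepsilon)_{+} = \bigl( (a - \varepsilon/2)_{+} - \varepsilon/2 \bigr)_{+} \precsim (a_{n} - \varepsilon/2)_{+}$. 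Hence $d\bigl( (a - \varepsilon)_{+} \bigr) \leq d\bigl( (a_{n} - \varepsilon/2)_{+} \bigr) \leq \overline{d}(a_{n})$ for all large $n$, so $d\bigl( (a - \varepsilon)_{+} \bigr) \leq \liminf_{n} \overline{d}(a_{n})$; taking the supremum over $\varepsilon > 0$ gives $\overline{d}(a) \leq \liminf_{n} \overline{d}(a_{n})$.

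Finally, the equivalence. If $\overline{d} = d$ then $d$ is lower semicontinuous by the previous paragraph. Conversely, if $d$ is lower semicontinuous, I would choose $\varepsilon_{n} \downarrow 0$, note $(a - \varepsilon_{n})_{+} \to a$ in norm, and apply lower semicontinuity of $d$ to get $d(a) \leq \liminf_{n} d\bigl( (a - \varepsilon_{n})_{+} \bigr)$; by the monotonicity from the first step this $\liminf$ is exactly $\overline{d}(a)$, so $d(a) \leq \overline{d}(a)$, and with $\overline{d} \leq d$ already in hand equality follows. The main obstacle is the lower semicontinuity step: Lemma~\ref{lemkr} only compares $(a - \varepsilon)_{+}$ with $a_{n}$ itself, so one must first pass to the cut-downs $(a - \varepsilon/2)_{+}$ and $(a_{n} - \varepsilon/2)_{+}$ in order to produce a subequivalence into a proper cut-down of $a_{n}$, which is precisely what lets $\overline{d}(a_{n})$, rather than merely $d(a_{n})$, appear on the right.
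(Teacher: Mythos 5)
Your overall route is exactly R{\o}rdam's original argument for \cite[Proposition 4.1]{Ro91}, which the paper does not reproduce but simply cites (noting only that normalization is dropped and infinite values are allowed); your treatment of the monotonicity in $\varepsilon$, of $\overline{d}$ being an (additive, order preserving) dimension function, and of the final equivalence is correct. There is, however, one genuinely false step in the lower semicontinuity argument: the claim that, because $\lambda \mapsto (\lambda - \varepsilon/2)_{+}$ is $1$-Lipschitz on $\mathbb{R}$, one has $\bigl\| (a - \varepsilon/2)_{+} - (a_{n} - \varepsilon/2)_{+} \bigr\| \leq \| a - a_{n} \|$. Scalar Lipschitz continuity does not transfer to an operator-norm estimate. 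Indeed, if $t \mapsto t_{+}$ were operator Lipschitz with constant $1$, then $|t| = 2 t_{+} - t$ would be operator Lipschitz with constant $3$, contradicting the classical fact (Kato, McIntosh, Farforovskaya, Davies) that the absolute value is not operator Lipschitz on selfadjoint matrices: the sharp bound carries a factor growing like $\log n$ on $M_n$, and this growth is attained. So the inequality you invoke fails in general C*-algebras, for every constant in place of~$1$.

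The conclusion you want from that step, namely $(a - \varepsilon)_{+} \precsim (a_{n} - \varepsilon/2)_{+}$ whenever $\| a - a_{n} \| < \varepsilon/2$, is nevertheless true --- it is the refined form of R{\o}rdam's lemma --- and it follows from the paper's own tools without any norm estimate on positive parts. Apply Lemma~\ref{lemkr} to get a contraction $d$ with $(a - \varepsilon/2)_{+} = d a_{n} d^{*}$, and then use \cite[Lemma~2.5(i)]{KR00} and Lemma~\ref{lemkey} (with $\| d \| \leq 1$) to get
\[
(a - \varepsilon)_{+}
 = \bigl( (a - \varepsilon/2)_{+} - \tfrac{\varepsilon}{2} \bigr)_{+}
 = \bigl( d a_{n} d^{*} - \tfrac{\varepsilon}{2} \bigr)_{+}
 \precsim \bigl( a_{n} - \tfrac{\varepsilon}{2} \bigr)_{+}.
\]
(Alternatively one may cite \cite[Corollary 1.6]{Ph14} or \cite[Proposition 2.4]{Ro91}, which state precisely this.) With that substitution the rest of your lower semicontinuity argument, and hence the whole proof, goes through.
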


\begin{proof}
The proof of \cite[Proposition 4.1]{Ro91}
works just as well in the present situation,
without normalization and with values allowed to be
in $[0, \infty]$.
\end{proof}

We have not found the following definition in the literature
in the nonunital case.
(For example, it isn't in~\cite{BRTTW}.)
%  We take dimension functions on a C*-algebra~$A$ to be as after
%  \cite[Theorem 4.5]{Ro04},
%  that is,
%  additive order preserving functions $W (A) \to [0, \infty]$.

\begin{definition}\label{D_8604_StrComp}
Let $A$ be a C*-algebra.
We say that $A$ has
{\emph{weak strict comparison}}
if whenever $a, b \in M_{\infty} (A)_{+}$
satisfy $a \in {\overline{A b A}}$
and $d (a) < d (b)$ for every
dimension function $d$ on~$A$ with $d (b) = 1$,
then $a \precsim b$.
\end{definition}

Purely infinite simple C*-algebras
have weak strict comparison,
since then $a \in {\overline{A b A}}$ implies $a \precsim b$.

For simple unital C*-algebras,
we show that our definition reduces to strict comparison.

\begin{proposition}\label{P_8604_UnitalCase}
Let $A$ be a simple unital C*-algebra
which has a quasitrace.
Then $A$ has strict comparison
(defined using quasitraces)
if and only if $A$ has weak strict comparison.
\end{proposition}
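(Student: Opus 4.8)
The plan is to identify the two notions by routing everything through the Blackadar--Handelman correspondence between lower semicontinuous dimension functions and quasitraces. Recall from \cite{BlkHdm} that on a unital C*-algebra the assignment $\tau \mapsto d_\tau$, with $d_\tau(a) = \lim_n \tau(a^{1/n})$, is a bijection from the normalized quasitraces onto the lower semicontinuous dimension functions $d$ with $d(\langle 1\rangle) = 1$. Two features of simplicity are used repeatedly: every nonzero quasitrace on $A$ is faithful, so $d_\tau(b) \in (0,\infty)$ for every $b \in M_\infty(A)_+\setminus\{0\}$ and every $\tau$; and for $b \neq 0$ the element $b$ is full, so $\overline{AbA} = M_\infty(A)$ (making the containment hypothesis in Definition~\ref{D_8604_StrComp} automatic) and $\langle 1\rangle \leq m\langle b\rangle$ for some $m$, whence $d(\langle 1\rangle) \leq m\,d(b) < \infty$ for any dimension function $d$ normalized by $d(b)=1$. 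Throughout, strict comparison via quasitraces means: $d_\tau(a) < d_\tau(b)$ for all quasitraces $\tau$ implies $a \precsim b$.

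For the implication from strict comparison to weak strict comparison I would take $a,b \in M_\infty(A)_+$ satisfying the hypotheses of Definition~\ref{D_8604_StrComp}, and assume $b \neq 0$. For each quasitrace $\tau$ the rescaling $d = d_\tau/d_\tau(b)$ is a dimension function with $d(b)=1$, so the hypothesis gives $d_\tau(a)/d_\tau(b) = d(a) < d(b) = 1$, that is $d_\tau(a) < d_\tau(b)$. As this holds for every $\tau$, strict comparison yields $a \precsim b$. This direction is routine.

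The converse is the harder one, and the main obstacle is exactly that weak strict comparison quantifies over \emph{all} dimension functions, whereas strict comparison only supplies inequalities for the lower semicontinuous ones. I would circumvent this by passing to cut-downs: to prove $a \precsim b$ it suffices, by \cite[Proposition~2.6]{KR00}, to show $(a-\varepsilon)_+ \precsim b$ for every $\varepsilon > 0$, and for each fixed $\varepsilon$ I apply weak strict comparison to the pair $(a-\varepsilon)_+,\,b$. The key estimate is that for an arbitrary dimension function $d$ with $d(b)=1$ one has $d\big((a-\varepsilon)_+\big) \leq \overline{d}(a)$, where $\overline{d}(x) = \lim_{\varepsilon'\to 0^+} d\big((x-\varepsilon')_+\big)$ is the lower semicontinuous dimension function of Lemma~\ref{L_8605_ToLDF}; moreover $\overline{d}(\langle 1\rangle) = d(\langle 1\rangle)$ since $(1-\varepsilon')_+ \sim 1$. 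When $d(\langle 1\rangle) > 0$ the Blackadar--Handelman correspondence gives $\overline{d} = d(\langle 1\rangle)\,d_\tau$ for some quasitrace $\tau$, and then, using $d_\tau(a) < d_\tau(b)$ and $\overline{d} \leq d$,
\[
d\big((a-\varepsilon)_+\big) \leq \overline{d}(a) = d(\langle 1\rangle)\,d_\tau(a) < d(\langle 1\rangle)\,d_\tau(b) = \overline{d}(b) \leq d(b) = 1 ;
\]
when $d(\langle 1\rangle) = 0$ one has $\overline{d} \equiv 0$ (as $x \precsim \|x\|\,1$), so $d\big((a-\varepsilon)_+\big) \leq 0 < 1 = d(b)$ as well. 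Hence $d\big((a-\varepsilon)_+\big) < d(b)$ for every dimension function $d$ with $d(b)=1$, so $(a-\varepsilon)_+$ and $b$ satisfy the hypotheses of weak strict comparison, giving $(a-\varepsilon)_+ \precsim b$. Letting $\varepsilon \to 0$ completes the argument. The only genuinely substantive ingredients are the passage $d \mapsto \overline{d}$ together with the cut-down trick, which together convert the weaker quasitrace hypothesis into the stronger all-dimension-function hypothesis required by Definition~\ref{D_8604_StrComp}.
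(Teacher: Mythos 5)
Your proof is correct and follows essentially the same route as the paper's: the easy direction via rescaling $d_\tau$ by $d_\tau(b)^{-1}$, and the converse via the cut-down trick together with the regularization $d \mapsto \overline{d}$ of Lemma~\ref{L_8605_ToLDF} and the Blackadar--Handelman identification $d(1)^{-1}\overline{d} = d_\tau$, yielding exactly the chain of inequalities $d\bigl((a-\varepsilon)_+\bigr) \leq \overline{d}(a) = d(1)\,d_\tau(a) < d(1)\,d_\tau(b) = \overline{d}(b) \leq d(b)$ that appears in the paper. The one difference is at the very end: the paper first shows that weak strict comparison forces $W(A)$ to be almost unperforated and then invokes \cite[Proposition 3.1]{Ro91} to get $(a-\varepsilon)_+ \precsim b$, whereas you apply weak strict comparison directly to the pair $\bigl((a-\varepsilon)_+, b\bigr)$, which is a legitimate and slightly more economical finish (your separate case $d(\langle 1\rangle)=0$ is in fact vacuous, since $d(b) \leq m\, d(\langle 1 \rangle)$ would force $d(b)=0$, but handling it does no harm).
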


As usual,
we take quasitraces to be normalized $2$-quasitraces,
and we denote the set of quasitraces on~$A$
by ${\mathrm{QT}} (A)$.

Some authors define strict comparison using only tracial states.
Without knowing that every quasitrace is a trace,
this might be a different concept.

\begin{proof}[Proof of Proposition~\ref{P_8604_UnitalCase}]
Observe that if $b \in M_{\infty} (A)_{+} \setminus \{ 0 \}$,
then $d (b) > 0$
for every nonzero
dimension function $d$ on~$A$.
Indeed,
suppose $d (b) = 0$ and $b \in M_n (A)_{+}$.
Then $\bigl\{ a \in M_n (A) \colon d (a^* a) = 0 \bigr\}$
is a proper ideal in $M_n (A)$
(a priori not necessarily closed).
But $M_n (A)$ has no nontrivial ideals,
closed or not,
so $b = 0$.
It follows that $d_{\ta} (b) > 0$
for all $\ta \in {\mathrm{QT}} (A)$.

Suppose $A$ has strict comparison.
Let $a, b \in M_{\infty} (A)_{+}$
satisfy $a \in {\overline{A b A}}$
and $d (a) < d (b)$ for every
dimension function $d$ on~$A$ with $d (b) = 1$.
If $b = 0$ then $a = 0$,
so $a \precsim b$.
Otherwise,
let $\ta \in {\mathrm{QT}} (A)$.
Then $d = d_{\ta} (b)^{-1} d_{\ta}$
is a dimension function on~$A$ with $d (b) = 1$.
So $d (a) < d (b)$ by hypothesis,
and it follows that $d_{\ta} (a) < d_{\ta} (b)$.
Since $\ta$ is arbitrary,
strict comparison implies $a \precsim b$.

Conversely,
assume that $A$ has weak strict comparison.
We first claim that
$W (A)$ is almost unperforated.
Let $a, b \in M_{\infty} (A)_{+}$,
and suppose there are are $m, n \in {\mathbb{N}}$
such that $n \langle a \rangle \leq m \langle b \rangle$
and $n > m$.
We want to show that $a \precsim b$.
(This version of almost unperforation is clearly equivalent
to the usual,
and is \cite[Definition 3.1]{Ro04}.)
If $b = 0$ then $a = 0$,
so $a \precsim b$.
Otherwise, for any dimension function $d$ on~$A$ with $d (b) = 1$,
we have
$n d (a) \leq m d (b) < n d (b)$,
so $d (a) < d (b)$.
Then $a \precsim b$ by weak strict comparison.

The rest is essentially contained in the proof
of \cite[Theorem 5.2(a)]{Ro91}.
Let $a, b \in M_{\infty} (A)_{+}$
satisfy $d_{\tau} (a) < d_{\tau} (b)$
for all $\ta \in {\mathrm{QT}} (A)$.
Clearly $b \neq 0$.
Fix $\varepsilon > 0$;
we claim that
for all dimension functions $d$ on~$A$ such that
$d (b) = 1$,
we have $d \big( (a - \varepsilon)_{+} \big) < d (b)$.

We prove the claim.
Since $A$ is simple and unital,
for every $c \in M_{\infty} (A)_{+}$
there is $n$ such that $\langle c \rangle \leq n \langle b \rangle$.
This shows that the values of $d$ are in $[0, \infty)$,
that is, $d$ is a state on $W (A)$
in the sense described at the beginning of \cite[Section 3]{Ro91}.
So the dimension function ${\overline{d}}$
of Lemma~\ref{D_8605_lsc}
also has values in $[0, \infty)$.
It is nonzero because ${\overline{d}} (1) = d (1)$.
Theorem II.2.2 of~\cite{BlkHdm}
provides $\ta \in {\mathrm{QT}} (A)$
such that $d (1)^{-1} {\overline{d}} = d_{\tau}$.
Now, using the assumption $d_{\tau} (a) < d_{\tau} (b)$
at the third step,
\[
d \big( (a - \varepsilon)_{+} \big)
 \leq {\overline{d}} (a)
 = d (1) d_{\tau} (a)
 < d (1) d_{\tau} (b)
 = {\overline{d}} (b)
 \leq d (b).
\]
The claim is proved.

Combining the claim and \cite[Proposition 3.1]{Ro91},
we get $(a - \varepsilon)_{+} \precsim b$.
Since this is true for all $\varepsilon > 0$,
we have $a \precsim b$,
as desired.
\end{proof}

Definition~\ref{D_8604_StrComp} was chosen because
it is the conclusion of \cite[Corollary 4.7]{Ro04},
and
% and because,
% as we saw in Proposition\ref{P_8604_UnitalCase}, it
gives the same answer as the usual definition
in the simple unital case.
% Moreover, purely infinite simple C*-algebras
% have weak strict comparison,
% since then $a \in {\overline{A b A}}$ implies $a \precsim b$.

More direct analogs of strict comparison
seem to be unsuitable.
As a heuristic example,
consider $A = C (S^2 \times S^2) \otimes {\mathcal{K}}$.
The algebra $A_0 = C (S^2 \times S^2)$ doesn't have strict comparison,
because there is a rank~$2$ projection $q_1 \in M_{\infty} (A_0)$
which is not Murray-von Neumann subequivalent to
the trivial rank~$3$ projection $q_2 = 1_{M_{3}} \otimes 1$.
So $A$ shouldn't have strict comparison either.
It presumably would if one restricted to dimension functions with
finite values.
Indeed,
define
\[
x = {\mathrm{diag}} \big( 1, \tfrac{1}{2}, \tfrac{1}{3}, \ldots \big)
  \in \mathcal{K}
\andeqn
e = {\mathrm{diag}} \big( 1, 0, 0, \ldots \big)
  \in \mathcal{K}.
\]
Any dimension function $d$ on~$A$
has $d (1 \otimes x) \geq n d (1 \otimes e)$
for every $n \in {\mathbb{N}}$.
So if $d (1 \otimes x)$ is finite then $d (1 \otimes e) = 0$.
Similar reasoning shows that in fact if
$d (1 \otimes x)$ is finite
then $d (p) = 0$ for every projection $p \in A$.
So $d (q_1) = d (q_2) = 0$
for every dimension function with
finite values,
and the standard example to show lack of strict comparison fails.

It doesn't help to restrict to
lower semicontinuous dimension functions
with finite values.

It seems, therefore,
that infinite values must be allowed.
But now the dimension function
given by $d (0) = 0$
and $d (a) = \infty$
for all nonzero $a \in M_{\infty} (A)_{+}$
gives $d (q_1) = d (q_2) = \infty$,
so again the standard example to show lack of strict comparison fails.
This dimension function is lower semicontinuous,
so again it doesn't help to
restrict to lower semicontinuous
dimension functions.

This discussion leaves several other possibilities,
which we do not address.
\begin{enumerate}
\item\label{Item_8618_AddLSC}
In Definition~\ref{D_8604_StrComp} use only
lower semicontinuous dimension functions.
\item\label{Item_8618_FinOnPed}
Use all dimension functions which are finite
on the Pedersen ideal of $A \otimes {\mathcal{K}}$.
\item\label{Item_8618_FinOnPedLSC}
In~(\ref{Item_8618_FinOnPed}) use only
lower semicontinuous dimension functions.
\end{enumerate}
%
% We will not pursue this issue further here.

Returning to the main development, from Theorem~\ref{thm_almup}
we get the following consequence.

\begin{proposition}\label{rmk_sc}
Let $A$ be a simple (not necessarily unital)
tracially $\mathcal{Z}$-absorbing C*-algebra.
Then $A$ has weak strict comparison.
\end{proposition}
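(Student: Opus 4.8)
The plan is to obtain weak strict comparison as a formal consequence of almost unperforation of $W(A)$, which is already established in Theorem~\ref{thm_almup}. Indeed, the property in Definition~\ref{D_8604_StrComp} is exactly the conclusion of \cite[Corollary~4.7]{Ro04}, whose only standing hypothesis is that $W(A)$ be almost unperforated, and whose argument never uses unitality. So the shortest route is to invoke Theorem~\ref{thm_almup} and then quote \cite[Corollary~4.7]{Ro04}; before doing so I would just verify that the hypotheses match, namely that R{\o}rdam asks for $a, b \in M_{\infty}(A)_{+}$ with $a$ in the closed ideal generated by $b$ and $d(a) < d(b)$ for every dimension function $d$ with $d(b) = 1$, which is precisely the hypothesis of Definition~\ref{D_8604_StrComp}.

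For a more self-contained account I would reproduce the mechanism behind that corollary. The case $b = 0$ is trivial, since then $\overline{AbA} = \{ 0 \}$ forces $a = 0 \precsim b$; so I assume $b \neq 0$, in which case simplicity of $A$ makes the condition $a \in \overline{AbA}$ automatic. Fixing $\varepsilon > 0$, it suffices to prove $(a - \varepsilon)_{+} \precsim b$, because letting $\varepsilon \to 0$ then yields $a \precsim b$ by \cite[Proposition~2.6]{KR00}. The crux is to produce an integer $N$ with
\[
(N + 1) \langle (a - \varepsilon)_{+} \rangle \leq N \langle b \rangle
\]
in $W(A)$; once this is in hand, almost unperforation (Theorem~\ref{thm_almup}) immediately gives $(a - \varepsilon)_{+} \precsim b$.

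To produce such an $N$ I would argue on the (weak-$*$ compact) set of dimension functions $d$ normalized by $d(b) = 1$, where one can pass to lower semicontinuous functions using Lemma~\ref{L_8605_ToLDF}. Since $(a - \varepsilon)_{+} \precsim a$, we have $d\bigl( (a - \varepsilon)_{+} \bigr) \leq d(a) < 1 = d(b)$ for every such $d$, and the map $d \mapsto d\bigl( (a - \varepsilon)_{+} \bigr)$ is upper semicontinuous there, so its supremum is some $t < 1$; choosing $N$ with $N/(N + 1) > t$ gives $(N + 1)\, d\bigl( (a - \varepsilon)_{+} \bigr) \leq N\, d(b)$ for all such $d$. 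Converting this inequality of dimension-function values into the genuine order relation in $W(A)$ displayed above --- that is, detecting the Cuntz order of the cut-down $(a - \varepsilon)_{+}$ by dimension functions --- is the technical heart of the matter and the step I expect to be the main obstacle; it is exactly what the representation machinery underlying \cite[Theorem~4.5 and Corollary~4.7]{Ro04} supplies, which is why it is cleanest to cite \cite{Ro04} rather than rebuild it here.
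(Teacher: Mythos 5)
Your proposal is correct and follows essentially the same route as the paper: both deduce weak strict comparison from almost unperforation of the Cuntz semigroup (Theorem~\ref{thm_almup}) by invoking \cite[Corollary~4.7]{Ro04}, which does not require unitality. The only cosmetic difference is that the paper first fixes $n$ with $a, b \in M_n(A)$ and applies the cited corollary to $M_n(A)$ (tracially $\mathcal{Z}$-absorbing by Proposition~\ref{propmatrix}), a bookkeeping step your argument could absorb with one added line.
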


\begin{proof}
Let $a, b \in M_{\infty} (A)_{+}$
satisfy $a \in {\overline{A b A}}$
and $d (a) < d (b)$ for every
dimension function $d$ on~$A$ with $d (b) = 1$.
Choose $n$ such that $a, b \in M_n (A)$.
Then $M_n (A)$ is tracially $\mathcal{Z}$-absorbing
by Proposition~\ref{propmatrix}.
By Theorem~\ref{thm_almup}, $W (M_n (A))$ is
almost unperforated.
Now apply \cite[Corollary~4.7]{Ro04},
noting that this result
(unlike some other results in~\cite{Ro04})
does not require the algebra to be unital.
\end{proof}

\begin{definition}[\cite{OPW17}, Definition~3.1]\label{DEF-WAD}
Let $A$ be a C*-algebra.
We say that
$A$ is {\emph{weakly almost divisible}}
if for any $a \in M_{\infty} (A)_{+}$,
any $n \in \mathbb{N}$, and any $\varepsilon > 0$,
there is $\eta \in W (A)$ such that $n \eta \le \langle a \rangle$
and $\langle (a - \varepsilon)_{+} \rangle \le (n + 1) \eta$.
\end{definition}

Weak almost divisibility
is a weak version of divisibility.
It seems to be needed when one wants to prove that
the crossed product by an action with the tracial Rokhlin property
preserves strict comparison
in the absence of tracial $\mathcal{Z}$-stability.
See~\cite{OPW17}.

\begin{proposition}\label{propwad}
Let $A$ be a simple C*-algebra.
If $A$ is tracially $\mathcal{Z}$-absorbing,
then $A$ is weakly almost divisible.
\end{proposition}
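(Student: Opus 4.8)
The plan is to reduce to a single positive element, use an order zero map furnished by tracial $\mathcal{Z}$-absorption to split (most of) $\langle a\rangle$ into $n$ nearly equal Cuntz pieces, and then absorb the leftover defect into one additional piece by means of weak strict comparison. First I would invoke Proposition~\ref{propmatrix}: since $M_m(A)$ is again tracially $\mathcal{Z}$-absorbing and $W(M_m(A))=W(A)$, it suffices to prove the statement for $a\in A_+$, which I normalize to $\|a\|=1$; the case $(a-\varepsilon)_+=0$ is trivial (take $\eta=0$), so I assume $a\neq0$. Because $A$ is not of type~I (Corollary~\ref{cornone}), I can apply the mechanism behind Lemma~\ref{lemmatrix}, namely \cite[Lemma~2.1]{Ph14}, inside $\overline{aAa}$ to produce $n+2$ pairwise orthogonal, pairwise Cuntz equivalent nonzero elements whose sum lies in $\overline{aAa}$; since $a$ is strictly positive in $\overline{aAa}$, every positive element of $\overline{aAa}$ is Cuntz below $a$, so this sum is $\precsim a$. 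This yields a nonzero $a''\in\overline{aAa}_+$ with the deliberately wasteful bound $(n+2)\langle a''\rangle\le\langle a\rangle$.

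Next I would feed $a''$ into Definition~\ref{deftza} (the clean heuristic being the central sequence form, Proposition~\ref{prop_cstza}\eqref{prop_cental_sequence_2}) with $x=a^{1/2}$, with $F=\{a^{1/2}\}$, with matrix size $n$, with bounding element $a''$, and with a small tolerance $\delta$, obtaining a c.p.c.\ order zero map $\varphi\colon M_n\to A$ such that $\bigl(a-a^{1/2}\varphi(1)a^{1/2}-\delta\bigr)_+\precsim a''$ and $\|[\varphi(z),a^{1/2}]\|<\delta$. Writing $c_i=a^{1/2}\varphi(e_{ii})a^{1/2}$, $S=a^{1/2}\varphi(1)a^{1/2}=\sum_i c_i$, and $e=(a-S-\delta)_+\precsim a''$, I would take the candidate piece to be (a slightly cushioned version of) $c_1$ and set $\eta=\langle c_1\rangle$. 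By the order zero structure theorem \cite[Theorem~3.3]{WZ09} together with the commutator bound, the $c_i$ are approximately pairwise orthogonal and approximately pairwise Cuntz equivalent; conceptually (and exactly in $A_\infty\cap A'$) one has $\langle S\rangle=n\eta$. The lower bound is then immediate: $S\le a$ forces $n\eta\le\langle a\rangle$, using only $\langle S\rangle\le\langle a\rangle$ and subadditivity of $\langle\,\cdot\,\rangle$.

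The main work is the upper bound $\langle(a-\varepsilon)_+\rangle\le(n+1)\eta$, and the hard part will be converting the Cuntz‑small defect into a genuine bound without illegitimate cancellation in $W(A)$. Since $a\precsim S\oplus e$ (Lemma~\ref{lemkr}), I get $\langle(a-\varepsilon)_+\rangle\le\langle a\rangle\le\langle S\rangle+\langle e\rangle\le n\eta+\langle a''\rangle$, so everything reduces to showing $\langle a''\rangle\le\eta$, i.e.\ $a''\precsim c_1$. I would prove this by weak strict comparison (Proposition~\ref{rmk_sc}, valid since $A$ has almost unperforated $W(A)$ by Theorem~\ref{thm_almup}): simplicity gives $a''\in\overline{A\,c_1\,A}=A$, and for the dimension‑function hypothesis I would exploit that for every dimension function $d$ on~$A$ real subtraction is legitimate, so $d(S)\ge d(a)-d(e)\ge d(a)-d(a'')$, while $d(S)\le\sum_i d(c_i)=n\,d(c_1)$; combined with $(n+2)\,d(a'')\le d(a)$ these force $n\,d(c_1)\ge(n+1)d(a'')$, hence $d(a'')<d(c_1)$ \emph{strictly} for every nonzero $d$ (recall that in a simple algebra a nonzero element has strictly positive dimension values, exactly as in the proof of Proposition~\ref{P_8604_UnitalCase}). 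Normalizing $d(c_1)=1$ gives $d(a'')<1=d(c_1)$, so weak strict comparison yields $a''\precsim c_1$ and therefore $\langle(a-\varepsilon)_+\rangle\le n\eta+\eta=(n+1)\eta$. The genuinely delicate points I expect are the finite‑stage bookkeeping—replacing the exactly orthogonal, exactly equivalent pieces of the central sequence picture by the approximate ones living in~$A$, and tracking the cushions so that the strict dimension‑function inequality survives for \emph{all} (not merely lower semicontinuous) dimension functions—and it is precisely the slack built into the choice $(n+2)\langle a''\rangle\le\langle a\rangle$ that keeps that inequality strict.
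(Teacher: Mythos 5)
Your overall strategy is the paper's: produce $n+2$ orthogonal, mutually equivalent comparison elements via \cite[Lemma~2.1]{Ph14}, use an order zero map of matrix size $n$ to manufacture $\eta$ as the class of one diagonal corner, and absorb the leftover by weak strict comparison. But two steps, as you have set them up, do not go through. First, your pieces $c_i = a^{1/2}\varphi(e_{i,i})a^{1/2}$ are only \emph{approximately} orthogonal and \emph{approximately} equivalent, yet you need the exact relations $n\langle c_1\rangle \leq \langle a\rangle$ and $\langle S\rangle \leq n\langle c_1\rangle$ in $W(A)$; since $\precsim$ is not stable under small perturbations without cutting by $(\,\cdot\, - \ep)_{+}$ and $W(A)$ has no cancellation, the ``finite-stage bookkeeping'' you defer is not routine (note also that $c_i \precsim \varphi(e_{i,i})$ but the reverse is unclear, so you cannot simply pass to the unconjugated corners after the fact). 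The paper sidesteps this entirely: by Theorem~\ref{thmeqtz} it applies tracial $\mathcal{Z}$-absorption to the hereditary subalgebra $B = \overline{a A a}$ and takes $b = \psi(e_{1,1})$ \emph{unconjugated}; then the $\psi(e_{j,j})$ are exactly orthogonal and exactly Cuntz equivalent, and $\psi(1) \in B$ gives $n\langle b\rangle = \langle \psi(1)\rangle \leq \langle a\rangle$ on the nose.

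Second, your weak-strict-comparison step rests on $d(S) \geq d(a) - d(e)$, i.e.\ on $d(a) \leq d(S) + d(e)$ for every dimension function $d$. But Lemma~\ref{lemkr} only yields $(a - \delta)_{+} \precsim S \oplus e$, not $a \precsim S \oplus e$, and an arbitrary dimension function need not be lower semicontinuous, so you cannot let $\delta \to 0$. Since you arranged the wasteful bound against $\langle a\rangle$ rather than against $\langle (a - \ep)_{+}\rangle$, the chain $n\,d(c_1) \geq d(a) - d(a'') \geq (n+1)d(a'')$ collapses: for instance if $d(a) = \infty$ (which can happen while $d(c_1) = 1$), the inequality $(n+2)d(a'') \leq d(a)$ carries no information and $d(a) \leq d(S) + d(e)$ is simply false. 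You also never verify that $d(a'')$ is finite, which is needed before subtracting. The paper's remedy is to arrange $(n+2)\langle z\rangle \leq \langle (a - \ep)_{+}\rangle$ from the outset, to run the whole dimension-function estimate with $(a - \ep)_{+}$ in place of $a$, and to check $d(z) < \infty$ first via simplicity (there is $m$ with $\langle (a - \ep)_{+}\rangle \leq m\langle b\rangle$, so $d(z) \leq m$). With these two repairs your argument becomes the paper's proof.
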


\begin{proof}
Let $a \in M_{\infty} (A)_{+}$,
$n \in \N$, and $\varepsilon > 0$ be as in Definition~\ref{DEF-WAD}.
We have to find $b \in M_{\infty} (A)_{+}$
such that
\[
n \langle b \rangle \leq \langle a \rangle
\qquad {\mbox{and}} \qquad
\big\langle (a - \varepsilon)_{+} \big\rangle
   \leq (n + 1) \langle b \rangle.
\]
We may assume that $(a - \varepsilon)_{+} \neq 0$.
Since $M_m (A)$ is also tracially $\mathcal{Z}$-absorbing
for any $m \in \N$
(Proposition~\ref{propmatrix}),
we may assume that $a \in A$.
% By Lemma \ref{lemma_dimension_function_minimum},
% we can choose $\rho > 0$ such that
% $(n + 1) \rho < d ( (a - \eps)_{+} )$ for all $d \in \df (A)$.
% Choose $z \in A_{+} \setminus \{ 0 \}$
% such that $d (z) < \rho$ for all $d \in \df (A)$
Use \cite[Lemma~2.1]{Ph14}
to find $z \in A_{+} \setminus \{ 0 \}$ such that
\begin{equation}\label{ineq_tza_weak_almost_divisibility}
(n + 2) \langle z \rangle
 \leq \big\langle (a - \varepsilon)_{+} \big\rangle.
\end{equation}
Choose $z_0 \in B_{+} \setminus \{ 0 \}$
such that $z_0 \precsim z$ in~$A$.
By Theorem~\ref{thmeqtz},
the subalgebra $B = \overline{aAa}$ of $A$
is tracially $\mathcal{Z}$-absorbing.
Thus, there exists a c.p.c.~order zero map
$\psi \colon M_{n} \to B$ such that
\begin{equation}\label{propwad_equ6}
\big( a - a^{1/2} \psi (1) a^{1/2} - \tfrac{\varepsilon}{2} \big)_{+}
 \precsim z_0.
\end{equation}

Let $t \in C_0 ((0, 1])$ denote the identity function,
and use \cite[Corollary 4.1]{WZ09}
to find a homomorphism
$\rho \colon C_0 ((0, 1]) \otimes M_{n} \to B$
satisfying $\rho (t \otimes x) = \psi (x)$
for all $x \in M_n$.
For $j, k = 1, 2, \ldots, n$,
let $e_{j, k} \in M_n$
be the standard matrix unit.
Set $b = \psi (e_{1, 1})$.
We have
\begin{equation}\label{Eq_6911_CSub}
\psi (1)
 = \sum_{j = 1}^n \psi (e_{j, j})
 \sim 1_{n} \otimes  \psi (e_{1, 1})
 = 1_{n} \otimes b.
\end{equation}
So $n \langle b \rangle = \langle \psi (1) \rangle$.
Since $\psi (1) \in B = {\overline{a A a}}$,
it follows that $n \langle b \rangle \leq \langle a \rangle$.

Now we show that
$\langle (a - \varepsilon)_{+} \rangle \le (n + 1) \langle b \rangle$.
Using \cite[Lemma~1.5]{Ph14} in the first step,
\eqref{propwad_equ6} in the second step,
and \eqref{Eq_6911_CSub} in the third step,
we get
\begin{align*}
(a - \varepsilon)_{+}
%  & \precsim \big( a^{1/2} \psi (1) a^{1/2}
%      - \tfrac{\varepsilon}{2} \big)_{+}
%  + \big( a^{1/2} [ 1 - \psi (1)] a^{1/2}
%      - \tfrac{\varepsilon}{2} \big)_{+}
%  \\
& \precsim \big( a^{1/2} \psi (1) a^{1/2}
    - \tfrac{\varepsilon}{2} \big)_{+}
   \oplus \big(a - a^{1/2} \psi (1) a^{1/2}
    - \tfrac{\varepsilon}{2} \big)_{+}
\\
& \precsim \psi (1) \oplus z_0
  \precsim (1_{n} \otimes b) \oplus z.
\end{align*}
Hence
\begin{align}\label{eq_tza_weak_almost_divisibility}
\langle (a - \varepsilon)_{+} \rangle
\leq n \langle b \rangle + \langle z \rangle.
\end{align}

It remains to show that $\langle z \rangle \leq \langle b \rangle$.
Since $A$ is simple and $b \neq 0$,
we certainly have $z \in {\overline{A b A}}$.
Since $A$ has weak strict comparison (by Proposition~\ref{rmk_sc}),
it now suffices to show that
$d (z) < d (b)$ for all $d \in \mathrm{DF} (A)$
with $d (b) = 1$.
If $d (z) = 0$, we are done.
Otherwise, by (4.1) at
the beginning of \cite[Section 4]{Ro91},
there is $m \in {\mathbb{N}}$
such that
$\big\langle (a - \varepsilon)_{+} \big\rangle
 \leq m \langle b \rangle$.
Then
\[
d (z)
 \leq d \big( (a - \varepsilon)_{+} \big)
 \leq m d (b)
 = m.
\]
That is, $d (z)$ is finite.
By \eqref{ineq_tza_weak_almost_divisibility}
and \eqref{eq_tza_weak_almost_divisibility} we have
\[
n d (z)
 < (n + 2) d (z) - d (z)
 \leq d \big( (a - \varepsilon)_{+} \big) - d (z)
 \leq n d (b).
\]
We have shown that $d (z) < d (b)$
for all $d \in \mathrm{DF} (A)$ with $d (b) = 1$,
as desired.
\end{proof}

\section{Tracial $\mathcal{Z}$-absorption for finite C*-algebras}\label{Sec_Finite}

\indent
The objective of this section
is to prove Proposition~\ref{P_8813_TZA_Fin},
which states that
if $A$ is
a finite simple tracially $\mathcal{Z}$-absorbing C*-algebra,
then in Definition~\ref{deftza}
we can assume $\| a \| = 1$
and require $\| \ph (1) a \ph (1) \| > 1 - \ep$.
This is parallel to the definition
of the weak tracial Rokhlin property for finite group actions.

We don't actually use Proposition~\ref{P_8813_TZA_Fin}.
However, almost all the work which goes into it is
needed for the proof of a result in
\cite{AGJP17}
that the permutation action
on the minimal tensor product of finitely many copies
of a tracially $\mathcal{Z}$-absorbing C*-algebra
has the weak tracial Rokhlin property,
provided that the tensor product is finite.

It isn't entirely clear what the ``right'' definition of
finiteness is for nonunital C*-algebras,
even simple ones.
We will use the following definition,
which seems to be the most common.

\begin{definition}[V.2.2.1 in \cite{BlEC}]\label{D_8813_Finite}
A nonunital C*-algebra $A$ is {\emph{finite}}
if its unitization $A^{+}$ is finite in the usual sense,
that is, $1$ is a finite projection in~$A^{+}$.
Otherwise, $A$ is {\emph{infinite}}.
\end{definition}

Equivalently,
all one sided invertible elements in~$A^{+}$
are two sided invertible.

There is further discussion of finiteness
in \cite[Sections V.2.2 and V.2.3]{BlEC}.
For our further discussion,
we need the following definition and notation.

\begin{definition}[Definition 3.2 in \cite{KR00}]\label{D_8813_FinElt}
Let $A$ be a C*-algebra and let $ a\in A_{+}$.
Then $a$ is {\emph{infinite}}
if there exists $b \in A_{+} \setminus \{ 0 \}$
such that $a \oplus b \precsim a$.
Otherwise, $a$ is {\emph{finite}}.
\end{definition}

\begin{notation}\label{N_8813_PedIdeal}
For any C*-algebra~$A$,
we let $\Ped (A)$ denote the Pedersen ideal
(smallest dense ideal) in~$A$;
see \cite[Section 5.6]{Pdsn1}.
\end{notation}

The following elementary fact will be used often enough
that we record it here.

\begin{lemma}\label{L_8816_CutToPed}
Let $A$ be a C*-algebra, let $a \in A_{+}$, and let $\ep > 0$.
Then $(a - \ep)_{+} \in \Ped (A)$.
\end{lemma}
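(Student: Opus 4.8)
The plan is to realize $(a - \ep)_+$ as a positive element that admits a local unit in $A$, and then to invoke the construction of the minimal dense ideal: by the description of $\Ped(A)$ in \cite[Section~5.6]{Pdsn1}, every $c \in A_+$ for which there exists $e \in A_+$ with $e c = c$ already lies in $\Ped(A)$, since such elements form (together with their downward hereditary closure and finite sums) the generating cone of the minimal dense ideal. Thus it suffices to produce such an $e$ for $c = (a - \ep)_+$.

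First I would fix the continuous function $g \colon [0, \infty) \to [0, 1]$ given by $g(t) = \min(1, t / \ep)$ and set $e = g(a)$. Since $\spec(a) \subseteq [0, \infty)$ and $g(0) = 0$, functional calculus places $e$ in $A_+$ with $\|e\| \leq 1$, even when $A$ is nonunital. The key pointwise identity is $g(t)(t - \ep)_+ = (t - \ep)_+$ for all $t \geq 0$: if $t \geq \ep$ then $g(t) = 1$, while if $t < \ep$ then $(t - \ep)_+ = 0$. Applying functional calculus at $a$ gives
\[
e (a - \ep)_+ = g(a)(a - \ep)_+ = (a - \ep)_+,
\]
and since $e$ and $(a - \ep)_+$ are both continuous functions of $a$ they commute, so $e$ is in fact a two-sided local unit for $(a - \ep)_+$.

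With the local unit in hand, $(a - \ep)_+$ belongs to the generating cone of $\Ped(A)$ described above, whence $(a - \ep)_+ \in \Ped(A)$. The computations here are entirely routine; the one point genuinely requiring care is the correct formulation of the cited fact. One must use that positive elements with local units lie in the \emph{smallest} dense ideal, which is precisely how Pedersen builds $\Ped(A)$, rather than the a priori weaker and unhelpful statement that they lie in \emph{some} dense ideal. I would note finally that the argument is uniform in the unital and nonunital cases: when $A$ is unital the conclusion is vacuous since $\Ped(A) = A$, and the construction of $e$ above goes through without change.
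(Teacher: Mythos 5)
Your proof is correct and is exactly the argument the paper is alluding to: the paper's own proof just says the claim is immediate from the proof of \cite[Theorem 5.6.1]{Pdsn1}, i.e., from Pedersen's construction of the minimal dense ideal out of positive elements admitting local units, and your choice of $e = g(a)$ with $g(t) = \min(1, t/\ep)$ supplies precisely such a local unit for $(a-\ep)_{+}$. No gaps; you have simply written out the details the paper leaves to the reference.
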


\begin{proof}
This is immediate from the proof of \cite[Theorem 5.6.1]{Pdsn1}.
\end{proof}

For a simple nonunital C*-algebra~$A$,
we can consider at least four further conditions,
none of which appears in~\cite{BlEC}:
\begin{enumerate}
\item\label{Item_8810_FiniteCond_Ped}
Every positive element in $\Ped (A)$
(see Notation~\ref{N_8813_PedIdeal})
is finite.
\item\label{Item_8810_FiniteCond_Dense}
The finite elements of $A_{+}$ are dense in $A_{+}$.
\item\label{Item_8810_FiniteCond_Cut}
For every $a \in A_{+}$ and $\ep > 0$,
the element $(a - \ep)_{+}$ is finite.
\item\label{Item_8810_FiniteCond_DimFcn}
There are a dimension function
(see Definition~\ref{D_8605_DimFcn}) $d$ on~$A$
and a positive element $a$ in $\Ped (A)$
such that $0 < d (a) < \infty$.
\end{enumerate}
It isn't reasonable to require that all positive elements
of~$A$ be finite,
since it follows from \cite[Proposition 3.7]{KR00}
that every nonzero separable stable C*-algebra
contains infinite elements.

Condition~\eqref{Item_8810_FiniteCond_Cut}
is the condition we actually use.

Conditions \eqref{Item_8810_FiniteCond_Ped},
\eqref{Item_8810_FiniteCond_Dense},
and \eqref{Item_8810_FiniteCond_Cut}
are very similar.
Apart from the obvious implications,
from \eqref{Item_8810_FiniteCond_Ped}
to \eqref{Item_8810_FiniteCond_Cut}
(because $(a - \ep)_{+} \in \Ped (A)$ by Lemma~\ref{L_8816_CutToPed})
and from \eqref{Item_8810_FiniteCond_Cut}
to \eqref{Item_8810_FiniteCond_Dense},
we have not investigated the relations between these
three conditions.
Proposition~\ref{L_8810_FinEltToUnit} below
shows that finiteness as in Definition~\ref{D_8813_Finite}
implies~\eqref{Item_8810_FiniteCond_Cut}.
Corollary~\ref{C_8813_DfToFinite} below
shows that Condition~\eqref{Item_8810_FiniteCond_DimFcn}
implies Condition~\eqref{Item_8810_FiniteCond_Cut}.
Definition~\ref{D_8813_Finite} has the advantage that
important permanence properties are immediate.

\begin{remark}\label{R_8813_PermFin}
Finiteness as in Definition~\ref{D_8813_Finite} has the
following properties.
\begin{enumerate}
\item\label{Item_R_8813_PermFin_SubH}
All subhomogeneous C*-algebras are finite.
\item\label{Item_R_8813_PermFin_LimInj}
Direct limits of direct systems of finite C*-algebras with injective
maps
are finite.
\item\label{Item_R_8813_PermFin_LimSubh}
Arbitrary direct limits of subhomogeneous C*-algebras are finite.
\item\label{Item_R_8813_PermFin_Subalg}
Subalgebras of finite C*-algebras are finite.
\end{enumerate}
\end{remark}

\begin{lemma}\label{L_8808_DimFcnOnSimp}
Let $A$ be a simple C*-algebra,
and let $d$ be a dimension function on~$A$
(Definition~\ref{D_8605_DimFcn}).
\begin{enumerate}
\item\label{Item_8808_Vanish}
If there is $a \in \Ped (A)_{+} \setminus \{ 0 \}$
such that $d (a) = 0$,
then $d (b) = 0$ for all $b \in \Ped (A)_{+}$.
\item\label{Item_8808_Finite}
If there is $a \in \Ped (A)_{+} \setminus \{ 0 \}$
such that $d (a)$ is finite,
then $d (b)$ is finite for all $b \in \Ped (A)_{+}$.
\end{enumerate}
\end{lemma}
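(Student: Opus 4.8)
The plan is to derive both statements from a single comparability fact for simple C*-algebras, namely the one already invoked in the proof of Proposition~\ref{propwad}: since $A$ is simple, for any $c \in \Ped(A)_+$ and any nonzero $e \in A_+$ there is $m \in \mathbb{N}$ with $c \precsim e \otimes 1_m$ (this is (4.1) at the beginning of \cite[Section 4]{Ro91}). Once this is in hand, each part is a one-line estimate, using that a dimension function is additive and order preserving, so that $d(e \otimes 1_m) = m\, d(e)$ and $c \precsim e \otimes 1_m$ forces $d(c) \le m\, d(e)$.

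For part~\eqref{Item_8808_Vanish}, I would fix an arbitrary $b \in \Ped(A)_+$ and apply the comparability fact with $c = b$ and $e = a$, which is legitimate because $b \in \Ped(A)_+$ and $a \neq 0$. This yields $m \in \mathbb{N}$ with $b \precsim a \otimes 1_m$, whence
\[
d(b) \le d(a \otimes 1_m) = m\, d(a) = 0,
\]
so $d(b) = 0$. Part~\eqref{Item_8808_Finite} is handled identically: the same application gives $d(b) \le m\, d(a) < \infty$.

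I expect essentially no obstacle beyond correctly recognizing that (4.1) is the needed input and checking its hypotheses. The one point to be careful about is the \emph{direction} of the comparison: the element whose dimension we wish to bound, $b$, must play the role of the element of $\Ped(A)_+$ that is dominated by a multiple of the nonzero reference element $a$, and both hypotheses of (4.1) are then met by assumption. No stable-rank or finiteness hypothesis on $A$ is required, and the value $\infty$ causes no trouble, since $d(a)$ equals $0$ in the first case and is finite in the second.
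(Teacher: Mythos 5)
Your proof is correct, but it takes a genuinely different route from the paper's. The paper invokes no comparison result at all: it observes that the two sets $\{x \in A \colon d(x^*x) = 0\}$ and $\{x \in A \colon d(x^*x) < \infty\}$ are (not necessarily closed) two-sided ideals of $A$ (additivity and order preservation of $d$ give closure under sums and under left and right multiplication), so by simplicity each is either $\{0\}$ or dense, and every dense ideal contains $\Ped(A)$ by minimality of the Pedersen ideal. Your argument instead packages the same simplicity input into the comparison statement that $b \precsim a \otimes 1_m$ for some $m$ whenever $b \in \Ped(A)_{+}$ and $a \in A_{+}\setminus\{0\}$, and then reads off $d(b) \le m\, d(a)$, which settles both parts at once. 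That comparison statement is true, and it is the one the paper itself quotes (as (4.1) of \cite{Ro91}) in the proof of Proposition~\ref{propwad}; the only caveat is that \cite{Ro91} works with unital algebras, so in the nonunital case the version for elements of $\Ped(A)_{+}$ rests on exactly the fact underlying the paper's proof, namely that $b$ lies in the algebraic ideal generated by $a$ and hence is a finite sum $\sum_j x_j a y_j$, from which $b \precsim a \otimes 1_m$ follows. Both arguments are short; the paper's is marginally more self-contained, while yours makes the quantitative bound $d(b) \le m\, d(a)$ explicit and handles the values $0$ and $\infty$ uniformly, as you note.
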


\begin{proof}
Both the sets
\[
\bigl\{ a \in A \colon d (a^* a) = 0 \bigr\}
\andeqn
\bigl\{ a \in A \colon d (a^* a) < \infty \bigr\}
\]
are easily seen to be
(not necessarily closed)
ideals in~$A$.
Therefore each is either equal to $\{ 0 \}$
or contains $\Ped (A)$.
\end{proof}

\begin{corollary}\label{C_8813_DfToFinite}
Let $A$ be a simple C*-algebra.
Assume that there are a dimension function
(see Definition~\ref{D_8605_DimFcn})
$d$ on~$A$
and $a \in \Ped (A)_{+}$
such that $0 < d (a) < \infty$.
Then all positive elements of $\Ped (A)$
are finite.
\end{corollary}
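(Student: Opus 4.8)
The plan is to argue by contradiction, showing that no positive element of $\Ped (A)$ can be infinite, by combining the two dichotomies recorded in Lemma~\ref{L_8808_DimFcnOnSimp} with the cutting-down trick of Lemma~\ref{L_8816_CutToPed}.

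First I would extract two consequences of the hypothesis $0 < d (a) < \infty$. Since $d (a) < \infty$ with $a \in \Ped (A)_{+} \setminus \{ 0 \}$, Lemma~\ref{L_8808_DimFcnOnSimp}\eqref{Item_8808_Finite} gives $d (b) < \infty$ for every $b \in \Ped (A)_{+}$. Since $d (a) > 0$, the contrapositive of Lemma~\ref{L_8808_DimFcnOnSimp}\eqref{Item_8808_Vanish} shows that $d (c') > 0$ for every $c' \in \Ped (A)_{+} \setminus \{ 0 \}$: if some nonzero element of $\Ped (A)_{+}$ had dimension $0$, then $d$ would vanish on all of $\Ped (A)_{+}$, contradicting $d (a) > 0$.

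Now fix $b \in \Ped (A)_{+}$ and suppose, for contradiction, that $b$ is infinite. By Definition~\ref{D_8813_FinElt} there is $c \in A_{+} \setminus \{ 0 \}$ with $b \oplus c \precsim b$. Applying $d$ and using additivity together with order-preservation gives $d (b) + d (c) = d (b \oplus c) \leq d (b)$. Because $d (b)$ is finite by the first step, I may cancel $d (b)$ to conclude $d (c) \leq 0$, hence $d (c) = 0$.

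The remaining step is to push this down into the Pedersen ideal, and this is the one point needing care: the witness $c$ is only known to be a nonzero positive element of $A$, not of $\Ped (A)$, so Lemma~\ref{L_8808_DimFcnOnSimp} does not apply to $c$ directly. To fix this, I would choose $\ep > 0$ small enough that $(c - \ep)_{+} \neq 0$. Then $(c - \ep)_{+} \in \Ped (A)_{+} \setminus \{ 0 \}$ by Lemma~\ref{L_8816_CutToPed}, while $(c - \ep)_{+} \precsim c$ yields $d \big( (c - \ep)_{+} \big) \leq d (c) = 0$. This contradicts the conclusion of the first step that $d$ is strictly positive on nonzero elements of $\Ped (A)_{+}$. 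Hence $b$ is finite, and since $b \in \Ped (A)_{+}$ was arbitrary, every positive element of $\Ped (A)$ is finite.
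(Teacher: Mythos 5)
Your proof is correct and follows essentially the same route as the paper's: apply the dimension function to the Cuntz subequivalence $b \oplus c \precsim b$ witnessing infiniteness, use Lemma~\ref{L_8808_DimFcnOnSimp}\eqref{Item_8808_Finite} to justify cancelling $d(b)$, and then cut down the witness $c$ via Lemma~\ref{L_8816_CutToPed} so that Lemma~\ref{L_8808_DimFcnOnSimp}\eqref{Item_8808_Vanish} yields a contradiction with $d(a) > 0$. The only difference is cosmetic: you establish the two global dichotomies up front, while the paper handles the case $d = \infty$ inline.
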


\begin{proof}
Suppose there is an infinite element
$a \in \Ped (A)_{+}$.
By definition, there is $b \in A_{+} \setminus \{ 0 \}$
such that $a \oplus b \precsim a$.

Let $d$ be a dimension function on~$A$
as in the statement.
Then $d (a) + d (b) \leq d (a)$.
So $d (b) = 0$ or $d (a) = \infty$.
If $d (a) = \infty$
then $d (x) = \infty$ for all $x \in \Ped (A)_{+} \setminus \{ 0 \}$
by Lemma \ref{L_8808_DimFcnOnSimp}(\ref{Item_8808_Finite}),
contradicting the hypothesis of the statement. So $d(a)<\infty$ and 
 $d (b) = 0$.
Choose $\ep > 0$ such that $\ep < \| b \|$.
Then $(b - \ep)_{+} \neq 0$,
and is in $\Ped (A)$ by Lemma~\ref{L_8816_CutToPed}.
Therefore,
$d\left( (b - \ep)_{+} \right)\leq d(b)=0$, which implies that
 $d (x) = 0$ for all $x \in \Ped (A)_{+}$
by Lemma \ref{L_8808_DimFcnOnSimp}(\ref{Item_8808_Vanish}),
contradicting the hypothesis of the statement.
\end{proof}

\begin{proposition}\label{L_8810_FinEltToUnit}
Let $A$ be a simple  C*-algebra.
Suppose that there are $a \in A_{+}$ and $\ep > 0$
such that $(a - \ep)_{+}$ is infinite.
Then $A$ is infinite
% Then $A^{\sim}$ is infinite
in the sense of Definition~\ref{D_8813_Finite}. 
\end{proposition}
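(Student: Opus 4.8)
The plan is to prove the contrapositive conclusion by showing directly that the unit $1_{A^{+}}$ of the unitization is an \emph{infinite projection}, which is exactly what Definition~\ref{D_8813_Finite} requires for $A$ to be infinite. Write $c=(a-\ep)_+$; after replacing $(a,\ep)$ by $(a/\|a\|,\ep/\|a\|)$ I may assume $\|a\|=1$ and $0<\ep<1$, so $c\neq0$. Two features of $c$ drive the argument. First, $c\in\Ped(A)$ by Lemma~\ref{L_8816_CutToPed}. Second, and crucially, $c$ has a \emph{local unit} inside $A$: choosing a continuous $h\colon[0,1]\to[0,1]$ with $h=0$ on $[0,\ep/2]$ and $h=1$ on $[\ep,1]$ and setting $g=h(a)\in A_{+}$, one gets $0\le g\le1$, $gc=cg=c$, $c\le g$ (compare the two functions of $a$ pointwise), and $c(1-g)=0$ in $A^{+}$. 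It is precisely this local unit that separates a genuine cut-down $(a-\ep)_+$ from an arbitrary infinite positive element such as the strictly positive element of $\mathcal K$ (whose class is properly infinite yet leaves $\mathcal K$ finite); without it the statement fails.

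Next I would promote $c$ to a \emph{properly infinite} element. From $c\oplus b\precsim c$ with $b\neq0$ one has $b\precsim c$ and $c\sim c\oplus b$, hence $\langle c\rangle=\langle c\rangle+m\langle b\rangle$ for every $m$. Since $A$ is simple and $c\neq0$, any cut-down lying in $\Ped(A)$ is Cuntz-dominated by a multiple of $b$ (a standard comparison fact for simple C*-algebras, e.g.\ \cite{KR00}), so $\langle(c-\eta)_+\rangle\le m\langle b\rangle$ for suitable $m$, giving $\langle c\rangle+\langle(c-\eta)_+\rangle\le\langle c\rangle$; letting $\eta\to0$ yields $2\langle c\rangle\le\langle c\rangle$. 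The same domination applied to $(g-\eta)_+\in\Ped(A)$, combined with $m\langle c\rangle\le\langle c\rangle$, gives $(g-\eta)_+\precsim c$ for all $\eta$, i.e.\ $\langle g\rangle\le\langle c\rangle$. A short Cuntz-class computation then closes the loop: from $c\perp(1-g)$ and $c+(1-g)\le1_{A^{+}}$ we get $\langle c\rangle+\langle 1-g\rangle=\langle c+(1-g)\rangle\le\langle 1_{A^{+}}\rangle$, while subadditivity together with $\langle g\rangle\le\langle c\rangle$ gives $\langle 1_{A^{+}}\rangle\le\langle g\rangle+\langle 1-g\rangle\le\langle c\rangle+\langle 1-g\rangle$. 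Hence $\langle 1_{A^{+}}\rangle=\langle c\rangle+\langle 1-g\rangle$, and since $\langle c\rangle=\langle c\rangle+\langle b\rangle$ this becomes $\langle 1_{A^{+}}\rangle=\langle 1_{A^{+}}\rangle+\langle b\rangle$ with $b\neq0$: the projection $1_{A^{+}}$ is an infinite element.

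Finally I would convert this into an honest proper isometry. From $1_{A^{+}}\oplus b\precsim 1_{A^{+}}$ in $M_2(A^{+})$, fix $\eta'<\min(1,\|b\|)$ and apply Lemma~\ref{lemkr} to obtain $e\in M_2(A^{+})$ with $e\,\mathrm{diag}(1_{A^{+}},0)\,e^{*}=\mathrm{diag}\bigl((1-\eta')1_{A^{+}},\,(b-\eta')_+\bigr)$. Reading off the first column $e_{11},e_{21}$ gives $e_{11}e_{11}^{*}=(1-\eta')1_{A^{+}}$, $e_{21}e_{11}^{*}=0$, and $e_{21}e_{21}^{*}=(b-\eta')_+\neq0$. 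Then $w=(1-\eta')^{-1/2}e_{11}$ satisfies $ww^{*}=1_{A^{+}}$, and $w^{*}w=1_{A^{+}}$ would force $e_{21}=e_{21}w^{*}w=0$ (using $e_{21}w^{*}=0$), contradicting $e_{21}e_{21}^{*}\neq0$. Thus $v=w^{*}$ is a non-unitary isometry in $A^{+}$, so $1_{A^{+}}$ is an infinite projection and $A$ is infinite in the sense of Definition~\ref{D_8813_Finite}.

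I expect the main obstacle to be the inequality $\langle g\rangle\le\langle c\rangle$ in the middle paragraph: this is the single point where the local unit and proper infiniteness must be played off against each other, and it is exactly where the hypothesis that the infinite element has the special form $(a-\ep)_+$ (rather than being an arbitrary infinite positive element) gets used. The cleanest route runs through the simple-algebra domination property of the Pedersen ideal, so the one external ingredient I would want pinned down with a precise citation is the statement that in a simple C*-algebra every positive element of $\Ped(A)$ is Cuntz-below some multiple of any fixed nonzero positive element.
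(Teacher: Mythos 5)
Your proof is correct, and its engine is the same as the paper's: both arguments exploit the fact that $(a-\ep)_+$ has a local unit in $A$ (your $g$; the paper's $e_1$, $e_2$), use simplicity to Cuntz-dominate that local unit by finitely many copies of the absorbed element $b$ (the external fact you ask to have pinned down is exactly \cite[Lemma 1.13]{Ph14}, which the paper invokes at the corresponding spot), and then run the absorption induction $\langle (a-\ep)_+\rangle + m\langle b\rangle \le \langle (a-\ep)_+\rangle$. Where you diverge is the endgame. The paper stays at the level of elements: it places an orthogonal piece $b$ with its own local unit $b_0$ inside the ``complement'' $\overline{(1-e_2)A(1-e_2)}$ (this is where nonunitality of $A$ is used), proves $e_1 + b \precsim (a-\ep)_+$, picks an approximant $v$, and writes down the explicit element $s = v a_0^{1/2} + (1-e_1-b)^{1/2} \in A^{+}$, which is right invertible (since $\|1 - s s^*\| < \tfrac12$) but satisfies $s b_0 = 0$, hence is not invertible. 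You instead do the bookkeeping in $W(A^{+})$, proving $\langle 1_{A^{+}}\rangle = \langle 1_{A^{+}}\rangle + \langle b\rangle$ from $g \precsim c$ and $c \perp (1-g)$, and then extract a non-unitary isometry from $1_{A^{+}} \oplus b \precsim 1_{A^{+}}$ via Lemma~\ref{lemkr} applied in $M_2(A^{+})$; all the steps check out. Your route is slightly cleaner in that it needs only one local unit, avoids the explicit element $s$, and never uses nonunitality of $A$ (so it also yields the unital variant the paper only remarks on), at the cost of passing to $W(A^{+})$ and $M_2(A^{+})$; the paper's route produces the witness of infiniteness directly in $A^{+}$. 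Your side remark that the hypothesis that the infinite element be a cut-down is essential (strictly positive elements of $\mathcal{K}$ are infinite while $\mathcal{K}$ is finite) correctly identifies the local unit as the crux.
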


Although it wasn't convenient to write the proof this way,
having chosen $b \in A_{+} \setminus \{ 0 \}$
which is orthogonal to $(a - \ep)_{+}$,
heuristically the key point is that
$(a - \frac{\ep}{3})_{+}$ is in the algebraic
ideal generated by~$b$. 

This proposition should also be true in the unital case,
but we don't need it there.

\begin{proof}[Proof of Proposition~\ref{L_8810_FinEltToUnit}]
Replacing $a$ by $\| a \|^{-1} a$ and $\ep$ by $\| a \|^{-1} \ep$,
we can assume that $\| a \| = 1$.
Then clearly $\ep < 1$.

Define continuous functions $f_1, f_2 \colon [0, 1] \to [0, 1]$
by
\[
f_1 (\ld)
 = \begin{cases}
   0                   & \hspace*{0.5em} 0 \leq \ld \leq \frac{2 \ep}{3}
        \\
   3 \ep^{-1} \ld - 2  & \hspace*{0.5em} \frac{2 \ep}{3} < \ld < \ep
       \\
   1                   & \hspace*{0.5em} \ep \leq \ld \leq 1
\end{cases}
\quad {\mbox{and}} \quad
f_2 (\ld)
 = \begin{cases}
   0          & \hspace*{0.5em} 0 \leq \ld \leq \frac{\ep}{3}
        \\
   3 \ep^{-1} \ld - 1
              & \hspace*{0.5em} \frac{\ep}{3} < \ld < \frac{2 \ep}{3}
       \\
   1          & \hspace*{0.5em} \frac{2 \ep}{3} \leq \ld \leq 1
\end{cases}
\]
Define
\[
a_0 = (a - \ep)_{+},
\qquad
e_1 = f_1 (a),
\andeqn
e_2 = f_2 (a).
\]
Then $e_1 a_0 = a_0$.
By assumption,
there is $c \in A_{+} \setminus \{ 0 \}$
such that $a_0 \oplus c \precsim a_0$.
Since $A$ is not unital,
${\overline{(1 - e_2) A (1 - e_2)}}$
is a nonzero hereditary subalgebra of~$A$.
By \cite[Lemma 2.4]{Ph14},
there exists a positive element
$c_0 \in {\overline{(1 - e_2) A (1 - e_2)}}$
such that $\| c_0 \| = 1$ and $c_0 \precsim c$.
Then set $b = f_2 (c_0)$ and $b_0 = f_1 (c_0)$.
So $b_0 \neq 0$
and $b b_0 = b_0$.
Also $b \precsim c_0$,
so $a_0 + b \precsim a_0$.
Note that $a_0 c_0=0$, and 
so $a_0 b=0$.

Since $A$ is simple,
\cite[Lemma 1.13]{Ph14} provides
$n \in \N$ and $y_1, y_2, \ldots, y_n \in A$
such that
$\left\| e_2 - \sum_{j = 1}^n y_j b y_j^* \right\| < \frac{1}{2}$.
In $W (A)$ we then have
\[
\langle e_1 \rangle
  \leq \bigl\langle \bigl( e_2 - \tfrac{1}{2} \bigr)_{+} \bigr\rangle
  \leq \left\langle \sum_{j = 1}^n y_j b y_j^* \right\rangle
  \leq \sum_{j = 1}^n \langle y_j b y_j^* \rangle
  \leq n \langle b \rangle
  \leq \langle a_0 \rangle + n \langle b \rangle.
\]
It follows from an induction argument that
$\langle a_0 \rangle + (n + 1) \langle b \rangle
 \leq \langle a_0 \rangle$,
so $e_1 + b \precsim a_0$.
Therefore there is $v \in A$ such that
\begin{equation}\label{Eq_8811_CSE}
\| v a_0 v^* - (e_1 + b) \| < \frac{1}{2}.
\end{equation}
Define
\[
s = v a_0^{1/2} + (1 - e_1 - b)^{1/2}.
\]

Note that if $A$ is unital then
$s\in A$, and if $A$ is nonunital then
$s\in A^{+}$.

Since $e_1 a_0 = a_0$ and $b a_0 = 0$, we have
\[
(1 - e_1 - b) a_0 = a_0 (1 - e_1 - b) = 0,
\]
and it follows (for example, by polynomial approximation) that
\begin{equation}\label{Eq_8813_ProdZ}
(1 - e_1 - b)^{1/2} a_0^{1/2} = a_0^{1/2} (1 - e_1 - b)^{1/2} = 0.
\end{equation}

We claim that $s$ is right invertible.
To prove this,
we show that $\| 1 - s s^* \| < \frac{1}{2}$.
Now
\[
s s^*
 = v a_0^{1/2} a_0^{1/2} v^*
   + (1 - e_1 - b)
   + v a_0^{1/2} (1 - e_1 - b)^{1/2}
   + (1 - e_1 - b)^{1/2} a_0^{1/2} v^*.
\]
The last two terms are zero by~\eqref{Eq_8813_ProdZ}.
The first is equal to $v a_0 v^*$.
So the relation $\| 1 - s s^* \| < \frac{1}{2}$
follows from~\eqref{Eq_8811_CSE}.
The claim is proved.

Now we claim that $s b_0 = 0$.
Since $e_1 b_0 = 0$ and $b b_0 = b_0$,
we have $(1 - e_1 - b) b_0 = 0$,
so $(1 - e_1 - b)^{1/2} b_0 = 0$.
Using $a_0 b_0 = 0$, so $a_0^{1/2} b_0 = 0$, we now get
$s b_0 = 0$,
as desired.

It follows from the last claim that $s$ is not two sided invertible,
completing the proof.
\end{proof}

\begin{lemma}\label{L_8810_xhxTohxh}
For every $\ep > 0$ there is $\dt > 0$ such that whenever
$A$ is a C*-algebra,
and $x, h \in A_{+}$ satisfy $\| x \| \leq 1$,
$\| h \| \leq 1$, and $\| x h x \| > 1 - \dt$,
then $\| h x h \| > 1 - \ep$.
\end{lemma}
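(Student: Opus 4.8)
The plan is to reduce everything to the limiting statement that, for positive contractions $x, h$ in an arbitrary C*-algebra, $\|xhx\| = 1$ forces $\|hxh\| = 1$, and then to prove that statement by a Hilbert space computation. For the reduction I would argue by contradiction and compactness. If the lemma failed for some $\ep > 0$, then for each $n$ there would be a C*-algebra $A_n$ and positive contractions $x_n, h_n \in A_n$ with $\|x_n h_n x_n\| > 1 - \tfrac1n$ and $\|h_n x_n h_n\| \le 1 - \ep$. Passing to the quotient $A_\infty = \prod_n A_n \big/ \bigoplus_n A_n$, whose norm on a class is the $\limsup$ of the coordinate norms, the classes $\bar x, \bar h$ of $(x_n)_n, (h_n)_n$ are positive contractions with $\|\bar x \bar h \bar x\| = \limsup_n \|x_n h_n x_n\| = 1$ (the bound $\le 1$ coming from $\bar x \bar h \bar x \le \bar x^2 \le 1$, since $\bar h \le 1$) and $\|\bar h \bar x \bar h\| \le 1 - \ep$. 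This contradicts the limiting statement, so a suitable $\dt$ must exist.

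To prove the limiting statement I would use the identities $\|xhx\| = \|h^{1/2} x\|^2$ and $\|hxh\| = \|x^{1/2} h\|^2$, together with a faithful representation on a Hilbert space $H$. Choose unit vectors $\xi_k \in H$ with $\langle xhx\, \xi_k, \xi_k\rangle = \|h^{1/2} x \xi_k\|^2 \to 1$. Because $0 \le x \le 1$ gives $x^2 \le x \le 1$, the inequality $\|x\xi_k\|^2 = \langle x^2 \xi_k, \xi_k\rangle \ge \langle xhx\,\xi_k,\xi_k\rangle \to 1$ forces $\langle x\xi_k,\xi_k\rangle \to 1$ and hence, via the elementary bound $\|(1-t)\zeta\|^2 \le \langle (1-t)\zeta,\zeta\rangle$ valid for $0 \le t \le 1$ and $\|\zeta\|\le 1$, that $x\xi_k \to \xi_k$. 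Setting $\zeta_k = x\xi_k / \|x\xi_k\|$ (a genuine unit vector, with $\zeta_k - \xi_k \to 0$ since $\|x\xi_k\|\to 1$), the hypothesis becomes $\langle h \zeta_k, \zeta_k\rangle \to 1$, and the same bound applied to $1-h$ gives $h\zeta_k \to \zeta_k$; moreover $x\zeta_k \to \zeta_k$ because $\zeta_k \approx \xi_k$ and $x\xi_k \approx \xi_k$. Then $\langle hxh\,\zeta_k, \zeta_k\rangle = \langle x (h\zeta_k), h\zeta_k\rangle \to \langle x\zeta_k, \zeta_k\rangle \to 1$, so $\|hxh\| = 1$.

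The one point needing care, and the main obstacle, is the normalization bookkeeping: the vector $x\xi_k$ is only approximately a unit vector, so one must renormalize and then verify that \emph{both} $x$ and $h$ act as the near-identity on the \emph{common} unit vectors $\zeta_k$ before combining them in $\langle x(h\zeta_k),h\zeta_k\rangle$; this is where the convexity inequality above is invoked repeatedly. If one prefers to avoid the product C*-algebra entirely, exactly these Hilbert space estimates, carried out quantitatively (replacing each ``$\to$'' by an explicit error controlled by $\sqrt{\dt}$), prove the lemma directly and yield an admissible $\dt$ of order $\ep^2$, for instance $\dt = (\ep/8)^2$ together with $\dt \le 1$.
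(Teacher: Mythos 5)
Your proof is correct, but it takes a genuinely different and much heavier route than the paper's. The paper proves the lemma in two lines: taking $\dt = 1 - (1-\ep)^{1/2}$, one has
\[
1 - \ep = (1-\dt)^2 < \| x h x \|^2 = \| (x h x)^2 \| = \| x h x^2 h x \| \leq \| h x^2 h \| \leq \| h x h \|,
\]
using only that $x h x$ is selfadjoint, that $\| x \| \leq 1$, and that $x^2 \leq x$ forces $h x^2 h \leq h x h$. You instead reduce by contradiction and a sequence-algebra compactness argument to the limiting statement ($\| x h x \| = 1$ implies $\| h x h \| = 1$ for positive contractions), and then prove that by extracting almost-fixed unit vectors in a faithful representation; your renormalization bookkeeping for $\zeta_k = x\xi_k / \| x \xi_k \|$ is handled correctly, and the quantitative variant you sketch at the end (with $\dt$ of order $\ep^2$) does go through. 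What the paper's computation buys is brevity, a completely representation-free argument, and a better explicit constant ($\dt \sim \ep/2$ rather than $\ep^2$); what your approach buys is a reusable template (near-maximizing vectors for positive contractions) that generalizes to situations where no such algebraic identity is available. Both are valid proofs of the stated lemma, since only the existence of some $\dt$ is required.
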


\begin{proof}
The statement is trivial if $\ep > 1$,
so assume $\ep \leq 1$.
Set $\dt = 1 - (1 - \ep)^{1/2} > 0$.
Now let $A$, $x$, and $h$
be as in the hypotheses.
Using selfadjointness of $x h x$ at the second step
and $x^2 \leq x$ at the last step,
we have
\[
1 - \ep
  = (1 - \dt)^2
  < \| (x h x)^2 \|
  = \| x h x^2 h x \|
  \leq \| h x^2 h \|
  \leq \| h x h \|.
\]
This completes the proof.
\end{proof}

The following lemma generalizes \cite[Lemma 1.8]{Ph14}.

\begin{lemma}\label{L_8810_Gen1Point8}
For every $\ep > 0$ there is $\dt > 0$ such that whenever
$A$ is a C*-algebra,
$\af, \bt \in [0, \infty)$,
and $r, x, h \in A_{+}$ satisfy
\[
\| r \| \leq 1,
\qquad
\| x \| \leq 1,
\qquad
\| h \| \leq 1,
\andeqn
\| r x - x \| < \dt,
\]
then
\[
\bigl( r x r - [ \af + \bt + \ep ] \bigr)_{+}
 \precsim \bigl( h^{1/2} x h^{1/2} - \af \bigr)_{+}
      \oplus (r^2 - r h r - \bt)_{+}.
\]
\end{lemma}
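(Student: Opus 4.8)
The plan is to use the exact decomposition obtained by inserting $h + (1-h) = 1_{A^{\sim}}$,
\[
r x r = r x^{1/2} h x^{1/2} r + r x^{1/2} (1-h) x^{1/2} r =: P + Q ,
\]
where $P, Q \in A_{+}$, and then to route $P$ into the first summand $(h^{1/2} x h^{1/2} - \af)_{+}$ and $Q$ into the second summand $(r^2 - rhr - \bt)_{+}$. The hypothesis $\| rx - x \| < \dt$ will be needed only for the $Q$-term; the $P$-term works for arbitrary contractive $r$.

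For the $P$-term I would write $P = w^* w$ with $w = h^{1/2} x^{1/2} r$, so that $P \sim w w^* = h^{1/2} x^{1/2} r^2 x^{1/2} h^{1/2}$. Since $r^2 \le 1_{A^{\sim}}$ gives $x^{1/2} r^2 x^{1/2} \le x$, hence $w w^* \le h^{1/2} x h^{1/2}$, monotonicity of the cut-down under $\le$ (for positive elements this follows from Lemmas~\ref{lemkr} and~\ref{lemkey}, using the contraction $s^{1/2}(t+\dt)^{-1/2}$) yields
\[
(P - \af)_{+} \sim (w w^* - \af)_{+} \precsim (h^{1/2} x h^{1/2} - \af)_{+} .
\]

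For the $Q$-term, set $M = r(1-h)r = r^2 - rhr \ge 0$ and $\widetilde{Q} = (x^{1/2} r)^* M (x^{1/2} r)$. Because $\| x^{1/2} r \| \le 1$, Lemma~\ref{lemkey} gives $(\widetilde{Q} - \bt)_{+} \precsim (M - \bt)_{+} = (r^2 - rhr - \bt)_{+}$. The point is that $Q$ is close to $\widetilde{Q}$: inserting $r$ twice rewrites
\[
Q - \widetilde{Q}
 = r \bigl[ x^{1/2}(1-r)(1-h) x^{1/2} + x^{1/2} r (1-h)(1-r) x^{1/2} \bigr] r ,
\]
so $\| Q - \widetilde{Q} \| \le 2 \| x^{1/2}(1-r) \|$, and since $\| x^{1/2}(1-r) \|^2 = \| (1-r) x (1-r) \| = \| x - rx - xr + rxr \| < 2\dt$ by the hypothesis (and its adjoint), we get $\| Q - \widetilde{Q} \| < 2\sqrt{2\dt} =: \omega$. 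Choosing $\dt$ so small that $\omega < \ep/2$, the $1$-Lipschitz estimate for $t \mapsto (t-\bt)_{+}$ together with Lemma~\ref{lemkr} gives $(Q - \bt - \omega)_{+} \precsim (\widetilde{Q} - \bt)_{+} \precsim (r^2 - rhr - \bt)_{+}$.

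Finally I would assemble the pieces with a splitting lemma for orthogonal sums: from $rxr = P + Q$ and the standard Cuntz arithmetic (as in \cite[Lemma~1.5]{Ph14} and \cite[Proposition~2.7]{KR00}),
\[
(rxr - \af - \bt - \ep)_{+} \precsim (P - \af)_{+} \oplus (Q - \bt - \omega)_{+} ,
\]
the slack $\ep$ absorbing the auxiliary constant $\omega < \ep/2$ (if only the equal-split form of the splitting lemma is available, one first peels $\af$ and $\bt+\omega$ off using cut-down monotonicity). Combining this with the two subequivalences above and monotonicity of $\oplus$ under $\precsim$ gives the assertion. The main obstacle is the $Q$-term: since $t \mapsto t^{1/2}$ is not Lipschitz at $0$, the hypothesis $\| rx - x \| < \dt$ does \emph{not} control $\| x^{1/2} - r x^{1/2} \|$ directly, and the essential device is the estimate $\| x^{1/2}(1-r) \|^2 = \| (1-r) x (1-r) \| < 2\dt$. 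This is precisely what dictates the quantifier order ``for every $\ep$ there is $\dt$'' and lets the resulting error be swallowed by the $\ep$-slack in the conclusion.
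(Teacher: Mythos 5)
Your argument is correct, and it is a genuine variant of the paper's proof rather than a reproduction of it. Both proofs insert $h + (1 - h)$, split the two resulting summands apart with the orthogonal-sum lemma \cite[Lemma~1.5]{Ph14}, flip $a^* a$ against $a a^*$ via \cite[Proposition~2.3(ii)]{ERS}, and strip a contraction using Lemma~\ref{lemkey}; but the paper inserts $h + (1-h)$ between the two copies of $r$ in $x^{1/2} r^2 x^{1/2} \sim r x r$, so that its second summand $x^{1/2} (r^2 - r h r) x^{1/2}$ passes to $(r^2 - r h r - \bt)_{+}$ with no error and the entire $\ep$-slack is spent replacing $x^{1/2} r h r x^{1/2}$ by $x^{1/2} h x^{1/2}$, whereas you insert $h + (1-h)$ between the two copies of $x^{1/2}$ in $r x r$, so that your first summand $P$ passes to $(h^{1/2} x h^{1/2} - \af)_{+}$ with no error (and with no use of the hypothesis) and the slack is spent entirely on the $Q$-term. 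The more substantive difference is how the non-Lipschitzness of $t \mapsto t^{1/2}$ is handled: the paper converts $\| r x - x \| < \dt$ into $\bigl\| r x^{1/2} - x^{1/2} \bigr\| < \tfrac{\ep}{2}$ by citing \cite[Lemma~2.5.11]{LnBook}, while your identity $\bigl\| x^{1/2} (1 - r) \bigr\|^{2} = \| (1 - r) x (1 - r) \| < 2 \dt$ is self-contained and produces an explicit $\dt$ of order $\ep^{2}$, a small gain in concreteness. Two steps should be pinned down in a final write-up: the monotonicity $(w w^{*} - \af)_{+} \precsim \bigl( h^{1/2} x h^{1/2} - \af \bigr)_{+}$ from $w w^{*} \leq h^{1/2} x h^{1/2}$ rests on the standard fact that $0 \leq a \leq b$ implies $(a - s)_{+} \precsim (b - s)_{+}$ for all $s \geq 0$, which is not literally Lemma~\ref{lemkr} or Lemma~\ref{lemkey} but does follow from them together with the contractions $a^{1/2} (b + \dt)^{-1/2}$ as in your parenthetical sketch; and since $\af$ or $\bt$ may equal~$0$, the appeals to Lemma~\ref{lemkey} should be read in that case as the trivial subequivalence $v c v^{*} \precsim c$.
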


\begin{proof}
Use \cite[Lemma~2.5.11]{LnBook} 
to choose $\dt > 0$ such that whenever $A$ is a C*-algebra
and $r, x \in A_{+}$ satisfy $\| r \| \leq 1$, $\| x \| \leq 1$,
and $\| r x - x \| < \dt$,
then $\bigl\| r x^{1/2} - x^{1/2} \bigr\| < \frac{\ep}{2}$.

Now let $A$, $\af$, $\bt$, $r$, $x$, and $h$
be as in the hypotheses.
We have
\[
\bigl\| x^{1/2} r h r x^{1/2} - x^{1/2} h x^{1/2} \bigr\|
 \leq 2 \bigl\| r x^{1/2} - x^{1/2} \bigr\|
 < \ep,
\]
so, by \cite[Lemma 1.6]{Ph14},
\begin{equation}\label{Eq_8811_Star18}
\bigl( x^{1/2} r h r x^{1/2} - [\af + \ep] \bigr)_{+}
  \precsim \bigl( x^{1/2} h x^{1/2} - \af \bigr)_{+}.
\end{equation} 
Now,
using \cite[Proposition 2.3(ii)]{ERS}
at the first and fourth steps,
\cite[Lemma 1.5]{Ph14} at the second step,
and
\eqref{Eq_8811_Star18} and Lemma~\ref{lemkey} at the third step,
we have
\begin{align*}
\bigl( r x r - [ \af + \bt + \ep ] \bigr)_{+}
& \sim \bigl( x^{1/2} r^2 x^{1/2} - [ \af + \bt + \ep ] \bigr)_{+}
\\
& \precsim \bigl( x^{1/2} r h r x^{1/2} - [\af + \ep] \bigr)_{+}
      \oplus \bigl( x^{1/2} (r^2 - r h r) x^{1/2} - \bt \bigr)_{+}
\\
& \precsim \bigl( x^{1/2} h x^{1/2} - \af \bigr)_{+}
      \oplus ( r^2 - r h r - \bt)_{+}
\\
& \sim \bigl( h^{1/2} x h^{1/2} - \af \bigr)_{+}
      \oplus ( r^2 - r h r - \bt)_{+}.
\end{align*}
This completes the proof.
\end{proof}

The following lemma is a nonunital version of \cite[Lemma 2.9]{Ph14}.

\begin{lemma}\label{L_8810_Nonunital2Point6}
Let $A$ be a simple C*-algebra which is not of type~I,
let $r \in A_{+}$ be a finite element,
and let $x \in \bigl( {\overline{r A r}} \bigr)_{+}$
satisfy $\| x \| = 1$.
Then for every $\ep > 0$
there are $\dt > 0$
and $y \in \bigl( {\overline{x A x}} \bigr)_{+} \setminus \{ 0 \}$
such that whenever $h \in A_{+}$ satisfies
$\| h \| \leq 1$ and $( r^2 - r h r - \dt )_{+} \precsim y$,
then $\| h x h \| > 1 - \ep$.
% then $\| x h x \| > 1 - \ep$.
\end{lemma}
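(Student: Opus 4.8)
The plan is to force the conclusion $\|hxh\| > 1 - \ep$ by producing a fixed nonzero element that the defect $(r^2 - rhr - \dt)_+$ would have to dominate if $\|xhx\|$ were not large, and then choosing $y$ too small to accommodate it. First I would apply Lemma~\ref{L_8810_xhxTohxh} to obtain $\dt_0 > 0$ (depending only on $\ep$) with $\|xhx\| > 1 - \dt_0 \Rightarrow \|hxh\| > 1 - \ep$, and set $\af = 1 - \dt_0$. Writing $u = x h^{1/2}$, the elements $u^* u = h^{1/2} x^2 h^{1/2}$ and $u u^* = x h x$ have equal norm, so $\bigl(h^{1/2} x^2 h^{1/2} - \af\bigr)_+ \neq 0$ is equivalent to $\|xhx\| > \af$. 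Thus it suffices to force $\bigl(h^{1/2} x^2 h^{1/2} - \af\bigr)_+ \neq 0$.

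The main tool is Lemma~\ref{L_8810_Gen1Point8}, which I would apply with $x^2$ in the role of its $x$ and a function of $r$ in the role of its $r$. Put $e = g(r)$, where $g(\ld) = \min(\ld / s, 1)$; then $0 \le e \le 1$, $e \le s^{-1} r$, and, since $\bigl(g(r)\bigr)_{s \to 0}$ is an approximate identity for $\overline{rAr} \ni x^2$, we can make $\|e x^2 - x^2\|$ as small as needed. Fix small $\bt, \ep' > 0$ with $\bt + \ep' + 2 \dt' < \dt_0$, where $\dt'$ is the tolerance Lemma~\ref{L_8810_Gen1Point8} assigns to $\ep'$, and choose $s$ with $\|e x^2 - x^2\| < \dt'$. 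Lemma~\ref{L_8810_Gen1Point8} then gives
\[
\bigl( e x^2 e - [\af + \bt + \ep'] \bigr)_+
 \precsim \bigl( h^{1/2} x^2 h^{1/2} - \af \bigr)_+
   \oplus \bigl( e^2 - e h e - \bt \bigr)_+ .
\]
Since $\|e x^2 e - x^2\| < 2 \dt'$ and $\|x^2\| = 1$, Lemma~\ref{lemkr} shows the left-hand side dominates the fixed nonzero element $w_0 = \bigl( x^2 - (1 - \dt_0 + \bt + \ep' + 2 \dt') \bigr)_+ \in \bigl( \overline{xAx} \bigr)_+$, which does not depend on $h$.

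Next I would absorb the second summand into the defect. With $v = e (1 - h)^{1/2}$ and $v' = r (1 - h)^{1/2}$, the relation $e^2 \le s^{-2} r^2$ gives $v^* v = (1 - h)^{1/2} e^2 (1 - h)^{1/2} \le s^{-2} (1 - h)^{1/2} r^2 (1 - h)^{1/2} = s^{-2} (v')^* v'$. Using the identity $(w w^* - t)_+ \sim (w^* w - t)_+$ and the monotonicity $S \le T \Rightarrow (S - t)_+ \precsim (T - t)_+$, together with scaling, one gets $\bigl( e^2 - e h e - \bt \bigr)_+ \precsim \bigl( r^2 - r h r - s^2 \bt \bigr)_+$. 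Hence, setting $\dt = s^2 \bt$ (which depends only on $s$ and $\bt$, not on $h$), the second summand is $\precsim (r^2 - rhr - \dt)_+$, which by hypothesis is $\precsim y$. Consequently $w_0 \precsim \bigl( h^{1/2} x^2 h^{1/2} - \af \bigr)_+ \oplus y''$ with $y'' \precsim y$; were the first summand zero we would obtain $w_0 \precsim y$. So if $y$ is chosen with $w_0 \not\precsim y$, the first summand is nonzero, giving $\|xhx\| > \af = 1 - \dt_0$ and therefore $\|hxh\| > 1 - \ep$.

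It remains to choose $y \in \bigl( \overline{xAx} \bigr)_+ \setminus \{0\}$ with $w_0 \not\precsim y$, and I expect this to be the main obstacle; it is where finiteness of $r$ and the non-type-I hypothesis enter. Using the subdivision behind Lemma~\ref{lemmatrix} (namely \cite[Lemma~2.1]{Ph14}) inside $\overline{w_0 A w_0}$, I would pick nonzero orthogonal $y, y'$ with $y \sim y'$ and $y \oplus y' \precsim w_0$, and take this $y$. If $w_0 \precsim y$ then $y \oplus y' \precsim w_0 \precsim y$, so $y$ is infinite; but $y \precsim w_0 \precsim x \precsim r$ with $r$ finite. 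The crux is the fact that a positive element Cuntz-subequivalent to a finite element is again finite, which contradicts infiniteness of $y$; I would derive this from the finiteness theory of Section~\ref{Sec_Finite}, e.g.\ from Proposition~\ref{L_8810_FinEltToUnit} applied inside $\overline{rAr}$ (finiteness of the element $r$ forcing $\overline{rAr}$ to be a finite C*-algebra, none of whose nonzero cut-downs can be infinite). Since $y$ and $\dt$ are selected before $h$, this produces the required uniform statement.
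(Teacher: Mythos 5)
Your argument is correct in substance but takes a genuinely different route from the paper's. The paper applies Lemma~\ref{L_8810_Gen1Point8} not to $x^2$ but to an auxiliary element $b_1 + b_2$ built from two orthogonal Cuntz-equivalent pieces placed deep inside $\overline{xAx}$ (via \cite[Lemmas 2.4, 2.6, 2.8]{Ph14}, so that $x$ and $r_0 = f_{1/n, 2/n}(r)$ act approximately as units on them); the hypothesis $\| x h x \| \leq 1 - \ep$ then kills the first summand through a norm estimate on $h^{1/2}(b_1 + b_2)h^{1/2}$, and the defect is transported from $r_0$ back to $r$ by conjugating with an approximate inverse $s = g(r)$ and estimating $\| s (r^2 - rhr) s - (r_0^2 - r_0 h r_0) \|$. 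Your version is leaner at both points: the identity $\| h^{1/2} x^2 h^{1/2} \| = \| x h x \|$ lets you feed $x^2$ directly into Lemma~\ref{L_8810_Gen1Point8} and extract a fixed nonzero $w_0 \in \bigl( \overline{xAx} \bigr)_{+}$ independent of $h$, and the operator inequality $e^2 \leq s^{-2} r^2$ combined with $(w w^* - t)_+ \sim (w^* w - t)_+$ replaces the norm estimates in the transport step. Both proofs terminate in the identical configuration: two orthogonal, equivalent, nonzero elements whose sum is Cuntz-dominated by one of them, all sitting under the finite element $r$.

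The one step you must repair is the endgame. The blanket claim that a positive element Cuntz-subequivalent to a finite element is finite is more than you can extract from Section~\ref{Sec_Finite}: Proposition~\ref{L_8810_FinEltToUnit} passes from infiniteness of an element $(a - \ep)_+$ to infiniteness of the ambient \emph{algebra}, and nothing in the paper converts finiteness of the \emph{element} $r$ into finiteness of the algebra $\overline{rAr}$, so that route stalls. What you actually have is stronger and is exactly what the paper uses: since $y \oplus y' \precsim w_0 \precsim y$ with $y' \sim y$ and $y y' = 0$, your $y$ is \emph{properly} infinite, and $y \precsim r$ in a simple algebra with $r$ finite; this is precisely the configuration excluded by \cite[Lemma 3.8]{KR00}, the citation invoked at the corresponding point of the paper's proof. (Concretely: a properly infinite element Cuntz-dominates every element of the closed ideal it generates, so simplicity gives $r \precsim y$, whence $r \oplus r \precsim y \oplus y \precsim y \precsim r$, contradicting finiteness of $r$.) With that substitution your proof goes through.
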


\begin{proof}
By Lemma~\ref{L_8810_xhxTohxh},
and changing the value of~$\ep$,
it suffices to prove this with the conclusion
$\| x h x \| > 1 - \ep$
instead of $\| h x h \| > 1 - \ep$.

If $\ep > 1$,
there is nothing to prove.
So assume $\ep \leq 1$.

Set $\ep_0 = \frac{\ep}{15}$.
Apply Lemma~\ref{L_8810_Gen1Point8}
with $\ep_0$ in place of~$\ep$,
getting $\rh > 0$
(called $\dt$ there).

For $\af, \bt \in [0, \infty)$ with $\af < \bt$,
define a continuous function
$f_{\af, \bt} \colon [0, \infty) \to [0, 1]$
by
\[
f_{\af, \bt} (\ld)
 = \begin{cases}
   0                           & \hspace*{1em} 0 \leq \ld \leq \af
        \\
 (\bt - \af)^{-1} (\ld - \af)  & \hspace*{1em} \af < \ld < \bt
       \\
   1                           & \hspace*{1em} \bt \leq \ld.
\end{cases}
\]
The elements $f_{1/n, 2 / n} (r)$,
for $n \in \N$,
form an approximate identity for ${\overline{r A r}}$.
Choose $n \in \N$ so large that
$\frac{2}{n} < \ep_0$
and the element $r_0 = f_{1/n, 2 / n} (r)$
satisfies $\| r_0 x - x \| < \min \bigl( \frac{\rh}{3}, \ep_0 \bigr)$.
Define a continuous function $g \colon [0, \infty) \to [0, \infty)$
by
\[
g (\ld)
 = \begin{cases}
   \frac{n^2 \ld}{4} & \hspace*{1em} 0 \leq \ld \leq \frac{2}{n}
        \\
   \frac{1}{\ld} & \hspace*{1em} \frac{2}{n} < \ld.
\end{cases}
\]
Set $s = g (r)$.
Then $\| s \| \leq \frac{n}{2}$.
We have
\[
\| s r - r_0 \| \leq \frac{2}{n} < \ep_0
\andeqn
\| s r \| \leq 1.
\]
So
\begin{equation}\label{Eq_8812_StSt}
\bigl\| s (r^2 - r h r ) s - (r_0^2 - r_0 h r_0 ) \bigr\| < 4 \ep_0.
\end{equation}

By \cite[Lemma 2.8]{Ph14},
there are $a, b \in \bigl( {\overline{x A x}} \bigr)_{+}$
such that $\| a \| = \| b \| = 1$, $a b = b$,
and whenever $d \in {\overline{b A b}}$ satisfies $\| d \| \leq 1$,
then $\| x d - d \| < \min \bigl( \frac{\rh}{3}, \ep_0 \bigr)$.
Use \cite[Lemma 2.4]{Ph14}
to choose nonzero orthogonal positive elements
$z_1, z_2 \in {\overline{b A b}}$
such that $\| z_1 \| = \| z_2 \| = 1$.
For $j = 1, 2$,
set $b_j = f_{\ep_0, 2 \ep_0} (z_j)$.
Use \cite[Lemma 2.6]{Ph14}
to choose nonzero positive elements
$c_j \in {\overline{f_{2 \ep_0, 3 \ep_0} (z_j)Af_{2 \ep_0, 3 \ep_0} (z_j)}}$
% and $c_j = f_{2 \ep_0, 3 \ep_0} (z_j)$.
for $j = 1, 2$ such that $c_1 \sim c_2$.
We may clearly assume $\| c_1 \| = \| c_2 \| = 1$.
Then
\[
b_1 b_2 = 0,
\qquad
0 \leq c_j \leq b_j \leq 1,
\qquad
a b_j = b_j,
\andeqn
b_j c_j = c_j.
\]
Define
$y = c_1$ and $\dt = 4 \ep_0 / n^2$.

Now let $h \in A_{+}$ satisfy
$\| h \| \leq 1$ and $( r^2 - r h r - \dt )_{+} \precsim y$.
Suppose $\| x h x \| \leq 1 - \ep$.
The choices of $a$ and $b$ imply that
\[
\bigl\| x (b_1 + b_2) - (b_1 + b_2) \bigr\| < \min( \ep_0, \frac{\rho}{3})
\andeqn
\bigl\| x (b_1 + b_2)^{1/2} - (b_1 + b_2)^{1/2} \bigr\| < \ep_0.
\]
Therefore
\begin{align}\label{Eq_8812_3Star}
\bigl\| h^{1/2} (b_1 + b_2) h^{1/2} \bigr\|
& = \bigl\| (b_1 + b_2)^{1/2} h (b_1 + b_2)^{1/2} \bigr\|
\\
& < \bigl\| (b_1 + b_2)^{1/2} x h x (b_1 + b_2)^{1/2} \bigr\|
    + 2 \ep_0
\notag
\\
& \leq \| x h x \| + 2 \ep_0
  \leq 1 - \ep + 2 \ep_0
  = 1 - 13 \ep_0.
\notag
\end{align}
We have
\begin{align}\label{Eq_8814_New}
\bigl\| r_0 (b_1 + b_2) - (b_1 + b_2) \bigr\|
& \leq \| r_0 x - x \|\cdot \| b_1 + b_2 \|
    + 2 \bigl\| x (b_1 + b_2) - (b_1 + b_2) \bigr\|
\\
& < \min \left( \frac{\rh}{3}, \ep_0 \right)
    + 2 \min \left( \frac{\rh}{3}, \, \ep_0 \right)
  = \min ( \rh, \, 3 \ep_0 ),
\notag
\end{align}
so
\begin{equation}\label{Eq_8812_Star}
\bigl\| r_0 (b_1 + b_2) r_0 - (b_1 + b_2) \bigr\|
  < 6 \ep_0.
\end{equation}
Now,
using $1 - \ep_0 < 1$ and \cite[Lemma 1.10]{Ph14} at the first step;
using \eqref{Eq_8812_Star}
and \cite[Corollary 1.6]{Ph14} at the second step;
and at the third step using (\ref{Eq_8814_New})
and the choice of~$\rh$ using Lemma~\ref{L_8810_Gen1Point8},
with $\af = 1 - 13 \ep_0$,
with $\bt = 5 \ep_0$,
and with $\ep_0$ in place of~$\ep$,
\begin{align*}
c_1 + c_2
& \precsim \bigl( b_1 + b_2 - (1 - \ep_0) \bigr)_{+}
\\
& \precsim \bigl( r_0 (b_1 + b_2) r_0 - (1 - 7 \ep_0) \bigr)_{+}
\\
& \precsim
   \bigl( h^{1/2} (b_1 + b_2) h^{1/2} - (1 - 13 \ep_0) \bigr)_{+}
   \oplus ( r_0^2 - r_0 h r_0 - 5 \ep_0 )_{+}.
\end{align*}
It follows from~\eqref{Eq_8812_3Star}
that
$\bigl( h^{1/2} (b_1 + b_2) h^{1/2} - (1 - 13 \ep_0) \bigr)_{+} = 0$.
So,
using \eqref{Eq_8812_StSt}
and \cite[Corollary 1.6]{Ph14} at the second step,
using Lemma~\ref{lemkey} at the third step,
and using the choice of $\dt$ and $\| s \| \leq \frac{n}{2}$
at the fourth step,
\begin{align*}
c_1 + c_2
& \precsim ( r_0^2 - r_0 h r_0 - 5 \ep_0 )_{+}
\\
& \precsim \bigl( s (r^2 - r h r ) s - \ep_0 \bigr)_{+}
\\
& \precsim \left( r^2 - r h r - \frac{\ep_0}{\| s \|^2} \right)_{+}
  \precsim ( r^2 - r h r - \dt )_{+}
  \precsim y
  = c_1.
\end{align*}
Since $A$ is simple, $c_1 c_2 = 0$, $c_1 \sim c_2$, and $r$ is finite,
this contradicts \cite[Lemma 3.8]{KR00}.
The proof is complete.
\end{proof}

\begin{proposition}\label{P_8813_TZA_Fin}
Let $A$ be a finite simple tracially $\mathcal{Z}$-absorbing C*-algebra.
Then for every $x, a \in A_{+}$ with $\| a \| = 1$,
every finite set $F \subseteq A$, every $\varepsilon > 0$,
and every $n \in \mathbb{N}$,
there is a c.p.c.~order zero map $\varphi \colon M_{n} \to A$
such that:
\begin{enumerate}
\item\label{Item_P_8813_TZA_Fin_1}
$\bigl( x^{2} - x \varphi (1) x - \varepsilon \bigr)_{+} \precsim a$.
\item\label{Item_P_8813_TZA_Fin_2}
$\| [\varphi (z), b] \| < \varepsilon$
for any $z \in M_{n}$ with $\| z \| \leq 1$ and any $b \in F$.
\item\label{Item_P_8813_TZA_Fin_Add}
$\| \ph (1) a \ph (1) \| > 1 - \ep$.
\setcounter{TmpEnumi}{\value{enumi}}
\end{enumerate}
\end{proposition}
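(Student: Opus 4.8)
The plan is to feed the technical Lemma~\ref{L_8810_Nonunital2Point6} with a suitable truncation of~$a$, and then to build a \emph{single} order zero map that simultaneously controls the given $x$ (for condition~\eqref{Item_P_8813_TZA_Fin_1}) and~$a$ (for the new condition~\eqref{Item_P_8813_TZA_Fin_Add}). The commutator estimate~\eqref{Item_P_8813_TZA_Fin_2} will come for free from the definition.

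First I would set up the data for Lemma~\ref{L_8810_Nonunital2Point6}. Fix any $\eta \in (0, 1)$, and put $r = (a - (1 - \eta))_{+}$ and $w = \eta^{-1} r$. Since $\| a \| = 1$ we have $r \neq 0$, $\| w \| = 1$, $w \in \bigl( \overline{r A r} \bigr)_{+}$, and $0 \le w \le a$. Because $A$ is finite, Proposition~\ref{L_8810_FinEltToUnit} shows that $r$ is a finite element, and by Corollary~\ref{cornone} the algebra $A$ is not type~I. Thus Lemma~\ref{L_8810_Nonunital2Point6} applies to $r$, $w$, and $\varepsilon$, yielding $\delta > 0$ and $y \in \bigl( \overline{w A w} \bigr)_{+} \setminus \{ 0 \}$ such that whenever $h \in A_{+}$ satisfies $\| h \| \le 1$ and $(r^{2} - r h r - \delta)_{+} \precsim y$, then $\| h w h \| > 1 - \varepsilon$. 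Note $y \in \overline{w A w} \subseteq \overline{a A a}$, so $y \precsim a$.

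Next I would produce $\varphi$. Put $p = x + a$, so that $x, r \in \overline{A p}$ (both are positive and dominated by~$p$). Applying Definition~\ref{deftza} to $A$ with cut element $p$, nonzero target $y$, commutator set $F$, a parameter $\eta_{0} > 0$ to be chosen, and the given~$n$, I obtain a c.p.c.~order zero map $\varphi \colon M_{n} \to A$ with $(p^{2} - p \varphi (1) p - \eta_{0})_{+} \precsim y$ and $\| [\varphi (z), b] \| < \eta_{0}$ for all $\| z \| \le 1$ and $b \in F$. Taking $\eta_{0} \le \varepsilon$ already gives~\eqref{Item_P_8813_TZA_Fin_2}. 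For the cut estimates I would run the computation in the proof of Lemma~\ref{lemind} twice, once with $x$ and once with $r$ in place of its ``$y$'' (both lie in $\overline{A p}$), invoking Lemma~\ref{lemkey} and choosing $\eta_{0}$ small enough in terms of $\varepsilon$, $\delta$, $\| x \|$, $\| r \|$, and the approximants of $x$ and $r$ by elements of $A p$. This transfers the one cut estimate for $p$ into
\[
\bigl( x^{2} - x \varphi (1) x - \varepsilon \bigr)_{+} \precsim y \precsim a
\andeqn
\bigl( r^{2} - r \varphi (1) r - \delta \bigr)_{+} \precsim y,
\]
the first of which is exactly~\eqref{Item_P_8813_TZA_Fin_1}.

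Finally, condition~\eqref{Item_P_8813_TZA_Fin_Add} follows by feeding $h = \varphi (1)$ (positive of norm at most~$1$) into Lemma~\ref{L_8810_Nonunital2Point6}: the second displayed subequivalence gives $\| \varphi (1) w \varphi (1) \| > 1 - \varepsilon$, and since $0 \le w \le a$ we conclude $\| \varphi (1) a \varphi (1) \| \ge \| \varphi (1) w \varphi (1) \| > 1 - \varepsilon$. The main obstacle is the middle step: extracting a single $\varphi$ that simultaneously satisfies the cut estimate for the \emph{arbitrary} given $x$ and the cut estimate for the auxiliary element $r$ manufactured from~$a$, since the transfer Lemma~\ref{lemind} is phrased only existentially. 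I resolve this by applying the definition once to the common dominator $p = x + a$ and inlining the transfer argument for the two elements at the same time. Finiteness of $A$ enters the proof in exactly one place, namely through Proposition~\ref{L_8810_FinEltToUnit}, which guarantees that $r$ is finite so that Lemma~\ref{L_8810_Nonunital2Point6} is applicable.
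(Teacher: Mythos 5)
Your argument is correct, and it rests on the same two pillars as the paper's own proof: Proposition~\ref{L_8810_FinEltToUnit} to extract a finite element from the finiteness of~$A$, and Lemma~\ref{L_8810_Nonunital2Point6} to convert a Cuntz subequivalence $(r^2 - r h r - \delta)_{+} \precsim y$ into the norm estimate of condition~\eqref{Item_P_8813_TZA_Fin_Add}. The execution differs in two genuine ways. The paper takes for its finite element a spectral cut-down $x_1 = (x_0 - \eta)_{+}$ of an element $x_0$ acting approximately as a unit on both $x$ and~$a$, feeds Lemma~\ref{L_8810_Nonunital2Point6} the normalized compression $a_0 = \| x_1 a x_1 \|^{-1} x_1 a x_1$, and applies Definition~\ref{deftza} with cut element $x_1$ itself; condition~\eqref{Item_P_8813_TZA_Fin_1} then transfers to $x$ via the approximate-unit estimate, condition~\eqref{Item_P_8813_TZA_Fin_Add} needs the extra bookkeeping $\| a_0 - a \| < \varepsilon / 2$, and a common Cuntz minorant $c$ of $y$ and~$a$ is required. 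You instead take $r = (a - (1 - \eta))_{+}$ and $w = \eta^{-1} r \leq a$, which reduces the passage from $\| \varphi (1) w \varphi (1) \|$ to $\| \varphi (1) a \varphi (1) \|$ to a one-line operator inequality and lets you use $y \precsim a$ directly. The price is that your cut element $p = x + a$ is neither $x$ nor~$r$, so you must run the quantitative content of the proof of Lemma~\ref{lemind} twice for the same~$\varphi$; this is legitimate, because that argument, for a fixed element of $\overline{A p}$ and a fixed tolerance, produces a threshold that works uniformly over all c.p.c.~order zero maps, so one simply takes the minimum of the two thresholds (together with~$\varepsilon$) as the parameter in the single application of Definition~\ref{deftza}. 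Both routes are of comparable length; yours trades the approximate-unit estimates for a double application of the transfer argument and avoids the normalization of~$a$.
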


By Lemma~\ref{L_8810_xhxTohxh},
we could equally well use the condition
$\| a \ph (1) a \| > 1 - \ep$.

\begin{proof}[Proof of Proposition~\ref{P_8813_TZA_Fin}]
Let $x, a \in A_{+}$ with $\| a \| = 1$,
let $F \subseteq A$ be finite, let $\varepsilon > 0$,
and let $n \in \mathbb{N}$.

We may assume that  $x \neq 0$.
Define
\[
\et = \min \left( \frac{1}{8}, \, \frac{\ep}{24},
       \, \frac{\ep}{8 \| x \| ( \| x \| + 1 )} \right).
\]
Choose $x_0 \in A_{+}$ such that
\[
\| x_0 \| = 1,
\qquad
\| x_0 x - x \| < \et,
\andeqn
\| x_0 a - a \| < \et.
\]
Define $x_1 = (x_0 - \et)_{+}$.
Then $x_1$ is finite by Proposition~\ref{L_8810_FinEltToUnit}.
Since $\| a \| = 1$,
we have
\begin{equation}\label{Eq_8825_Star}
\| x_1 a x_1 - a \|
 \leq 2 \| x_1 - x_0 \| + \| x_0 a - a \| + \| a x_0 - a \|
 < 4 \et.
\end{equation}
So $\| x_1 a x_1 \| > 1 - 4 \et$.
Thus $x_1 a x_1 \neq 0$,
and we can
set $a_0 = \| x_1 a x_1 \|^{-1} x_1 a x_1$.

We claim that
\begin{equation}\label{Eq_8814_a0}
\| a_0 - a \| < \frac{\ep}{2}.
\end{equation}
To prove the claim,
using $1 \geq \| x_1 a x_1 \| > 1 - 4 \et$
at the first step,
using \eqref{Eq_8825_Star} at the second step,
using $\et \leq \frac{1}{8}$ at the third step,
and using $\et \leq \frac{\ep}{20}$ at the fourth step,
\[
\| a_0 - a \|
 \leq \left( 1 - \frac{1}{1 - 4 \et} \right) + \| x_1 a x_1 - a \|
 < \frac{4 \et}{1 - 4 \et} + 4 \et
 \leq 8 \et + 4 \et
 \leq \frac{\ep}{2}.
\]
The claim is proved.

Apply Lemma~\ref{L_8810_Nonunital2Point6}
with $x_1$ in place of~$r$,
with $\et$ in place of~$\ep$,
and with $a_0$ in place of~$x$,
getting $\dt > 0$
and
$y \in \bigl( {\overline{a_0 A a_0}} \bigr)_{+} \setminus \{ 0 \}$
as there.
By \cite[Lemma 2.4]{Ph14},
there is $c \in A_{+} \setminus \{ 0 \}$
such that $c \precsim y$ and $c \precsim a$.
Set
\[
\rh = \min \left( \dt, \, \frac{\ep}{2},
       \, \frac{\ep}{2 \| x \|^2} \right).
\]
Apply the definition of tracial $\mathcal{Z}$-absorption
with $x_1$ in place of~$x$,
with $\rh$ in place of~$\ep$,
with $c$ in place of~$a$,
and with $F$ and~$n$ as given,
getting a c.p.c.~order zero map $\varphi \colon M_{n} \to A$
such that:
\begin{enumerate}
\setcounter{enumi}{\value{TmpEnumi}}
\item\label{Item_P_8813_TZA_Fin_1_Pf}
$\bigl( x_1^{2} - x_1 \varphi (1) x_1 - \rh \bigr)_{+} \precsim c$.
\item\label{Item_P_8813_TZA_Fin_2_Pf}
$\| [\varphi (z), b] \| < \rh$
for any $z \in M_{n}$ with $\| z \| \leq 1$ and any $b \in F$.
\end{enumerate}

Since $\rh \leq \ep$,
Condition~\eqref{Item_P_8813_TZA_Fin_2} in the conclusion
follows from~\eqref{Item_P_8813_TZA_Fin_2_Pf}.

We prove Condition~\eqref{Item_P_8813_TZA_Fin_1} in the conclusion.
We have $\| x_0 x - x \| < \et$ and $\| x_1 - x_0 \| \leq \et$,
so $\| x_1 x - x \| < (1 + \| x \|) \et$.
Therefore, using the choice of~$\et$ at the third step,
\begin{align*}
\bigl\| \bigl( x x_1^2 x - x x_1 \ph (1) x_1 x \bigr)
  - \bigl( x^2 - x \ph (1) x \bigr) \bigr\|
& \leq 4 \| x \|\cdot \| x_1 x - x \|
\\
& < 4 \| x \| ( \| x \| + 1 ) \et
  \leq \frac{\ep}{2}.
\end{align*}
Using this and \cite[Corollary 1.6]{Ph14} at the first step,
using $\| x \|^2 \rh \leq \tfrac{\ep}{2}$ at the second step,
using Lemma~\ref{lemkey} at the third step,
using \eqref{Item_P_8813_TZA_Fin_1_Pf} at the fourth step,
and using $c \precsim a$ at the fifth step,
we now get
\begin{align*}
\bigl( x^2 - x \ph (1) x - \ep \bigr)_{+}
& \precsim
   \bigl( x x_1^2 x - x x_1 \ph (1) x_1 x - \tfrac{\ep}{2} \bigr)_{+}
\\
& \precsim
  \bigl( x x_1^2 x - x x_1 \ph (1) x_1 x - \| x \|^2 \rh \bigr)_{+}
\\
& \precsim \bigl( x_1^2 - x_1 \ph (1) x_1 - \rh \bigr)_{+}
  \precsim c
  \precsim a.
\end{align*}
This is~\eqref{Item_P_8813_TZA_Fin_1}.

Finally,
since $c \precsim y$, $\rh \leq \dt$, and $\et \leq \frac{\ep}{2}$,
\eqref{Item_P_8813_TZA_Fin_1_Pf}
and the choices of $\dt$ and~$y$
using Lemma~\ref{L_8810_Nonunital2Point6}
imply that $\| \ph (1) a_0 \ph (1) \| > 1 - \frac{\ep}{2}$.
Combining this estimate with~(\ref{Eq_8814_a0})
gives Condition~\eqref{Item_P_8813_TZA_Fin_Add} in the conclusion.
\end{proof}

\section*{Acknowledgment}
MA was partially supported by grants from INSF (no.~91058675) and IPM (no.~98460121). NG was supported by a grant from INSF (no.~98009270). SJ would like to acknowledge the support of Tarbiat Modares University where he was a graduate student while this work was in its preliminary stages. The work of NCP was partially supported by the
  US National Science Foundation under
  Grant DMS-1501144.
  
  While preparing the final draft of this paper, a  preprint 
appeared on arXiv \cite{CLS2021}
shortly before our first arXiv version. 
Though there are minor overlap between the results of two papers
(mainly some permanence properties and a variant of 
Part~\eqref{thmx_Cu_it1} of Theorem~\ref{thmx_Cu}), 
the main results
are  different. The main result of 
\cite{CLS2021} is that every
 separable simple nuclear  
 tracially ${\mathcal{Z}}$-absorbing C*-algebra  is ${\mathcal{Z}}$-absorbing,
 which extends \cite[Theorem~4.1]{HO13} to the nonunital case
 and answers an open question of our unpublished
 version, and in a sense complements our work.
 We had a proof of this result in the special case where
 the C*-algebra is not stably projectionless (Remark~\ref{rmk_tzz}).
Though we posted the first arXiv version of the current paper in
September  2021, its ideas and results had been obtained
long time ago. For instance, Definition~\ref{defx_tz}
was quoted from the unpublished version of the current paper
   in \cite[Definition~6.6]{FG17ar},
   \cite{SJ_thesis}, and \cite[Definition~4.4]{FLL21}.
   Moreover, this definition and some results of this paper
   (including the permanenece properties and  almost unperforated
   Cuntz semigroup of tracially $\mathcal{Z}$-absorbing C*-algebras)
   were announced by the authors in several conferences,
   for instance, by the fourth  author in his
    ICM  talk in 2018 entitled ``Traciall  $\mathcal{Z}$-stability
   in the nonunital case" (ICM Operator Algebras Satellite Conference,
   http://mtm.ufsc.br/icmoa/), by the second
    author in his talk in 2020  entitled ``Group Actions on Tracially 
   $\mathcal{Z}$-absorbing C*-algebras" (The 7th Workshop on
Operator Algebras and their Applications, IPM,
http://math.ipm.ac.ir/conferences/2020/OpeAlg2020/),  
    and by the third  author in his 
   talk in 2021 in COSy entitled ``Simple Tracially $\mathcal{Z}$-absorbing C*-algebras"
   (http://www.fields.utoronto.ca/activities/20-21/COSy).
%%%%%%%%%%%%%%%%%%%%%%%%%%%%%%%%%%%%%%%%%%%%%%%%% References
%%%%%%%%%%%%%%%%%%%%%%%%%%%%%%%%%%%%%%%%%%%%%%%%%


\begin{thebibliography}{99}

\bibitem{AGJP17}
M.~Amini, N.~Golestani, S.~Jamali, N. C.~Phillips,
\emph{Finite group actions on simple tracially $\mathcal{Z}$-absorbing C*-algebras},
in preparation.

\bibitem{APT11}
P.~Ara, F.~Perera, and A.~S.\  Toms,
{\emph{K-theory for operator algebras. Classification of C*-algebras}},
pages 1--71 in:
{\emph{Aspects of Operator Algebras and Applications}},
P.~Ara, F~Lled\'{o}, and F.~Perera (eds.),
Contemporary Mathematics vol.~534,
Amer.\  Math.\  Soc., Providence RI, 2011.

\bibitem{ABP16}
D.~Archey, J.~Buck, and N.~C.\  Phillips,
{\emph{Centrally large subalgebras and tracial
${\mathcal{Z}}$-absorption}},
International Mathematics Research Notices
{\textbf{292}}(2017), 1--21.

\bibitem{AP16}
D.~Archey and N.~C.\  Phillips,
{\emph{Permanence
of stable rank one for centrally large subalgebras and
crossed products by minimal homeomorphisms}},
J.\ Operator Th. {\bf 83} (2020), 353--389.

\bibitem{Avtz}
D.~Avitzour, {\emph{Free products of C*-algebras}},
Trans.\  Amer.\  Math.\  Soc.\  {\textbf{271}}(1982), 423--435.

\bibitem{Blkd1} B.~Blackadar,
{\emph{Weak expectations and nuclear C*-algebras}},
Indiana Univ.\  Math.~J.\  {\textbf{27}}(1978), 1021--1026.

\bibitem{BlEC}
B.~Blackadar,
{\emph{Operator algebras. Theory of C*-Algebras and von Neumann
  Algebras}},
Encyclopaedia of Mathematical Sciences, vol.~122:
{\emph{Operator Algebras and Non-commutative Geometry, III}},
Springer-Verlag, Berlin, 2006.

\bibitem{BlkHdm} B.~Blackadar and D.~Handelman,
{\emph{Dimension functions and traces on C*-algebras}},
J.~Funct.\  Anal.\  {\textbf{45}}(1982), 297--340.

\bibitem{BRTTW}
B.~Blackadar, L.~Robert, A.~P.\  Tikuisis, A.~S.\  Toms, and W.~Winter,
{\emph{An algebraic approach to the radius of comparison}},
Trans.\  Amer.\  Math.\  Soc.\  {\textbf{364}}(2002), 3657--3674.

\bibitem{Brnt} L.~Barnett,
{\emph{Free product von Neumann algebras of type ${\mathrm{III}}$}},
Proc.\  Amer.\  Math.\  Soc.\  {\textbf{123}}(1995), 543--553.
  
\bibitem{CE}
J.\ Castillejos and S.\ Evington, {\emph{Nuclear dimension of stably projectionless C*-algebras}}, Anal.\ PDE.\ {\bf 13}(7) (2020), 2205--2240. 

\bibitem{CETWW} J.\ Castillejos, S.\ Evington, A.\ Tikuisis, S.\ White, and W.\ Winter,  {\emph{Nuclear dimension of simple C*-algebras}}, Invent. Math. (2020), https://doi.org/10.1007/s00222-020-01013-1.

\bibitem{CETW} J.\ Castillejos, S.\ Evington, A.\ Tikuisis and S.\ White,  {\emph{Uniform Property $\Gamma$}}, to appear in International Mathematics Research Notices, https://doi.org/10.1093/imrn/rnaa282.


\bibitem{CLS2021}
J.\ Castillejos, K.\ Li, and G.\ Szab\'{o},
\emph{On tracial $\mathcal{Z}$-stability of simple 
non-unital C*-algebras},
preprint 2021 (arXiv: 2108.08742v2 [math.OA]).

\bibitem{Cu2}
J.~Cuntz,
{\emph{K-theory for certain C*-algebras}},
Ann.\  Math.\  {\textbf{113}}(1981), 181--197.

\bibitem{Dkm2} K.~J.\  Dykema,
{\emph{Purely infinite, simple C*-algebras arising from free product
constructions,~II}},
Math.\  Scand.\  {\textbf{90}}(2002), 73--86.

\bibitem{DkmRdm} K.~J.\  Dykema and M.~R{\o}rdam,
{\emph{Purely infinite simple C*-algebras arising from free product
constructions}},
Canad.\  J.\  Math.\  {\textbf{50}}(1998),  323--341.

\bibitem{EGLN17}
G.~A.\  Elliott, G.~Gong, H.~Lin, and Z.~Niu,
{\emph{Simple stably projectionless C*-algebras
  with generalized tracial rank one}},
J. Noncommut. Geom. {\bf 14} (2020), 251--347. 
% ??? Need to get and check this reference.

\bibitem{ERS}
G.~A.\  Elliott, L.~Robert, and L.~Santiago,
{\emph{The cone of lower semicontinuous traces on a C*-algebra}},
Amer.\  J.\  Math.\  {\textbf{133}}(2011), 969--1005.

\bibitem{FG17ar}
M.~Forough and N.~Golestani,
{\emph{Tracial Rokhlin property for finite group actions on 
non-unital simple C*-algebras}},
preprint 2017 (arXiv: 1711.10818v1 [math.OA]).


\bibitem{FG17}
M.~Forough and N.~Golestani,
{\emph{The weak tracial Rokhlin property for finite group actions on simple C*-algebras}}
Doc. Math. {\bf 25}(2020), 2507--2552.

%\bibitem{GHS14} E. Gardella, I. Hirshberg, and L. Santiago,
%\emph{Rokhlin dimension: tracial properties and crossed products},
%  in preparation, 2014.

\bibitem{FLL21}
X. Fu, K. Li, and H. Lin, \emph{Tracial approximate divisibility and stable rank one},
preprint 2021 (arXiv: 2108.08970v2 [math.OA]).

\bibitem{FL21}
X. Fu  and H. Lin, \emph{Non-amenable simple C*-algebras with tracial approximation},
preprint 2021 (arXiv: 2101.07900v1 [math.OA]).

\bibitem{GH20}
E. Gardella and I. Hirshberg, \emph{Strongly outer actions of amenable groups on 
$\mathcal{Z}$-stable C*-algebras}, 
preprint 2020 (arXiv: 1811.00447v3 [math.OA]).

\bibitem{GngLin}
G.~Gong and H.~Lin,
{\emph{On classification of non-unital simple amenable
  C*-algebras, I}},
preprint 2017 (arXiv: 1611.04440v3 [math.OA]).


\bibitem{GngLinII}
G.~Gong and H.~Lin,
{\emph{On classification of non-unital simple amenable
  C*-algebras, II}},
preprint 2017 (arXiv: 1702.01073v4 [math.OA]).

\bibitem{GngLinIII}
G.~Gong and H.~Lin,
{\emph{On classification of non-unital amenable simple C*-algebras, III, 
Stably projectionless C*-algebras}},
preprint 2021 (arXiv: 2112.14003v1 [math.OA]).

 
\bibitem{HO13}
I.~Hirshberg and J.~Orovitz,
{\emph{Tracially ${\mathcal{Z}}$-absorbing C*-algebras}},
J.~Funct.\  Anal.\  {\textbf{265}}(2013), 765--785.

%\bibitem{HRW7} I. Hirshberg, M. Rordam, and W. Winter,
%{$ C_{0}( X)$-algebras, stability and
%   strongly selfabsorbing C$^{*}$-algebras},
%Math. Ann. \textbf{339} (2007), no. 3, 695--732.

%\bibitem{HWZ15} I. Hirshberg, W. Winter, and J. Zachrias,
% {Rokhlin dimension and C*-dynamics},
% Comm. Math. Phys. \textbf{335} (2015), no. 2, 637--670.

\bibitem{Jcln}
B.~Jacelon,
{\emph{A simple, monotracial, stably projectionless C*-algebra}},
J.~London Math.\  Soc.\  (2) {\textbf{87}}(2013), 365--383.

\bibitem{SJ_thesis}
S. Jamali, \emph{Simple tracially $\mathcal{Z}$-absorbing
C*-algebras}, Ph.D. thesis (Persian), Tarbiat Modares University, 2018.
 
\bibitem{JS}
X.~Jiang and H.~Su,
{\emph{On a simple unital projectionless C*-algebra}},
Amer.\  J.\  Math.\  {\textbf{121}}(1999), 359--413.

\bibitem{Ke17}
D.~Kerr,
{\emph{Dimension, comparison, and almost finiteness}}, to appear in
 J. Eur. Math. Soc., https://doi.org/10.4171/JEMS/995. 
% ??? Need to check this reference.

\bibitem{KR00}
E.~Kirchberg and M.~R{\o}rdam,
{\emph{Non-simple purely infinite C*-algebras}},
Amer.\  J.\  Math.\  {\textbf{122}}(2000), 637--666.

\bibitem{KR02}
E.~Kirchberg and M.~R{\o}rdam,
{\emph{Infinite non-simple C*-algebras: absorbing the Cuntz algebra
${\mathcal{O}}_{\infty}$}},
Adv.\  Math.\  {\textbf{167}}(2002), 195--264.

\bibitem{KW04}
E.~Kirchberg and W.~Winter,
{\emph{Covering dimension and quasidiagonality}},
International J.\  Math.\  {\textbf{15}}(2004), 63--85.
 
\bibitem{Ln1} H.~Lin, {\emph{Tracially AF C*-algebras}},
Trans.\  Amer.\  Math.\  Soc.\  {\textbf{353}}(2001), 693--722.

\bibitem{LnBook}
H.~Lin,
{\emph{An Introduction to the Classification of Amenable
C*-Algebras}},
World Scientific, River Edge~NJ, 2001.

\bibitem{LN}
H.~Lin and Z.~Niu,
{\emph{Lifting KK-elements, asymptotic unitary equivalence and
  classification of simple C*-algebras}},
Advances in Math.\  {\textbf{219}}(2008), 1729--1769.

\bibitem{MS}
H.~Matui and Y.~Sato,
{\emph{Strict comparison and $\mathcal{Z}$-absorption of
  nuclear C*-algebras}},
Acta Math. {\textbf{209}}(2012), 179--196.
% ??? Need to check this reference.

\bibitem{NiuWng}
Z.~Niu and Q.~Wang,
{\emph{A tracially AF algebra which is not $Z$-stable}},
M\"{u}nster J. of Math. {\bf 14} (2021), 41--57.

\bibitem{Na19}
N.~Nawata,
{\emph{Trace scaling automorphisms of the stabilized 
Razak-Jacelon algebra}},
Proc. Lond. Math. Soc.  (3) {\textbf{118}}(2019),
no. 3, 545--576.


\bibitem{OPW17}
J.~Orovitz, N.~C.\  Phillips, and Q.~Wang,
{\emph{Strict comparison and crossed products}},
in preparation.

\bibitem{OP06}
H.~Osaka and N.~C.\  Phillips,
{\emph{Stable and real rank for crossed products by
  automorphisms with the tracial Rokhlin property}},
Ergod.\  Th.\  Dynam.\  Sys.\  {\textbf{26}}(2006), 1579--1621.

% \bibitem{PP15}
% C.~Pasnicu and N.~C.\  Phillips,
% {\emph{Crossed products by spectrally free actions}},
% J.~Funct.\  Anal.\  {\textbf{269}}(2015), 915--967.

\bibitem{Pdsn1}
G.~K.\  Pedersen,
{\emph{C*-Algebras and their Automorphism Groups}},
Academic Press, London, New York, San Francisco, 1979.

\bibitem{Ph14}
N.~C.\  Phillips,
{\emph{Large subalgebras}},
preprint (arXiv: 1408.5546v1 [math.OA]).

\bibitem{RW98}
I.~Raeburn and D.~P.\  Williams,
{\emph{Morita Equivalence and Continuous-Trace C*-Algebras}},
Mathematical Surveys and Monographs no.~60,
American Mathematical Society, Providence RI, 1998.


\bibitem{Ro91}
M.~R{\o}rdam,
{\emph{On the structure of simple C*-algebras tensored with
   a UHF-algebra}},
J.~Funct.\  Anal.\  {\textbf{100}}(1991), 1--17.

\bibitem{Ro02}
M.~R{\o}rdam,
{\emph{Classification of nuclear, simple C*-algebras}},
pages 1--145 of:
M.~R{\o}rdam and E.~St{\o}rmer,
{\emph{Classification of nuclear C*-algebras. Entropy in operator
algebras}},
Encyclopaedia of Mathematical Sciences vol.\  126,
Springer-Verlag, Berlin, 2002.

\bibitem{Ro04}
M.~R{\o}rdam,
{\emph{The stable and the real rank of ${\mathcal{Z}}$-absorbing
 C*-algebras}},
Internat.\  J.\  Math.\  {\textbf{15}}(2004), 1065--1084.

\bibitem{RrPC}
M.~R{\o}rdam,
personal communication to N.~Golestani.

% \bibitem{RW10}
% M.~R{\o}rdam and W.~Winter,
% {\emph{The Jiang-Su algebra revisited}},
% J.~reine angew.\  Math.\  {\textbf{642}}(2010), 129--155.


\bibitem{Ti14} A.\ Tikuisis, {\emph{Nuclear dimension, $Z$-stability, and algebraic simplicity for
stably projectionless C*-algebras}}, Math.\ Ann.\ {\bf 358}(2014), 729--778. 

\bibitem{TW07}
A.~S.\  Toms and W.~Winter,
{\emph{Strongly self-absorbing C*-algebras}},
Trans.\  Amer.\  Math.\  Soc.\  {\textbf{359}}(2007), 3999--4029.

%\bibitem{WO93}
%N.~E.\  Wegge-Olsen,
%{\emph{K-Theory and C*-Algebras}},
%Oxford University Press, Oxford, 1993.

\bibitem{W1}
W.~Winter,
{\emph{Nuclear dimension and $\mathcal{Z}$-stability
 of pure C*-algebras}},
Invent.\  Math.\  {\textbf{187}}(2012), 259--342.
% ??? Need to check this reference.

\bibitem{W2}
W.~Winter,
{\emph{Localizing the Elliott conjecture at strongly
 self-absorbing C*-algebras}},
J.~reine angew.\  Math.\  {\textbf{692}}(2014), 193--231.
% ??? Need to get and check this reference.

\bibitem{WZ09}
W.~Winter and J.~Zacharias,
{\emph{Completely positive maps of order zero}},
M\"{u}nster J.\  Math.\  {\textbf{2}}(2009), 311--324.

\end{thebibliography}
\end{document}